\newtheorem{theorem}{Theorem}[section] 
\newtheorem{lemma}[theorem]{Lemma}     
\newtheorem{corollary}[theorem]{Corollary} 
\newtheorem{proposition}[theorem]{Proposition} 
\theoremstyle{definition} 
\newtheorem{definition}{Definition} 
\theoremstyle{remark} 
\newtheorem{remark}[theorem]{Remark} 
\newtheorem{observation}{Observation}
\newcommand{\E}{\ensuremath{\mathbb E}}
\newcommand{\R}{\ensuremath{\mathbb R}}
\newcommand{\lab}{\label}  \newcommand{\ra}{\ensuremath{\rightarrow}}   \def\de{{\mathbf{\delta}}} \def\De{{{\Delta}}}  \def\m{{\mathbf{\mu}}}
 \def\var{{\mathrm{var}}} \def\beq{\begin{eqnarray}} \def\eeq{\end{eqnarray}} \def\ben{\begin{enumerate}}
\def\een{\end{enumerate}} \def\bit{\begin{itemize}}
\def\bel{\begin{lemma}}
\def\eel{\end{lemma}}
\def\eit{\end{itemize}} \def\beqs{\begin{eqnarray*}} \def\eeqs{\end{eqnarray*}} \def\bel{\begin{lemma}} \def\eel{\end{lemma}}
\newcommand{\N}{\mathbb{N}} \newcommand{\Z}{\mathbb{Z}}  \newcommand{\C}{\mathcal{C}} 
\newcommand{\T}{\mathbb{T}}  \newcommand{\x}{\mathbf{x}} \newcommand{\y}{\mathbf{y}}    
    \newcommand{\p}{\mathbb{P}}
\newcommand{\PP}{\mathcal P}    \newcommand{\one}{\mathrm{1}}
 \newcommand{\MM}{\mathcal M}\newcommand{\NN}{\mathcal N} \newcommand{\la}{\lambda}  
  \def\eps{{\epsilon}}  \def\ie{i.\,e.\,}
\newcommand{\RR}{\mathbb{R}}
\newcommand{\dist}{\mathbf{d}}
\renewcommand{\L}{\mathbb{L}}
\renewcommand{\L}{\mathbb L}
\renewcommand{\C}{\mathcal C}
\newtheorem{claim}{Claim}
\newtheorem{obs}[theorem]{Observation}
\numberwithin{equation}{section}
\numberwithin{figure}{section}
\newcommand{\Spec}{\mathtt{Spec}}
\newcommand{\diag}{\mathrm{diag}}
\newcommand{\tilh}{h}
\newcommand{\HIVE}{\mathtt{HIVE}}
\newcommand{\AHIVE}{\mathtt{AUGHIVE}}
\newcommand{\HORN}{\mathtt{HORN}}
\newcommand{\GT}{\mathtt{GT}}
\newcommand{\Var}{\operatorname{var}}
\newcommand{\weight}{\mathbf{wt}}
\newcommand{\edge}{{\diamond}}
\renewcommand{\RR}{\mathcal R}
\renewcommand{\P}{\mathbb{P}}
\newcommand{\rel}{\to}
\newcommand{\HT}{\mathtt{HT}}
\newcommand{\AHT}{\mathtt{AHT}}
\newcommand{\FF}{\mathcal{F}}
\newcommand{\wt}{\mathbf{wt}}
\newcommand{\nnn}{{d}}
\newcommand{\sidelength}{\,\mathrm{len}}
\newcommand{\up}{\mathtt{up}}
\newcommand{\lo}{\mathtt{lo}}
\newcommand{\rhosc}{\rho_{\mathrm{sc}}}
\newcommand{\CZ}{\mathtt{CZ}}
\newcommand{\tiling}{\Xi}
\title{On the limit of random  hives with GUE boundary conditions}
\author{Hariharan Narayanan\thanks{School of Technology and Computer Science, Tata Institute of Fundamental Research, Mumbai, India}}
\date{\today}
\begin{document}
\maketitle
\begin{abstract}
We show that hives chosen at random with independent GUE boundary conditions on two sides, weighted by a Vandermonde factor depending on the third side (which is necessary in the context of the randomized Horn problem), when normalized so that the eigenvalues at the edge are asymptotically constant, converge in probability to a continuum hive as $n \ra \infty.$ It had previously been shown in joint work with Sheffield and Tao \cite{NST} that the variance of these scaled random hives tends to $0$ and consequently, from compactness, that they converge in probability subsequentially.   In the present paper, building on \cite{NST}, we prove convergence in probability to a single continuum hive, without having to pass to a subsequence. We moreover show that the value at a given point $v$ of this continuum hive equals the supremum of a certain functional acting on asymptotic height functions of lozenge tilings.
\end{abstract}
\tableofcontents
\section{Introduction}

\begin{figure}[!h]
  \centering
  \includegraphics[width=\linewidth]{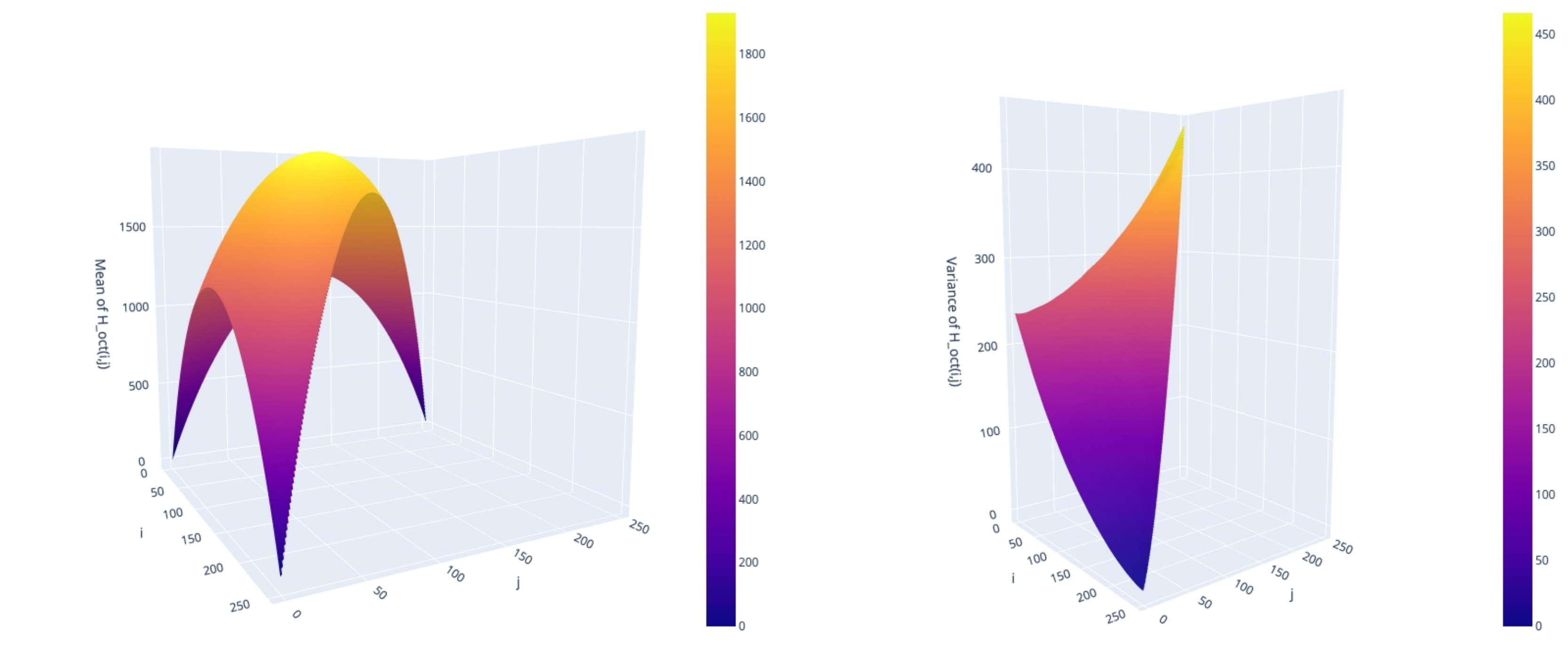}\\
  \caption{The mean and variance of a random hive corresponding to two GUE distributions with $n = 250$ from \cite{GangNar}.}
  \label{fig:GUE_hive}
\end{figure}

\subsection{Motivation}
Let $\la, \mu, \nu$ be vectors in $\R^n$ whose entries are non-increasing. Assume that $\sum_i \la_i + \sum_i\mu_i = \sum_i \nu_i.$

 Take an equilateral triangle $\T_n$ of side $n$. Tessellate it with equilateral triangles of side $1$. Assign boundary values to $\T_n$ as in Figure~\ref{fig:tri3}; Clockwise, assign the values $0, \la_1, \la_1 + \la_2, \dots, \sum_i \la_i, \sum_i \la_i + \mu_1, \dots, \sum_i \la_i + \sum_i \mu_i.$ Then anticlockwise, on the horizontal side, assign  $$0, \nu_1, \nu_1 + \nu_2, \dots, \sum_i \nu_i.$$

Knutson and Tao defined this hive model for Littlewood-Richardson coefficients with $\la, \mu, \nu \in \Z^n$, in \cite{KT1}. They showed that the Littlewood-Richardson coefficient (which plays an important role in the representation theory of $GL_n(\mathbb{C})$)
$c_{\la\mu}^\nu$ is given by the number of ways of assigning integer values to the interior vertices of the triangle, such that the piecewise linear extension to the interior of $\T_n$ is a concave function $f$ from $\T_n$ to $\R$.  
Such  an integral ``hive" $f$ can be described as an integer point in a certain polytope known as a hive polytope. The volumes of these polytopes shed light on the asymptotics of Littlewood-Richardson coefficients \cite{Nar, Okounkov, Greta}. More generally, when $\la, \mu, \nu \in \R^n$, 
they can be used to specify the Horn probability measure exactly \cite{KT2, Zuber}. Indeed,
the volume of the polytope of all real hives with fixed boundaries $\la, \mu, \nu$ is equal, up to known multiplicative factors involving Vandermonde determinants, to the probability density of obtaining a Hermitian matrix with spectrum $\nu$ when two Haar random Hermitian matrices with spectra  $\la$ and $\mu$   are added \cite{KT2}. The  relation was stated in an explicit form by Coquereaux and Zuber in \cite{Zuber} and has been reproduced here in (\ref{horn-prob}).

\begin{figure}
\begin{center}
\includegraphics[scale=0.40]{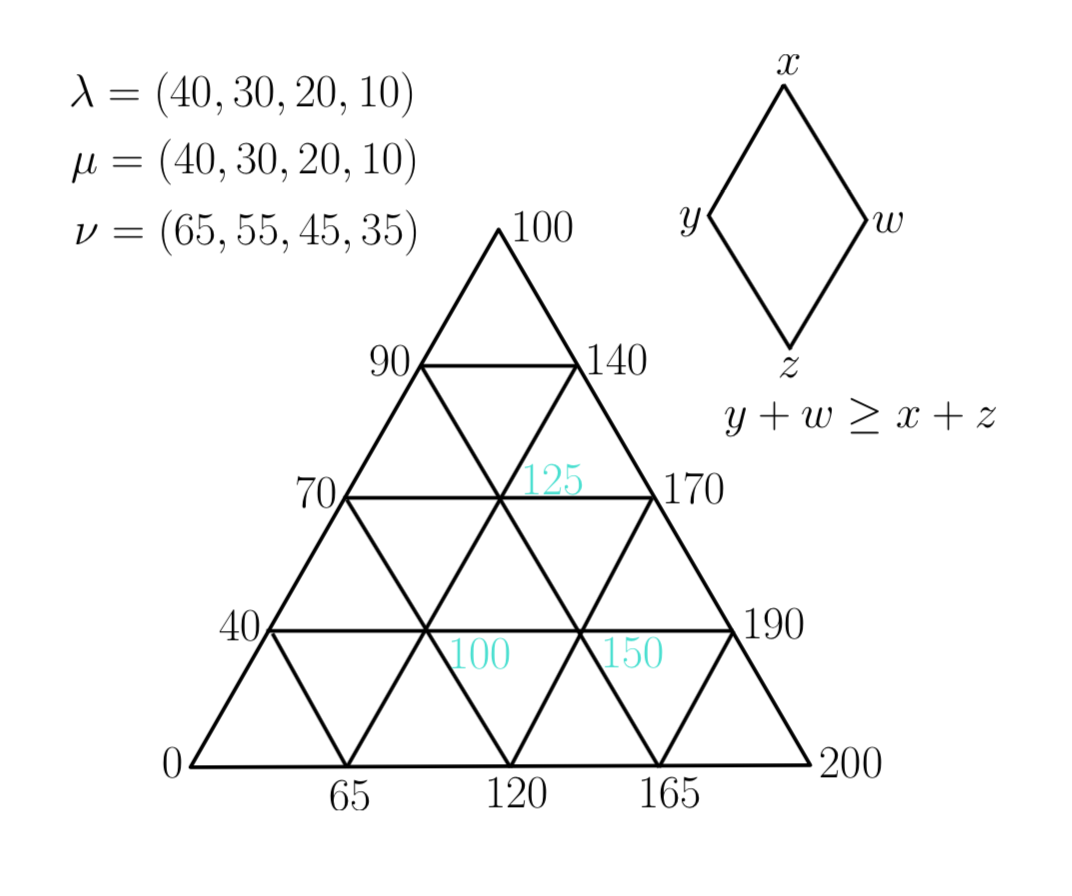}
\caption{Values taken at interior vertices in the hive model satisfy rhombus inequalities as shown above. A function which satisfies all possible rhombus inequalities is called {\it rhombus concave.}}\label{fig:tri3}
\end{center}
\end{figure}


The question of understanding the spectrum of the sum of two matrices with given spectra (known as the Horn problem) is related to a large body of research, including work in free probability. For this reason, and given the relationship between hives and this problem, it is natural to ask if random hives (appropriately weighted to reflect the connection with free additive convolution) have a limit as the size of the hives tends to infinity. In this paper we address this question in the special case when the spectra of the two matrices come from two independent scaled GUEs, and answer it in the affirmative. This paper builds on earlier joint work with Sheffield and Tao \cite{NST}, where the concentration of the hive around its expectation was proved, but whether the normalized expectation converged to a limit was left open. We also make crucial use of a result of Fefferman on quantitative differentiation (Theorem~\ref{thm:Feff}) and other results of Tao including one on the correlation decay of interlacing gaps in the GUE minor process on constant sized patches (Theorem~\ref{theorem:Tao4}).
\subsection{Background on hives}
 
\newcommand{\tih}{\tilde{h}}
 
Let $\Spec_n$ denote the cone of all possible vectors $x = (x_1, \dots, x_n)$ in $\R^n$ such that $$x_1 \geq x_2 \geq \dots \geq x_n.$$ 

We consider a $n \times n$ lattice square  $\Box_n$ with vertices $(0, 0)$, $(0, n)$, $(n, 0)$ and $(n, n)$. 
We use $[n]$ to denote $\Z \cap [1, n].$

Note that the eigenvalues of an $n \times n$ Hermitian matrix, ordered in non-increasing order, become an element of $\Spec_n$.  We continue with the notation of \cite{NST} below. 
We define a relation
\begin{equation}\label{rel}
 \lambda \boxplus \mu \rel \nu
\end{equation}
if there exist Hermitian matrices $A,B$ with eigenvalues $\la, \mu$ respectively such that $A+B$ has eigenvalues $\nu$.  

 In \cite{Weyl}, Weyl asked the question of determining necessary and sufficient conditions on $\la, \mu, \nu \in \Spec_n$ for the relation \eqref{rel} to hold.  As conjectured by Horn \cite{Horn} and proven in \cite{KT1} (building on \cite{Kly}), the set 
$$ \HORN_{\la \boxplus \mu} \coloneqq \{ \nu \in \Spec: \lambda \boxplus \mu \rel \nu \}$$
of possible $\nu$ arising from a given choice of $\la, \mu$ forms a polytope (known as the \emph{Horn polytope}), given by the trace condition
\begin{equation}\label{trace}
 \sum \lambda + \sum \mu = \sum \nu
\end{equation}
(where we abbreviate $\sum \lambda \coloneqq \sum_{i=1}^n \lambda_i$)
together with a recursively defined set of linear inequalities known as the \emph{Horn inequalities}, which include for instance the Weyl inequalities
$$ \nu_{i+j-1} \leq \lambda_{i} + \mu_{j}$$
for $1 \leq i,j,i+j-1 \leq n$, as well as many others.  We refer the reader to \cite{KT2} for a survey of the history of this problem and its resolution.  For $\lambda,\mu \in \Spec^\circ$, the Horn polytope $\HORN_{\la \boxplus \mu}$ is $n-1$-dimensional.

One of the key tools used in the proof of the Horn conjecture in \cite{KT1} is that of a \emph{hive}, defined as follows.

Let $\T$ be the isosceles right triangle in $\R^2$ whose vertices are $(0, 0)$, $(0, 1)$ and $(1, 1)$.  Let $\T_n$ be the set of lattice points $n\T \cap (\Z^2)$, where $n$ is a positive integer.
For $i = 0, 1$ and $2$,  we define $E_i(\T_n)$ to be the sets of parallelograms corresponding to $\De_0, \De_1$ and $\De_2$ on $\T_n$, respectively.  

\begin{definition}[Discrete Hessian and the $\De_i$ on $\T_n$]
Let $f: \T_n \ra \R$ be a function.
\bit
\item Let $E_0(\T_n)$ be the set of all parallelograms $e_0\subseteq \T_n$ whose vertices are $\{(v_1, v_2),  (v_1 + 1, v_2),  (v_1 + 1, v_2 + 1),  (v_1 +2, v_2 + 1)\}.$

\item Let $E_1(\T_n)$ be the set of all parallelograms $e_1\subseteq \T_n$ whose vertices are $\{(v_1, v_2),  (v_1 + 1, v_2),  (v_1, v_2 + 1),  (v_1 +1, v_2 + 1)\}.$ 
\item Let $E_2(\T_n)$ be the set of all parallelograms $e_2\subseteq \T_n$ whose vertices are $\{(v_1, v_2),  (v_1 + 1, v_2+1),  (v_1, v_2 + 1),  (v_1 +1, v_2 + 2)\}.$ 
\eit

We define the discrete Hessian $\nabla^2(f):E(\T_n) \ra \R$ to be a  real-valued function on the set $E(\T_n) = E_0(\T_n) \cup E_1(\T_n) \cup E_2(\T_n)$ and the $\De_i$ from $\R^{V(\T_n)}$ to $\R^{E_i(\T_n)}$ by 
\beq\lab{eq:A}
\nabla^2 f(e_0) := \De_0 f(e_0) :=  f(v_1, v_2) - f(v_1 + 1, v_2) - f(v_1 + 1, v_2 + 1) + f(v_1 +2, v_2 + 1).\nonumber\\
\nabla^2f(e_1) := \De_1 f(e_1) :=  - f(v_1, v_2) + f(v_1 + 1, v_2) + f(v_1, v_2 + 1) - f(v_1 + 1,  v_2 + 1).\nonumber\\
\nabla^2f(e_2) := \De_2 f(e_2) :=  f(v_1, v_2) - f(v_1+1, v_2+1) - f(v_1, v_2 + 1) + f(v_1 + 1, v_2 + 2).\nonumber\\
\eeq
\end{definition}

\begin{definition}[Rhombus concavity]\lab{def:rhomb}
Given a function $h:\T \ra \R$, and a positive integer $n$,  let $h_n$ denote the function from $\T_n$ to $\R$ such that for $(nx, ny) \in \T_n$, $h_n(nx, ny) = n^2 h(x, y).$
A function $h:\T \ra \R$ is called {\bf rhombus concave} if for any positive integer $n$, 
and any $i$, $\De_i h_n$ is nonpositive on $E_i(\T_n),$ and $h$ is continuous on $\T$.
The corresponding function $h_n$ is called {\bf discrete (rhombus) concave}.  
Note that a necessary and sufficient condition for a function $h_n$ from $\T_n$ to $\R$ to be discrete concave,  is that the piecewise linear extension (which we denote $\tilh_n$) of $h_n$ to $n\T$ is  concave.
Here each piece is an isosceles right triangle with a $\sqrt{2}-$length  hypotenuse parallel to the vector $(1, 1).$
\end{definition}

\begin{definition}[Hive]Let $ \HIVE_{\lambda^{(n)} \boxplus \mu^{(n)} \rel \nu_n}$ denote the set of all discrete concave functions $h_n:\T_n \ra \R,$ (which,  following Knutson and Tao \cite{KT1},  we call hives)
such that \ben \item $\forall i \in [n]\cup\{0\},\quad h_n(0,  i) = \sum_{j = 1}^i \la_n(j).$
\item $\forall i \in [n]\cup\{0\},\quad h_n(i,  n) = \sum_{j = 1}^n \la_n(j) + \sum_{j = 1}^i \mu_n(j).$
\item $\forall i \in [n]\cup\{0\},\quad h_n(i,  i) = \sum_{j = 1}^i \nu_n(j).$
\een
Let $| \HIVE_{\lambda^{(n)} \boxplus \mu^{(n)} \rel \nu_n}|$ denote the ${n-1 \choose 2}-$dimensional Lebesgue measure of this hive polytope. 
\end{definition}

There is a natural probability measure on the Horn polytope $\HORN_{\la \boxplus \mu}$, referred to as the \emph{Horn probability measure} in \cite{Zuberhorn}, defined as the eigenvalues of $A+B$ when $A, B$ are chosen independently and uniformly (i.e., with respect to $U(n)$-invariant Haar measure) the space (essentially a coadjoint orbit) of all Hermitian matrices with eigenvalues $\la, \mu$ respectively.  This Horn measure turns out to be piecewise polynomial and was computed explicitly in \cite[(8), Proposition 4]{Zuber} (see also \cite{Zuberhorn}) to be given by the formula
\begin{equation}\label{horn-prob}
\frac{V(\nu) V(\tau)}{V(\la) V(\mu)} |\HIVE_{\la \boxplus \mu \rel \nu}| \ d\nu
\end{equation}
for $\la, \mu \in \Spec^\circ$, where $|\HIVE_{\lambda \boxplus \mu \rel \nu}|$ denotes the $\binom{n-1}{2}$-dimensional Lebesgue measure of the hive polytope $\HIVE_{\lambda \boxplus \mu \rel \nu}$, $d\nu = d\nu_1 \dots d\nu_{n-1}$ is $n-1$-dimensional Lebesgue measure on the hyperplane given by \eqref{trace}, 
$$ V(\la) = V_n(\la) \coloneqq \prod_{1 \leq i < j \leq n} (\la_i - \la_j)$$
is the Vandermonde determinant, and $\tau$ is the tuple
$$ \tau \coloneqq ( n, n-1, \dots, 1 ).$$

The factor of $V(\nu)$ in \eqref{horn-prob} is inconvenient, but can be removed through the device of \emph{Gelfand--Tsetlin patterns}, which can be viewed as a limiting (and much better understood) case of a hive.  Analogously to \eqref{rel}, we introduce the relation
$$ \diag(\la) \rel a$$
for $\la \in \Spec$ and $a \in \R^n$ to denote the claim that there exists a Hermitian matrix $A$ with eigenvalues $\la$ and diagonal entries $a_1,\dots,a_n$.  The classical \emph{Schur--Horn theorem} \cite{schur}, \cite{horn} asserts that the relation $\diag(\la) \rel a$ holds if and only if \emph{majorized} by $\lambda$ in the sense that one has the trace condition
\begin{equation}\label{al-trace}
\sum a = \sum \lambda
\end{equation}
and the majorizing inequalities
\begin{equation}\label{major}
 a_{i_1} + \dots + a_{i_k} \leq \lambda_1 + \dots + \lambda_k
\end{equation}
for all $1 \leq i_1 < \dots < i_k \leq n$; equivalently, $a$ lies in the \emph{permutahedron} formed by the convex hull of the image of $\lambda$ under the permutation group $S_n$.

Now define a Gelfand--Tsetlin pattern to be a pattern $\gamma = (\lambda_{j,k})_{1 \leq j \leq k \leq n}$ of real numbers obeying the interlacing conditions
\begin{equation}\label{interlacing}
 \lambda_{j,k+1} \geq \lambda_{j,k} \geq \lambda_{j+1,k+1}
\end{equation}
for $1 \leq j \leq k \leq n-1$; see Figure \ref{fig:gt}.  We say that this pattern has boundary condition $\diag(\lambda) \rel a$ for some $\lambda \in \Spec_n$ and $a \in \R^n$ if one has
$$ \lambda_{j,n} = \lambda_j$$
for $1 \leq j \leq n$ and
$$ \sum_{j=1}^k \lambda_{j,k} = \sum_{j=1}^k a_j$$
for $1 \leq k \leq n$; see Figure \ref{fig:gt-example} for an example.  The polytope of all Gelfand--Tsetlin patterns with boundary condition $\diag(\lambda) \rel a$ will be denoted $\GT_{\diag(\lambda) \rel a}$, and following \cite{NST}, we adopt the same wildcard convention as before, thus for instance
$$\GT_{\diag(\lambda) \rel \ast} \coloneqq \bigcup_a \GT_{\diag(\lambda) \rel a}$$
and
$$\GT_{\diag(\ast) \rel \ast} \coloneqq \bigcup_\lambda \GT_{\diag(\lambda) \rel \ast}$$
denote the sets of Gelfand--Tsetlin patterns with boundary conditions $\diag(\lambda) \rel \ast$ and $\diag(\ast) \rel \ast$ respectively.  We remark that for $\lambda \in \Spec^\circ_n$, $\GT_{\diag(\lambda) \rel \ast}$ is a $\binom{n}{2}$-dimensional polytope, which we call a \emph{Gelfand--Tsetlin polytope}, while $\GT_{\diag(\ast) \rel \ast}$ is a $\binom{n+1}{2}$-dimensional convex cone, which we call the \emph{Gelfand--Tsetlin cone}.

\begin{figure}
\begin{center}
\includegraphics[scale=0.50]{./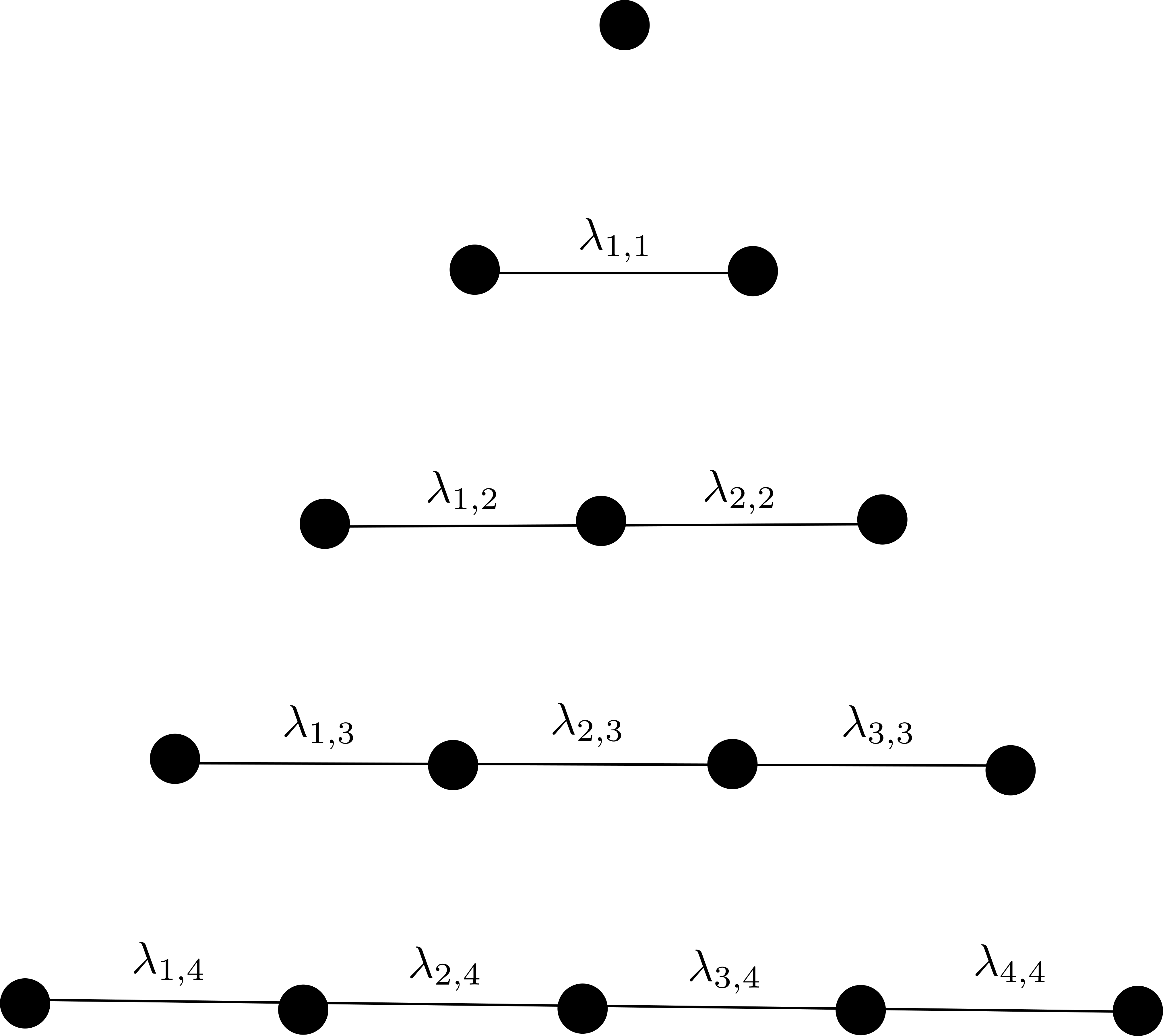}
\caption{An $n=4$ Gelfand--Tsetlin pattern.  Each number $\lambda_{i,j}$ in the pattern is greater than or equal to numbers immediately to the northeast or southeast of the pattern; in particular, every row of the pattern is decreasing. Note that such patterns are sometimes depicted as inverted pyramids instead of pyramids in the literature.}\label{fig:gt}
\end{center}
\end{figure}

\begin{figure}
\begin{center}
\includegraphics[scale=0.50]{./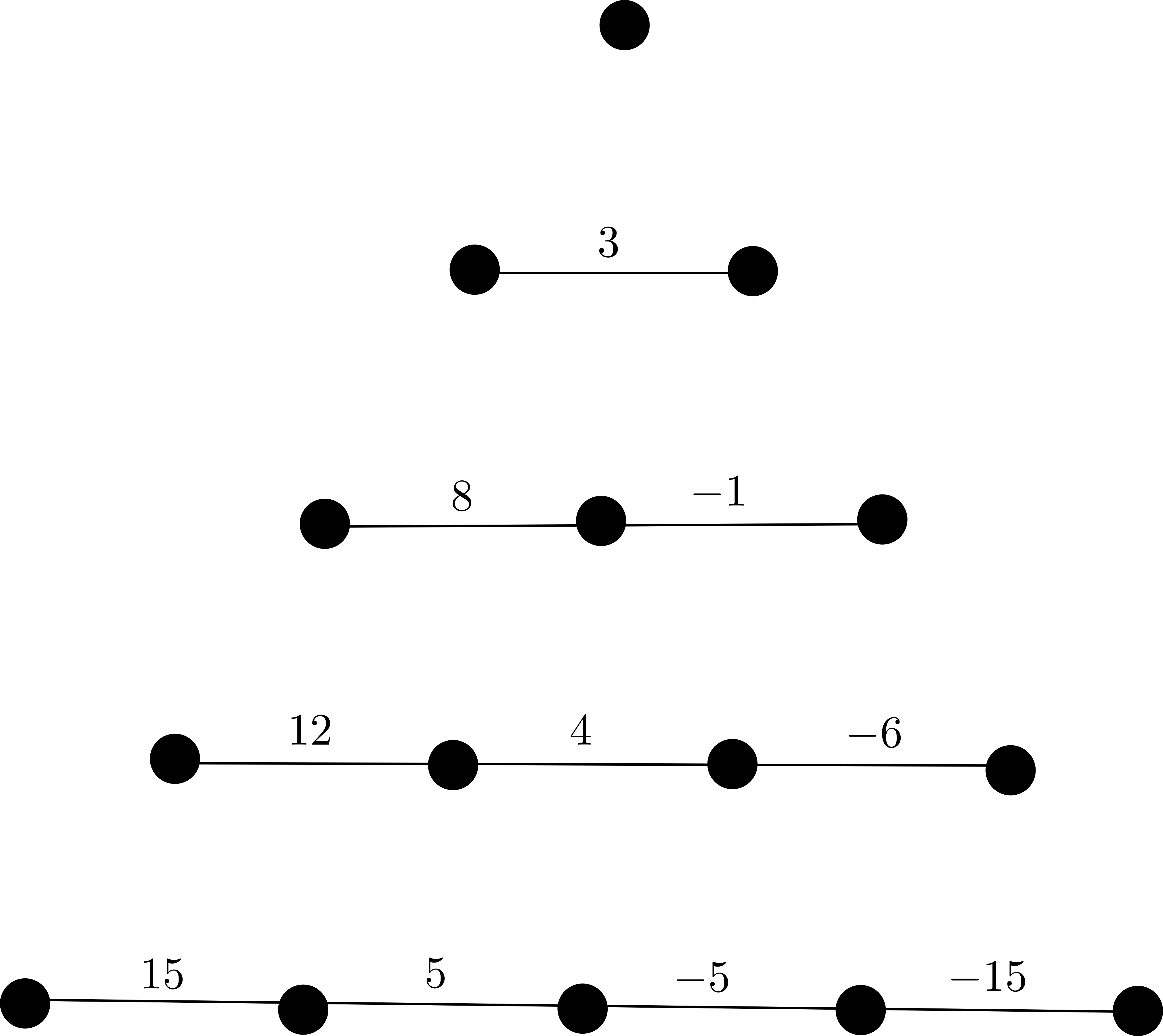}
\caption{A Gelfand--Tsetlin pattern with boundary $\diag(15,5,-5,-15) \rel (3,4,3,-10)$.}\label{fig:gt-example}
\end{center}
\end{figure}

We recall the following standard facts about Gelfand--Tsetlin polytopes from \cite[Proposition 2]{NST}:

\begin{proposition}\label{gt-rem}  Let $\lambda \in \Spec^\circ$.
\begin{itemize}
\item[(i)] If $a \in \R^n$, then $\diag(\lambda) \rel a$ holds if and only if $\GT_{\diag(\lambda) \rel a}$ is non-empty.
\item[(ii)]  The $\binom{n}{2}$-dimensional volume of $\GT_{\diag(\lambda) \rel \ast}$ is $V(\lambda) / V(\tau)$.
\item[(iii)]  Let $A$ be a random Hermitian matrix with eigenvalues $\lambda$, drawn using the $U(n)$-invariant Haar probability measure.  For $1 \leq k \leq n$, let $\lambda_{1,k} \geq \dots \geq \lambda_{k,k}$ be eigenvalues of the top left $k \times k$ minor of $A$.  Then $(\lambda_{j,k})_{1 \leq j \leq k \leq n}$ lies in the polytope $\GT_{\diag(\lambda) \rel \ast}$ with the uniform probability distribution; it has boundary data $\diag(\lambda) \rel a$ where $a = (a_{11},\dots,a_{kk})$ are the diagonal entries of $A$.
\item[(iv)]  If $\Lambda \in \Spec$ has \emph{large gaps} in the sense that 
\begin{equation}\label{large-gaps}
\min_{1 \leq i < n} \Lambda_i - \Lambda_{i+1} > \lambda_1 - \lambda_n,
\end{equation}
then there is a volume-preserving linear bijection between $\GT_{\diag(\lambda) \rel a}$ and $\HIVE_{\Lambda \boxplus \lambda \rel \Lambda+a}$ for any $a \in \R^n$, with a Gelfand--Tsetlin pattern $(\lambda_{j,k})_{1 \leq j \leq k \leq n}$ being mapped to the hive $h \colon T \to \R$ defined by the formula
\begin{equation}\label{hij}
 h(i,j) = \Lambda_1 + \dots + \Lambda_j + \lambda_{1,j} + \dots + \lambda_{i,j};
\end{equation}
see Figure \ref{fig:gt-hive}.
\end{itemize}
\end{proposition}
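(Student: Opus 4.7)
The plan is to treat the four parts of the proposition separately. Parts (i)--(iii) are classical facts about Hermitian matrices and their principal minors, while (iv) is the main content: the identification of Gelfand--Tsetlin polytopes with hives in the large-gap regime.

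For part (i), the forward direction follows from Cauchy interlacing applied to consecutive top-left principal minors of a Hermitian $A$ with eigenvalues $\la$ and diagonal entries $a$, combined with trace identities for the row-sum condition. Conversely, given a GT pattern with the prescribed boundary data, one builds a Hermitian realization inductively: at each step, border an already-constructed $(k-1)\times(k-1)$ Hermitian block having eigenvalues $\la_{1,k-1}\geq\cdots\geq\la_{k-1,k-1}$ by a column $v\in\mathbb{C}^{k-1}$ and the scalar $a_k$, and invoke the classical rank-one completion theorem to produce a $k\times k$ Hermitian matrix with prescribed eigenvalues $\la_{1,k}\geq\cdots\geq\la_{k,k}$; this is solvable exactly under the interlacing and trace constraints encoded in the pattern. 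Part (iii) is Baryshnikov's theorem: the push-forward of Haar measure on the unitary orbit of $A$ under the minor-spectrum map is uniform on $\GT_{\diag(\la)\rel\ast}$, which one can verify by Baryshnikov's direct calculation or by combining the Weyl integration formula with the Harish-Chandra--Itzykson--Zuber integral. Part (ii) then follows from (iii) and the Weyl-formula computation of the orbit volume; alternatively, (ii) admits a direct proof by iterated integration over the rows of the GT polytope, using the Vandermonde structure of the interlacing integrals.

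Part (iv) is the main content. The map $(\la_{j,k})\mapsto h$ in \eqref{hij} is linear and invertible, since $h(i,j)-h(i-1,j)=\la_{i,j}$ recovers every GT entry. A direct substitution checks the three boundary conditions of $\HIVE_{\La\boxplus\la\rel\La+a}$: $i=0$ gives the $\La$ edge, $j=n$ together with $\la_{k,n}=\la_k$ gives the $\la$ edge, and $i=j$ combined with the row-sum identity gives the $\La+a$ hypotenuse. Ordering the interior coordinates lexicographically in $(i,j)$, the Jacobian of the map is upper-triangular with unit diagonal entries, so the map is volume-preserving.

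The key step in (iv) is matching the three rhombus-concavity inequalities to the interlacing inequalities. A direct computation from \eqref{hij} gives $\De_1 h(e_1)=\la_{v_1+1,v_2}-\la_{v_1+1,v_2+1}$ and $\De_0 h(e_0)=\la_{v_1+2,v_2+1}-\la_{v_1+1,v_2}$, so $\De_1\leq 0$ and $\De_0\leq 0$ are exactly the two sides of the GT interlacing inequalities. For the third orientation one computes $\De_2 h(e_2)=\La_{v_2+2}-\La_{v_2+1}+S$, where $S$ is a linear combination of $\la$-differences; a telescoping bound using the interlacing yields $S\leq\la_1-\la_n$, and the large-gap hypothesis $\La_{v_2+1}-\La_{v_2+2}>\la_1-\la_n$ then makes $\De_2\leq 0$ automatic. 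Hence the bijection carries the GT polytope exactly onto the hive polytope. The main obstacle is the sign and index bookkeeping in the three $\De_i$ computations, together with the telescoping estimate that makes the $\De_2$ inequality automatic under the large-gap hypothesis; all remaining steps are either classical input or elementary linear algebra.
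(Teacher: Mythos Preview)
The paper does not prove this proposition at all: it is quoted verbatim from \cite[Proposition 2]{NST} as ``standard facts about Gelfand--Tsetlin polytopes,'' with no argument given. Your proposal therefore supplies what the paper omits, and it is correct in all essential respects.

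A few minor remarks on part (iv). Your computation of $\De_0$ and $\De_1$ is right and matches the two interlacing inequalities exactly, and your telescoping bound $S \leq \la_1 - \la_n$ for the $\De_2$ direction is the correct mechanism (indeed, one gets $S \leq \la_{1,v_2+2} - \la_{v_1+1,v_2+1} \leq \la_1 - \la_n$ by shifting indices via $\la_{i,v_2+2} \leq \la_{i-1,v_2+1}$). You should also note explicitly that the bijection is \emph{onto} the hive polytope: since $\De_0 \leq 0$ and $\De_1 \leq 0$ recover exactly the GT interlacing, and you have shown these two together force $\De_2 \leq 0$ under the large-gap hypothesis, the image of the GT polytope under the linear map is precisely $\{h : \De_0 \leq 0,\ \De_1 \leq 0,\ \text{boundary}\} = \{h : \De_0 \leq 0,\ \De_1 \leq 0,\ \De_2 \leq 0,\ \text{boundary}\}$, i.e.\ the full hive polytope. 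Finally, the Jacobian is block lower-triangular (one block per column index $j$) rather than upper-triangular, but the diagonal entries are indeed all $1$, so the volume-preservation conclusion stands.
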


From this proposition, \eqref{horn-prob}, and the Fubini--Tonelli theorem, one can view the Horn probability measure associated to two tuples $\lambda,\mu \in \Spec^\circ$ to be the pushforward of $\frac{V(\tau)^2}{V(\la) V(\mu)}$ times Lebesgue measure on the ($(n-1)^2$-dimensional) \emph{augmented hive polytope}
$$ \AHIVE_{\diag(\lambda \boxplus \mu \rel \ast) \rel \ast} \coloneqq \bigcup_{\nu,a} \AHIVE_{\diag(\lambda \boxplus \mu \rel \nu) \rel a} \coloneqq \bigcup_{\nu,a} (\HIVE_{\lambda \boxplus \mu \rel \nu} \times \GT_{\diag(\nu) \rel a})$$
under the linear map that sends $\AHIVE_{\diag(\lambda\boxplus \mu \rel\nu) \rel a}$ to $\nu$ for each $\nu,a$.  We refer to the elements $(h, \gamma)$ of the augmented hive polytope $\AHIVE_{\diag(\lambda \boxplus \mu \rel \ast) \rel \ast}$, as \emph{augmented hives}. 

Using Proposition \ref{gt-rem}(iv), one can view an augmented hive as two hives glued together along a common edge, where two of the boundaries have large gaps: see Figure \ref{fig:augment}.

\begin{figure}
\begin{center}
\includegraphics[scale=0.40]{./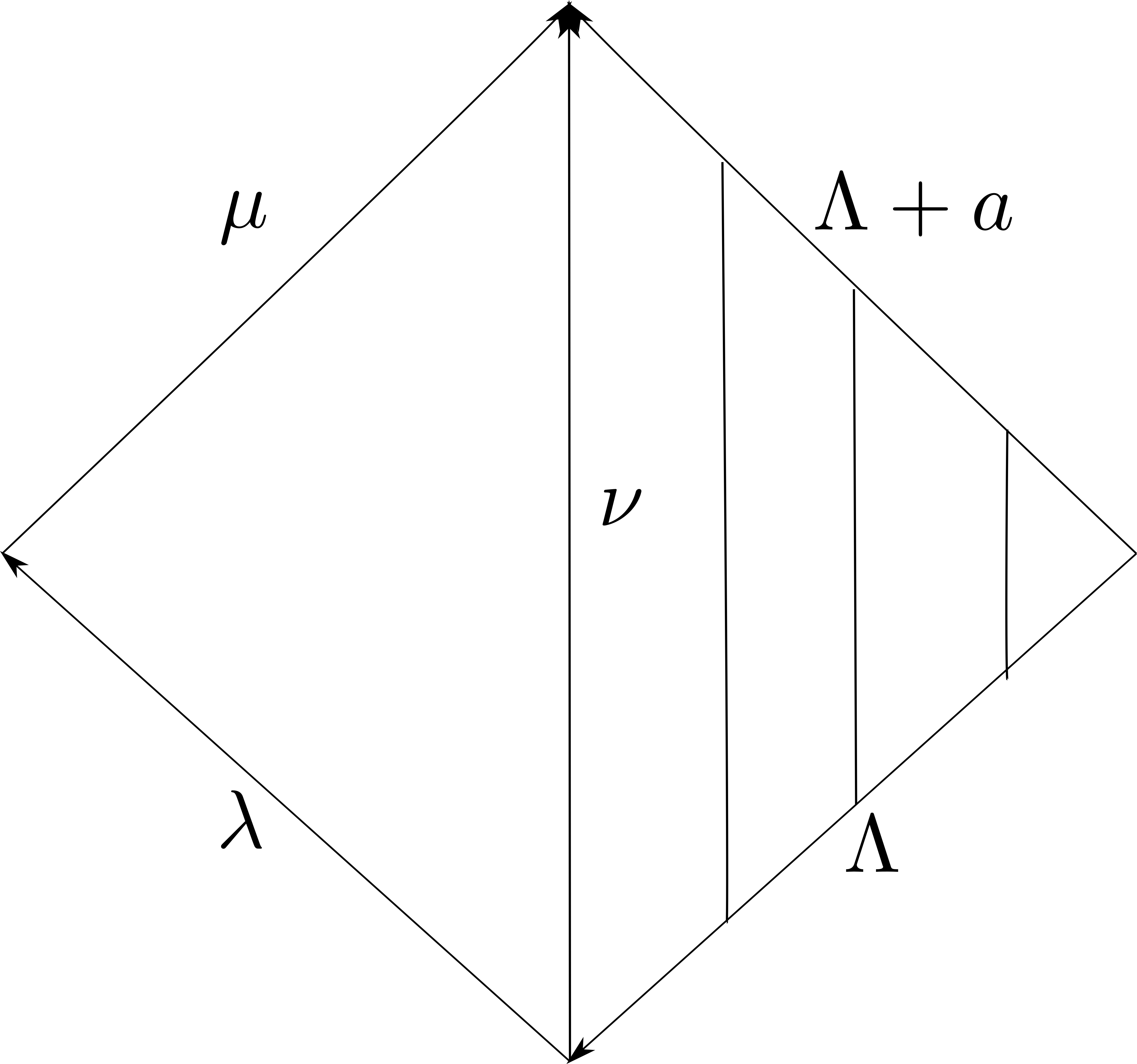}
\caption{A schematic depiction of an augmented hive in $\AHIVE_{\diag(\lambda \boxplus \mu \rel \nu) \rel a}$, where we artificially shift by a tuple $\Lambda$ with large gaps in order to create two hives, instead of a hive and a Gelfand--Tsetlin pattern.}\label{fig:augment}
\end{center}
\end{figure}

As noted in \cite{NST}, as just one illustration of the power of this characterization, we observe as an immediate corollary using Pr\'ekopa's theorem \cite{prekopa} (or the Pr\'ekopa--Leindler inequality, or the Brunn--Minkowski inequality) that the Horn probability measure is log-concave for any $\lambda,\mu \in \Spec^\circ$.

To summarize the discussion so far: the eigenvalues of the sum of two independent, uniformly distributed Hermitian matrices with eigenvalues $\lambda,\mu \in \Spec^\circ$ respectively, has the distribution of a linear projection of the uniform probability measure on the augmented hive polytope $\AHIVE_{\diag(\lambda \boxplus \mu \rel \ast) \rel \ast}$, which can be computed to be an $n(n-1)$-dimensional polytope.

\subsection{Large deviations for hives}

In principle, the behavior of the Horn probability measures in the limit $n \to \infty$ is governed by the theory of free probability: if the empirical distributions of $\la = \lambda^{(n)}, \mu = \mu^{(n)}$ converge (in an appropriate sense) to probability measures $\sigma, \sigma'$, then the empirical distribution of $\nu$ (drawn from the Horn probability measure) should similarly converge to the free convolution $\sigma  \boxplus \sigma'$; see for instance the seminal paper \cite{Voi} for some rigorous results in this direction.  Relating to this, results have emerged in recent years establishing large deviation inequalities for the Horn probability measure under suitable hypotheses on $\lambda, \mu$: see \cite{bgh}, \cite{NarSheff}.

The question of understanding the spectrum of $X_n + Y_n,$ in the setting of large deviations was studied  in \cite{bgh}, where upper and lower large deviation bounds were given which agreed for measures of a certain class that correspond to ``free products with  amalgamation" (see Theorem 1.3, \cite{bgh}). 

Suppose $\lambda^{cont}:[0,1]\rightarrow \R$ and $\mu^{cont}:[0,1]\rightarrow \R$ are $C^1$, strongly decreasing functions. Let $\lambda^{(n)}$ and $\mu^{(n)}$ be obtained from it by taking the slopes of the respective piecewise linear approximations to $\lambda^{cont}$ and $\mu^{cont}$, where the number of pieces is $n$.   In \cite[Theorem 8]{NarSheff}, a large deviation principle was obtained for the probability measure (as $n \ra \infty$) of the piecewise linear extension of $\frac{h_n}{n^2}$ to $T$, 
where $(h_n, \gamma_n)$ is an augmented hive sampled uniformly at random. 

The rate function $I_1$ from  \cite[Notation 1]{NarSheff} for a continuum hives $h^{cont}$ defined on a continuous triangle $T^{cont}$ is roughly, the integral of a certain surface tension function $\sigma$,  added to a term arising from the negative of the limiting value of the logarithm of a Vandermonde determinant associated with $h^{cont}$ restricted to the third side.  Given a discrete hive $h_n$, let $h' = \iota_n(h_n)$  be a rescaling of the  piecewise linear extension of $h_n$ that is a map from $T^{cont}$ to $\R$. 
Let $h_n$ be sampled from the measure on $ \HIVE_{\lambda^{(n)} \boxplus \mu^{(n)} \rel \ast}$  obtained by pushing forward the uniform measure on $\AHIVE_{\diag(\lambda^{(n)} \boxplus \mu^{(n)} \rel \ast) \rel \ast}$ via the natural map from $\AHIVE_{\diag(\lambda^{(n)} \boxplus \mu^{(n)} \rel \ast) \rel \ast}$ to $ \HIVE_{\lambda^{(n)} \boxplus \mu^{(n)} \rel \ast}$.
For any Borel set $E\subset L^\infty(T^{cont})$, let $\PP_n(E) := \p_n[\iota_n(h_n) \in E].$
For each Borel measurable set 
${\displaystyle E\subset L^\infty(T^{cont})}$,  it is shown in \cite[Theorem 8]{NarSheff} that 
$${\displaystyle -\inf_{h\in E^{\circ }}I_1(h)\leq \liminf_{n \ra \infty}\frac{2}{n^2} \log(\mathcal {P}_{n}(E))\leq \limsup_{n \ra \infty}\frac{2}{n^2}\log(\mathcal {P}_{n}(E))\leq -\inf_{h\in {\overline {E}}}I_1(h).}$$
It has not yet been proven that the rate function minimizer for the above large deviation principle is unique (although such uniqueness would be implied by the strict convexity of $\sigma$).   Further,  it has not been established  that the Lebesgue measure on  the polytope $\AHIVE_{\diag(\lambda^{(n)} \boxplus \mu^{(n)} \rel \ast) \rel \ast}$ exhibits concentration,  although this would be a consequence of the rate function minimizer being unique; however for GUE boundary data, concentration has been established in \cite{NST} as mentioned in Subsection~\ref{ssec:GUEhives}.

\subsection{Concentration and convergence of GUE hives}\lab{ssec:GUEhives}

\begin{figure}\label{fig:factorize}
\begin{center}
\includegraphics[scale=0.5, angle = 45]{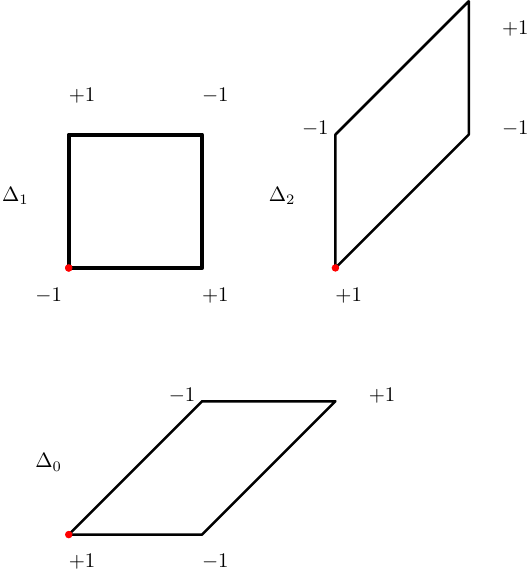}
\caption{A visual depiction of the the second order discrete operators $\De_i$.  A red dot indicates the vertex $(v_1, v_2)$.}
\end{center}
\end{figure}

To establish normalization conventions, we define a GUE random matrix to be a random Hermitian matrix $M = (\xi_{ij})_{1 \leq i,j \leq n}$ where $\xi_{ij} = \overline{\xi_{ji}}$ for $i<j$ are independent complex gaussians of mean zero and variance $1$, $\xi_{ii}$ are independent real gaussians of mean zero and variance $1$, independent of the $\xi_{ij}$ for $i<j$.  As is well known (see e.g., \cite{mehta}), if $\sigma>0$ and $A$ is a random matrix with $\frac{A}{\sqrt{\sigma^2 n}}$ drawn from the GUE ensemble, then the eigenvalues $\lambda \in \Spec$ of $A$ are distributed with probability density function
\begin{equation}\label{density}
 C_n \sigma^{-n^2} \exp\left( - \frac{|\lambda|^2}{2\sigma^2 n} \right) V(\lambda)^2
\end{equation}
for some constant $C_n>0$ depending only on $n$.  In particular, $\lambda$ will lie in $\Spec_n^\circ$ almost surely. From this the previous discussion, we see that if $\var_\lambda, \var_\mu > 0$ are fixed\footnote{In particular, we allow implied constants in the $O()$ notation to depend on these quantities.} and $A, B$ are independent random matrices with\footnote{This normalization is chosen so that the mean eigenvalue gaps of $A,B$ comparable to $1$ in the bulk of the spectrum.} $\frac{A}{\sqrt{\var_\la n}}, \frac{B}{\sqrt{\var_\mu n}}$ drawn from the GUE ensemble, then the the distribution of the eigenvalues of $A+B$ are the pushforward of the measure 
on the $n(n+1)$-dimensional \emph{augmented hive cone} 
$$\AHIVE_{\diag(\ast \boxplus \ast \rel \ast) \rel \ast} \coloneqq \bigcup_{\lambda,\mu,\nu,\pi} (\HIVE_{\lambda \boxplus \mu \rel \nu} \times \GT_{\diag(\nu) \rel \pi}),$$ 
where the probability density function of this measure $\p_n$ is given by
\begin{equation}\label{css}
 C_{n,\sigma_\lambda,\sigma_\mu} \exp\left( - \frac{|\lambda|^2}{2\var_\la n} - \frac{|\mu|^2}{2\var_\mu n} \right) V(\lambda) V(\mu)
\end{equation}
on the slices
\begin{equation}\label{ahive-slice}
\AHIVE_{\diag(\lambda \boxplus \mu \rel \ast) \rel \ast} \coloneqq \bigcup_{\nu,\pi} (\HIVE_{\lambda \boxplus \mu \rel \nu} \times \GT_{\diag(\nu) \rel \pi})
\end{equation}
and where $C_{n,\sigma_\lambda,\sigma_\mu} > 0$ is chosen to make this measure a probability measure.

Since GUE matrices have an operator norm of $O(\sqrt{n})$ with overwhelming probability (by which we mean with probability $1-O(n^{-C})$ for any fixed $C>0$), the boundary differences $\lambda, \mu, \nu$ of an augmented hive $(h,\gamma)$ drawn from the above measure will be of size $O(n)$ with overwhelming probability, and hence the entries $h(v), v \in T$ of the hive will be of size $O(n^2)$ with overwhelming probability.  From this fact (and some crude moment estimates to treat the contribution of the exceptional event), it is not difficult to show the ``trivial bound'' that the variance $\Var h(v)$ of any individual entry $h(v)$ of the hive is bounded by $O(n^4)$.

\noindent The main theorem of \cite{NST} is the following.
\begin{theorem}\lab{theorem:final3}
Let $\la_n:= (\la_{1, n}, \dots, \la_{n, n})$ and $\mu_n :=(\mu_{1, n}, \dots, \mu_{n, n})$ be respectively the eigenvalues of two independent random matrices $X_n$ and $Y_n$ such that $X_n/( \sqrt{\var_\la n})$ and $Y_n/( \sqrt{\var_\mu n})$ have the distribution of a GUE.
Let $a_n$ be a random sample from the normalized Lebesgue measure on $\AHIVE_{\diag(\lambda^{(n)} \boxplus \mu^{(n)} \rel \ast) \rel \ast}.$  Then,  for all $\eps > 0$, there exists an $n_0(\eps) \in \N$,  such that for all $n > n_0(\eps)$,  every coordinate  $(a_n)_{ij}$ of $a_n$,  satisfies  $\var (a_n)_{ij}  < \eps^2 n^4.$
\end{theorem}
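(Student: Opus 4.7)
The plan is to exploit the log-concavity of the density \eqref{css} together with the integrable structure of the GUE to reduce the uniform variance bound to a concentration estimate for a tractable interlacing particle system. The first observation is that $\p_n$ is log-concave on the augmented hive cone: the Gaussian factor is strictly log-concave in $(\lambda, \mu)$, the Vandermonde weights $V(\lambda) V(\mu)$ are log-concave on $\Spec^\circ$, and the support is convex. By Pr\'ekopa's theorem, the law of every coordinate $(a_n)_{ij}$ is one-dimensional log-concave, and the a priori bound $\var((a_n)_{ij}) = O(n^4)$ that follows from the $O(\sqrt{n})$ operator norm of GUE matrices already forces the density into a window of length $O(n^2)$. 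The target $\var((a_n)_{ij}) < \eps^2 n^4$ is therefore the statement that this log-concave marginal cannot have macroscopic spread at scale $n^2$.

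Using Proposition \ref{gt-rem}(iv), I would next reduce the Gelfand--Tsetlin portion of $a_n$ to a hive by gluing a deterministic tuple $\Lambda$ with large gaps onto $\nu$: every GT entry becomes an interior value of a larger hive, so it suffices to bound $\var(h_n(v))$ uniformly in $v \in \T_n$ for $h_n$ drawn from the marginal of $\p_n$ on $\HIVE_{\lambda \boxplus \mu \rel \ast}$. Because the joint law is log-concave, it is enough to produce a coupling of two independent copies $h_n, h_n'$ with $\E[(h_n(v) - h_n'(v))^2]^{1/2} = o(n^2)$ at every vertex. My plan is to realize this hive measure, after the large-gap extension, as the marginal of a weighted interlacing particle process via the octahedron recurrence, in which the Vandermonde and Gaussian factors can be absorbed into a lozenge-type measure that preserves log-concavity. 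Variance bounds for such integrable ensembles (Kenyon--Okounkov, Bufetov--Gorin, Petrov) would then yield concentration at scale $o(n^2)$ for each interior particle, hence for each hive entry.

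The principal obstacle is this translation into an integrable model: one must check that the octahedron recurrence relating hives to lozenge tilings respects the weights, and must verify uniform concentration at \emph{every} interior vertex of $\T_n$, including vertices near the boundary sides where the GUE spectra enter and vertices near the diagonal edge where $\nu$ itself is a random Horn-distributed boundary datum. To control these boundary effects I would invoke GUE eigenvalue rigidity --- the spectrum of $X_n/\sqrt{\var_\la n}$ concentrates on the semicircle with bulk gap fluctuations of polylogarithmic order --- and propagate this rigidity through the rhombus-concavity inequalities defining the hive polytope. Combining the bulk integrable concentration estimate with this boundary rigidity, and transporting the result back through the correspondence of Proposition \ref{gt-rem}(iv), should give the required uniform bound $\var((a_n)_{ij}) < \eps^2 n^4$ for all sufficiently large $n$.
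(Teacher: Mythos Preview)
First, note that the present paper does not itself prove Theorem~\ref{theorem:final3}: it is quoted as ``the main theorem of \cite{NST}'' and used as input to the convergence argument. What the paper does display, however, are the two technical ingredients that \cite{NST} assembles, namely Lemma~\ref{lem:17} (a variance bound for the maximum of affine functionals under a log-concave measure, resting on Klartag's KLS estimate) and the correlation-decay estimate for interlacing gaps in the GUE minor process, Theorem~\ref{theorem:Tao4}. Comparison has to be against that scheme.

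Your sketch has the right opening moves---log-concavity of $\p_n$, the large-gap reduction via Proposition~\ref{gt-rem}(iv), passage through the octahedron recurrence---but it misidentifies the object that emerges. After Speyer's theorem (see Definition~\ref{octa-def} and the remark following it), the hive value $h_n(v)$ is expressed as the \emph{maximum}, over all lozenge tilings $\Xi$ of $\hexagon_v^n$, of a weight $w_\Xi$ that is a fixed linear functional of the GUE minor data $\tilde k$. There is no random tiling here: the randomness sits in the weights, and the maximizing tiling is itself a function of those random weights. The Kenyon--Okounkov / Bufetov--Gorin / Petrov concentration results you invoke are for uniformly or Gibbs-random tilings and simply do not bear on $\max_\Xi w_\Xi$. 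Likewise, the coupling of two independent copies $h_n,h_n'$ that you propose gives no handle on a maximum over exponentially many affine forms.

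The mechanism in \cite{NST} is instead: (i) apply Lemma~\ref{lem:17} to bound $\var(\max_\Xi w_\Xi)$ by $C\log n \cdot \sup_\Xi \var(w_\Xi)$, using only that $\p_n$ is log-concave and supported in dimension $O(n^2)$; then (ii) show $\sup_\Xi \var(w_\Xi)=o(n^4/\log n)$ by writing each $w_\Xi$ as a bounded-coefficient linear combination of $O(n^2)$ interlacing gaps and bounding the operator norm of their covariance matrix via Theorem~\ref{theorem:Tao4} (cf.\ the local version, Proposition~\ref{prop:8.17}). Neither step appears in your plan. GUE eigenvalue rigidity, which you do invoke, controls individual $\lambda_i$ to accuracy $n^{1/3+o(1)}$ but says nothing about the cancellation across $O(n^2)$ correlated gap variables that drives step~(ii); it is used in \cite{NST} only for boundary and equator contributions, not for the bulk variance bound.
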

Note that the typical value of a boundary coordinate has magnitude bounded below by $cn^2.$ By the concavity of hives  the same is true for the value at a typical interior vertex.

A random variable $Z$ in $\R$ that satisfies for some positive real $K$,  $$\E[\exp(|Z|/K)] \leq 2$$ is called subexponential.  Following  (Definition 2.7.5, \cite{Ver}), the subexponential (or $\psi_1$) norm is defined as follows.

\begin{definition}[$\psi_1$ norm]\lab{def:psi1}
$\|Z\|_{\psi_1}$ is defined to be $\inf\{ t > 0: \E[\exp(|Z|/t)] \leq 2\}.$
\end{definition}

It follows from  Theorem~\ref{theorem:prekopa} and the fact that $\p_n$ is log-concave, that $\left((a_n)_{ij} - \E_\P (a_n)_{ij}\right)$ has a log-concave density and is therefore subexponential. Further, we see that the standard deviation of a random variable with a log-concave density is within multiplicative positive universal constants, the same as its $\psi_1$ norm.   Thus, we have from Theorem~\ref{theorem:final3}, the following.
 
 \begin{corollary}Let $\eps$ be greater than $0$. Then, there exists a positive integer $n_0(\eps)$ depending on $\eps,$ such that 
 for all $n > n_0(\eps)$,  $$\left\|(a_n)_{ij} - \E_n(a_n)_{ij}\right\|_{\psi_1} < \eps n^2.$$\end{corollary}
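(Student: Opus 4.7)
The plan is to combine the variance bound from Theorem~\ref{theorem:final3} with the log-concavity of the distribution of $a_n$ and the standard equivalence between the standard deviation and the $\psi_1$-norm for log-concave random variables.

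First, I would note that the measure $\p_n$ on $\AHIVE_{\diag(\lambda^{(n)} \boxplus \mu^{(n)} \rel \ast) \rel \ast}$ is the restriction of Lebesgue measure to a convex polytope, hence log-concave. By Prékopa's theorem (Theorem~\ref{theorem:prekopa}), the marginal distribution of any single coordinate $(a_n)_{ij}$ is again log-concave, and the same is true for the centered variable $Z_{ij} := (a_n)_{ij} - \E_\P (a_n)_{ij}$.

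Second, I would invoke the well-known fact (see, e.g., the discussion following Definition 2.7.5 in \cite{Ver}, or the standard estimates on the tails of one-dimensional log-concave densities) that for any centered real-valued random variable $Z$ with log-concave density, there is a universal constant $C > 0$ such that
\begin{equation*}
\|Z\|_{\psi_1} \leq C \sqrt{\Var Z}.
\end{equation*}
This follows from the fact that one-dimensional centered log-concave distributions have exponential tails with decay rate controlled by the standard deviation.

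Finally, given $\eps > 0$, apply this inequality to $Z_{ij}$ and use Theorem~\ref{theorem:final3} with $\eps' := \eps/C$ in place of $\eps$. This produces an integer $n_0(\eps) := n_0(\eps/C)$ such that for all $n > n_0(\eps)$,
\begin{equation*}
\|(a_n)_{ij} - \E_n (a_n)_{ij}\|_{\psi_1} \leq C \sqrt{\Var (a_n)_{ij}} < C \cdot \frac{\eps}{C} \cdot n^2 = \eps n^2,
\end{equation*}
which is exactly the desired bound.

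There is no real obstacle here — the corollary is essentially a bookkeeping consequence of (a) the log-concavity of the uniform measure on the augmented hive polytope, which is immediate from the convexity of the polytope, (b) the variance estimate already supplied by Theorem~\ref{theorem:final3}, and (c) the standard equivalence of Orlicz norms for one-dimensional log-concave distributions. The only mild care needed is to absorb the universal constant $C$ coming from the log-concave tail bound into the choice of $n_0(\eps)$, which is handled by rescaling $\eps$ in the invocation of Theorem~\ref{theorem:final3}.
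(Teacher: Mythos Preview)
Your argument is essentially the same as the paper's: log-concavity of $\p_n$ plus Pr\'ekopa for the marginal, then the standard equivalence $\|Z\|_{\psi_1}\asymp\sqrt{\Var Z}$ for one-dimensional log-concave laws, then Theorem~\ref{theorem:final3}. One small correction: $\p_n$ is \emph{not} simply Lebesgue measure restricted to a convex polytope, because the boundaries $\lambda^{(n)},\mu^{(n)}$ are themselves random; the full measure lives on the augmented hive cone with density \eqref{css}, and its log-concavity comes from that density (Gaussian factor times Vandermonde, each log-concave on $\Spec_n^\circ$) rather than from uniformity on a fixed polytope --- a mixture of uniform-on-polytope measures need not be log-concave, so your stated justification does not by itself suffice.
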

In the setting of periodic hives, results with related statements were obtained in \cite{random_concave}.

The main theorem of this paper is Theorem~\ref{theorem:main}, which states the following.\\\\
\noindent{\bf Main Theorem:} Let $\la_n:= (\la_{1, n}, \dots, \la_{n, n})$ and $\mu_n :=(\mu_{1, n}, \dots, \mu_{n, n})$ be respectively the eigenvalues of two independent random matrices $X_n$ and $Y_n$ such that $X_n/( \sqrt{\var_\la n})$ and $Y_n/( \sqrt{\var_\mu n})$ have the distribution of a GUE.  
Let $a_n$ be a random sample from the normalized Lebesgue measure on $\AHIVE_{\diag(\lambda^{(n)} \boxplus \mu^{(n)} \rel \ast) \rel \ast}.$ Let $\lim_{n \ra \infty} v_n/n = v,$ where $v_n$ is a vertex (of the hive having $\la^{(n)}$ and $\mu^{(n)}$ as sides) of $\AHIVE_{\diag(\lambda^{(n)} \boxplus \mu^{(n)} \rel \ast) \rel \ast}.$ Then,
$\lim_{n \ra \infty}n^{-2} \E_n h_n(v_n)$ exists and equals  $\sup\limits_{f^\sharp \in \AHT_v^\infty} S_{v}(f^\sharp).$

Here $\AHT_v^\infty$ is the set of asymptotic height functions (see Definition~\ref{def:AHT}) for $\hexagon_v^\infty$, and $S_v$ is a certain functional from Definition~\ref{def:Sv}.

In the sequel, we will frequently use the notation $o_\eps(f(n))$ to denote a  quantity $A$, such that as $\eps \ra 0$, $\frac{|A|}{f(n)} \ra 0$.
We will use $c, C,$ etc to denote absolute constants, and $C_{|\rho|}, $ for example, to denote a constant controlled by an increasing function of $|\rho|$ alone.

 \section{Lozenge tilings of $\hexagon_v^n$}\label{recurrence-sec}
We refer the reader to  \cite[\S 3]{NST} for further background, but reproduce some of the material from there that we will need.

\begin{definition}[Lozenges and border triangles]  A \emph{lozenge} is a quadruple $ABCD$ in $U$ or $U'$ that is one of following three forms for some $i,j \in \Z$: (For the coordinates, see Figure~\ref{fig:typical}.)
\begin{itemize}
\item[(i)] $(A,B,C,D) = ((i,j), (i+1,j-1), (i+2,j-1), (i+1,j))$
\item[(ii)] $(A,B,C,D) = ((i,j), (i,j+1), (i-1,j+2), (i-1,j+1))$
\item[(iii)] $(A,B,C,D) = ((i,j), (i+1,j), (i+1,j+1), (i,j+1))$.
\end{itemize}
Lozenges of type (i) will be called \emph{blue} if they lie in the upper triangle $U$ and \emph{red} if they lie in the lower triangle $U'$; lozenges of type (ii) will be called \emph{red} if they lie in $U$ and \emph{blue} if they lie in $U'$; and lozenges of type (iii) that lie either in $U$ or in $U'$ will be called \emph{green}; see Figure \ref{fig:typical}.  A quadruple of the form (iii) that crosses the diagonal separating $U$ and $U'$ is \emph{not} considered to be a lozenge, but instead splits into two border triangles as defined below.  (The colors of lozenges will not be needed immediately, but will play a useful role later in this section.)

A \emph{border edge} is an edge $AC$ of the form $(A,C) = ((i,n-i), (i+1,n-i-1))$ for some $0 \leq i < n$; the border edges thus separate $U$ and $U'$.  Each border edge $(A,C) = ((i,n-i), (i+1,n-i-1))$ is bordered by two \emph{border triangles} $ABC$, defined as follows:
\begin{itemize}
\item (Upward triangle) $(A,B,C) = ((i,n-i), (i+1,n-i), (i+1,n-i-1))$.
\item (Downward triangle) $(A,B,C) = ((i,n-i), (i,n-i-1), (i+1,n-i-1))$.
\end{itemize}
Note that upward triangles lie (barely) in $U$, while downward triangles lie (barely) in $U'$.

Given a lozenge $\edge = ABCD$ and a function $\tilde k \colon \{0,\dots,n\}^2 \to \R$ defined as before, we define the \emph{weight} $\weight(\edge) = \weight(\edge, \tilde k)$ to be the quantity
$$\weight(\edge) \coloneqq \frac{1}{3} (\tilde k(A) + \tilde k(C) - \tilde k(B) - \tilde k(D)).$$
Similarly, given a border triangle $\Delta = ABC$, the weight $\weight(\Delta) = \weight(\Delta,  \tilde k)$ is defined as
$$\weight(\Delta) \coloneqq \frac{1}{3} (\tilde k(B) -  \tilde k(A)).$$ \end{definition}

\begin{definition}[Octahedron recurrence]\label{octa-def}  If $v = (i,j)$ lies in the interior of $\{0,\dots,n\}^2 = T \cup T'$, then the \emph{excavation hexagon}
$\hexagon_v^n = ABCDEF$ in $\{0,\dots,n\}^2 = U \cup U'$ centered at $v$ is defined as follows:
\begin{itemize}
\item If $v \in T$ (i.e., $i \leq j$), then
$$ (A,B,C,D,E,F) = ((0,n), (0,j), (i,j-i), (n+i-j,j-i), (n+i-j,j), (i,n)).$$
\item If $v \in T'$ (i.e., $i \geq j$), then
$$ (A,B,C,D,E,F) = ((i-j, n+j-i), (i-j,j), (i,0), (n,0), (n,j), (i,n+j-i)).$$
\end{itemize}
Note that these two definitions agree when $v \in T \cap T'$ (i.e., when $i=j$). The original point $v = (i,j)$ is then the intersection of the diagonals $BE$ and $CF$.  The line $AD$ is called the \emph{equator}; it lies on the border between $U$ and $U'$.  The \emph{weight} $\weight(\hexagon_v^n) = \weight(\hexagon_v^n,\tilde k)$ of this hexagon is defined as
\begin{equation}\label{hexagon-weight}
 \weight(\hexagon_v^n) \coloneqq \frac{1}{3} (\tilde k(B) + \tilde k(C) - \tilde k(D) + \tilde k(E) + \tilde k(F)).
\end{equation}

A \emph{lozenge tiling} $\Xi$ of the excavation hexagon $\hexagon_v^n$ is a partition of the (solid) hexagon into (solid) lozenges and (solid) border triangles, such that each border edge on the equator is adjacent to exactly one border triangle in the tiling; see Figure \ref{fig:typical}.  An example of a lozenge tiling is the \emph{standard lozenge tiling} $\Xi_0$, in which the trapezoid $ABEF$ is tiled by blue lozenges in $U$ and by green lozenges and downward border triangles in $U'$, while the opposite trapezoid $BCDE$ is tiled by green lozenges and upward border triangles in $U$ and by red lozenges in $U'$; see Figure \ref{fig:standard}.

The \emph{weight} $w_\Xi = w_\Xi(\tilde k)$ of such a tiling is defined to be the sum of the weights of all the lozenges $\edge$ and triangles $\Delta$ in the tiling, as well as the weight of the entire hexagon $\hexagon_v^n$:
\begin{equation}\label{weight-form}
w_\Xi \coloneqq \sum_{\edge \in \Xi} \weight(\edge) + \sum_{\Delta \in \Xi} \weight(\Delta) + \weight(\hexagon_v^n).
\end{equation}
Note that the $w_\Xi$ depend linearly on $\tilde k$, and hence on $k, k'$.  We then define
$$ \tilde h(v) \coloneqq \max_{\Xi\ \mathrm{tiles}\ \hexagon_v^n} w_\Xi.$$
\end{definition}

\begin{definition}
Let $v = (i, j) \in [0, 1]^2$ equal $\lim_{n \ra \infty} \frac{v_n}{n}$ where $v_n \in \{0, \dots, n\}^2$. We then define $$\hexagon_v^\infty := \lim_{n \ra \infty} (\frac{1}{n}) \hexagon_{v_n}^n.$$  
\end{definition}
As remarked on  \cite[page 15]{NST},  using Speyer's Theorem \cite{Speyer} and the volume preserving nature of the octahedron recurrence, it can be seen that when $k$ and $k'$ are hives arising via Proposition~\ref{gt-rem}(iv) from random Gelfand-Tsetlin patterns sampled independently from two scaled GUE minor processes, the distribution of $\tilde h(v)$ is exactly the same as the distribution of the value of the random augmented hive (from Theorem~\ref{theorem:final3}) at $v$.

\begin{figure}
\begin{center}
\includegraphics[scale=0.40]{./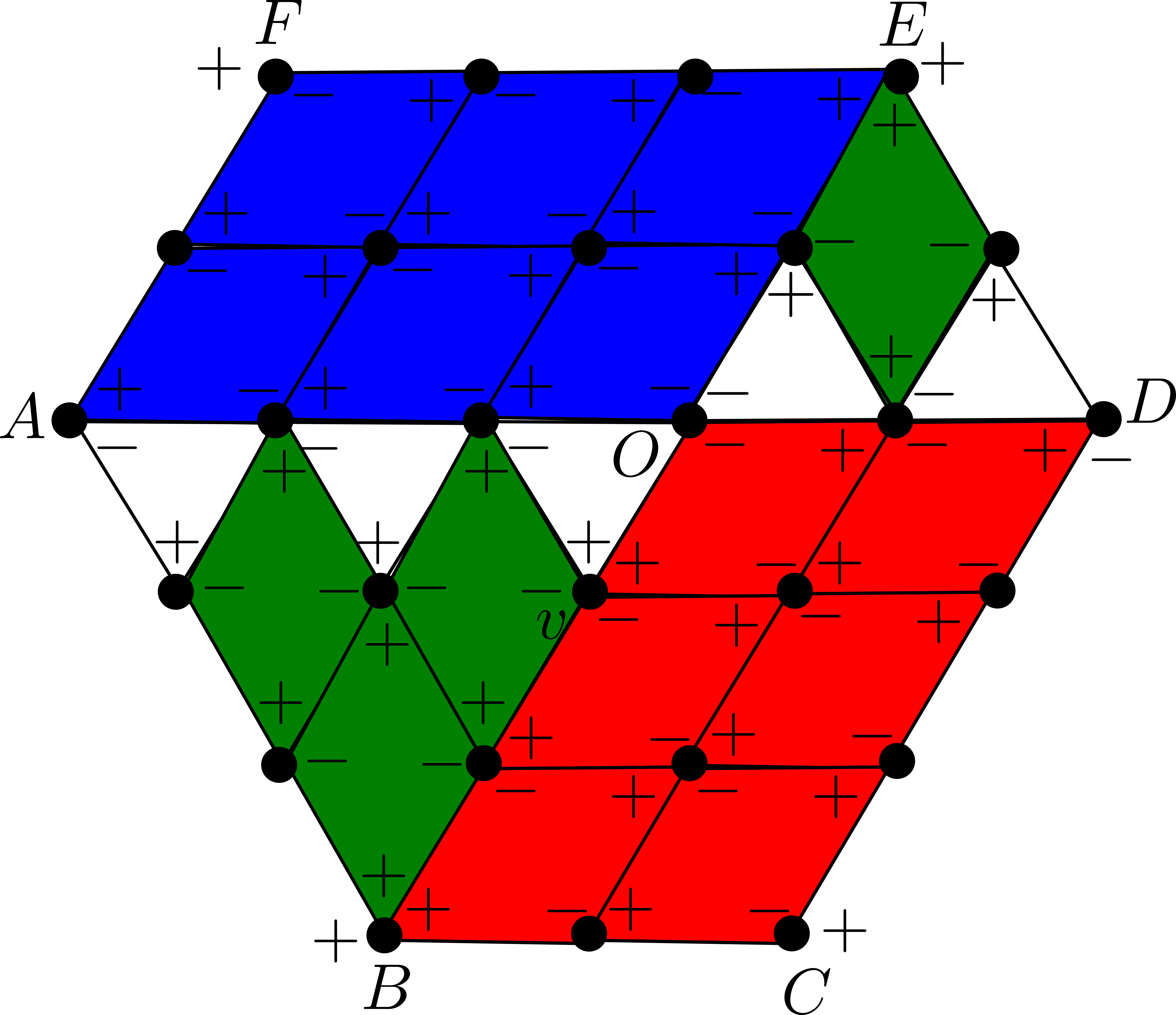}
\caption{The standard lozenge tiling of a hexagon $ABCDEF$ centered at $v$.  The total weight of this tiling is $\tilde k(E) + \tilde k(B) - \tilde k(O)$, where $O$ is the intersection of the diagonal $BE$ with the equator $AD$.}\label{fig:standard}
\end{center}
\end{figure}

\begin{figure}
\begin{center}
\includegraphics[scale=0.40]{./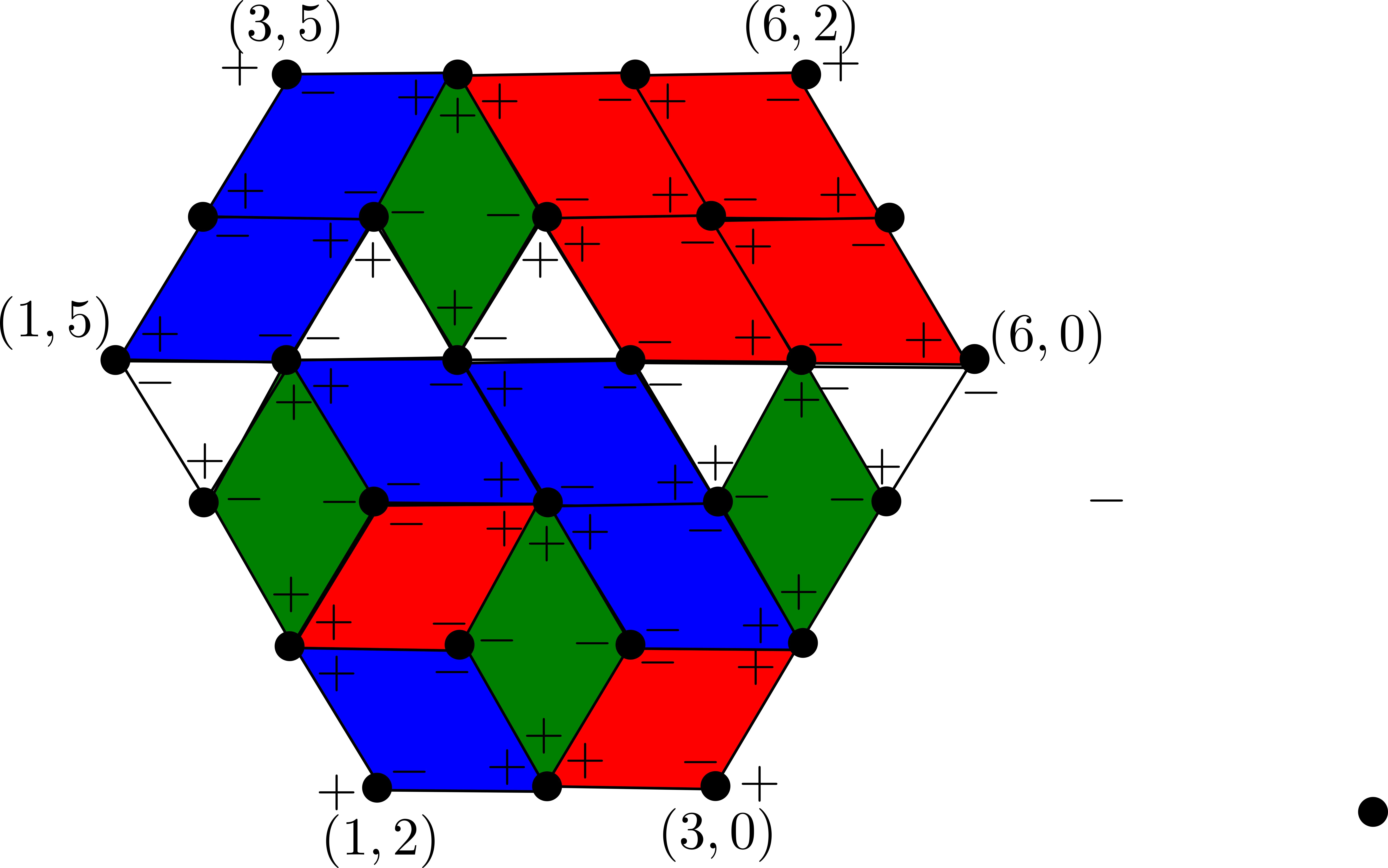}
\caption{A typical lozenge tiling of $\hexagon_{(3,2)}$, $n=6$. }\label{fig:typical}
\end{center}
\end{figure}

It was shown in \cite{NST} that we can also write the weight form $w_\Xi$ in \eqref{weight-form} in a red lozenge-avoiding form as
\begin{equation}\label{alt-weight}
w_\Xi \coloneqq 2 \sum_{\edge \in \Xi\text{, blue}} \weight(\edge) + \sum_{\edge \in \Xi\text{, green}} \weight(\edge) 
+ \sum_{\Delta \in \Xi} \weight(\Delta) + \weight'(\hexagon_v^n)
\end{equation}
where the modified weight $\weight'(\hexagon_v)$ of the hexagon $\hexagon_v$ is defined by the formula
$$
 \weight'(\hexagon_v^n) \coloneqq \frac{1}{3} (-\tilde k(A) + 2\tilde k(B) + 2\tilde k(F)).
$$
   
  \begin{figure}[ht]
    \centering
    \includegraphics[scale=0.40]{./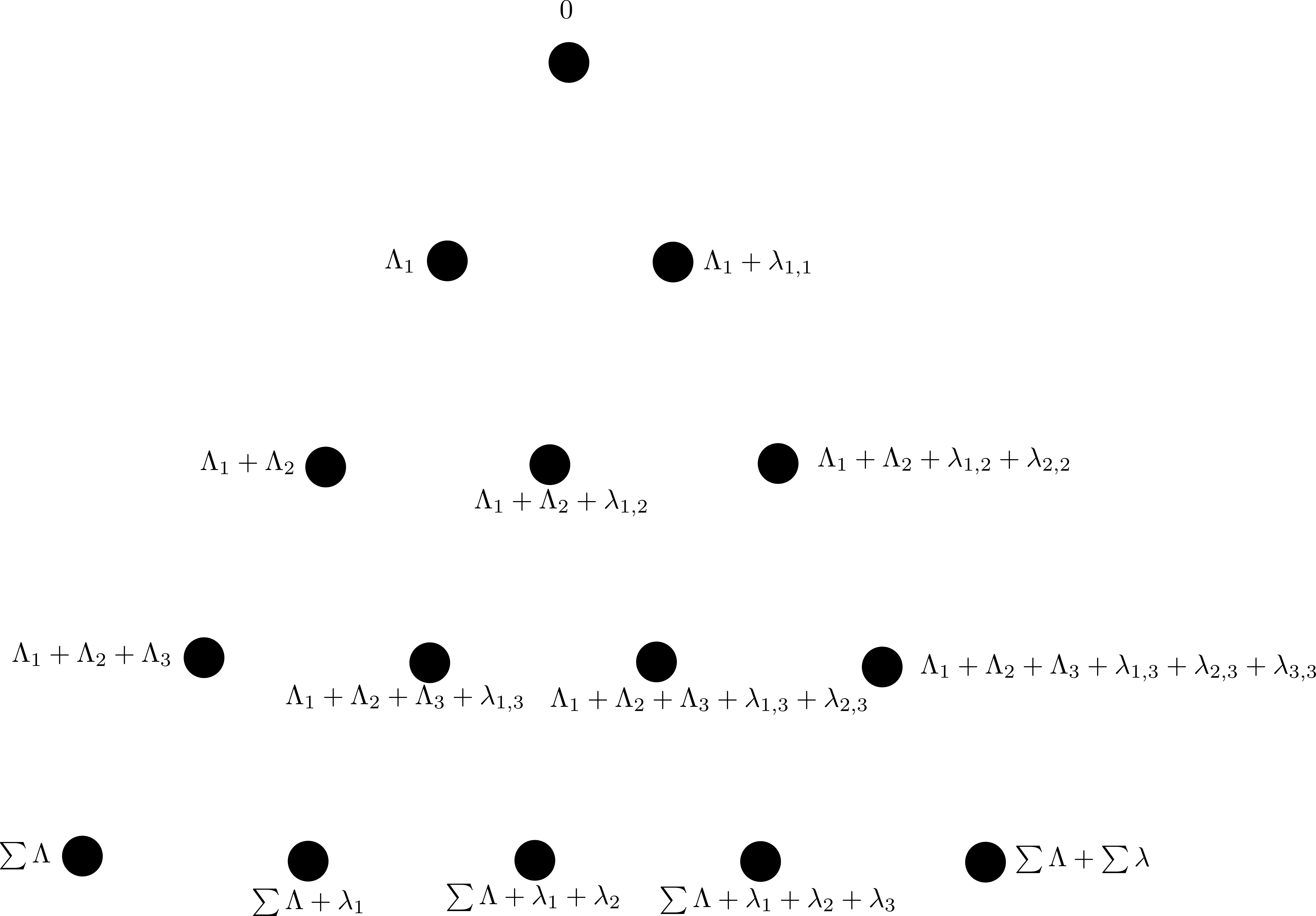}
    \caption{The hive associated with the Gelfand--Tsetlin pattern in Figure \ref{fig:gt} and some large gap tuple $\Lambda$. Note that for $1 \leq i \leq 4$, $\la_i = \la_{i, 4}$.}
    \label{fig:gt-hive}
  \end{figure}
  
  \begin{figure}[ht]
    \centering
    \includegraphics[scale=0.40, angle=270]{./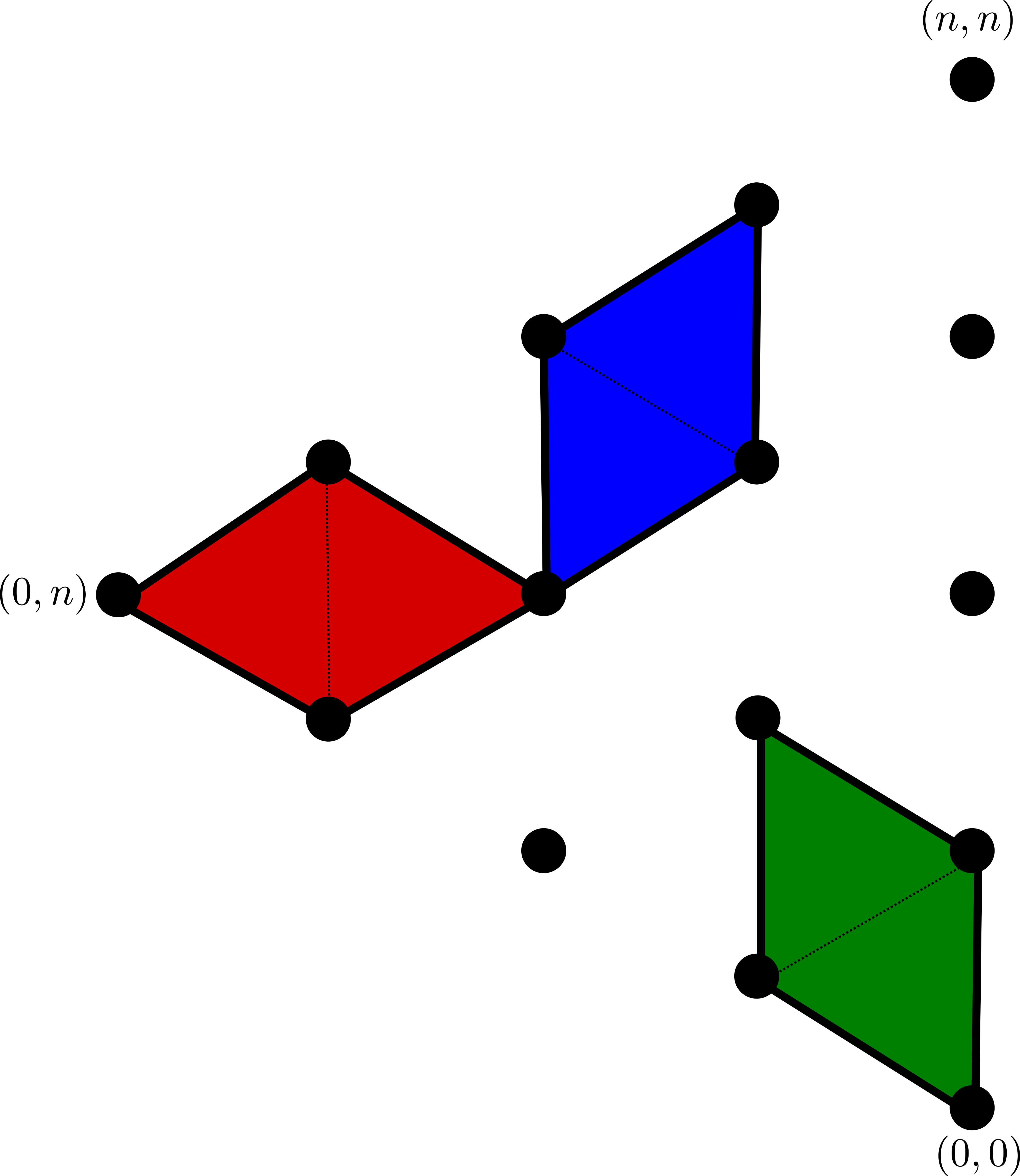}
    \caption{The weights of the green, blue, and red lozenges are, respectively, $\frac{1}{3}\left(\la_{1, 3} - \la_{1, 4}\right)$, $\frac{1}{3}\left(\la_{3, 3} - \la_{2, 2}\right)$, and $\frac{1}{3}\left(-\Lambda_1 + \Lambda_2 - \la_{1,1} + \la_{1, 2}\right)$.}
    \label{fig:rhombus}
  \end{figure}
  
\begin{figure}
  \begin{center}
  \includegraphics[scale=0.40]{./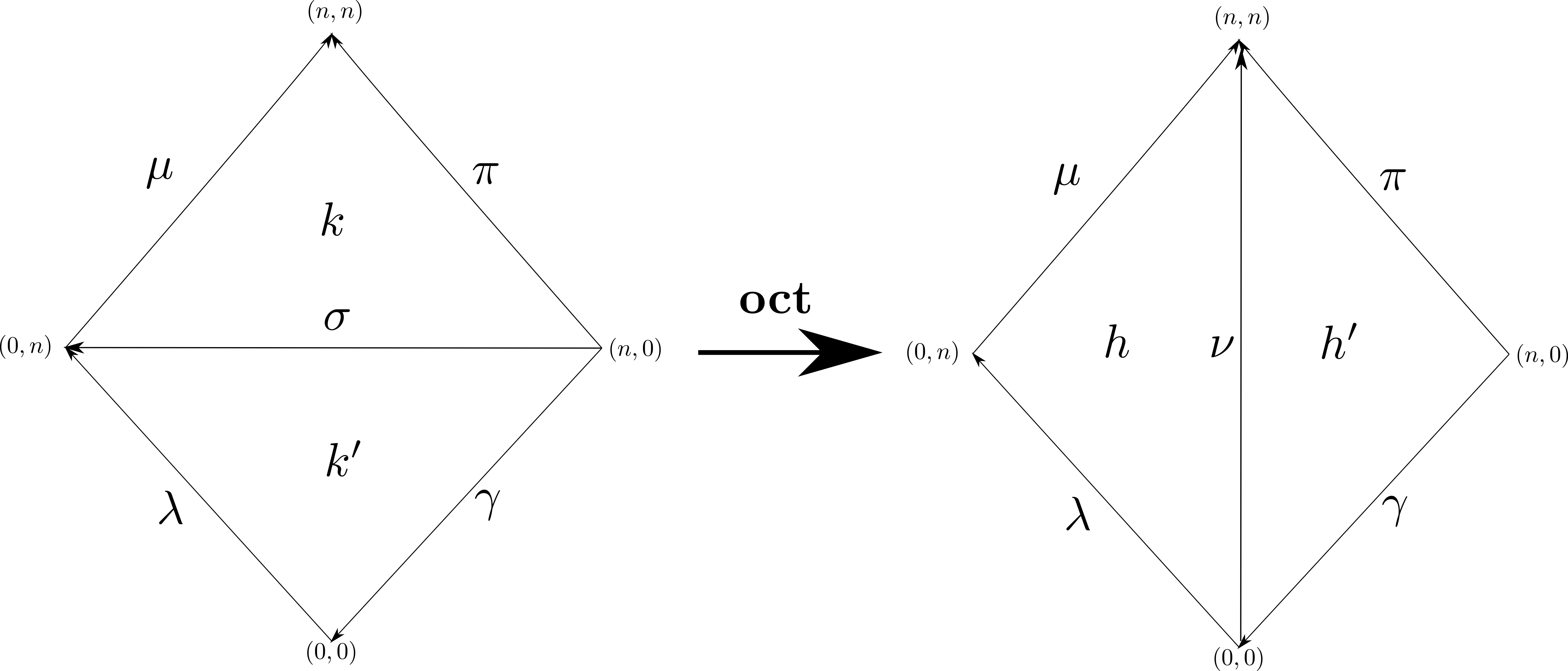}
  \caption{A schematic depiction from \cite{NST} of the octahedron recurrence that transforms one pair $(k,k') $ of hives into another $(h,h')$.  The hives $h,h',k,k'$ have been shifted to lie on triangles $T, T', U, U'$ respectively. }\label{fig:octahedron}
  \end{center}
  \end{figure}

\begin{observation}
In the lower trapezoid (contained in $k'$ in Figure~\ref{fig:octahedron}),
for a green lozenge $\edge$ that has labels $\la_{i, j}$ and $\la_{i, j+1}$ on two opposite sides, $$\wt(\edge) :=  \frac{1}{3}\left(\la_{i, j} - \la_{i, j+1}\right),$$ and a blue lozenge  $\edge$ that has labels $\la_{i, j}$ and $\la_{i-1, j-1}$ on two opposite sides, $$\wt(\edge) :=  \frac{1}{3}\left(\la_{i, j} - \la_{i-1, j-1}\right).$$
In the upper trapezoid (contained in $k$ in Figure~\ref{fig:octahedron}),
for a blue lozenge $\edge$ that has labels $\mu_{i, j}$ and $\mu_{i, j+1}$ on two opposite sides, $$\wt(\edge) :=  \frac{1}{3}\left(\mu_{i, j} - \mu_{i, j+1}\right),$$ and a green lozenge  $\edge$ that has labels $\mu_{i, j}$ and $\mu_{i-1, j-1}$ on two opposite sides, $$\wt(\edge) :=  \frac{1}{3}\left(\mu_{i, j} - \mu_{i-1, j-1}\right).$$
\end{observation}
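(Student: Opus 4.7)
The Observation is a direct computational verification using the explicit GT-to-hive bijection from Proposition~\ref{gt-rem}(iv). The plan is to substitute the closed-form expression $\tilde k(i,j) = \La_1 + \dots + \La_j + \la_{1,j} + \dots + \la_{i,j}$ (for the lower trapezoid, with $\mu$ in place of $\la$ under the appropriate reflection for the upper trapezoid) into the weight definition $\wt(\edge) = \tfrac13(\tilde k(A) + \tilde k(C) - \tilde k(B) - \tilde k(D))$, and observe that essentially everything cancels pairwise, leaving only the two $\la$-entries (respectively $\mu$-entries) named in the statement.

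For the green lozenge in the lower trapezoid, which is a type (iii) rhombus with two parallel horizontal sides whose endpoints have the form $\{(i-1,j),(i,j)\}$ and $\{(i-1,j+1),(i,j+1)\}$, the hive formula gives $\tilde k(i,j)-\tilde k(i-1,j)=\la_{i,j}$ and $\tilde k(i,j+1)-\tilde k(i-1,j+1)=\la_{i,j+1}$, so $\la_{i,j}$ and $\la_{i,j+1}$ are indeed the ``labels'' on these opposite sides. Writing $\wt(\edge)=\tfrac13[(\tilde k(A)-\tilde k(B))+(\tilde k(C)-\tilde k(D))]$ and substituting the formula, all $\La$-contributions and all partial sums $\la_{1,r}+\dots+\la_{i-2,r}$ cancel between the two brackets, leaving $\pm\tfrac13(\la_{i,j}-\la_{i,j+1})$; matching the orientation convention recorded in Figure~\ref{fig:rhombus} pins down the overall sign and yields the claimed $\tfrac13(\la_{i,j}-\la_{i,j+1})$. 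For the blue lozenge in the lower trapezoid, a type (ii) rhombus, the pair of opposite parallel sides is shifted by one unit diagonally rather than one unit vertically; the surviving edge increments are $\la_{i,j}$ and $\la_{i-1,j-1}$, and the identical telescoping argument produces $\tfrac13(\la_{i,j}-\la_{i-1,j-1})$.

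For the upper trapezoid, the hive $k$ is built by the same bijection of Proposition~\ref{gt-rem}(iv) applied to the reflected triangle $U$ with an independent GT pattern in $\mu$ replacing $\la$. The blue lozenges in $U$ are of type (i) and the green lozenges are of type (iii), but after the reflection identifying $U$ with the standard triangle $T$ from Proposition~\ref{gt-rem}(iv), their weight computations reduce verbatim to the two calculations performed above, with $\la\mapsto\mu$; this gives both $\mu$-identities.

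The only real obstacle is bookkeeping. One must correctly match the vertex ordering $(A,B,C,D)$ in the definition of $\wt(\edge)$ to the specific geometric lozenge, identify ``opposite sides'' consistently with the labeling conventions of Figures~\ref{fig:gt-hive}--\ref{fig:rhombus}, and keep track of the reflection identifying the triangle $U$ carrying $k$ with the standard triangle $T$ on which Proposition~\ref{gt-rem}(iv) is stated. Once these identifications are fixed for a single representative of each color, the remaining cases follow immediately from translation-invariance of the GT-to-hive labeling, so no further computation is needed.
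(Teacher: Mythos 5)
The paper gives no proof of this Observation; it is asserted by reference to the worked example in Figure~\ref{fig:rhombus}, so there is no paper argument to compare against. Your strategy—substitute the closed form from Proposition~\ref{gt-rem}(iv) into $\wt(\edge)=\tfrac13(\tilde k(A)+\tilde k(C)-\tilde k(B)-\tilde k(D))$ and telescope—is the natural verification and is correct in outline.

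There is, however, one genuine soft spot. You carry out the computation directly in the coordinates of the standard hive triangle $T$ (where Proposition~\ref{gt-rem}(iv) is stated), take $(A,B,C,D)$ in the type-(iii) order from the lozenge definition, and obtain $\pm\tfrac13(\la_{i,j}-\la_{i,j+1})$, deferring the sign to Figure~\ref{fig:rhombus}. If one honestly tracks the type-(iii) ordering $(A,B,C,D)=((v_1,v_2),(v_1+1,v_2),(v_1+1,v_2+1),(v_1,v_2+1))$ with $\tilde k$ equal to the $T$-coordinate hive, the telescoping actually produces $\tfrac13(\la_{i,j+1}-\la_{i,j})$—the \emph{wrong} sign, and one that would be nonnegative by interlacing, contradicting the nonpositivity of lozenge weights invoked later in the paper. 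The fix is exactly the coordinate change you allude to but do not spell out: the hive $k'$ sits on $U'$, and identifying $U'$ with $T$ so that Proposition~\ref{gt-rem}(iv) applies requires an affine map whose linear part is a quarter-turn (for instance $(i',j')\mapsto(j',n-i')$), not a reflection as you describe. This quarter-turn exchanges the two diagonals of each lozenge, which is precisely what reverses the roles of $\{A,C\}$ and $\{B,D\}$ in the weight formula and restores the correct nonpositive sign; it also permutes the lozenge types so that, for example, the type-(ii) (blue) lozenges of $U'$ are carried to $E_0$-type rhombi on $T$ where the $\Lambda$-terms and all but one partial sum cancel cleanly, giving $\tfrac13(\la_{i,j}-\la_{i-1,j-1})$. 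A careful writeup should exhibit this map once and note that it interchanges the diagonals, rather than resolving the sign by inspection of a figure; with that addition the argument is complete.
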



  \begin{figure}[!h]
    \centering
    \includegraphics[width=0.1\linewidth]{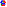}\\
    \caption{A random lozenge tiling for $n=4$, \cite{Gang}}
    \label{fig:lozenge_4}
  \end{figure}
  
  \begin{figure}[!h]
    \centering
    \includegraphics[width=0.15\linewidth]{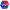}\\
    \caption{A random lozenge Tiling for $n=6$, \cite{Gang}}
    \label{fig:lozenge_6}
  \end{figure}
  
  \begin{figure}[!h]
    \centering
    \includegraphics[width=0.25\linewidth]{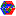}\\
    \caption{A random lozenge Tiling for $n=10$, \cite{Gang}}
    \label{fig:lozenge_10}
  \end{figure}
  
  \begin{figure}[!h]
    \centering
    \includegraphics[width=0.5\linewidth]{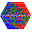}\\
    \caption{A random lozenge tiling of $\hexagon_{(10,10)}^{20}
    $, $n=20$, \cite{Gang}.}
    \label{fig:lozenge_20}
  \end{figure}

  \begin{figure}[!h]
    \centering
    \includegraphics[width=0.5\linewidth]{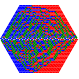}\\
    \caption{A random lozenge tiling for $n=50$, \cite{Gang}}
    \label{fig:lozenge_50}
  \end{figure}

  \begin{figure}[!h]
    \centering
    \includegraphics[width=0.5\linewidth]{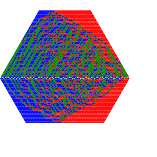}\\
    \caption{A random lozenge tiling for $n=100$, \cite{Gang}}
    \label{fig:lozenge_100}
  \end{figure}

  \begin{figure}[!h]
    \centering
    \includegraphics[width=0.5\linewidth]{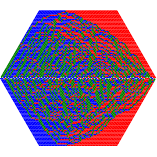}\\
    \caption{A random lozenge tiling of $\hexagon_{(50,50)}^{100}$, $n=100$, \cite{Gang}}
    \label{fig:lozenge_100_1}
  \end{figure}

  \section{Height functions of lozenge tilings}
  
  \begin{figure}[ht]
      \centering
      \begin{minipage}[b]{0.45\textwidth}
              \centering    
  \includegraphics[width=0.63\linewidth, height = 1\linewidth]{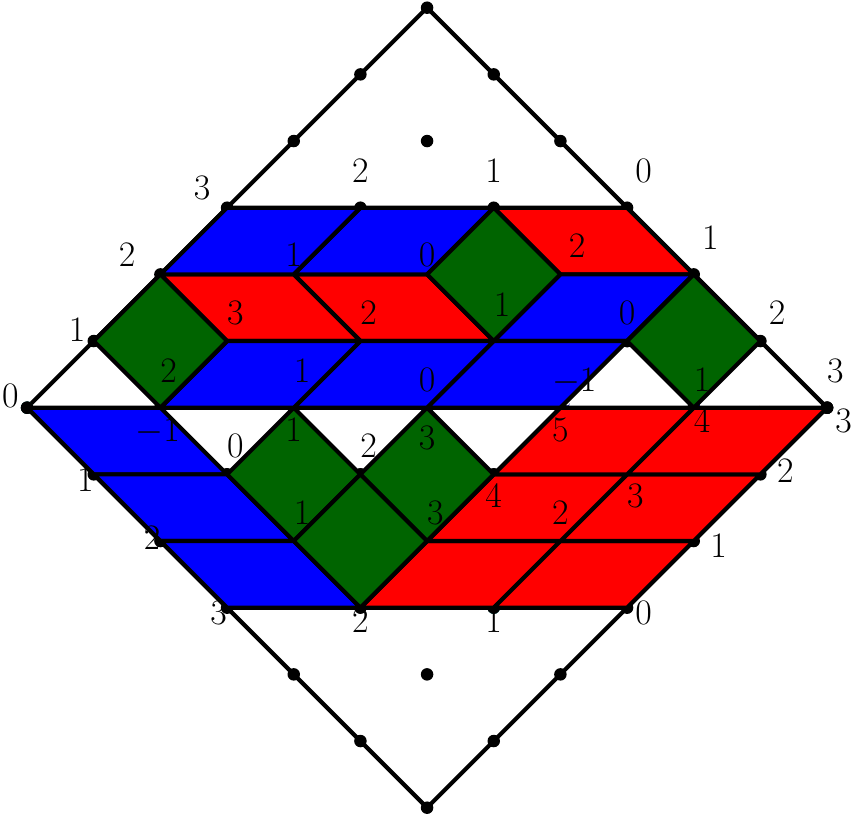}
  \caption{A typical lozenge tiling of $\hexagon_{(3,3)}^6$, with the height functions $f_{6,\mathtt{up}}$ and $f_{6,\mathtt{lo}}$ on the upper and lower trapezoids. More generally, in $\hexagon^n_{(a, b)}$ the westmost  corner in the upper trapezoid gets the height $2b-a$ while the same point, viewed as belonging to the lower trapezoid, gets a height $2a-b$.}\label{fig:typical-hex(3,3)}
            \end{minipage}
      \hfill
      \begin{minipage}[b]{0.30\textwidth}
          \centering
          \includegraphics[width=\textwidth]{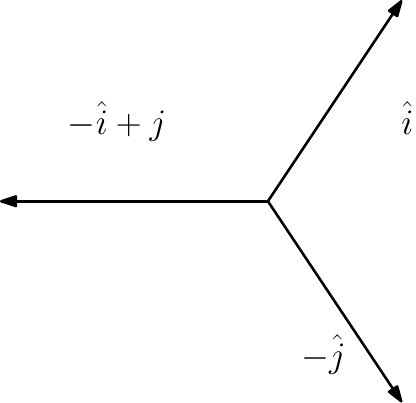} 
          \caption{positive directions}
          \label{fig:image2}
      \end{minipage}
      \label{fig:ij}
  \end{figure}
  
  Let $\hat{i}$ and $\hat{j}$ denote the unit vectors along the $x$ and $y$ axes respectively (adjusted to be unit vectors in the triangular lattice which we will identify with $\Z^2$; see Figure~\ref{fig:typical-hex(3,3)}).    Let the following be called unit vectors in ``positive directions'':  $\{\hat{i}, - \hat{j}, - \hat{i} + \hat{j}\}$.   Let positive multiples of these vectors be called positive vectors. 
  
  Let $\mathcal{R}$ be a domain whose boundary $\partial \RR$ is a piecewise linear non-self-intersecting curve, whose linear pieces are parallel to positive directions and have end points that are lattice points. 
  A piecewise linear path in $\mathcal R$ is called positively oriented if each constituent directed piece is along a positive direction.
  
  For two points $u, v \in \mathcal{R}$, let the asymmetric distance function $d_{\RR}(u, v)$ be defined as (see page 10 of \cite{GorinBook} for further discussion)  the minimal total length over all positively oriented paths from $u$ to $v$ staying within (or on the boundary) of $\mathcal R$.
  If all the piecewise linear segments of $\partial \mathcal R$ have endpoints in the triangular lattice $\L$, we call $\mathcal R$ a {\bf lattice domain}. If $\RR$ is also simply connected, we call $\RR$ a {\bf simply connected lattice domain}.
  
  \begin{definition}[Height function for a simply connected lattice domain]\lab{def:Htfn}
  We say that an integer valued Lipschitz function $f$ on ${\mathcal R}$ is a  height function if \ben \item $f(u') - f(u)$ is less or equal to $d_\RR(u, u')$ for any points $u$ and $u'$ in $\mathcal{R} \cap \L.$
  \item $f(u') - f(u)$ is equal to $1$ for any $u, u' \in \partial{\mathcal R}\cap \L$ such that $u'-u$ is a unit vector in a positive direction.
  \een
  Given a tiling of $\RR$, a height function $h(v)$ can be defined by a local rule: if $u \ra v$ is a positive direction, then $h(v) = h(u) + 1$, if we follow an edge of a lozenge, and $-2$ if we cross a lozenge diagonally.  The height function is unique up to a constant shift. 
  \end{definition}

  \begin{definition}[Asymptotic height function] Let $K$ be the convex hull of the vectors $2(\hat{i} - \hat{j})$, $2 \hat{j}$ and $-2\hat{i}$. We say that a Lipschitz function $f$ on a domain is an asymptotic height function if $\nabla f \in K$ at all points $x$ where
  $f$ is differentiable. \end{definition}
  

  \begin{figure}
  \begin{center}
  \includegraphics[scale=0.40]{./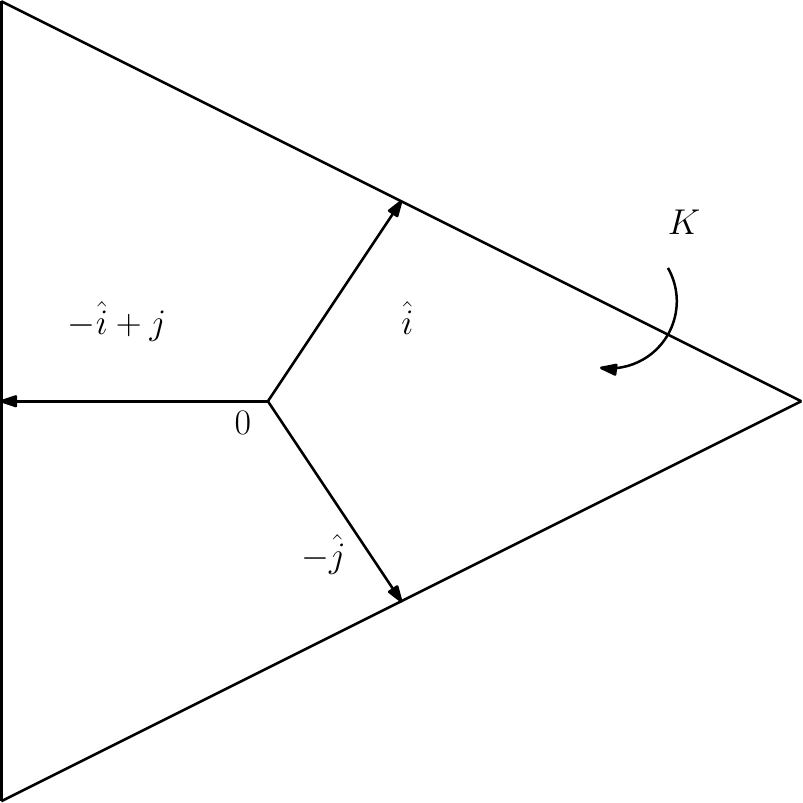}
  \caption{The convex set $K$ that the gradients of an asymptotic height function must belong to.}\label{fig:ij2}
  \end{center}
  \end{figure}

\begin{definition}[Height function on $\hexagon_{v}^n$] We look at the upper trapezoid $Z_{n, \mathtt{up}}$ and the lower trapezoid $Z_{n, \mathtt{lo}}$ whose union is $\hexagon_{v_n}^n$ and whose intersection is contained in the equator. We define height functions $f_{n,\mathtt{up}}$ and $f_{n,\mathtt{lo}}$  on the lattice points of a triangular mesh restricted respectively to $Z_{n,\mathtt{up}}$ and $Z_{n,\mathtt{lo}}$ as above for any lozenge (and triangle) tiling of $\hexagon_{v_n}^n$, shifted so that
such that their value is  $0$ at the leftmost point (see in Figure~\ref{fig:image2}).  We extend it to all real points in $\hexagon_{v}^n$ in a piecewise linear manner using the triangles of $\hexagon_{v}^n$ and call  this pair of functions  $f_n = (f_{n,\mathtt{up}}, f_{n,\mathtt{lo}})$ a height function pair. 
We denote the set of all such function pairs $f_n$ by $\HT_{v}^n$.
\end{definition}

\begin{obs}
By the definition of height functions  $ \left(f_{n,\mathtt{up}} + f_{n,\mathtt{lo}}\right) (x, y)$ is linear when restricted to $Z_{n,\mathtt{up}} \cap Z_{n,\mathtt{lo}} $  with a slope that equals $1$.
\end{obs}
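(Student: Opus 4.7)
The intersection $Z_{n,\mathtt{up}} \cap Z_{n,\mathtt{lo}}$ is exactly the equator segment of $\hexagon_{v_n}^n$, with lattice points $P_0, P_1, \dots, P_L$ satisfying $P_{k+1} - P_k = \hat{i} - \hat{j}$, joined by the border edges $P_k P_{k+1}$. The plan is to establish the per-step identity
\[
(f_{n,\mathtt{up}} + f_{n,\mathtt{lo}})(P_{k+1}) - (f_{n,\mathtt{up}} + f_{n,\mathtt{lo}})(P_k) = 1
\]
for every such border edge; since each of $f_{n,\mathtt{up}}$ and $f_{n,\mathtt{lo}}$ is the piecewise linear extension of its lattice values via the unit triangles of the mesh, both are linear on each border edge, so a uniform increment of $1$ per lattice step promotes to affine linearity of the sum along the entire equator with slope $1$.

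To prove the per-step identity I invoke the tiling constraint: each border edge $P_k P_{k+1}$ is adjacent to exactly one border triangle, so one side of the edge is covered by a border triangle (with $P_k P_{k+1}$ as hypotenuse) while the other side is covered by a type (i) or (ii) lozenge (with $P_k P_{k+1}$ as one of its four sides). The hypotenuse of a border triangle is, by construction, the diagonal of the conceptual straddling type (iii) quadruple that the equator splits into the upward and downward triangles; so under the local rule for height functions, traversing this hypotenuse contributes $-2$ per step in the positive direction $-\hat{i} + \hat{j}$, while traversing an edge of the adjacent lozenge contributes $+1$ per step in that direction. Reversing to the direction $\hat{i} - \hat{j}$ from $P_k$ to $P_{k+1}$, the contributions become $+2$ on the triangle side and $-1$ on the lozenge side; whichever of the two admissible configurations occurs (upward triangle in $Z_{n,\mathtt{up}}$, or downward triangle in $Z_{n,\mathtt{lo}}$), the sum is $+2 + (-1) = 1$.

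The main subtlety is to justify the $-2$ diagonal rule for the border triangle's hypotenuse, because the stated local rule refers only to lozenges. I would resolve this by identifying each border triangle explicitly with one of the two halves into which the equator splits a type (iii) quadruple, which fixes the sign conventions unambiguously (hypotenuse $=$ diagonal, other two sides $=$ sides of the original lozenge). With that in place, the per-edge increment is $+1$ for every border edge along the equator, and piecewise linear extension between consecutive $P_k$'s assembles these increments into a single affine function on $Z_{n,\mathtt{up}} \cap Z_{n,\mathtt{lo}}$ of slope exactly $1$.
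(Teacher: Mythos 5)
Your proof is correct and is essentially the computation the paper implicitly has in mind when it says ``by the definition of height functions,'' since the observation is stated without proof. You correctly identify that the equator consists of border edges, that the tiling places exactly one border triangle on one side of each border edge and a type (i) or (ii) lozenge on the other (type (iii) lozenges cannot present a side in the direction $\hat{i}-\hat{j}$, and a straddling type (iii) quadruple is by definition split into border triangles), and you apply the local $+1$/$-2$ rule with the correct orientation (the border edge direction $\hat{i}-\hat{j}$ is the negative of the positive direction $-\hat{i}+\hat{j}$) to get the per-step increment $+2 - 1 = 1$, which then passes to the piecewise-linear extension. The one point you flag as a ``subtlety'' — that the border triangle's long side should be treated as a lozenge diagonal — is indeed the right reading and is exactly what the splitting of the type (iii) quadruple into two border triangles is designed to encode.
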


\begin{definition}[Asymptotic height function on $\hexagon_v^\infty$] \lab{def:AHT} We say that $f = (f_{\mathtt{up}}, f_{\mathtt{lo}})$ is an asymptotic height function pair on $\hexagon_v^\infty$ if 
\ben \item $f_{\mathtt{up}}$ is Lipschitz on the upper trapezoid $Z_{\mathtt{up}}$,  $f_{\mathtt{lo}}$ is Lipschitz on the upper trapezoid $Z_{\mathtt{lo}}$, and wherever these functions are differentiable, their gradients belong to $K$. 
\item $f$ satisfies the boundary conditions dictated by Figure~\ref{fig:image2} and $ \left(f_{\mathtt{up}} + f_{\mathtt{lo}}\right) (x, y)$ is linear when restricted to $Z_{\mathtt{up}} \cap Z_{\mathtt{lo}} $  with a derivative that equals $1$.
\een

We denote the set of all such function pairs $f$ on $\hexagon_v^\infty$ by $\AHT_v^\infty$.
\end{definition}

\section{Log-concavity}

The following is a result of Pr\'{e}kopa (\cite{prekopa}, Theorem 6).
\begin{theorem}[Pr\'{e}kopa]\lab{theorem:prekopa}
Let $f(x, y)$ be a function of $\R^n \oplus \R^m$ where $x \in \R^n$ and 
and $y \in \R^m$. Suppose that $f$ is log-concave in $\R^{n+m}$ and let
$A$ be a convex subset of $\R^m$. Then the function of the variable x:
$$\int_A f(x, y) dy$$
is log-concave in the entire space $\R^n$.
\end{theorem}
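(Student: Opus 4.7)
The plan is to derive Prékopa's theorem from the Prékopa--Leindler inequality, which is the functional form of the Brunn--Minkowski inequality and the natural technical backbone for such marginal log-concavity statements. First I would reduce the problem with the convex constraint $A$ to a free marginal integral: writing $\tilde f(x,y) := f(x,y)\,\mathbf 1_A(y)$, the function $\tilde f$ is the product of two log-concave functions on $\R^{n+m}$ (using convexity of $A$ so that $\mathbf 1_A$ is log-concave, with the standard convention $\log 0 = -\infty$), hence $\tilde f$ is itself log-concave on $\R^{n+m}$. Thus it suffices to show that for any log-concave $F \colon \R^{n+m} \to [0,\infty)$ the marginal $g(x) := \int_{\R^m} F(x,y)\,dy$ is log-concave on $\R^n$.

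The central step is to verify log-concavity of $g$ pointwise. Fix $x_0, x_1 \in \R^n$ and $\lambda \in (0,1)$, and set $x_\lambda := (1-\lambda) x_0 + \lambda x_1$. Define the three nonnegative functions on $\R^m$
\[
u_0(y) := F(x_0, y), \qquad u_1(y) := F(x_1, y), \qquad u_\lambda(y) := F(x_\lambda, y).
\]
For arbitrary $y_0, y_1 \in \R^m$ with $y_\lambda := (1-\lambda) y_0 + \lambda y_1$, log-concavity of $F$ on $\R^{n+m}$ applied to the points $(x_0, y_0)$ and $(x_1, y_1)$ gives
\[
u_\lambda(y_\lambda) \; = \; F(x_\lambda, y_\lambda) \; \ge \; F(x_0,y_0)^{1-\lambda} F(x_1,y_1)^{\lambda} \; = \; u_0(y_0)^{1-\lambda} u_1(y_1)^{\lambda}.
\]
This is precisely the hypothesis of the Prékopa--Leindler inequality on $\R^m$ with exponents $1-\lambda$ and $\lambda$, which then yields
\[
g(x_\lambda) = \int_{\R^m} u_\lambda(y)\,dy \; \ge \; \left(\int_{\R^m} u_0(y)\,dy\right)^{1-\lambda} \left(\int_{\R^m} u_1(y)\,dy\right)^{\lambda} = g(x_0)^{1-\lambda} g(x_1)^{\lambda},
\]
which is exactly log-concavity of $g$.

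Thus the whole argument rests on Prékopa--Leindler, which I would either cite or prove by induction on $m$. The base case $m=1$ is the nontrivial step: it can be obtained from the one-dimensional Brunn--Minkowski inequality $\Leb_1(\lambda E_0 + (1-\lambda) E_1) \ge \lambda \Leb_1(E_0) + (1-\lambda)\Leb_1(E_1)$ applied to the super-level sets of $u_0, u_1, u_\lambda$, combined with the layer-cake representation $\int u = \int_0^\infty \Leb_1(\{u > t\})\,dt$ and the AM--GM inequality $\lambda a + (1-\lambda) b \ge a^\lambda b^{1-\lambda}$ to pass from arithmetic to geometric means. The inductive step $m \mapsto m+1$ follows from Fubini by freezing the last coordinate and using the one-dimensional case on each slice, then applying the inductive hypothesis to the resulting marginals. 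The main obstacle is the one-dimensional base case, since the higher-dimensional and constrained versions are essentially formal consequences of it; once that is in hand, the reduction above finishes the theorem.
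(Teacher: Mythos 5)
Your argument is correct and is the standard modern derivation of Pr\'ekopa's theorem from the Pr\'ekopa--Leindler inequality: the reduction to a free marginal by absorbing $\mathbf 1_A$ into the integrand, the pointwise verification of the Pr\'ekopa--Leindler hypothesis for the three slices $u_0$, $u_1$, $u_\lambda$, and the induction-on-dimension scheme for Pr\'ekopa--Leindler itself are all sound. The paper does not include its own proof of this result but simply cites Pr\'ekopa's original paper (\cite{prekopa}, Theorem~6); Pr\'ekopa's original argument proceeds more directly (essentially a careful application of Brunn--Minkowski to level sets of $f$ together with a measurability analysis), whereas your route factors the argument through Pr\'ekopa--Leindler, which is cleaner and is the presentation found in most modern references. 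Both are valid; yours buys a more modular proof at the cost of first establishing (or citing) Pr\'ekopa--Leindler.
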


 The following lemma appeared as \cite[Lemma 3]{NST}. Its proof was based on the work of Klartag \cite{Klartag-KLS} on the KLS conjecture.
 \begin{lemma}\lab{lem:17}
Let $\eta$ be a log-concave measure in $\R^D$ that is supported on a $d$-dimensional convex subset for some $d \leq D.$ Let $\ell_1, \dots, \ell_t$ be affine functions such that $$\var_\eta \ell_i \leq \mathbf v$$ for all $i \in [t].$ Then $$\var_\eta \left(\max_{i \in [t]} \ell_i\right) \leq C{\mathbf{v}} \log (2 + d).$$
\end{lemma}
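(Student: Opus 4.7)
My plan is to reduce the bound to an application of the Poincar\'e inequality for log-concave probability measures, with the Poincar\'e constant controlled by Klartag's bound on the KLS constant. Since $\eta$ is supported on a $d$-dimensional convex subset of $\R^D$, by translating and restricting to the affine hull of the support I may assume without loss of generality that $\eta$ is a log-concave probability measure on $\R^d$ with a density with respect to $d$-dimensional Lebesgue measure; the variances of the affine functions $\ell_i$ are unaffected by this identification. Next, by an affine change of coordinates on $\R^d$, I arrange that $\eta$ is isotropic, i.e.\ centered with identity covariance. Writing $\ell_i(x) = \langle a_i, x\rangle + b_i$ in these coordinates, the hypothesis $\var_\eta \ell_i \leq \mathbf v$ becomes $\|a_i\|^2 \leq \mathbf v$ for every $i \in [t]$.

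Set $F(x) := \max_{i \in [t]} \ell_i(x)$. Then $F$ is a convex, $\sqrt{\mathbf v}$-Lipschitz function on $\R^d$, and its non-differentiability set is contained in a finite union of hyperplanes of positive codimension, which therefore has $\eta$-measure zero (as $\eta$ has a density). Wherever $F$ is differentiable, $\nabla F(x) \in \{a_1, \ldots, a_t\}$, so $|\nabla F(x)|^2 \leq \mathbf v$, and therefore $\E_\eta |\nabla F|^2 \leq \mathbf v$.

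Finally, I invoke Klartag's bound showing that the Poincar\'e constant $C_P(\eta)$ of an isotropic log-concave probability measure on $\R^d$ is of order $\log(2+d)$, together with the standard Poincar\'e inequality $\var_\eta F \leq C_P(\eta) \cdot \E_\eta |\nabla F|^2$, to conclude
\[
\var_\eta F \;\leq\; C \log(2 + d) \cdot \mathbf v,
\]
which is exactly the desired inequality. The main substantive ingredient is the invocation of Klartag's deep result on the KLS conjecture; everything else is a routine reduction to the isotropic case combined with the trivial observation that a maximum of affine functions is Lipschitz with constant bounded by the largest coefficient norm. Notably, the count $t$ of affine functions does not appear on the right-hand side because the argument controls $F$ through its uniform Lipschitz constant rather than through a union bound over the individual $\ell_i$; the dimension $d$ enters only through the Poincar\'e constant.
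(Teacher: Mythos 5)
Your proof is correct and takes essentially the same route the paper attributes to the original argument in \cite{NST}: reduce to the isotropic case, observe that $F=\max_i \ell_i$ is Lipschitz with $\nabla F \in \{a_i\}$ a.e., and apply the Poincar\'e inequality with Klartag's bound on the Poincar\'e constant of an isotropic log-concave measure. The one step you elide is the conversion from Klartag's Cheeger-type isoperimetric estimate $\psi_d \gtrsim 1/\sqrt{\log d}$ to a Poincar\'e bound $C_P(\eta) \lesssim \log d$, which in the log-concave setting is standard (Buser--Ledoux, or E.~Milman's equivalence of concentration constants cited as \cite{Emanuel}).
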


\begin{lemma}\lab{lem:reuse}
Let $X = (x_1, \dots, x_m)$ be a random variable, whose distribution  $\eta$ is a log-concave probability measure supported on $\R^m_{\geq 0}$.  Suppose $\de \in (0, 1)$ and  $\|\E X\|_\infty \leq \de^{-\frac{1}{2}}$.
Then, $$\E\max_{|S| \leq \de m } \sum_{i \in S} x_i \leq C\de^{\frac{1}{4}} m. $$
\end{lemma}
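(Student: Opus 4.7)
The plan is to prove the bound by a fourth-moment argument. First, since the $x_i$ are non-negative, the maximum over subsets of size at most $\delta m$ is achieved by taking the $k := \lfloor \delta m\rfloor$ largest coordinates, so
\[
Y := \max_{|S|\le \delta m}\sum_{i\in S} x_i = \sum_{i=1}^{k} x_{(i)},
\]
where $x_{(1)} \ge x_{(2)} \ge \dots \ge x_{(m)}$ are the order statistics of $X$. My target is to bound $\E Y^4$ and apply Jensen's inequality.

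Next, since $\eta$ is log-concave on $\R^m_{\ge 0}$, Theorem~\ref{theorem:prekopa} (Pr\'ekopa) implies that each marginal $x_i$ has a log-concave density on $[0,\infty)$. For such a univariate non-negative log-concave random variable with mean $\mu_i = \E x_i$, a classical moment estimate yields $\E x_i^p \le C^p\, p!\, \mu_i^p$ for every integer $p \ge 1$, with the exponential distribution saturating this up to absolute constants; equivalently, all $L^p$ norms of $x_i$ are equivalent with ratio $O(p)$. Combined with $\mu_i \le \delta^{-1/2}$, this gives $\E x_i^p \le C^p\, p!\, \delta^{-p/2}$.

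The main computation now is the power-mean inequality:
\[
Y^p \le k^{p-1}\sum_{i=1}^{k} x_{(i)}^p \le k^{p-1}\sum_{i=1}^{m} x_i^p.
\]
Taking expectations with $p=4$,
\[
\E Y^4 \le (\delta m)^3 \cdot m \cdot C^4\, 4!\, \delta^{-2} \le C_1\, \delta\, m^4,
\]
and Jensen's inequality gives $\E Y \le (\E Y^4)^{1/4} \le C_1^{1/4}\, \delta^{1/4} m$, which is the desired bound. The choice $p=4$ is essentially dictated by the exponent $1/4$ in the conclusion: a general $p$ produces a bound of order $p\,\delta^{1/2 - 1/p}m$, and $p=4$ balances this to $\delta^{1/4}m$.

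The only real obstacle is the log-concave moment estimate $\E x_i^p \le C^p p!\, \mu_i^p$ for the non-negative marginal $x_i$. This is a standard consequence of Borell-type comparison results for univariate log-concave densities, and I would either quote the moment-comparison theorem for log-concave distributions or prove it directly by comparing $x_i$ to the exponential distribution of matching mean, using that a log-concave density on $[0,\infty)$ is dominated (in a suitable sense) by an exponential tail.
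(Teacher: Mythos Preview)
Your proof is correct and complete. The log-concave moment bound $\E x_i^p \le (Cp)^p\mu_i^p$ for the marginals is indeed standard (Borell's lemma for one-dimensional log-concave densities), and the power-mean step $Y^p \le k^{p-1}\sum_i x_i^p$ followed by Jensen with $p=4$ cleanly yields the exponent $\delta^{1/4}$.

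The paper takes a different route: it uses a truncation at level $\delta^{-3/4}$, writing
\[
\max_{|S|\le \delta m}\sum_{i\in S} x_i \;\le\; \delta m\cdot \delta^{-3/4} \;+\; \sum_{i=1}^m (x_i - \delta^{-3/4})_+,
\]
and then bounds each $\E(x_i-\delta^{-3/4})_+$ using the subexponential tail of a non-negative log-concave variable with mean $\le \delta^{-1/2}$. Both arguments ultimately rest on the same fact---that log-concave marginals have tails (or moments) controlled by their mean---but package it differently: the paper's threshold argument is a two-line computation that never goes above first moments, while your moment method is systematic and makes transparent why the exponent $1/4$ arises as the optimal choice $p=4$ in the one-parameter family $p\,\delta^{1/2-1/p}$. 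Either approach is perfectly acceptable here.
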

\begin{proof}
\begin{eqnarray} \E\max\limits_{|S| \leq \de m} \sum\limits_{i \in S} x_i & \leq & \frac{\de m}{\de^{\frac{3}{4}}} + \E \sum_{i = 1}^m \max(0, x_i - \de^{- \frac{3}{4}})\\
& \leq &  \frac{\de m}{\de^{\frac{3}{4}}}  + C'm \de^\frac{1}{4}\\
& \leq & C\de^{\frac{1}{4}} m.\lab{eq:7.5p1}
\end{eqnarray}

\end{proof}

 \section{Marked Bead Model}

  Let $\Box_{m}$ be the lattice square that is equal to $[-m, m]^2 \cap \Z^2.$
   Let $\x \in \{(x_1, x_2)|0 < x_1 < x_2\}$. Let $\MM =\{\nu_{ij}\}_{1 \leq i \leq j}$ be an infinite GUE minor process, wherein $\nu_{ij}$ is the $i^{th}$ largest eigenvalue of a successively defined $j \times j$ random GUE matrix $X_j$ (our normalization conventions for a random GUE matrix are identical to those stated at the beginning of Subsection~\ref{ssec:GUEhives}). 
   Thus the non-diagonal $ik^{th}$ entry of $X_j$ is a complex Gaussian with mean zero and variance of the real and complex parts equal to $\frac{1}{2}$. The diagonals are real Gaussians with variance $1$ $X_j^* = X_j$, but the non-upper triangular entries are all independent. 
   Further, the if $j < j'$ then $X_j$ is the leftmost ($j \times j$) principle minor of $X_j'$.  Let $\ell$ be a positive integer. Let $\lceil \ell \x\rceil := (\lceil \ell x_1\rceil, \lceil \ell x_2 \rceil)$. Define $\MM|_{\lceil \ell \x\rceil + \Box_m}$ to be the restriction of $\MM$ to the square $(\lceil \ell x_1\rceil, \lceil \ell x_2 \rceil) + \Box_m$, and denote $\one_{\Box_m}$ to be the vector in $\R^{\Box_m}$ whose every entry is $1$, where $m$ is a fixed positive integer.
   
   Similarly, let $l_m$ be the lattice line segment joining $(-m, 0)$ and $(m, 0)$. Let $\y \in \{(y_1, 0)|0 < y_1\}$ and $\ell$ be a positive integer. Define $\MM|_{\lceil \ell \y\rceil + l_m}$ to be the restriction of $\MM$ to the line segment $(\lceil \ell y_1\rceil, 0) + l_m$.
   \begin{observation} By known results on the spectra of GUE matrices (see for example \cite{TaoVu-conc}), the following holds.
    As $\ell \ra \infty$  the $\R^{l_m}$ valued random variable $\ell^{-\frac{1}{2}}\left(\MM|_{\lceil \ell \y\rceil + l_m}\right)$ converges in distribution to a constant vector $2y_1 \one_{2m+1}$ where $\one_{2m+1}$ is the vector of all ones of length $2m+1$.
   \end{observation}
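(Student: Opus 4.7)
The observation asserts convergence in distribution of a random vector in $\mathbb{R}^{2m+1}$ to a deterministic constant vector. Since a deterministic vector has atomic distribution, this is equivalent to convergence in probability of each of the $2m+1$ coordinates of $\ell^{-1/2} \MM|_{\lceil \ell \y\rceil + l_m}$ to the scalar $2 y_1$. The dimension $2m+1$ is fixed independently of $\ell$, so joint convergence reduces via a union bound to marginal convergence of each coordinate.

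The plan is to invoke the sharp concentration (``eigenvalue rigidity'') results for GUE matrices of Tao and Vu referenced in the excerpt \cite{TaoVu-conc}. Each coordinate of $\MM|_{\lceil \ell \y\rceil + l_m}$ is a specific eigenvalue $\nu_{ij}$ of some minor $X_j$ in the infinite GUE minor process, with $(i,j)$ differing from $(\lceil \ell y_1 \rceil, 0)$ by at most $m$ in each component. Since $\y = (y_1, 0)$ with $y_1 > 0$ sits on the boundary of the minor-process index region, each such $\nu_{ij}$ is an edge (extremal) eigenvalue of its respective minor, and the matrix sizes involved differ from one another by at most $2m$.

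First I would write each coordinate explicitly as an edge eigenvalue $\nu_{ij}$ of a specific minor $X_j$, tracking the precise dependence of $(i,j)$ on $\ell$ and on $k \in \{-m, \dots, m\}$. Second, using the semicircle law under the paper's unit-variance GUE normalization (so that the edge of the spectrum of $X_j$ is located at $\pm 2\sqrt{j}$), I would verify that the classical location of each such edge eigenvalue equals $2 y_1 \sqrt{\ell} + o(\sqrt{\ell})$ uniformly in $k$; the $O(1)$ variation in index produces only an $O(\ell^{-1/2})$ variation after the global rescaling. Third, Tao--Vu rigidity \cite{TaoVu-conc} gives
\[
\bigl| \nu_{ij} - 2 y_1 \sqrt{\ell} \bigr| = o(\sqrt{\ell})
\]
with probability $1 - o(1)$ for each individual coordinate. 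Finally, a union bound over the $2m+1$ coordinates, followed by rescaling by $\ell^{-1/2}$, yields
\[
\mathbb{P}\!\left( \bigl\| \ell^{-1/2}\, \MM|_{\lceil \ell \y\rceil + l_m} - 2 y_1 \one_{2m+1} \bigr\|_\infty > \varepsilon \right) \longrightarrow 0
\]
for every $\varepsilon > 0$, which gives convergence in probability and hence the desired convergence in distribution to $2 y_1 \one_{2m+1}$.

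The main obstacle is the second step: correctly identifying the classical edge location in the paper's indexing convention for points with second coordinate $0$, and verifying that this classical location is $2 y_1 \sqrt{\ell} + o(\sqrt{\ell})$ uniformly over the $2m+1$ displacements. Once this bookkeeping between the $(i,j)$ indexing of $\MM$ and the asymptotic spectral location is pinned down, the rest of the argument is a direct application of standard rigidity and concentration estimates for GUE spectra.
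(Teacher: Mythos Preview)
Your proposal is correct and matches the paper's approach: the paper offers no proof beyond the sentence ``By known results on the spectra of GUE matrices (see for example \cite{TaoVu-conc})'', so your plan---reduce convergence to a constant to coordinatewise convergence in probability, then invoke Tao--Vu eigenvalue rigidity and a union bound over the $2m+1$ fixed coordinates---is exactly the intended argument spelled out in detail. Your flagged ``main obstacle'' (pinning down the paper's indexing convention at the $j=0$ boundary so as to confirm the specific limiting value $2y_1$) is a genuine bookkeeping issue in the paper's notation rather than a gap in your reasoning; whichever eigenvalue the coordinates actually index, rigidity gives convergence to its classical location and the rest of your argument goes through unchanged.
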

   \begin{definition}
The \( W_1 \) Wasserstein metric is defined as:

\[
W_1(\mu, \nu) = \inf_{\pi \in \Pi(\mu, \nu)} \int_{X \times X} d(x, y) \, d\pi(x, y),
\]

where:
\begin{itemize}
    \item \( \mu \) and \( \nu \) are probability measures on a metric space \( (X, d) \),
    \item \( \Pi(\mu, \nu) \) is the set of couplings of \( \mu \) and \( \nu \), i.e., the set of probability measures \( \pi \) on \( X \times X \) such that:
    \[
    \int_X \pi(x, y) \, d y = \mu(x), \quad \int_X \pi(x, y) \, d x = \nu(y).
    \]
\end{itemize}
\end{definition}
  The  proof of the following theorem was provided by Terence Tao in personal communication, and has been placed in  Appendix~\ref{ssec:Tao4.4}.
   \begin{theorem}[Tao, \cite{TaoComm}] \lab{theorem:beadR}
   Let $m$ be a fixed positive integer. As $\ell \ra \infty$  the $\R^{\Box_m}$ valued random variable $\ell^\frac{1}{2}\left(\MM|_{\lceil \ell \x\rceil + \Box_m} - \MM(\lceil\ell\x\rceil)  \one_{\Box_m}\right)$ converges in $W_1$ to a random variable $\MM_{m, \x}$ taking values in $\R^{\Box_m}$.
   \end{theorem}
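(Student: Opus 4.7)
The plan is to establish $W_1$ convergence in $\R^{\Box_m}$ by combining (a) convergence in distribution of the finite dimensional marginals with (b) uniform integrability in $L^{1+\varepsilon}$; because $m$ is fixed, the target space is finite dimensional (of dimension $(2m+1)^2$), so this reduction is standard and even the $L^2$ version of (b) is enough to upgrade distributional convergence to $W_1$.

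For (a), I would invoke the determinantal structure of the GUE minor process: the joint law of the eigenvalues $\{\nu_{ij}\}$ over any fixed window of indices is a determinantal point process with an explicit extended kernel, worked out by Johansson--Nordenstam, Forrester--Nagao, and Borodin--Ferrari. Since $0 < x_1 < x_2$, every index $(\lceil \ell x_1 \rceil + a,\, \lceil \ell x_2 \rceil + b)$ with $(a,b) \in \Box_m$ sits, for $\ell$ large, deep in the bulk of its respective minor $X_j$ (with $i/j$ approximately $x_1/x_2 \in (0,1)$). Rescaling by $\ell^{1/2}$, which is the inverse of the typical bulk eigenvalue spacing, the extended kernel converges to a translation invariant limit, an extended sine / ``bead'' type kernel, whose joint marginals I would take as the definition of $\MM_{m,\x}$. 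The fact that I center by the random quantity $\MM(\lceil \ell \x \rceil)$ rather than by the deterministic classical location at $(\lceil \ell x_1 \rceil, \lceil \ell x_2 \rceil)$ only shifts the whole scaled patch by a single random constant that is $O_p(1)$ on the rescaled picture, so the distributional limit is preserved.

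For (b), I would use rigidity estimates for GUE eigenvalues: with probability at least $1 - \ell^{-C}$, every eigenvalue $\nu_{ij}$ with $(i,j) \in \lceil \ell \x \rceil + \Box_m$ lies within $\ell^{-1/2} (\log \ell)^{O(1)}$ of its classical location, so $\ell^{1/2} |\nu_{ij} - \MM(\lceil \ell \x \rceil)|$ has subexponential tails uniform in $\ell$. On the exceptional event, a crude operator norm bound $\|X_j\| = O(\sqrt{j})$ together with Gaussian moment estimates for the matrix entries controls the contribution to any polynomial moment. Altogether,
$$\sup_\ell \E \Bigl\| \ell^{1/2} \bigl( \MM|_{\lceil \ell \x \rceil + \Box_m} - \MM(\lceil \ell \x \rceil) \one_{\Box_m} \bigr) \Bigr\|_{\ell^1}^2 < \infty,$$
which gives the required uniform integrability, and combined with (a) yields the claimed $W_1$ convergence.

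The main obstacle is pinning down the extended-kernel asymptotics in exactly the form needed: the patch $\lceil \ell \x \rceil + \Box_m$ traverses $2m+1$ distinct matrix sizes and $2m+1$ distinct eigenvalue indices within each, so one must go beyond single-minor bulk asymptotics and establish joint convergence across all of these simultaneously. The building blocks exist in the random matrix literature, but assembling them to extract a joint local limit for the centered, $\ell^{1/2}$-rescaled patch is the delicate part. Once the extended kernel limit is in place, the swap between classical and random centering and the upgrade from distributional to $W_1$ convergence are routine.
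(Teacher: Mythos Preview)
Your high-level reduction --- convergence in distribution plus uniform integrability in a finite-dimensional space gives $W_1$ convergence --- is sound, and your uniform integrability argument via rigidity is correct (the paper handles this more cheaply by noting that the vector is log-concave, hence subexponential, via Pr\'ekopa and Milman, but your route works too).

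The gap is in step (a). The determinantal kernel results you cite (Johansson--Nordenstam, Adler--Nordenstam--Van Moerbeke, etc.) describe the minor process as a point process on $\R \times \Z$: their correlation functions, and the bulk scaling limits thereof, live at a \emph{fixed energy} $E$, not at a \emph{fixed index} $i$. The event ``the point at spatial location near $E$ on row $j$ is the $i$-th eigenvalue of $X_j$'' is non-local in the determinantal description, so kernel asymptotics alone do not deliver the fixed-index limit you need. Your remark that swapping the random center $\MM(\lceil \ell\x\rceil)$ for a deterministic classical location is a bounded shift is also off: after scaling by $\ell^{1/2}$ that shift is of order $\log \ell$, not $O_p(1)$, so you cannot simply re-center to a deterministic energy and then quote fixed-energy convergence.

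The paper's proof is exactly the bridge between fixed-energy and fixed-index statistics that you are missing. It first invokes Tao's fixed-index universality (Theorem~\ref{theorem:Tao3}) to replace $\E F(\Lambda_{i,N})$ by an average over a window $i \leq i' \leq i + \log^{10} N$ of indices. Next, since $F(\Lambda^{E'}_N) = F(\Lambda_{i',N})$ whenever $E'$ lies in $(\lambda_{i'}^{(N)}, \lambda_{i'+1}^{(N)})$, the index sum rewrites as an energy integral of $F(\Lambda^{E'}_N)$ divided by the gap $\lambda_{0,E'}^{(N)} - \lambda_{1,E'}^{(N)}$: this is a Palm-type reweighting, and it is the crux. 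Eigenvalue rigidity replaces the random integration endpoints by deterministic ones up to $o(\log^{10} N)$; then fixed-energy universality (from \cite{ANV14}) together with the Wegner repulsion estimate shows that the reweighted expectation is essentially independent of $E'$, collapsing the average back to a single fixed-energy expression. The limit $\MM_{m,\x}$ is thus identified not as the bead process itself but as its gap-reweighted version. None of this is visible from the extended-kernel asymptotics alone, and it is precisely the ``delicate part'' you flagged but did not supply.
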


Given $a_m \in B_{m}(a_\infty,  m\eps)$, let $\Xi(a_m)$ denote the lozenge tiling corresponding to the height function $a_m$.
Given a realization of the random variable $\MM_{m, \x}$, 
and $a_m \in B_{m}(a_\infty,  m\eps)$, 
define the  {\it total lozenge weight} of $\Xi(a_m)$ with respect to $\MM_{m, \x}$, to be $$\langle \partial \MM_{m, \x}, \partial a_m\rangle :=  2 \sum_{\edge \in \Xi(a_m), \mathrm{blue }: \edge \subseteq \Box_m} \wt(\edge) + \sum_{\edge \in \Xi(a_m), \mathrm{green }: \edge \subseteq \Box_m} \wt(\edge).$$ 
Since the lozenge weights are almost surely nonpositive, this quantity is almost surely nonpositive. 

\section{Key definitions}
\begin{definition}\lab{def:8.1}

Suppose $\rho \in \R_{>0}^2$,  $a_\infty \in K$, $\eps >0$ and $m \in \N$.

Let \begin{eqnarray*}\ \sigma_{m, \edge}(\rho, a_\infty, \eps) &:=&  -\left(\frac{1}{m^2}\right) \E \max_{a_m \in B_{m}(a_\infty,  m\eps)}  \langle \partial \MM_{m, \x(\rho)}, \partial a_m\rangle.\end{eqnarray*}

\end{definition}

 \begin{definition}\lab{def:psi}
 For $m > 0$, let  $\psi(m)$ be equal to $(\log m)^{-\frac{1}{10}}$.  \end{definition}
 \begin{observation}\lab{obs:psi}
Note for future use, that this is a monotonically decreasing function of $m$ such that 
 firstly, $\lim_{m \ra \infty} \psi(m) = 0$ and secondly,  $m \psi(m)$ is monotonically increasing with  $\lim_{m \ra \infty} m \psi(m) = \infty.$
\end{observation}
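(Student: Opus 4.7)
The statement to prove is the observation that $\psi(m) = (\log m)^{-1/10}$ is monotonically decreasing with $\lim_{m\to\infty} \psi(m) = 0$, while $m\psi(m)$ is monotonically increasing (at least for large $m$) with $\lim_{m\to\infty} m\psi(m) = \infty$. This is a purely elementary calculus verification, so my plan is simply to check each of the four claims directly from the definition.

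First I would observe that $\log m$ is strictly increasing on $(1,\infty)$ and tends to $+\infty$ as $m \to \infty$. Since $x \mapsto x^{-1/10}$ is strictly decreasing on $(0,\infty)$ and satisfies $x^{-1/10} \to 0$ as $x \to \infty$, the composition $\psi(m) = (\log m)^{-1/10}$ is strictly decreasing on $(1,\infty)$ with $\lim_{m\to\infty} \psi(m) = 0$, giving the first two claims.

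For the monotonicity of $m\psi(m) = m(\log m)^{-1/10}$, I would compute the derivative
\[
\frac{d}{dm}\bigl[m(\log m)^{-1/10}\bigr] = (\log m)^{-1/10} - \tfrac{1}{10}(\log m)^{-11/10} = (\log m)^{-11/10}\bigl(\log m - \tfrac{1}{10}\bigr),
\]
which is strictly positive for all $m > e^{1/10}$. So $m\psi(m)$ is strictly increasing on this range, which is all that is needed (the claim is asymptotic in nature, and nothing in the sequel depends on monotonicity at small values of $m$).

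Finally, for the divergence $\lim_{m\to\infty} m\psi(m) = \infty$, I would argue that $m = e^{\log m}$ grows faster than any fixed power of $\log m$: writing $m/(\log m)^{1/10} = e^{\log m - \frac{1}{10}\log\log m}$, the exponent tends to $+\infty$ since $\log m \gg \log\log m$. Alternatively, one can apply L'Hôpital's rule once to $\log m / (\log m)^{1/10}$ (after taking logarithms), but the direct estimate is cleaner. There is no serious obstacle here; the observation is recorded only so that its two qualitative features (vanishing of $\psi(m)$ and unboundedness of $m\psi(m)$) can be invoked without comment in later arguments.
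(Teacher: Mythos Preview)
Your verification is correct. The paper records this as an observation without proof, treating the four properties as immediate from the definition $\psi(m)=(\log m)^{-1/10}$; your elementary calculus check is exactly the implicit justification.
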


  \begin{definition}\lab{def:8.6}
  
 For  $\rho \in \R_{>0}^2$ and $a_\infty \in K$,  let
 $$ \sigma_{m, \edge}(\rho, a_\infty) := \sigma_{m, \edge}(\rho, a_\infty, \psi(m)).$$  
  \end{definition}
 
 \begin{definition}\lab{def:8.7}
 
 For $\rho \in \R_{> 0}^2$ and $a_\infty \in K^o$,
 let $$\sigma_{\edge}(\rho, a_\infty) := \liminf_{m \ra \infty} \sigma_{m, \edge}(\rho, a_\infty).$$
 For $\rho \in \R_{> 0}^2$ and $a_\infty \in \partial K \setminus\{2(\hat{i} - \hat{j}), -2\hat{i}, -2 \hat{j}\}$, define 
 $\sigma_\edge(\rho, a_\infty)$ by taking a directional liminf of $\sigma_\edge(\rho, a'_\infty)$ as $a'_\infty \ra a_\infty$ along the line segment joining $a_\infty$ to the vertex of $K$ that is not on the boundary line segment containing $a_\infty$. 
 For the three vertices in $\partial K$, define the value of $\sigma_\edge(\rho, a_\infty)$ to be the liminf  of $\sigma_\edge(\rho, a'_\infty)$ as $a'_\infty \ra a_\infty$ for  $a'_\infty \in K\setminus\{2(\hat{i} - \hat{j}), -2\hat{i}, -2 \hat{j}\}$. 
 \end{definition}

\begin{definition}
Let $$S_{v,\edge}(f) := (-1) \int_{(\hexagon_v^\infty \setminus Q)^\circ} \sigma_\edge(\rho(\x),  \partial f(\x))d\x.$$ Here $\rho(\x)$  is the vector in $\R^2$ whose coordinates are respectively, the expectations of the two interlacing gaps for the corresponding marked bead model (for $(x, y)$ that is not on the equator). 
\end{definition} 

\begin{definition}
Let $g_n = (g_{n, \up}, g_{n, \lo})$ be a pair of Gelfand-Tsetlin patterns arising from two independent GUE minor processes with variance parameters $\var_\mu$ and $\var_\la$ respectively (see the left diagram in Figure~\ref{fig:octahedron}).
Suppose the vertex $v$ has coordinates $(i, j)$ where $j \geq i$, which is the case when we are using the octahedron recurrence to recover the value of a hive vertex in $h$ (rather than $h'$). Thus the vertices of the hexagon are $(0, n), (i, n), (n+ i-j, j), (n+i-j, j-i), (i, j-1), (0, j)$ (Figure~\ref{fig:typical} is for the value of a hive vertex in $h'$, rather than $h$).
Given a height function pair $(f_{n, \up}, f_{n, \lo})$ for $\hexagon_v^n$, we define 
$b_n(k) := - f_{n, \up}(k, n-k) + f_{n, \up}(k+1, n-k-1)$ and 
\beq\lab{eq:bn} \langle g_n, \partial f_n \rangle_\De := \sum_{k = 0}^{n+i-j-1} \left((\frac{1 + b_n(k)}{3}) \frac{\mu_{1, n-k}}{3} + (\frac{2 - b_n(k)}{3}) (- \frac{\la_{n-k, n-k}}{3})\right).\eeq
\end{definition}

\begin{definition}\lab{defn:8.14}
Corresponding to a point $\y \in Q$, let $\tau_\up n$ and $\tau_\lo n$ denote the asymptotic expected eigenvalue  at the  location $n\y$ on the equator in the upper and lower trapezoid respectively in $\hexagon_v^n$.

Let  $$ \sigma_\Delta(\tau, b_\infty) :=  (-1)\left((\frac{b_\infty +1}{3})\frac{\tau_\up}{3} + (\frac{2 - b_\infty}{3})(-\frac{\tau_\lo}{3})\right).$$
\end{definition}

\begin{definition} Let $\tau$ be defined as in Definition~\ref{defn:8.14}.
Let 
$$S_{v,\Delta}(f) := (-1) \int_{ Q} \sigma_\Delta(\tau(\y), \partial f(\y))d\y.$$
\end{definition}

\begin{definition}Let  $$S_{v,\hexagon} := \limsup_{n \ra \infty} n^{-2} \E_n \weight'(\hexagon_v^n).$$ \end{definition}

\begin{definition} \lab{def:Sv} Let $$S_v(f) := S_{v,\edge}(f) + S_{v,\Delta}(f) + S_{v,\hexagon}.$$
\end{definition}

\begin{definition}\lab{def:14}
Given two asymptotic height functions $f$ and $g$ for $\hexagon_v^n$ we define $f \lor g$ to be the function that in the upper trapezoid takes the maximum of $f$ and $g$ and in the lower trapezoid takes the minimum of $f$ and $g$.
  Likewise, we define $f \land g$ to be the function that in the upper trapezoid takes the minimum of $f$ and $g$ and in the lower trapezoid takes the maximum of $f$ and $g$.
  For a collection $\FF$ of asymptotic height functions, we define $\bigvee_{f \in \FF} f$ to be the function that in the upper trapezoid takes the maximum of the functions in $\FF$ and in the lower trapezoid takes the minimum of the functions in $\FF$. Likewise, we define $\bigwedge_{f \in \FF} f$ to be the function that in the upper trapezoid takes the minimum of the functions in $\FF$ and in the lower trapezoid takes the maximum of the functions in $\FF$.
   We say $f \succeq g$ and $g \preceq f$ if $f \lor g = f$ and $f \land g = g$.
  
  Note that if $\FF$ is a set of asymptotic height functions for $\hexagon_v^n$, then $\bigvee_{f \in \FF} f$ and $\bigwedge_{f \in \FF} f$ are also asymptotic height functions. 
\end{definition}

\begin{definition}\lab{def:HT} Given $f \in \HT_v^\infty$ and positive $\eps$, let $\HT_v^n(f, \eps n)$ denote the set of all functions in $f_n \in \HT_v^n$, such that $| \frac{f_n}{n} (n x) - f(x)| < \eps|$ for all $x \in \hexagon^\infty_v$.
\end{definition}
\section{Key propositions and theorems}

\subsection{Bead model}
   \begin{proposition}[Local concentration bounds at fixed index]\lab{prop:8.17} 
Let $g_{m}$ denote an $(2m+1)\times (2m+1)$ patch corresponding to the eigenvalues in a $(2m+1)\times (2m+1)$ box of a GUE minor process around some fixed $(\lceil i_0 n \rceil, \lceil j_0 n\rceil)$, where $(i_0, j_0) \in (0, 1)^2$ and $  j_0 < i_0$, and $m = o(n).$ Thus, the square consists of all indices $(i, j)$ such that  $\max(|i- \lceil i_0 n\rceil|, |j-\lceil j_0 n\rceil|) \leq m.$
Let $B_m(a_\infty, m\eps)$ denote the set of all height functions on a $(2m+1)\times (2m+1)$ piece of the triangular lattice that are contained in an $L^\infty$ radius of $n\eps$ centered around $a_\infty$. Denote by $\partial g_m$, the array of interlacing gaps of the form $\la_{ij} - \la_{i-1\, j}$, where $\la_{ij}$ is in $g_{m}$. For any fixed $\eps > 0$ and fixed $C > 0$, for all affine functions $a_\infty \in K $, 
$$   \var\left( \max_{a_m \in B_m(a_\infty,  m\eps)}  \langle \partial g_{m}, \partial a_m\rangle\right) = O\left(\frac{m^4}{\log^C  m}\right).$$
\end{proposition}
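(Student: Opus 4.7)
The plan is to combine Lemma~\ref{lem:17} (which bounds the variance of the maximum of affine functionals under a log-concave measure) with the correlation-decay input provided by Theorem~\ref{theorem:Tao4}. First I would verify the two structural prerequisites. By the red-lozenge-avoiding form \eqref{alt-weight}, each height function $a_m \in B_m(a_\infty, m\eps)$ gives rise to an affine functional $\ell_{a_m}(g_m) := \langle \partial g_m, \partial a_m\rangle$ in the interlacing gaps of $g_m$, with coefficients bounded by $\frac{2}{3}$ and at most $O(m^2)$ participating lozenges in the patch; the set $B_m(a_\infty, m\eps)$ is finite since its elements are integer-valued. The joint law of $g_m$ is log-concave: by Proposition~\ref{gt-rem}(iii) the GT pattern is uniform on the GT polytope conditional on the top row, while the top-row density is proportional to $V(\la)^2 \exp(-|\la|^2/2\var n)$, which is log-concave on the Weyl chamber (each $\log(\la_i-\la_j)$ is concave); projecting to $g_m$ preserves log-concavity, and the support has dimension $d = O(m^2)$.

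The crux is to bound $\var(\ell_{a_m}(g_m))$ for a single tiling. Expanding,
$$\var(\ell_{a_m}(g_m)) = \sum_{\edge,\edge' \in \Xi(a_m)\cap \Box_m} c_\edge c_{\edge'}\,\cov\!\left(\wt(\edge),\wt(\edge')\right),$$
a sum of $O(m^4)$ covariances. The individual interlacing gaps in the GUE minor process have variance at most polylogarithmic (via standard rigidity), and Theorem~\ref{theorem:Tao4} provides decay of covariances of interlacing gaps located at distinct lattice points of the minor process. The goal is to combine these inputs to show that, for any fixed $C > 0$ and all sufficiently large $m$,
$$\var(\ell_{a_m}(g_m)) \leq C_C \, \frac{m^4}{(\log m)^{C+1}},$$
uniformly in $a_m$, since the coefficients of $\ell_{a_m}$ are bounded and the only structural input to the sum above is the patch size.

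Finally I would apply Lemma~\ref{lem:17} with $\mathbf{v} = C_C\, m^4/(\log m)^{C+1}$ and $d = O(m^2)$, yielding
$$\var\!\left(\max_{a_m \in B_m(a_\infty, m\eps)} \ell_{a_m}(g_m)\right) \leq C\,\mathbf{v}\,\log(2+d) = O\!\left(\frac{m^4}{(\log m)^{C+1}}\cdot \log m\right) = O\!\left(\frac{m^4}{(\log m)^C}\right),$$
which is the claimed bound. The main obstacle is the middle step: extracting enough quantitative strength from Theorem~\ref{theorem:Tao4} that, after summing pairwise covariances over all $O(m^4)$ lozenge pairs in the patch, one still beats an arbitrary polylog power of $\log m$. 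The remaining accounting---uniformity of the estimate in $a_m$, coefficient sizes in $\ell_{a_m}$, and finiteness of $B_m(a_\infty, m\eps)$---is routine.
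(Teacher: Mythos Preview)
Your proposal is essentially the paper's proof: log-concavity of the gap vector (via Pr\'ekopa), Lemma~\ref{lem:17} to pass from individual variances to the variance of the maximum, and Theorem~\ref{theorem:Tao4} to bound the individual variances. The paper packages the middle step slightly differently---it observes that $\var\langle \partial g_m,\partial a_m\rangle = (\partial a_m)^\top M_m(\partial a_m)\le \|\partial a_m\|_2^2\,\|M_m\|_{op}\le Cm^2\|M_m\|_{op}$ and then invokes (as Lemma~\ref{lem:8.16}) the operator norm bound $\|M_m\|_{op}=O(m^2/\log^C m)$---but this is the same computation you are setting up.

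One remark on your ``main obstacle'': it is not actually an obstacle. Theorem~\ref{theorem:Tao4} is already stated as a bound on the variance of an \emph{arbitrary} linear combination of interlacing gaps, namely $\Var\sum_l a_l\tilde g_{i+l}\ll (m/\log^A m)\sum_l|a_l|^2$, which is exactly the statement $\|M_m\|_{op}\ll m/\log^A m$ for the covariance matrix of those gaps. So you do not need to expand into $O(m^4)$ pairwise covariances and sum them (which would indeed lose); you apply Theorem~\ref{theorem:Tao4} once with the coefficient vector $\partial a_m$ (of $\ell^2$-norm $O(m)$, since there are $O(m^2)$ entries each $O(1)$), and the required bound $\var(\ell_{a_m})=O(m^4/\log^{C+1}m)$ drops out immediately. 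The only thing to check is that the theorem, as stated for gaps $\tilde g_{i+l}$ along a single pair of adjacent levels, extends to the full two-dimensional patch of interlacing gaps; the paper treats this as routine via the multi-row companion Theorem~\ref{theorem:Tao3}.
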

The proof of this proposition appears in Section~\ref{sec:Bead}.

\subsection{Height functions}
  \begin{proposition}\lab{prop:6.4}
  Let $f$ be an asymptotic height function for $\hexagon_v^n$. There exists a height function $f_n$ for $\hexagon_v^n$ such that $\max_v |f(v) - f_n(v)| < 3$.
  \end{proposition}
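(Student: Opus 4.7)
The plan is to construct $f_n$ by a sweep-based discretization of the rescaled asymptotic function. First, rescale $f$ to a Lipschitz function $F:\hexagon_v^n \to \R$ via $F(w) := n\, f(w/n)$, so that $\nabla F \in K$ almost everywhere. A direct computation shows that the support function of $K$ evaluated on each positive unit direction $\hat i$, $-\hat j$, $-\hat i + \hat j$ equals $1$, so by standard convex duality the gradient constraint is equivalent to the asymmetric Lipschitz inequality $F(w') - F(w) \le d_\RR(w, w')$ for all lattice pairs $w, w'$ within the same trapezoid (a convex subdomain). Since $F$ and any discrete height function pair satisfy the same linear boundary-increment rules on $\partial \hexagon_v^n$, $F$ is automatically integer-valued at boundary lattice points, and I would set $f_n := F$ there.

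For the interior, fix a total order on the interior lattice vertices of each trapezoid compatible with the positive directions (e.g., lexicographic in the $(\hat i, \hat j)$-coordinates) and process the vertices one at a time. At each new vertex $w$, the already-assigned neighbors impose the constraint $f_n(w) - f_n(w') \in \{-2, 1\}$ (with appropriate sign) for each positive unit neighbor $w'$, confining $f_n(w)$ to a nonempty integer set $I_w$; I would set $f_n(w)$ to be the element of $I_w$ closest to $F(w)$. At an equator vertex the sum $f_{n,\up}(w) + f_{n,\lo}(w)$ is a predetermined integer (from the slope-$1$ linearity in Definition~\ref{def:AHT}), so I would choose $f_{n,\up}(w)$ freely in its valid set and determine $f_{n,\lo}(w)$ to make the sum correct.

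The key observation driving the bound $|f_n - F| < 3$ is that the two allowed successor values from any single predecessor are at integer distance exactly $3$ apart (namely $f_n(w') + 1$ and $f_n(w') - 2$), so the one nearer $F(w)$ is within distance $3/2$ of $F(w)$, independent of the previous error. Combined with $F(w) - F(w') \in [-2, 1]$ for positive predecessors and the inductive hypothesis at previously-processed vertices, a short case analysis shows that $I_w$ is nonempty and that its closest element to $F(w)$ lies within distance strictly less than $3$.

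The main obstacle is handling the multi-neighbor case of the sweep — ensuring that the constraints from distinct already-assigned neighbors are simultaneously satisfiable and that the chosen value stays within $3$ of $F(w)$. The cleanest resolution is to process vertices in a layered sweep (e.g., by $i + j$), so that at each step the relevant predecessor constraints reduce to a two-element set for which the $3/2$-distance estimate applies directly. An alternative, lattice-theoretic approach is to define $f_n$ as the $\preceq$-maximum (in the sense of Definition~\ref{def:14}) of discrete height functions dominated by $F+1$ on $Z_\up$ and dominating $F-1$ on $Z_\lo$; the error bound then follows from the same asymmetric distance analysis.
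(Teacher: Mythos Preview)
Your sweep-based approach has a genuine gap, and the key observation you state is not correct as written. The claim that ``the one nearer $F(w)$ is within distance $3/2$ of $F(w)$, independent of the previous error'' is false: the two admissible values $f_n(w')+1$ and $f_n(w')-2$ are $3$ apart, so the nearer one is within $3/2$ of $F(w)$ only when $F(w)$ lies \emph{between} them. With the inductive bound $|f_n(w')-F(w')|<3$ and $F(w)-F(w')\in[-2,1]$ one has $F(w)-f_n(w')\in(-5,4)$, so $F(w)$ need not lie in $[f_n(w')-2,\,f_n(w')+1]$. A correct single-predecessor argument does give $<3$ (write out $f_n(w)-F(w)$ for both options and take the one of smaller absolute value), but it does \emph{not} yield $3/2$ and it is \emph{not} independent of the previous error.

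The real obstruction is the one you flag but do not resolve: at a vertex with several already-processed neighbours the admissible set $I_w$ may collapse to a single forced value, and you give no argument that this forced value stays within $3$ of $F(w)$, nor that $I_w$ is nonempty. A layered sweep by $i+j$ does not reduce to a single predecessor in the triangular lattice (each new vertex typically sees two or three processed neighbours), so your first proposed resolution does not work as stated.

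Your second proposed resolution --- take the $\preceq$-maximum of all height functions dominated by $f$ --- is exactly the paper's route, which follows Cohn--Kenyon--Propp. Concretely: for each lattice vertex $v$ (say in the upper trapezoid) one sets $g_\up(w)=\min_{u\in\{v\}\cup(\partial\RR\setminus Q_n)}\big(f(u)+d_\RR(u,w)\big)$, with $g_\up(v)$ chosen as the unique value in $(f(v)-3,\,f(v)]$ congruent to $f_{\mathrm{standard}}(v)\bmod 3$; this is automatically a height function $\preceq f$ hitting $f(v)$ within $3$ at the single point $v$. One then takes $f_n=\bigvee_v g_v$ over all such witnesses, and the uniform bound $|f-f_n|<3$ follows immediately. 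This sidesteps the sweep-consistency problems entirely, because the extremal extension formula builds in compatibility with all neighbours at once.
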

  
  \begin{proposition} \lab{prop:3} Let $m \in \N$ and 
  let $\RR_0$ be a simply connected tileable lattice domain with boundary $\partial \RR_0$. Let $ 0 < 2 \eps' < \hat{\eps} \tilde{\eps}.$ 
  Let $\RR_1 \subseteq \RR_0$, where $\RR_1$  is a simply connected tileable lattice domain, such that every lattice point $x \in \partial\RR_1$ satisfies $d_{\RR}(x, \partial \RR_0) > \tilde{\eps}m$ and every lattice point $y \in \partial \RR_0$ satisfies $d_{\RR}(x, \partial \RR_1) > \tilde{\eps}m$. 
  Let $\RR = \RR_0\setminus \RR_1.$ Let $g_i: \partial \RR_i \ra \Z$ for $i \in \{0, 1\}$ be such that 
   $g_i$ extends to a height function on $\RR_i$ for each $i \in \{0, 1\}$ and 
    there exists an affine asymptotic height function $a_\infty \in (1 - \hat{\eps})K$ such that $\sup\limits_{\partial \RR_i} |g_i - a_\infty| < \eps' m$ for $i \in \{0, 1\}$.
 
  Then, there is a constant $\chi \in \{-1, 0, 1\}$ such that $g: \partial \RR \ra \R$ defined by $$g|_{\partial \RR_0} = g_0$$ and $$g|_{\partial \RR_1} = g_1 + \chi,$$ extends to a height function on $\RR$.
  \end{proposition}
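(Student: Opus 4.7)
The plan is to parametrize the admissible integer shifts $\chi$ via height-function extensions of $g_0$ on the simply connected $\RR_0$, use Thurston's extremal formulas to describe the range of admissible shifts, and then use the affine closeness of $g_0,g_1$ to $a_\infty$ together with the separation hypothesis to confirm that the range intersects $\{-1,0,1\}$.

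The reduction is as follows. For any height function $h$ on $\RR_0$ extending $g_0$, the integer-valued function $h|_{\partial\RR_1} - g_1$ is locally constant on the connected curve $\partial\RR_1$ (since both $h|_{\partial\RR_1}$ and $g_1$ change by $+1$ across each positive-direction edge of $\partial\RR_1$), hence equals a single integer $\chi$; the restriction $h|_\RR$ is then a height function on $\RR$ with boundary data $g_0\sqcup(g_1+\chi)$. Conversely, any such extension on $\RR$ glues along $\partial\RR_1$ with any extension of $g_1+\chi$ to the simply connected $\RR_1$, producing an extension of $g_0$ on $\RR_0$. Thus the set of valid $\chi$ equals $\{h(p) - g_1(p) : h \text{ extends } g_0 \text{ on } \RR_0\}$ for any fixed $p\in\partial\RR_1$, and Thurston's distributive-lattice structure theorem for height-function extensions on a simply connected tileable lattice domain identifies its extreme values as
\begin{equation*}
H_{\max}(p) = \min_{x\in\partial\RR_0}\bigl(g_0(x) + d_{\RR_0}(x,p)\bigr),\qquad H_{\min}(p) = \max_{x\in\partial\RR_0}\bigl(g_0(x) - d_{\RR_0}(p,x)\bigr).
\end{equation*}

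Using $g_0 \geq a_\infty - \eps' m$ on $\partial\RR_0$ and $g_1\leq a_\infty + \eps' m$ on $\partial\RR_1$,
\begin{equation*}
H_{\max}(p) - g_1(p) \geq \min_{x\in\partial\RR_0}\bigl(a_\infty(x) - a_\infty(p) + d_{\RR_0}(x,p)\bigr) - 2\eps' m.
\end{equation*}
Because $\nabla a_\infty \in (1-\hat\eps)K$, the affine function $a_\infty$ satisfies the contracted positive-direction inequality $a_\infty(p) - a_\infty(x) \leq (1-\hat\eps)\,d_{\RR_0}(x,p)$, so the bracketed quantity is at least $\hat\eps\,d_{\RR_0}(x,p)$. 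Any positive-direction path from $x\in\partial\RR_0$ to $p\in\partial\RR_1$ in $\RR_0$ must first meet $\partial\RR_1$ at some point $x'$, and the initial subpath from $x$ to $x'$ lies in $\RR$ and therefore has length at least $d_\RR(x,x')\geq\tilde\eps m$ by hypothesis; hence $d_{\RR_0}(x,p)\geq\tilde\eps m$. Combining, $H_{\max}(p) - g_1(p) \geq (\hat\eps\tilde\eps - 2\eps')m$, which exceeds $1$ once $m$ is sufficiently large, by the hypothesis $2\eps' < \hat\eps\tilde\eps$. The symmetric argument yields $H_{\min}(p) - g_1(p) \leq -1$, so the admissible range for $\chi$ contains $\{-1,0,1\}$ and we may choose $\chi$ to be any value in this set that is admissible.

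The main obstacle is the parametrization step: invoking Thurston's theorem to describe the range of $h(p)$ over extensions of $g_0$, and verifying the lattice-geometric estimate $d_{\RR_0}(x,p) \geq \tilde\eps m$ despite the fact that $\RR_0 \supset \RR$ admits a priori shorter positive-direction paths. The resolution is that such a path cannot ``cheat'' by entering $\RR_1$, because it must touch $\partial\RR_1$ first, and the initial leg lies in $\RR$ where the separation hypothesis applies. A secondary technicality is that the set of $h(p)$ values may lie in an arithmetic progression (reflecting the $\pm 3$ change under a flip), but since the progression spans at least the interval $[-1,1]$ around $g_1(p)$, it must hit $\{-1,0,1\}$ in at least one point.
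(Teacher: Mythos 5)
Your route differs from the paper's, which verifies Kenyon's extendability conditions (A) and (B) from Proposition~\ref{prop:2} directly on the annulus $\RR$, but there is a genuine gap in your parametrization step. You assert that for an arbitrary extension $h$ of $g_0$ to a height function on $\RR_0$, the difference $h|_{\partial\RR_1} - g_1$ is locally constant, on the grounds that both $h$ and $g_1$ change by $+1$ across each positive-direction edge of $\partial\RR_1$. That is true for $g_1$, since $g_1$ extends to a height function on $\RR_1$ and the $+1$ rule is part of the height-function definition on $\partial\RR_1$. It is false for $h$: the curve $\partial\RR_1$ lies in the \emph{interior} of $\RR_0$, and along an interior positive-direction edge a height function changes by $+1$ if that edge is a lozenge edge of the corresponding tiling, or by $-2$ if it is a lozenge diagonal. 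So $h|_{\partial\RR_1} - g_1$ is in general not constant, and consequently controlling $h(p)$ at a single marked point $p$ through $H_{\max}(p)$ and $H_{\min}(p)$ does not give you $h|_{\partial\RR_1} = g_1 + \chi$ on all of $\partial\RR_1$, which is precisely what would be required for $h|_\RR$ to carry the boundary data $g_0 \sqcup (g_1 + \chi)$. The reduction to the simply connected $\RR_0$ therefore does not go through as written.

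The fix is to check conditions (A), (B) of Proposition~\ref{prop:2} directly for the prescribed boundary function $g$ on $\partial\RR$, as the paper does. Condition (A) for $u \in \partial\RR_1$, $v \in \partial\RR_0$ is essentially your distance estimate: the contracted inequality $a_\infty(v) - a_\infty(u) \leq (1-\hat\eps)\, d_\RR(u,v)$ together with the first-hit argument giving $d_\RR(u,v) > \tilde\eps m$ are both correct, and combined with $\sup|g_i - a_\infty| < \eps' m$ they yield $g(v) - g(u) < d_\RR(u,v) - \chi$ for any $\chi \geq -1$. Condition (B) requires a separate mod-$3$ argument that your proposal gestures at but leaves imprecise: take height-function extensions $f_0, f_1$ of $g_0, g_1$ on $\RR_0, \RR_1$ respectively, note that along any interior positive-direction edge both $f_0$ and $f_1$ change by $+1$ or $-2$, hence $f_0 - f_1$ is constant mod $3$ on the connected $\RR_1$, and choose $\chi \in \{-1,0,1\}$ in that residue class. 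This replaces the arithmetic-progression heuristic at the end of your proposal with the actual mechanism, and it is the ingredient your sketch is missing.
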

 The proofs of Propositions~\ref{prop:6.4} and \ref{prop:3} appear in Section~\ref{sec:Height}.

   \subsection{Surface tension}
\begin{proposition}\lab{prop:conv-final}
 $\sigma_{m, \edge}(\rho, a_\infty)$ converges uniformly to $\sigma_{ \edge}(\rho, a_\infty)$ on compact subsets of $K^o$ for fixed $\rho$. Additionally,
 given  a compact subset $\C \subseteq \R_{>0}^2$, and $\eps' > 0$,  there exists an $m_0$ such that for all $m > m_0$ and all $(\rho, a_\infty) \in \C \times K,$ $$ \sigma_{m, \edge}(\rho, a_\infty)  \geq \sigma_\edge(\rho, a_\infty) - \eps'.$$
Further,  for each fixed $\rho \in \R_{>0}^2$, $\sigma_\edge(\rho, a_\infty)$ is a continuous nonnegative convex 
  function of $a_\infty$ on $K$ bounded above by $C_{|\rho|}$ and consequently,  for any $\de > 0$,  $\sigma_\edge(\rho, a_\infty)$ is a $C_{|\rho|}\de^{-1}$-Lipschitz function of $a_\infty$ on $(1 - \de)K$. 
\end{proposition}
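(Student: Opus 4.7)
My plan is to first establish the easy qualitative properties of $\sigma_\edge$, then prove convexity on $K^o$ via a gluing construction (the main technical step), and finally deduce continuity, the Lipschitz bound, and uniform convergence by standard convex-analytic arguments combined with Proposition~\ref{prop:8.17}. Nonnegativity is immediate since every lozenge weight is almost surely nonpositive, so $-\frac{1}{m^2}\E\max\geq 0$. For the upper bound $C_{|\rho|}$, choose as a feasible competitor inside $B_m(a_\infty,m\psi(m))$ any fixed integer height function within $O(1)$ of $a_\infty$ (produced by an analogue of Proposition~\ref{prop:6.4}); its weight is a linear combination of $O(m^2)$ entries of $\MM_{m,\x(\rho)}$, whose scale is controlled by $|\rho|$ via Theorem~\ref{theorem:beadR}.

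The heart of the proof is convexity. Fix $\rho$, affine $a_\infty^{(1)},a_\infty^{(2)}\in K^\circ$, and $t\in(0,1)$, and set $a_\infty=ta_\infty^{(1)}+(1-t)a_\infty^{(2)}$. For $M\gg m$, I partition $\Box_M$ into horizontal strips of height roughly $m$, designating a $t$-fraction for type $1$ and the rest for type $2$, separated by transition bands of width $o(m)$. On each type-$j$ strip I place a near-optimal integer height function inside $B_m(a_\infty^{(j)}, m\psi(m))$ and glue them together through the transition bands using Proposition~\ref{prop:3}, which guarantees the stitched function is a global height function on $\Box_M$ lying within $L^\infty$ distance $o(M\psi(M))$ of $a_\infty$. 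Since restricting the maximum to this class of stitched configurations only decreases the max,
\[
-\tfrac{1}{M^2}\E\!\max_{a_M\in B_M(a_\infty,M\psi(M))}\langle\partial\MM_{M,\x(\rho)},\partial a_M\rangle
\;\le\; -\tfrac{1}{M^2}\E\!\max_{\text{stitched}}\langle\partial\MM_{M,\x(\rho)},\partial a_M\rangle.
\]
The right-hand side splits over strips, and the Tao decorrelation estimate (Theorem~\ref{theorem:Tao4}) lets me treat the bead-model patches associated to distant strips as asymptotically independent copies whose distributions match (after centering) the local bead model used to define $\sigma_{m,\edge}(\rho,a_\infty^{(j)})$. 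The transition bands contribute $o(1)$ by Lemma~\ref{lem:reuse} applied to the bounded-tail lozenge weights. Dividing by $M^2$ and sending $M\to\infty$ followed by $m\to\infty$ yields
\[
\sigma_\edge(\rho,a_\infty)\;\le\; t\,\sigma_\edge(\rho,a_\infty^{(1)})+(1-t)\,\sigma_\edge(\rho,a_\infty^{(2)}),
\]
which is the required convexity on $K^\circ$.

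Granted convexity, $\sigma_\edge(\rho,\cdot)$ is automatically continuous on $K^\circ$, and the two-sided bound $0\le\sigma_\edge\le C_{|\rho|}$ implies the $C_{|\rho|}\delta^{-1}$-Lipschitz estimate on $(1-\delta)K$ by the usual slope argument for bounded convex functions. The same reasoning, applied at finite $m$ using Proposition~\ref{prop:8.17} to control $\var$ and Lemma~\ref{lem:17} to convert variance bounds into oscillation bounds for the max of affine functionals, shows that $\sigma_{m,\edge}(\rho,\cdot)$ is Lipschitz on $(1-\delta)K$ with a constant independent of $m$ (for large $m$). Together with pointwise convergence of $\sigma_{m,\edge}\to\sigma_\edge$ on $K^\circ$ (which the gluing argument actually upgrades from liminf to a genuine limit by applying it symmetrically), the Arzelà–Ascoli theorem yields uniform convergence on compacta of $K^\circ$.

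The global lower bound $\sigma_{m,\edge}(\rho,a_\infty)\ge\sigma_\edge(\rho,a_\infty)-\eps'$ on all of $\C\times K$ then follows by extending from the interior. For $a_\infty\in\partial K$, Definition~\ref{def:8.7} defines $\sigma_\edge$ via a directional liminf from $K^\circ$; picking $a'_\infty\in(1-\delta)K$ on the prescribed ray with $\delta$ so small that $|\sigma_\edge(\rho,a'_\infty)-\sigma_\edge(\rho,a_\infty)|<\eps'/2$, the uniform Lipschitz control of $\sigma_{m,\edge}$ on $(1-\delta)K$ (uniform in $\rho\in\C$, since $C_{|\rho|}$ is bounded on $\C$) allows us to transfer the interior lower bound to $a_\infty$ with at most an additional $\eps'/2$ loss. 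The most delicate step in the whole plan is the gluing construction: I have to verify that the stitched configuration really lies in $B_M(a_\infty,M\psi(M))$ despite the boundary adjustments between strips, and that the ``asymptotic independence'' of strip contributions gives a sharp enough lower bound on the expected max. Proposition~\ref{prop:3} and the slow decay $\psi(m)=(\log m)^{-1/10}$, which leaves ample room between $m\psi(m)$ and $o(m)$-wide transition regions, are what make this step work.
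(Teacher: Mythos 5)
Your overall architecture matches the paper: convexity via a washboard/strip gluing argument using Proposition~\ref{prop:3}, nonnegativity and $C_{|\rho|}$-boundedness from the structure of the weights, Lipschitz on $(1-\de)K$ from bounded-convex, and then extending control to the whole of $K$. However, there are two genuine gaps.

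First, the claim that $\sigma_{m,\edge}(\rho,\cdot)$ is Lipschitz on $(1-\de)K$ with a constant uniform in $m$ is not supported by the tools you cite. Proposition~\ref{prop:8.17} and Lemma~\ref{lem:17} bound the \emph{variance of a single random max} $\max_{a_m\in B_m(a_\infty,m\eps)}\langle\partial\MM_{m,\x(\rho)},\partial a_m\rangle$; they say nothing about how its \emph{expectation} varies with the center $a_\infty$. In particular, nothing prevents two balls $B_m(a_\infty,m\psi(m))$ and $B_m(a'_\infty,m\psi(m))$ with $|a_\infty-a'_\infty|\gg\psi(m)$ from being disjoint, and then there is no a priori comparison. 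The paper instead obtains uniform convergence on compacta of $K^\circ$ via a quantitative patching inequality (Lemma~\ref{lem:7.13}): a washboard surface with thin bands of a corrective tilt is used to show $\sigma_{m_0,\edge}(\rho,a_\infty)+C_{|\rho|}\de\geq\sigma_{m',\edge}(\rho,a'_\infty)$ for a rescaled $m'\asymp m_0/\psi(m_0)$ whenever $|a_\infty-a'_\infty|<\de^2$, and then runs a finite-net argument (Lemma~\ref{lem:uniform}). You should replace your Arzel\`a--Ascoli step by some version of this two-scale patching estimate (or prove the equicontinuity of $\sigma_{m,\edge}(\rho,\cdot)$ in $m$ by a direct construction); as written, the equicontinuity is assumed, not proved.

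Second, the extension of the lower bound $\sigma_{m,\edge}\geq\sigma_\edge-\eps'$ from $K^\circ$ to all of $K$ cannot be handled by the interior Lipschitz estimate. On $(1-\de)K$ the Lipschitz constant is $C_{|\rho|}\de^{-1}$, so moving from a boundary (or vertex) point $a_\infty\in\partial K$ to a point $a'_\infty\in(1-\de)K$ costs $C_{|\rho|}\de^{-1}\cdot O(\de)=O(C_{|\rho|})$, which is \emph{not} small; shrinking $\de$ does not help. The paper handles this (Lemma~\ref{lem:8.6}, Claim~\ref{lem:8.6-cl:3}, and Lemma~\ref{lem:8.12}) by a combinatorial lozenge-path-deletion argument: starting from a near-optimal tiling of tilt $a_\infty$, one deletes an $O(\de)$-fraction of monochromatic lattice paths to shift the average tilt toward $a'_\infty$, and Lemma~\ref{lem:reuse} bounds the expected total weight lost by $o_\de(1)\cdot m^2$. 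This same argument is also needed to establish continuity of $\sigma_\edge(\rho,\cdot)$ at $\partial K$ and at the three vertices of $K$, which your proposal asserts but does not prove (recall the definition of $\sigma_\edge$ at boundary points is a directional liminf, so continuity there is a claim requiring verification). Without these path-deletion estimates the boundary case of the proposition is genuinely missing.
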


\begin{proposition}\lab{prop:sig-unif}
Let $\rho \in \R_{> 0}^2$. Let $\bar{B}(\rho, \de)$ be the closed Euclidean ball of radius $\de$ around $\rho$. Then, the function $|\sigma_\edge(\rho', a'_\infty) - \sigma_\edge(\rho, a'_\infty)|$ defined for  $(\rho', a'_\infty) \in \bar{B}(\rho, \de) \times K$ is uniformly upper bounded by $o(1)$ as $\de \ra 0$.
\end{proposition}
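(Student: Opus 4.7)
The strategy is to establish uniform continuity of the pre-limit surface tension $\sigma_{m,\edge}(\cdot, a_\infty)$ in $\rho$ with bounds independent of $m$ and $a_\infty \in K$, and then lift this to $\sigma_\edge$ using the one-sided uniform approximation from Proposition~\ref{prop:conv-final} combined with the liminf definition of $\sigma_\edge$.

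\emph{Step 1 (uniform continuity at pre-limit).} I would construct a coupling of the bead models $\MM_{m,\x(\rho)}$ and $\MM_{m,\x(\rho')}$ on a common probability space such that
$$\frac{1}{m^2} \E \|\MM_{m,\x(\rho)} - \MM_{m,\x(\rho')}\|_{\ell^1(\Box_m)} \to 0 \text{ as } |\rho - \rho'| \to 0,$$
uniformly in $m$. Because the distribution of $\MM_{m,\x}$ depends on $\x$ only through the local semicircle density (equivalently through $\rho$), which is smooth in $\x$, the determinantal kernel of the bead model varies continuously with $\rho$ at a rate independent of $m$, allowing a single auxiliary source to drive both ensembles. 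For any height function $a_m \in B_m(a_\infty, m\psi(m))$, the lozenge weight functional $\langle \partial \MM, \partial a_m\rangle = \sum_\edge \partial a_m(\edge) \wt(\edge, \MM)$ is linear in $\MM$ with coefficient vector of bounded $\ell^\infty$-norm --- since $\partial a_m$ is uniformly bounded and each lattice site of $\Box_m$ lies in only $O(1)$ lozenges of any tiling. Hence the pointwise envelope bound
$$\bigl|\max_{a_m} \langle \partial \MM_{m,\x(\rho)}, \partial a_m\rangle - \max_{a_m} \langle \partial \MM_{m,\x(\rho')}, \partial a_m\rangle \bigr| \leq C \|\MM_{m,\x(\rho)} - \MM_{m,\x(\rho')}\|_{\ell^1}$$
holds; taking expectation and dividing by $m^2$ yields $|\sigma_{m,\edge}(\rho, a_\infty) - \sigma_{m,\edge}(\rho', a_\infty)| = o_\delta(1)$ uniformly in $m$ and $a_\infty \in K$.

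\emph{Step 2 (lifting to $\sigma_\edge$ and handling $\partial K$).} Fix $a_\infty \in K^o$ and $\eps' > 0$. By Proposition~\ref{prop:conv-final} with $\C = \bar B(\rho, 1)$, there is $m_0$ such that $\sigma_\edge(\rho'', a_\infty) \leq \sigma_{m,\edge}(\rho'', a_\infty) + \eps'$ for all $m \geq m_0$ and $\rho'' \in \bar B(\rho, 1)$. Using the liminf definition of $\sigma_\edge$, choose $m \geq m_0$ with $\sigma_{m,\edge}(\rho, a_\infty) \leq \sigma_\edge(\rho, a_\infty) + \eps'$. For $\rho' \in \bar B(\rho, \delta)$ with $\delta$ small enough that Step 1 gives discrepancy at most $\eps'$,
$$\sigma_\edge(\rho', a_\infty) \leq \sigma_{m,\edge}(\rho', a_\infty) + \eps' \leq \sigma_{m,\edge}(\rho, a_\infty) + 2\eps' \leq \sigma_\edge(\rho, a_\infty) + 3\eps',$$
and the reverse inequality follows by swapping $\rho$ and $\rho'$. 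Hence $|\sigma_\edge(\rho', a_\infty) - \sigma_\edge(\rho, a_\infty)| = o(1)$ as $\delta \to 0$, uniformly for $a_\infty$ in compact subsets of $K^o$. For $a_\infty \in \partial K$, Definition~\ref{def:8.7} gives $\sigma_\edge(\rho, a_\infty)$ as a directional liminf of $\sigma_\edge(\rho, a'_\infty)$ along a segment into $K^o$, so the interior estimate applied uniformly along this segment propagates to the boundary after passing to the liminf.

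\emph{Main obstacle.} The delicate point is Step 1: constructing a coupling whose total $\ell^1$ distance across all $(2m+1)^2$ vertices is $o_\delta(m^2)$ in expectation, \emph{uniformly} in $m$. Marginal continuity of $\MM_{m,\x}$ in $\x$ is essentially Theorem~\ref{theorem:beadR}, but the joint coupling controlling the sum (rather than individual marginals) requires either exploiting an explicit determinantal-kernel description of the bead model with Lipschitz-in-$\rho$ dependence at an $m$-independent rate, or an intrinsic change of variables realizing $\MM_{m,\x(\rho)}$ as a deterministic Lipschitz-in-$\rho$ function of a single canonical $\rho$-independent bead model. Care must also be taken with the centering by $\MM(\lceil\ell\x\rceil)$ and the $\ell^{1/2}$ rescaling in the definition of $\MM_{m,\x}$ so that they behave well under the coupling.
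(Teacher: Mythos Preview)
Your Step~1 is the heart of your argument, and you are candid that it is unproved: you need a coupling of the bead models $\MM_{m,\x(\rho)}$ and $\MM_{m,\x(\rho')}$ with $\E\|\MM_{m,\x(\rho)}-\MM_{m,\x(\rho')}\|_{\ell^1(\Box_m)}=o_\delta(m^2)$ \emph{uniformly in $m$}. Nothing in the paper supplies such a coupling, and Theorem~\ref{theorem:beadR} only gives $W_1$ convergence at fixed $m$; turning this into an $m$-uniform $\ell^1$ coupling over $(2m+1)^2$ sites is exactly the exchange-of-limits you are trying to prove, so as written the argument is circular or at least incomplete. There is also a loose end in Step~2: your interior estimate is uniform only on compact subsets of $K^o$, so passing to the directional $\liminf$ along a segment approaching $\partial K$ requires uniformity up to the boundary, which you have not established.

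The paper avoids your main obstacle entirely by reversing the order of uniformity. Rather than making the $\rho$-continuity uniform in $m$, it makes the $m$-approximation uniform in $\rho$: Lemma~\ref{lem:sigma-cont2} (a Calder\'on--Zygmund decomposition combined with the log-concave concentration of Proposition~\ref{prop:8.17}) gives a lower bound $\sigma_\edge(\rho,a_\infty)\ge\gamma_{m_1}(\rho,a_\infty)-C_{|\rho|}\bar\psi(m_1)$ with error independent of $\rho$, and combined with Lemma~\ref{lem:8.7} this sandwiches $\sigma_\edge$ between quantities close to $\sigma_{m,\edge}$ uniformly for $\rho$ in a compact set. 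Only \emph{fixed-$m$} continuity of $\sigma_{m,\edge}(\cdot,a_\infty)$ in $\rho$ is then needed, and that follows softly from Tao's interlacing universality (Theorem~\ref{theorem:Tao3}). This yields Lemma~\ref{lem:8.14} (continuity in $\rho$ for each fixed $a_\infty\in K$, including $\partial K$), and the uniform statement of Proposition~\ref{prop:sig-unif} then drops out of a short compactness/contradiction argument on $K$ using Lemma~\ref{lem:8.12}. Your coupling approach, if it could be made rigorous, would be more direct and quantitative, but the paper's route trades that for tools already developed (CZ decomposition, concentration, universality) and thereby sidesteps the coupling construction altogether.
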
  
   
    The proofs of Propositions~\ref{prop:conv-final} and \ref{prop:sig-unif} appear in Section~\ref{sec:Surface}.
   \subsection{Quantitative differentiation}
   
  \begin{theorem}[Fefferman]\lab{thm:Feff}
  Suppose $\varepsilon, \eta > 0$ and $k \geq 1$. Assume that $\varepsilon^{\nnn+2} \eta k \geq C^\sharp$ (a large enough constant depending only on the dimension $\nnn$).
  Let $F: Q^\circ \ra \R$, where $Q^\circ$ is the unit cube in $\R^\nnn$. Suppose $F$ has Lipschitz constant $\leq 1$. Then we may decompose $Q^\circ$ into finitely many GOOD cubes and finitely many BAD cubes, with the following properties:
  \begin{enumerate}
  \item[(I)] The GOOD cubes have sidelength $\geq 2^{-k}$. For each GOOD cube $Q$, we have $|F - L_Q| \leq \eps \de_Q$ on $Q$, where $L_Q$ is the linear function that best approximates $F$ in $L^2(Q)$. 
  \item[(II)] The BAD cubes have sidelength exactly $2^{-k}$, and their total volume is at most $\eta$.
  \end{enumerate}
  
  \end{theorem}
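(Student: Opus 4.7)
The plan is to run a dyadic stopping-time argument on subcubes of $Q^\circ$, combined with a Carleson packing estimate. Starting from $Q^\circ$, at each dyadic cube $Q$ I compute the best $L^2$ affine approximant $L_Q$ and declare $Q$ to be $\varepsilon$-\emph{flat} if $\|F - L_Q\|_{L^\infty(Q)} \le \varepsilon \sidelength(Q)$. I subdivide $Q$ into its $2^\nnn$ dyadic children unless either $Q$ is $\varepsilon$-flat (call it GOOD and stop) or $\sidelength(Q) = 2^{-k}$ (call it BAD and stop). Then every GOOD cube satisfies conclusion (I) by construction and BAD cubes automatically have sidelength $2^{-k}$, so only the volume bound in (II) remains.

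Two ingredients drive that volume bound. The first is an $L^2 \to L^\infty$ conversion. Introduce the scale-invariant quantity
$$\beta(Q)^2 \;:=\; \frac{1}{|Q|\,\sidelength(Q)^2}\int_Q (F-L_Q)^2.$$
Since $F$ is $1$-Lipschitz and the gradient of $L_Q$ is an $L^2$-type average of $\nabla F$, the residual $g := F - L_Q$ is $C_\nnn$-Lipschitz. If $M := \|g\|_{L^\infty(Q)}$ is attained at some $x_0 \in Q$ then $|g| \ge M/2$ on $Q \cap B(x_0, c_\nnn M)$, and integrating gives $M^{\nnn+2} \le C_\nnn\, \beta(Q)^2\, \sidelength(Q)^{\nnn+2}$. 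Hence there is $c_\nnn > 0$ such that $\beta(Q) \le c_\nnn\, \varepsilon^{(\nnn+2)/2}$ forces $Q$ to be $\varepsilon$-flat. The second ingredient is a Dorronsoro/Carleson-type tree-sum: the Pythagorean identity
$$ \int_{Q_*}(F-L_{Q_*})^2 \;=\; \sum_{Q\text{ child of } Q_*}\!\!\Bigl(\int_Q(F-L_Q)^2 \,+\, \int_Q(L_{Q_*}-L_Q)^2\Bigr),$$
combined with telescoping over the full dyadic tree $\mathcal{T}$ under $Q^\circ$ and the crude bound $\int_{Q^\circ}(F-L_{Q^\circ})^2 \le C_\nnn$ (from Lipschitzness of $F$), yields
$$\sum_{Q \in \mathcal{T}} \beta(Q)^2\,|Q| \;\le\; C_\nnn,$$
after comparing the telescoping ``increments'' $\int_Q(L_{Q_*}-L_Q)^2$ with $\beta(Q)^2 |Q|\sidelength(Q)^2$ via the optimality of $L_Q$.

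To finish, apply a Carleson packing argument. Every BAD cube $Q$ has $k+1$ strict dyadic ancestors $Q = Q_k \subsetneq Q_{k-1} \subsetneq \cdots \subsetneq Q_0 = Q^\circ$, none of which was $\varepsilon$-flat (else the stopping rule would have triggered earlier), so by the conversion step $\beta(Q_j)^2 > c_\nnn^2\, \varepsilon^{\nnn+2}$ for every $j = 0,\dots,k$. Summing over BAD cubes and exchanging order of summation,
$$(k+1)\,c_\nnn^2\,\varepsilon^{\nnn+2} \!\!\sum_{Q\text{ BAD}}\!|Q| \;<\; \sum_{Q\text{ BAD}}\sum_{j=0}^{k}\beta(Q_j)^2\,|Q| \;=\; \sum_{R \in \mathcal{T}} \beta(R)^2 \!\!\sum_{\substack{Q\text{ BAD}\\ Q \subseteq R}}\!\! |Q| \;\le\; \sum_{R \in \mathcal{T}}\beta(R)^2\,|R| \;\le\; C_\nnn,$$
where the inner sum uses pairwise disjointness of BAD cubes contained in $R$. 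Therefore the total BAD volume is at most $C_\nnn/(k\,c_\nnn^2\,\varepsilon^{\nnn+2})$, which is $\le \eta$ as soon as $\varepsilon^{\nnn+2}\eta k \ge C^\sharp := C_\nnn/c_\nnn^2$. The hardest step is the Dorronsoro tree summability; the $L^2$-to-$L^\infty$ step and the packing argument are then essentially bookkeeping.
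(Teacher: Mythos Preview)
Your overall architecture coincides with the paper's: a Calder\'on--Zygmund stopping-time decomposition, the $L^2\!\to\!L^\infty$ conversion (same Lipschitz-bump argument), and then a Carleson packing over ancestors of BAD cubes. The paper stops on the $L^2$ condition $\beta(Q)^2\le \hat c\,\varepsilon^{\nnn+2}$ and then proves the $L^\infty$ flatness, while you stop on $L^\infty$ flatness and use the contrapositive $L^2$ lower bound on ancestors; these are equivalent reformulations, and your packing inequality is exactly the paper's.

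The gap is in your derivation of the Dorronsoro/Carleson sum $\sum_{Q}\beta(Q)^2|Q|\le C_\nnn$. The Pythagorean identity you wrote is correct, but telescoping it yields only
\[
\sum_{Q\ne Q^\circ}\int_Q (L_{Q^+}-L_Q)^2 \;\le\; \int_{Q^\circ}(F-L_{Q^\circ})^2 \;\le\; C_\nnn,
\]
which controls the \emph{martingale increments} $\|L_{Q^+}-L_Q\|_{L^2(Q)}^2$, not the tails $\|F-L_Q\|_{L^2(Q)}^2$; and it carries no $\delta_Q^{-2}$ weight. The ``comparison via optimality of $L_Q$'' only gives $\|L_{Q^+}-L_Q\|_{L^2(Q)}\le 2\|F-L_{Q^+}\|_{L^2(Q)}$, which is the wrong direction, and in fact no pointwise inequality of the form $\int_Q(L_{Q^+}-L_Q)^2\gtrsim \delta_Q^{-2}\int_Q(F-L_Q)^2$ can hold (take $F$ odd about the center of $Q^+$ so that $L_{Q^+}=L_Q=0$ while $F\not\equiv0$). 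More globally, $\sum_Q \delta_Q^{-2}\|F-L_Q\|_{L^2(Q)}^2$ re-sums tails across all finer scales, and for generic $L^2$ data this sum diverges; its finiteness for Lipschitz $F$ is exactly the Dorronsoro inequality, which is not a consequence of orthogonality alone.

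The paper establishes this estimate by a different route: a \emph{Basic Identity}
\[
\int_{\R^\nnn}|\nabla u|^2 \;=\; c_\nnn \int_{\xi,R}\Bigl(\tfrac{1}{R^{\nnn+2}}\!\int_{B(\xi,R)}|u-L_{\xi,R}|^2\Bigr)\tfrac{d\xi\,dR}{R},
\]
proved via Plancherel (projecting onto the affine span in $L^2(B(\xi,R))$, taking Fourier transforms, and identifying the resulting multiplier as $c|\xi|^2$ by rotation invariance and homogeneity). Restricting to $(\xi,R)$ adapted to each dyadic $Q$ then gives the Main Estimate $\sum_Q \beta(Q)^2|Q|\le C$. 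So your plan is correct once the tree-sum is in hand, but your justification of that sum needs to be replaced either by the paper's Fourier identity or by a genuine Dorronsoro-type argument (e.g.\ Poincar\'e plus a Littlewood--Paley square-function bound for $\nabla F$), not by bare telescoping.
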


  For the convenience of the reader, the proof due to Fefferman is reproduced in  Appendix~\ref{sec:Feff}.
  
 
  \begin{remark}
   Although the particular result quoted here is due to Fefferman,  this is merely one way to prove quantitative differentiation,  and 
    quantitative differentiation was well known.
  \end{remark}

\section{Convergence of GUE hives}

\begin{definition}
Let the equator in $\hexagon_v^n$ be denoted $Q_n,$ and the equator in $\hexagon_v$ be denoted $Q$.
Let $U_{n, \eps}$ denote the set of all points in $U_n$ at a distance at least $n\eps$ from $n \partial U$. Similarly, let $U'_{n, \eps}$ denote the set of all points in $U'_n$ at a distance at least $n\eps$ from $n \partial U'$.
Let $U_{ \eps}$ denote the set of all points in $U$ at a distance at least $\eps$ from $\partial U$. Similarly, let $U'_{\eps}$ denote the set of all points in $U'$ at a distance at least $\eps$ from $ \partial U'$.
 (See Figure~\ref{fig:octahedron} for a visualization of $U$ and $U'$).
\end{definition}

\subsection{Contribution of the equator and the hexagon}
  
Let $b_\infty: \R \ra \R$ be an arbitrary  affine function whose derivative lies in the interior of the projection of $K$ on the $x-$axis (see Figure~\ref{fig:ij2}), \ie the closed interval $[-1, 2]$.

Let $B_{m, \Delta}(b_\infty, \eps)$ denote the set of Lipschitz piecewise linear functions to $l_m$ whose derivatives belong to $\{-1, 2\}$ that are contained inside an $L^\infty$ $\eps m $-neighborhood of $b_\infty.$ 

Recall that the distribution of the minor process on $k$ restricts at level $j$ to a spectral distribution corresponding to  GUE matrices, rescaled so that the entry-wise variance is $\var_\mu$,
that on $k'$ is given by $\var_\la$.





Let $f^\sharp$ be an arbitrary asymptotic height function in $\AHT_v^\infty$.
\begin{lemma}\lab{lem:8}

$$\limsup_{n \ra \infty}  \E_n  n^{-2} \max_{f_n \in \HT_v^n(f^\sharp, \eps)}  \langle g_n, \partial f_n \rangle_\De  \leq  S_{v, \De}(f^\sharp) + o_\eps(1),$$ and
$$\liminf_{n \ra \infty}  \E_n  n^{-2} \min_{f_n \in \HT_v^n(f^\sharp, \eps)}  \langle g_n, \partial f_n \rangle_\De  \geq  S_{v, \De}(f^\sharp) - o_\eps(1).$$
\end{lemma}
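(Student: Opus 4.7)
\textbf{Proof plan for Lemma \ref{lem:8}.}

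The key structural observation is that $\sigma_\De(\tau,b_\infty)$ is \emph{affine} in $b_\infty$, so the integrand in $\langle g_n,\partial f_n\rangle_\De$ is linear in the interlacing gap $b_n(k)\in\{-1,2\}$. This makes the maximization and minimization over $f_n\in\HT_v^n(f^\sharp,\eps)$ essentially trivial once we can pass to the continuum. Concretely, the plan is to split
\[
\langle g_n,\partial f_n\rangle_\De \;=\; \tfrac19\sum_{k=0}^{n+i-j-1} b_n(k)\bigl(\mu_{1,n-k}+\la_{n-k,n-k}\bigr) \;+\; \tfrac19\sum_{k=0}^{n+i-j-1}\bigl(\mu_{1,n-k}-2\la_{n-k,n-k}\bigr),
\]
and treat the $f_n$-dependent first sum via summation by parts while the $f_n$-independent second sum converges directly to the $f$-independent piece of $-S_{v,\De}(f^\sharp)$.

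First I would use concentration for the GUE minor process (standard rigidity estimates together with Theorem \ref{theorem:Tao4} for the interlacing-gap correlations) to replace $\mu_{1,n-k}/n$ and $\la_{n-k,n-k}/n$ by their deterministic limits $\tau_\up(k/n)$ and $\tau_\lo(k/n)$, with fluctuations that are $o(1)$ uniformly for $k/n$ in any compact subset of the interior of $Q$, and total contribution from a small neighborhood of the corners being $o_\eps(1)$ in expectation (using crude $L^2$ tail bounds for the extreme eigenvalues). This handles the $f_n$-independent sum immediately by Riemann-sum convergence to $\int_Q(\tau_\up(\y)-2\tau_\lo(\y))/9\,d\y$.

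Next I would apply Abel summation to the $f_n$-dependent piece: writing $c_k:=(\mu_{1,n-k}+\la_{n-k,n-k})/9$ and noting $b_n(k)=f_{n,\up}(k+1,n-k-1)-f_{n,\up}(k,n-k)$,
\[
\sum_k b_n(k)\,c_k \;=\; -\sum_k f_{n,\up}(k,n-k)\bigl(c_k-c_{k-1}\bigr) \;+\; \text{boundary terms}.
\]
Since $f_n\in\HT_v^n(f^\sharp,\eps)$ gives $f_{n,\up}(k,n-k)=n\,f^\sharp(k/n)+O(n\eps)$, and since the post-concentration sequence $c_k/n$ has bounded total variation in expectation (here is where uniform control on $\tau_\up,\tau_\lo$ being $C^1$ away from corners enters), the error is $O(n^2\eps)$, which becomes $o_\eps(1)$ after dividing by $n^2$. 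The main term converges (again by Riemann-sum approximation combined with integration by parts in the continuum) to
\[
\int_Q \partial f^\sharp(\y)\,\frac{\tau_\up(\y)+\tau_\lo(\y)}{9}\,d\y,
\]
which together with the constant piece above reassembles precisely into $S_{v,\De}(f^\sharp)$ after comparison with Definition \ref{defn:8.14}. Because $\sigma_\De$ is affine in $b$, the \emph{same} error bound applies uniformly over $f_n\in\HT_v^n(f^\sharp,\eps)$, so taking maximum or minimum produces the same limit, proving both inequalities.

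The main obstacle is the summation-by-parts error bound, which requires the expected total variation of $k\mapsto c_k/n$ to be $O(1)$. Pointwise concentration of $\mu_{1,n-k}$ is classical, but controlling $\sum_k\E|c_k-c_{k-1}|$ needs the correlation decay for neighboring interlacing gaps in the GUE minor process supplied by Theorem \ref{theorem:Tao4}, so that the sum of increments is not dominated by independent $O(1)$-sized fluctuations accumulating over $n$ indices to $\Omega(n^{3/2})$. Once this TV bound is in hand, the remainder of the argument is routine Riemann-sum approximation combined with integration by parts.
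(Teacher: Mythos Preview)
Your approach is correct and essentially matches the paper's: eigenvalue rigidity to pass to expectations, then summation by parts exploiting $\sum_{t<k} b_n(t) = f_{n,\up}(k,n-k) - f_{n,\up}(0,n)$, then the $O(n\eps)$ closeness of $f_{n,\up}$ to $nf^\sharp$ on the equator.

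Your ``main obstacle'', however, is a phantom. In the paper's execution one replaces the random $\mu_{1,n-k},\la_{n-k,n-k}$ by their expectations \emph{before} summing by parts: each has fluctuation $O(n^{1/3}\log^{O(1)}n)$ by Lemma~\ref{lem:TV}, the coefficients $b_n(k)\in\{-1,2\}$ are bounded, and summing over $O(n)$ terms gives total error $O(n^{4/3}\log^{O(1)}n)=o(n^2)$ with overwhelming probability. After this replacement the summation by parts involves only the deterministic sequence $\E c_k$, whose total variation is trivially $O(n)$ because $k\mapsto\E\mu_{1,n-k}$ and $k\mapsto\E\la_{n-k,n-k}$ are each monotone. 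So no bound on $\sum_k \E|c_k-c_{k-1}|$ for the random $c_k$ is ever needed, and Theorem~\ref{theorem:Tao4} plays no role here: that theorem controls variances of sums of \emph{bulk} interlacing gaps at a \emph{single} level, whereas $c_k-c_{k-1}$ involves \emph{edge} eigenvalues across \emph{varying} levels. Even if you insisted on bounding the random total variation, Cauchy interlacing makes $k\mapsto\mu_{1,n-k}$ and $k\mapsto\la_{n-k,n-k}$ monotone almost surely, so the TV telescopes to $O(n)$ without any correlation input.
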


\begin{proof}
From Lemma \ref{lem:TV}, we conclude in particular that
$$ \lambda_n = \E \lambda_n + O(n^{1/3} \log^{O(1)} n) $$
with probability greater than $1 - n^{-C}$ for an arbitrarily large absolute constant $C$.  Since the $k \times k$ minor of a GUE matrix is also a GUE matrix, we similarly have
$$ \lambda_{k,k} = \E \lambda_{k,k} + O(n^{1/3} \log^{O(1)} n)$$
with probability  greater than $1 - n^{-C}$ for an arbitrarily large absolute constant $C$ for all $1 \leq k \leq n$, where we recall that $\lambda_{1,k} \geq \dots \geq \lambda_{k,k}$ are the eigenvalues of the top left $k \times k$ minor of $A$ (and thus form the $k^{\mathrm{th}}$ row of $g$).  Similarly we have
$$ \mu_{1,k} = \E \mu_{1,k} + O(n^{1/3} \log^{O(1)} n)$$
with probability  greater than $1 - n^{-C}$ for an arbitrarily large absolute constant $C$ for all $1 \leq k \leq n$.  
Let $\Delta$ be a border triangle associated to a border edge $((i,n-i), (i+1,n-i-1))$ for some $0 \leq i < n$.  By inspecting the definitions, we see that the weight of this triangle is given by the formula
$$ \weight(\Delta) = \frac{1}{3} \mu_{1,n-i}$$
if $\Delta$ is an upward pointing triangle, and
$$ \weight(\Delta) = - \frac{1}{3} \lambda_{n-i,n-i}$$
if $\Delta$ is a downward pointing triangle.  
We conclude that for the supremum over lozenge tilings $\Xi_n$ of $\hexagon_v^n$, we have
$$ \sup_{\Xi_n} \sum_{\Delta \in \Xi_n} |\weight(\Delta) -  \E_n \weight(\Delta)| \leq \sum_{\textit{all}\, \Delta}  |\weight(\Delta) -  \E_n \weight(\Delta)| \leq O( n^{4/3} \log^{O(1)} n )$$
with probability greater that $1 - n^{-100}.$
The error term $O( n^{4/3} \log^{O(1)} n )$ is small enough for our purposes, so it suffices to prove the following  claim.
\begin{claim}\lab{cl:8.1} $$  n^{-2}  \max_{f_n \in \HT_v^n(f^\sharp, \eps)}  |\langle \E_n g_n, \partial f_n \rangle_\De   - S_{v, \De}(f^\sharp)|  < o_\eps(1).$$
\end{claim}
\begin{proof}
We rewrite the expression for $\langle \E_n g_n, \partial f_n \rangle_\De$ arising from  (\ref{eq:bn}) using summation by parts, as
$$ \sum_{k = 0}^{n+i-j-1} \left( \E_n (\frac{\mu_{1, n-k}}{9} - \frac{2 \la_{n-k, n-k}}{9}) + (\sum_{t \leq k} b_n(t)) (\E_n \frac{\mu_{1, n-k} + \la_{n-k, n-k} - \mu_{1, n-k-1} - \la_{n-k-1, n- k-1}}{9})\right).$$
Here we have adopted the convention that when the indices of $\mu$ and $\la$ go out of bounds, the value of the corresponding variable is $0$.
Note that  $f_{n, \up}(k, n-k) = \sum_{t \leq k} b_n(t).$
Writing $f_{n, \up} = f^\sharp_{n, \up} + (f_{n, \up} - f^\sharp_{n, \up})$ and using the triangle inequality, we see that the claim follows.
\end{proof}

This completes the proof of Lemma~\ref{lem:8}.
\end{proof}
\begin{lemma}\lab{lem:9}
$n^{-2} \weight'(\hexagon_v^n)$ is a log-concave random variable that concentrates around its expectation under $\p_n$ as $n \ra \infty$ with a standard deviation of $O( n^{-1} \log^{O(1)} n) $, and in the limit as $n \ra \infty$, $$\E_n n^{-2} \weight'(\hexagon_v^n)$$ converges.
\end{lemma}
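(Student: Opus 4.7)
The plan is to handle the three claims in turn, each reducing to a standard property of the two GUE minor processes via the explicit formula
\[
\weight'(\hexagon_v^n) \;=\; \tfrac{1}{3}\bigl(-\tilde k(A) + 2\tilde k(B) + 2\tilde k(F)\bigr)
\]
from Definition~\ref{octa-def} together with the identification via \eqref{hij} in Proposition~\ref{gt-rem}(iv) of each $\tilde k(V)$, $V\in\{A,B,F\}$, as a deterministic $\Lambda$-offset plus a partial sum of top eigenvalues of a single minor of one of the two scaled GUE matrices $X_n,Y_n$ that generate $k,k'$. Thus, modulo a deterministic $\Lambda$-shift, $\weight'(\hexagon_v^n)$ is a linear combination with $O(1)$ coefficients of a few such partial sums.

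Log-concavity follows immediately from Prékopa. The joint density~\eqref{css} of the augmented hive sample is a product of a Gaussian factor, the Vandermonde factors $V(\la)V(\mu)$ (log-concave on the interior of the Weyl chamber, since each $\log(\la_i-\la_j)$ is concave on its natural domain), and the indicator of the convex augmented hive cone; hence the density is log-concave on its (convex) support. Since $n^{-2}\weight'(\hexagon_v^n)$ is an affine functional of this sample, Theorem~\ref{theorem:prekopa} implies that its marginal density is log-concave.

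For the concentration bound, I would apply the Gaussian Poincaré inequality to the underlying Hermitian matrices. The Ky Fan $k$-partial sum $S_k(X)=\la_1(X)+\cdots+\la_k(X)$ is $\sqrt k$-Lipschitz on Hermitian matrices in the Frobenius norm (by Cauchy--Schwarz, via $S_k\le\sqrt k\,\|X\|_F$). Applied to the Hermitian Gaussian matrices $X_{j(V)}$ or $Y_{j(V)}$, whose entries have variance $O(\var_\la n)$ or $O(\var_\mu n)$, this yields $\mathrm{Var}(\tilde k(V))=O(n\cdot n)=O(n^2)$ for each $V$, and hence
\[
\mathrm{Var}\bigl(n^{-2}\weight'(\hexagon_v^n)\bigr)=O(n^{-2}),
\]
matching (in fact slightly sharper than) the asserted standard deviation of $O(n^{-1}\log^{O(1)}n)$.

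For convergence of $\E_n n^{-2}\weight'(\hexagon_v^n)$, I would use the semicircle law for the GUE minor process. Writing $X_j=\sqrt{\var_\la n}\,\tilde X_j$ with $\tilde X_j$ a $j\times j$ standard GUE matrix, the semicircle law combined with bulk rigidity implies that, for $j=\lfloor\alpha n\rfloor$ and $i=\lfloor\beta j\rfloor$,
\[
\frac{1}{jn}\sum_{t=1}^{i}\E\la_{t,j}\;\longrightarrow\;\sqrt{\var_\la\,\alpha}\int_{q_{1-\beta}}^{2}x\,\rhosc(x)\,dx \qquad\text{as } n\to\infty,
\]
where $q_{1-\beta}\in[-2,2]$ is the $(1-\beta)$-quantile of $\rhosc$, and analogously for the $\mu$-process with $\var_\mu$. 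Applied with the $i(V),j(V)$ dictated by each vertex $V\in\{A,B,F\}$ and combined with the convergence of the deterministic $\Lambda$-terms (which can be chosen to scale appropriately in $n$), this gives convergence of each $n^{-2}\E_n\tilde k(V)$ and hence of $\E_n n^{-2}\weight'(\hexagon_v^n)$. The main obstacle is purely bookkeeping: correctly identifying which of the two minor processes each of $A,B,F$ belongs to, handling the two variance parameters $\var_\la,\var_\mu$, and verifying that the arbitrary large-gap tuple $\Lambda$ from Proposition~\ref{gt-rem}(iv) does not contaminate the limit.
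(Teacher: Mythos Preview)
Your proposal is essentially correct, and for log-concavity it coincides with the paper's one-line Pr\'ekopa argument. For the other two claims you take genuinely different routes.

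\emph{Concentration.} The paper does not use Poincar\'e. It observes that $\weight'(\hexagon_v^n)$ is a bounded-coefficient linear combination of the \emph{top-level} eigenvalues $\lambda_i,\mu_j$ (no minor eigenvalues appear---your ``single minor'' phrasing is slightly off, though harmless), applies the pointwise rigidity estimate of Lemma~\ref{lem:TV} to each $\lambda_i-\E\lambda_i$ and $\mu_j-\E\mu_j$, and sums the resulting $O(n^{1/3}\min(i,n-i+1)^{-1/3}\log^{O(1)}n)$ bounds to get $O(n\log^{O(1)}n)$. Your Gaussian Poincar\'e argument on Ky Fan sums gives the same $O(n^{-2})$ variance more cheaply and without rigidity; note however that your parenthetical ``by Cauchy--Schwarz, via $S_k\le\sqrt{k}\|X\|_F$'' is the wrong justification for the Lipschitz constant (that bounds the function, not its gradient); the correct argument is the variational formula $S_k(X)=\sup_{P}\Tr(PX)$ over rank-$k$ projections, each with $\|P\|_F=\sqrt{k}$.

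\emph{Convergence of the mean.} The paper simply cites Landon--Sosoe \cite[Theorem~1.4]{Sosoe}. Your semicircle-plus-rigidity route is perfectly adequate here and arguably more elementary: since only top-level partial sums $\sum_{t\le i}\lambda_t$, $\sum_{t\le i}\mu_t$ appear, bulk rigidity gives $n^{-1}\E\lambda_{\lfloor sn\rfloor}\to\sqrt{\var_\la}\,\gamma_s$ uniformly on compacta of $(0,1)$, and the edge contributions are $O(n)$ out of $O(n^2)$, so the Riemann-sum limit is immediate. Your flagged worry about the large-gap tuple $\Lambda$ is real but resolves cleanly: in the red-avoiding form \eqref{alt-weight} the blue, green, and triangle weights are manifestly $\Lambda$-free (Observation after Figure~\ref{fig:rhombus}), and since the total $w_\Xi=\tilde h(v)$ is $\Lambda$-free, so is $\weight'(\hexagon_v^n)$; alternatively, one checks directly that the $\Lambda$-contributions from $A,B,F$ cancel in $-\tilde k(A)+2\tilde k(B)+2\tilde k(F)$.
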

\begin{proof}

The weight $\weight'(\hexagon_v)$ is a certain linear combination of the eigenvalues $\lambda_i, \mu_j$ with bounded coefficients, and is a log-concave random variable by Theorem~\ref{theorem:prekopa}.  By Lemma \ref{lem:TV}, we conclude that
\begin{align*}
 \weight'(\hexagon_v) &= \E \weight'(\hexagon_v) + O\left( \sum_{i=1}^n n^{1/3} \min(i, n - i + 1)^{-1/3} \log^{O(1)} n \right)\\
&= \E \weight'(\hexagon_v) + O( n \log^{O(1)} n) .
\end{align*}
The contribution of the $O( n \log^{O(1)} n )$ error is acceptable, which proves the concentration. That $n^{-2}\E_n \weight'(\hexagon_v)$ has a limit follows from \cite[Theorem 1.4]{Sosoe}.
\end{proof}

\subsection{Upper semicontinuity of $S_{v, \edge}$}

 Let $\lim_{n \ra \infty} v_n/n =  v.$ 

Let $f$ be an asymptotic height function pair on $\hexagon_v^\infty$. 
Following Cohn, Kenyon and Propp, \cite{CKP} for $\ell > 0$, we look at a mesh made up of equilateral triangles of side length $\ell$ (which we call an $\ell$-mesh). Consider any piecewise linear function $\tilde{f}$ that agrees with $f$ on the vertices of the mesh and is linear on each triangle.

\begin{lemma}[Lemma 2.2, \cite{CKP}]\lab{lem:6}
Let $f$ be an asymptotic height function, and let $\varepsilon > 0$. If $\ell$ is sufficiently small, then on at least a $1 - \varepsilon$ fraction of the triangles in 
the $\ell$-mesh that are contained in  $\hexagon_v \cap(U_\eps \cup U'_\eps)$, we have the following two properties. First, the piecewise linear approximation $\tilde{f}$ agrees with $f$ to within $\ell\varepsilon$. Second, for at least a $1 - \varepsilon$ fraction (in measure) of the points $x$ of the triangle, the tilt $f'(x)$ exists and is within $\varepsilon$ of $\tilde{f}'(x).$

\end{lemma}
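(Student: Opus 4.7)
The plan is to deduce the statement from quantitative differentiation applied to the Lipschitz function $f$. Since $f$ is an asymptotic height function, $\nabla f$ lies in the compact convex set $K$ wherever it exists, so $f$ is Lipschitz with constant bounded by $\mathrm{diam}(K)$, and by Rademacher's theorem $\nabla f$ exists at almost every $x \in \hexagon_v \cap(U_\eps \cup U'_\eps)$. The argument below is applied separately to $f_{\up}$ on $U_\eps$ and to $f_{\lo}$ on $U'_\eps$; the two cases are identical, so I describe only the former.

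First I would invoke Egorov's theorem: for any $\delta > 0$, there is a compact set $E \subseteq U_\eps$ with $|U_\eps \setminus E| < \delta$ on which $\nabla f_{\up}$ is uniformly continuous, so there exists $\ell_0 > 0$ such that $|\nabla f_{\up}(x) - \nabla f_{\up}(y)| < \varepsilon/3$ whenever $x, y \in E$ and $|x-y| \leq \ell_0$. Next I would classify a triangle $T$ of the $\ell$-mesh (with $\ell < \ell_0$) as \emph{good} if $|T \cap E|/|T| \geq 1 - \varepsilon$ and if each edge of $T$ meets $E$ in a set of one-dimensional measure at least $(1 - \varepsilon)\ell$. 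A standard volume/Fubini count shows that at most an $O(\delta/\varepsilon)$ fraction of triangles can fail either test, so choosing $\delta$ a sufficiently small power of $\varepsilon$ keeps the bad fraction below $\varepsilon$.

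On each good triangle $T$ I would fix a reference point $x_0 \in T \cap E$. The uniform-continuity bound immediately gives $|\nabla f_{\up}(x) - \nabla f_{\up}(x_0)| < \varepsilon/3$ for all $x \in T \cap E$, which establishes the second property once $\tilde{f}'$ is shown to be close to $\nabla f_{\up}(x_0)$. For the first property, writing $f_{\up}(y) - f_{\up}(x) = \int_0^1 \nabla f_{\up}(x + t(y-x)) \cdot (y - x)\, dt$ along any segment on which $\nabla f_{\up}$ is defined, and restricting to segments running between vertices of $T$ whose intersection with $E$ has measure at least $(1 - \varepsilon)\ell$, shows that the three vertex values of $f_{\up}$ agree with those of any affine function of gradient $\nabla f_{\up}(x_0)$ up to $O(\varepsilon \ell)$ errors. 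Hence the linear interpolant $\tilde{f}_{\up}$ has gradient within $O(\varepsilon)$ of $\nabla f_{\up}(x_0)$, and integrating the gradient bound over $T$ yields $|f_{\up} - \tilde{f}_{\up}| \leq O(\varepsilon \ell)$ on $T$; rescaling constants gives the claimed $\ell \varepsilon$ bound.

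The main obstacle will be the combinatorial bookkeeping: a triangle $T$ must be good for three criteria simultaneously (area of $T \cap E$, linear measure of each edge $\cap E$, and the pointwise bound on $\nabla f_{\up}$), and the bad fractions from each criterion must sum to at most $\varepsilon$. Since all three error scales are polynomial in $\varepsilon$ and $\delta$, taking $\delta$ a sufficiently small power of $\varepsilon$ and rescaling the thresholds in the definition of a good triangle accommodates all three within the single $\varepsilon$ budget. Alternatively, one could bypass Egorov entirely by applying Theorem~\ref{thm:Feff} directly, which packages quantitative differentiation into a decomposition of the domain into GOOD cubes on which $f_{\up}$ is close to its best affine $L^2$ approximation and BAD cubes of small total volume; a GOOD cube can then be triangulated so that each triangle inherits the approximation up to absolute constants, and the BAD cubes contribute an $O(\eta)$ fraction of bad triangles.
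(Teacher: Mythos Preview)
The paper does not supply a proof of this lemma; it is quoted as Lemma~2.2 of Cohn--Kenyon--Propp \cite{CKP} and used as a black box, so there is no in-paper argument to compare against. Your outline is essentially the CKP argument: invoke Rademacher, then find a compact set $E$ of near-full measure on which $\nabla f$ is uniformly continuous (you call this Egorov, but the statement you use---continuity of a measurable function on a large compact set---is Lusin's theorem), and declare a mesh triangle good when both its area and its edges meet $E$ in large measure. The counting of bad triangles via Fubini and the deduction of the second property are correct.

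One step to tighten: your passage from ``$\nabla f$ close to $\nabla f(x_0)$ on $T\cap E$'' to ``$|f-\tilde f|\le O(\varepsilon\ell)$ on all of $T$'' is not fully justified by the edge hypothesis alone. Integrating along edges controls only the vertex values (hence $\nabla\tilde f$), but for an interior point $x\in T$ the segment from a vertex to $x$ need not meet $E$ in large one-dimensional measure, so you cannot directly integrate. The standard repair is to observe that $g=f-\tilde f$ is $O(1)$-Lipschitz with $\frac{1}{|T|}\int_T|\nabla g|=O(\varepsilon)$, and an $O(1)$-Lipschitz function with small $L^1$ gradient average on a triangle of side $\ell$ has oscillation $O(\varepsilon^{1/2}\ell)$; the square-root loss is then absorbed by reparametrizing the threshold $\varepsilon$. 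Your alternative suggestion of going through Theorem~\ref{thm:Feff} is also valid and in fact packages exactly this step.
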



\begin{definition}
  Let $X$ be a metric space.
  Recall that a function $g:X \ra \R$ is upper semicontinuous iff given $x \in X$ and $r > g(x)$ there is an open neighborhood of $U$ of $x$  such that $r > g(y)$  for each $y \in U$. 
Equivalently,  $g$ is upper semicontinuous iff $$\limsup_{x \ra y} g(x)  \leq g(y)$$ for all $y \in X.$
\end{definition}

\begin{proposition}\lab{prop:upper-semi} 
Let $\AHT_v^\infty$ be equipped with the $L^\infty(\hexagon_v^\infty)$ metric. Then, $S_{v, \edge}$ is an upper semicontinuous functional on $\AHT_v^\infty$.
\end{proposition}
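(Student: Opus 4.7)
The plan is to recognise $S_{v,\edge}(f) = -\int_{(\hexagon_v^\infty \setminus Q)^\circ}\sigma_\edge(\rho(\x),\partial f(\x))\,d\x$ as (minus) a classical integral functional whose integrand is convex in the gradient argument, and to derive upper semicontinuity of $S_{v,\edge}$ from the standard lower semicontinuity of $F(f) := \int \sigma_\edge(\rho(\x),\partial f(\x))\,d\x$ under $L^\infty$ convergence of $f$. The core ingredients are Proposition~\ref{prop:conv-final} (convexity and continuity of $\sigma_\edge(\rho,\cdot)$ on all of $K$) together with Mazur's lemma.

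Given $f_k \to f$ in $L^\infty(\hexagon_v^\infty)$ with all $f_k, f \in \AHT_v^\infty$, first pass to a subsequence achieving $\liminf_k F(f_k)$ as a limit. Since by definition of $\AHT_v^\infty$ each $\partial f_k$ takes values a.e.\ in the compact convex set $K$, the family $\{\partial f_k\}$ is uniformly bounded in $L^\infty$. By Banach--Alaoglu extract a further subsequence with $\partial f_k \rightharpoonup^* w$ in $L^\infty$; distributional continuity of differentiation combined with $f_k \to f$ in $L^\infty$ forces $w = \partial f$ a.e., in particular $\partial f(\x) \in K$. Mazur's lemma (applied to the weak convergence in $L^2$) now yields convex combinations $g_N := \sum_{j \geq N} \alpha_{N,j}\, \partial f_j$, $\alpha_{N,j} \geq 0$, $\sum_j \alpha_{N,j} = 1$, with $g_N \to \partial f$ in $L^2$, and, after a diagonal extraction, pointwise a.e.

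Next, apply Proposition~\ref{prop:conv-final}: $\sigma_\edge(\rho(\x),\cdot)$ is convex and continuous on all of $K$, uniformly bounded by $C_{|\rho(\x)|}$. Convexity gives the pointwise inequality
\[ \sigma_\edge(\rho(\x), g_N(\x)) \leq \sum_{j \geq N} \alpha_{N,j}\, \sigma_\edge(\rho(\x), \partial f_j(\x)). \]
Integrating, the right-hand side is at most $\sup_{j \geq N} F(f_j)$, while the left-hand side tends to $F(f)$ by dominated convergence, using the uniform boundedness together with the fact that $g_N(\x) \to \partial f(\x)$ a.e.\ and $\sigma_\edge(\rho(\x),\cdot)$ is continuous at $\partial f(\x)$. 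Letting $N \to \infty$ gives $F(f) \leq \liminf_k F(f_k)$, equivalently $\limsup_k S_{v,\edge}(f_k) \leq S_{v,\edge}(f)$.

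The most delicate point is justifying the dominated-convergence step when $\partial f(\x)$ happens to lie on $\partial K$, where $\sigma_\edge$ was defined via a directional liminf rather than a continuous extension; this is exactly what Proposition~\ref{prop:conv-final} is set up to handle, since it asserts continuity of $\sigma_\edge(\rho,\cdot)$ on the full closed set $K$, so no separate boundary argument is required. Continuity of $\rho(\cdot)$ in the bulk of $(\hexagon_v^\infty \setminus Q)^\circ$ together with Proposition~\ref{prop:sig-unif} ensures joint measurability of $(\x,a) \mapsto \sigma_\edge(\rho(\x),a)$ and a locally integrable dominating function, closing the argument.
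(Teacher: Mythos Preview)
Your argument is correct and takes a genuinely different route from the paper.  The paper follows the hands-on mesh approximation scheme of Cohn--Kenyon--Propp and Gorin: it fixes a fine $\ell$-mesh, replaces $f$ by its piecewise linear interpolant $\tilde f$, invokes Lemma~\ref{lem:6} to control the approximation on most triangles, and then on each small triangle $T$ uses convexity of $\sigma_\edge(\rho,\cdot)$ in the form of a Jensen inequality (Claim~\ref{cl:7.8}) to compare $\int_T \sigma_\edge(\rho,\partial h)$ against $\int_T \sigma_\edge(\rho,\partial\tilde f)$ whenever $\|h-\tilde f\|_\infty \le \varepsilon\ell$ on $T$.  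Proposition~\ref{prop:sig-unif} is used there to freeze $\rho$ on each small triangle.  Your approach, by contrast, is the abstract Tonelli--Serrin argument for weak lower semicontinuity of convex integral functionals: extract a weak-$*$ limit of the gradients, apply Mazur's lemma to get strongly convergent convex combinations, use pointwise convexity of $\sigma_\edge(\rho(\x),\cdot)$, and pass to the limit.  Your method is shorter and more conceptual, and makes the role of convexity completely transparent; the paper's approach is more constructive and reuses the same mesh machinery that appears elsewhere in the paper (Lemma~\ref{lem:12}, Lemma~\ref{lem:9.9}).

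One small point: in your final step you invoke dominated convergence with a ``locally integrable dominating function'', but on the full open domain $(\hexagon_v^\infty\setminus Q)^\circ$ the bound $C_{|\rho(\x)|}$ may be delicate near the boundary of $U,U'$.  Since $\sigma_\edge\ge 0$ (Proposition~\ref{prop:conv-final}), you can simply replace dominated convergence by Fatou's lemma: $F(f)=\int \lim_N \sigma_\edge(\rho(\x),g_N(\x))\,d\x \le \liminf_N \int \sigma_\edge(\rho(\x),g_N(\x))\,d\x$, and the rest of the chain of inequalities goes through unchanged.  This sidesteps any integrability concern for the dominating function.
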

\begin{proof}

We note that $\sigma_\edge(\rho, a_\infty)$ is continuous and convex in $a_\infty$ when $\rho \in \R_{> 0}^2$ is fixed,  by Proposition~\ref{prop:conv-final}.  Also by Proposition~\ref{prop:sig-unif}, the function $|\sigma_\edge(\rho', a'_\infty) - \sigma_\edge(\rho, a'_\infty)|$ defined for  $(\rho', a'_\infty) \in \bar{B}(\rho, \de) \times K$ is uniformly upper bounded by $o(1)$ as $\de \ra 0$, when $\rho \in \R_{> 0}^2$. The proof of this theorem is now essentially the same as the proof of \cite[Theorem 7.5]{GorinBook} but we provide details below. 
By Lemma~\ref{lem:6} (and retaining its notation), \beq \lim_{\eps \ra 0}\left(\int_{\hexagon_v^\infty}(-1)\sigma_\edge(\rho(\x), \partial f(\x))d\x - \int_{\hexagon_v^\infty}(-1)\sigma_\edge(\rho(\x), \partial \tilde{f} (\x))d\x \right) = 0.\lab{eq:cor:gorin}\eeq
Suppose now that $\eps > 0$ is fixed and the domain $\RR^*$ is an equilateral triangle $T$ of side length $\ell $ contained in  $\hexagon_v^\infty \cap U_\eps$ or $\hexagon_v^\infty \cap U'_\eps. $

\begin{claim}\lab{cl:7.8} Let $f$ be an asymptotic height function for $T$ and let $\tilde{f}$ be a linear function satisfying $|f - \tilde{f}| <  \varepsilon \ell$ on $\partial T$. Then, $$\frac{\int_T(-1) \sigma_\edge(\rho(\x), \partial f(\x))d\x}{|T|} \leq \frac{\int_T(-1) \sigma_\edge(\rho(\x), \partial \tilde{f}(\x))d\x}{|T|} + o(1)$$
as $\varepsilon \ra 0$ and $\ell  \ra 0$.
\end{claim}
\begin{proof}
By the concavity of $-\sigma_\edge(\rho, a_\infty)$ for fixed $\rho$ in the second argument and using Proposition~\ref{prop:sig-unif} to control the effect of the variation in the first argument, we have

\begin{equation}
\frac{\int_T (-1)\sigma_\edge(\rho(\x), \partial f(\x)) \, d\x}{|T|} \leq \frac{\int_T (-1)\sigma_\edge(\rho(\x), \text{Avg}(\partial f(\x)))d\x}{|T|} + o(1), \lab{eq:7.4}
\end{equation} 
as $\ell \ra 0$,
where $\text{Avg}(\partial f)$ is the average value of $\partial f$ on $T$. Also, we have
\[
\|\text{Avg}(\partial f) - \text{Avg}(\partial \tilde{f})\| = O(\varepsilon),
\]
which can be proven by reducing the integral over $T$ in the definition of the average value to the integral over $\partial T$ using the fundamental theorem of calculus and then using $\|f - \tilde{f}\| < \varepsilon \ell$. Therefore, by continuity of $\sigma_\edge$ in the second argument, uniformly for all $\x \in \hexagon_v^\infty \cap (U_\eps \cup U'_\eps)$, we have
\[
(-1)\sigma_\edge(\rho(\x), \text{Avg}(\partial f)) = (-1)\sigma_\edge(\rho(\x), \text{Avg}(\partial \tilde{f})) + o(1).
\]
Since $\tilde{f}$ is linear, its gradient is constant on $T$ and we can remove the average value in the right-hand side of the last identity. Combining with (\ref{eq:7.4}), the claim is proved. 
\end{proof}

Let $f$ be an asymptotic height function for $\hexagon_v^\infty$. Then for each $\gamma > 0$, we want to show that there exists $\delta > 0$ such that whenever $f$ and $h$ differ by at most $\delta$ everywhere, we should have
\[
\int_{\mathcal{R}^*}   (-1)\sigma_\edge(\rho(\x), \partial h)\, d\x \leq \int_{\mathcal{R}^*}  (-1)\sigma_\edge(\rho(\x), \partial f) d\x + \gamma.
\]
Take a piecewise linear approximation $\tilde{f}$ for $f$ from Lemma~\ref{lem:6}, and choose $\delta < \varepsilon \ell$. Let us call a triangle $T$ of the mesh ``good," if the two approximation properties of Lemma~\ref{lem:6} hold on it. Then as $\varepsilon \to 0$, we have by Claim~\ref{cl:7.8}
\[
\int_T  (-1)\sigma_\edge(\rho(\x), \partial h) \, d\x \leq \int_T  (-1)\sigma_\edge(\rho(\x),\partial \tilde{f}) \, d\x + \text{Area}(T) \cdot o(1),
\]
and thus by summing over all the good triangles $T$
\[
\int_{\text{good triangles in } \mathcal{R}^*}  (-1)\sigma_\edge(\rho(\x), \partial h) \, d\x \leq \int_{\text{good triangles in } \mathcal{R}^*}  (-1)\sigma_\edge(\rho(\x), \tilde{f}) \, d\x + \text{Area}(\mathcal{R}^*) \cdot o(1).
\]
Because $ (-1)\sigma_\edge(\rho(\x), a_\infty)$ is bounded when $x \in \RR^*$, the bad triangles only add another $O(\varepsilon)$. Thus, using also (\ref{eq:cor:gorin}), we get
\[
\int_{\mathcal{R}^*}  (-1)\sigma_\edge(\rho(\x),\partial h) \, d\x \leq \int_{\mathcal{R}^*}  (-1)\sigma_\edge(\rho(\x),\partial f) \, d\x + \text{Area}(\mathcal{R}^*) \cdot o(1).
\]


\end{proof}

\subsection{Main theorem}
 We have the following main theorem, which will be proved in the sequel.
\begin{theorem}\lab{theorem:main}
Let $\la_n:= (\la_{1, n}, \dots, \la_{n, n})$ and $\mu_n :=(\mu_{1, n}, \dots, \mu_{n, n})$ be respectively the eigenvalues of two independent random matrices $X_n$ and $Y_n$ such that $X_n/( \sqrt{\var_\la n})$ and $Y_n/( \sqrt{\var_\mu n})$ have the distribution of a GUE.  
Let $a_n$ be a random sample from the normalized Lebesgue measure on $\AHIVE_{\diag(\lambda^{(n)} \boxplus \mu^{(n)} \rel \ast) \rel \ast}.$ Let $\lim_{n \ra \infty} v_n/n = v,$ where $v_n$ is a vertex  of $\AHIVE_{\diag(\lambda^{(n)} \boxplus \mu^{(n)} \rel \ast) \rel \ast}$ in $T$. Then,
$\lim_{n \ra \infty}n^{-2} \E_n h_n(v_n)$ exists and equals  $\sup\limits_{f^\sharp \in \AHT_v^\infty} S_{v}(f^\sharp).$
\end{theorem}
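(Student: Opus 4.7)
The plan is to use the octahedron recurrence together with Speyer's theorem, as invoked after Definition~\ref{octa-def}, which represents $h_n(v_n)$ in distribution as $\tilde h(v_n) = \max_{\Xi} w_\Xi$, where $\Xi$ ranges over lozenge tilings of $\hexagon_{v_n}^n$ and $k,k'$ are built from two independent GUE minor processes. The red-lozenge-avoiding form \eqref{alt-weight} decomposes $w_\Xi$ into three summands: a lozenge part $w_\Xi^{\edge}$ (over blue and green lozenges away from the equator), a border-triangle part $w_\Xi^{\Delta}$ along $Q_n$, and the fixed hexagon weight $\weight'(\hexagon_v^n)$. The strategy is to show that after normalization by $n^{-2}$ and taking $\E_n$, each of these three pieces, when optimized over tilings whose scaled height function is near a prescribed $f^\sharp \in \AHT_v^\infty$, tends to $S_{v,\edge}(f^\sharp)$, $S_{v,\Delta}(f^\sharp)$ and $S_{v,\hexagon}$ respectively. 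The contribution of the border triangles is handled by Lemma~\ref{lem:8} and the hexagon contribution by Lemma~\ref{lem:9}, so the main task is to control the lozenge part, and then to glue local statements into a global matching upper/lower bound.

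For the \emph{upper bound}, I would apply Fefferman's quantitative differentiation (Theorem~\ref{thm:Feff}) to the scaled height function of any tiling $\Xi$ (viewed as a $1$-Lipschitz map) on $\hexagon_v^\infty \setminus Q$: this decomposes the hexagon into GOOD cubes of side $\sim n\ell$ where the height function is within $\eps \ell n$ of an affine function $a_\infty \in K$, plus BAD cubes of total area $O(\eta)$. On each GOOD cube, Proposition~\ref{prop:8.17} gives a variance bound of $O(m^4/\log^C m)$ for the maximum of the local lozenge functional over height functions in $B_m(a_\infty, m\psi(m))$, and by definition of $\sigma_{m,\edge}$ and Proposition~\ref{prop:conv-final}, the expected maximum equals $-m^2 \sigma_\edge(\rho, a_\infty) + o(m^2)$. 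Summing over cells, taking a union bound (affordable thanks to the polynomial bound on the number of affine profiles discretized at scale $o(1)$ in $K$ and the polylog variance), and absorbing the BAD cubes' contribution into the $O(\eta)$ term and the subexponential tail bounds after Theorem~\ref{theorem:final3}, we get $n^{-2}\E_n \max_\Xi w_\Xi^{\edge} \le S_{v,\edge}(f^\sharp) + o_{\eps,\eta}(1)$ whenever the tiling's height function lies in $\HT_v^n(f^\sharp, \eps n)$. Combined with Lemmas~\ref{lem:8} and~\ref{lem:9} and the upper semicontinuity from Proposition~\ref{prop:upper-semi}, this yields $\limsup_n n^{-2}\E_n h_n(v_n) \le \sup_{f^\sharp} S_v(f^\sharp)$ after using compactness of $\AHT_v^\infty$ (Lipschitz in a compact domain, Arzel\`a-Ascoli) to justify passing to a subsequential limit $f^\sharp$.

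For the \emph{lower bound}, fix $f^\sharp \in \AHT_v^\infty$ near the supremum. Using Fefferman's theorem again to linearize $f^\sharp$ on a GOOD cube decomposition, on each cube of side $2^{-k}$ with local affine slope $a_\infty$, use Proposition~\ref{prop:8.17} and Definition~\ref{def:8.1} to pick a local discrete height function $a_m$ whose lozenge weight against the local GUE patch realizes the local maximum up to $o(m^2)$. Proposition~\ref{prop:3} then lets us patch these local maximizers across cube boundaries: since $f^\sharp$ varies by $O(\eps\ell n)$ across any boundary and $a_m$ was drawn from $B_m(a_\infty, m\psi(m))$, the boundary data differ by at most some $\eps' m$ on adjacent cubes' boundaries, well within the affine-approximation threshold of Proposition~\ref{prop:3}, yielding a global tiling $\Xi$ whose total lozenge weight is at least $n^2 S_{v,\edge}(f^\sharp) - o(n^2)$. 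Combined with Lemma~\ref{lem:8} applied to a height function $f_n \in \HT_v^n(f^\sharp,\eps n)$ produced by Proposition~\ref{prop:6.4} (matched to the patched tiling) and Lemma~\ref{lem:9} for the hexagon weight, we obtain $\liminf_n n^{-2}\E_n h_n(v_n) \ge S_v(f^\sharp) - o(1)$; taking $\sup$ over $f^\sharp$ finishes the matching lower bound.

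The main obstacle will be the \emph{patching step}: Proposition~\ref{prop:3} requires an affine comparison function with a quantitative gap $\tilde\eps$ between boundaries, and the independence of the tilings chosen on adjacent GOOD cubes must be preserved so that the variance bound of Proposition~\ref{prop:8.17} aggregates additively (via $\psi(m) \to 0$ and $m\psi(m) \to \infty$ from Observation~\ref{obs:psi}) rather than multiplicatively. This is addressed by choosing a buffer annulus between adjacent cubes where we discard the optimization (contributing $O(\eps)$ loss to the area fraction), and by leveraging the local decorrelation of the GUE minor process (via Theorem~\ref{theorem:beadR}) on cubes separated by at least a polylogarithmic distance. The other delicate point is the behavior near the equator $Q$ and near the corners of $\hexagon_v^\infty$, where the height functions' gradients are forced to degenerate; here the contribution of border triangles handled by Lemma~\ref{lem:8}, together with the $O(n^{4/3}\log^{O(1)} n)$ uniform concentration over all tilings established inside that lemma, allows us to bound the equator contribution uniformly across tilings, and the degeneracy on a set of measure $O(\eps)$ near the boundary of $K$ contributes only $o_\eps(1)$ to the total by Proposition~\ref{prop:conv-final}.
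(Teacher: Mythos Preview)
Your overall architecture matches the paper's: split via \eqref{alt-weight} into lozenge, triangle, and hexagon pieces; handle the latter two by Lemmas~\ref{lem:8} and~\ref{lem:9}; use Fefferman's theorem and the surface-tension machinery for the upper bound on the lozenge part; patch local maximizers via Proposition~\ref{prop:3} for the lower bound; finish with Arzel\`a--Ascoli compactness of $\AHT_v^\infty$. This is exactly the skeleton of Lemmas~\ref{lem:12}, \ref{lem:9.8}, \ref{lem:9.9}, \ref{lem:9.10}.

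There is, however, one genuine gap in your lower bound. Proposition~\ref{prop:3} requires the affine comparison to lie in $(1-\hat\eps)K$, i.e.\ strictly in the interior. An arbitrary $f^\sharp\in\AHT_v^\infty$ may have $\nabla f^\sharp$ on or arbitrarily close to $\partial K$ on a set of positive measure (indeed this is forced near the non-equator edges of the hexagon), and on the cells where this happens you cannot apply Proposition~\ref{prop:3} at all, so the global patched tiling cannot be constructed. Your remark that ``degeneracy on a set of measure $O(\eps)$ near $\partial K$ contributes only $o_\eps(1)$'' is an upper-bound argument; for the lower bound the issue is existence of the patched object, not smallness of a contribution. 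The paper fixes this (Definition~\ref{def:dagger}, Observation~\ref{obs:dagger}, and the claim inside Lemma~\ref{lem:9.9}) by replacing $f^\sharp$ with $f^{\sharp,\de}=\de f^\ddagger+(1-\de)f^\sharp$, where $f^\ddagger$ is the unique asymptotic height function linear on each horizontal line; an explicit computation shows $\nabla f^{\sharp,\de}$ is at distance $\ge c\eps^2$ from $\partial K$ throughout $U_\eps\cup U'_\eps$ once $\de=\eps/10$, so Proposition~\ref{prop:3} applies uniformly, and the $O(\de)$ perturbation costs only $o_\eps(1)$ in $S_{v,\edge}$.

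Two smaller points. First, for the lower bound the paper does \emph{not} invoke Fefferman: since $f^\sharp$ is fixed (not a random height function being maximized over), a single mesh at a constant scale $C_0$ suffices, and this is simpler. Second, your appeal to decorrelation via Theorem~\ref{theorem:beadR} for the patching is not needed and not used: for the lower bound you are exhibiting one specific tiling, so no union bound arises; for the upper bound the paper aggregates cell contributions by linearity of expectation and the variance bound of Proposition~\ref{prop:8.17} (see Claim~\ref{cl:6}), not by any independence between cells.
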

Recall that the main result of \cite{NST} stated that under the same conditions, $\lim_{n \ra \infty}n^{-4} \var\, h_n(v_n) = 0.$

Therefore, we have the following corollary.
\begin{corollary}
  Let $h_n$ be the hive part of a random augmented hive with GUE boundary conditions, chosen from the measure $\p_n$. Then, $n^{-2}  h_n(v_n)$ converges in probability to 
  $\sup\limits_{f^\sharp \in \AHT_v^\infty} S_{v}(f^\sharp).$
\end{corollary}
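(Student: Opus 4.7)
The plan is to establish matching upper and lower bounds on $n^{-2}\E h_n(v_n)$, each equal to $\sup_{f^\sharp\in\AHT_v^\infty} S_v(f^\sharp)$. The starting point is the octahedron-recurrence identification from Section~\ref{recurrence-sec}: the random value $h_n(v_n)$ has the same distribution as $\tilde h(v_n) = \max_{\Xi} w_\Xi$ over lozenge tilings $\Xi$ of $\hexagon_{v_n}^n$, when $k,k'$ come from two independent rescaled GUE minor processes. Using the red-lozenge-avoiding form \eqref{alt-weight}, we decompose $w_\Xi$ into a blue/green lozenge contribution, a border triangle contribution, and the hexagon weight $\weight'(\hexagon_{v_n}^n)$. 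Each tiling $\Xi$ corresponds to a height function pair $f_n \in \HT_v^n$, so the task becomes computing $n^{-2}\E \max_{f_n} w_{\Xi(f_n)}$, and we want the three pieces to contribute $S_{v,\edge}(f^\sharp)$, $S_{v,\De}(f^\sharp)$ and $S_{v,\hexagon}$ respectively when $f_n$ is restricted to lie near a fixed asymptotic height function $f^\sharp$.

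The crucial intermediate step is a local lemma asserting that for every $f^\sharp \in \AHT_v^\infty$ and $\eps>0$,
$$\lim_{n\to\infty} n^{-2} \E \max_{f_n \in \HT_v^n(f^\sharp,\eps n)} w_{\Xi(f_n)} = S_v(f^\sharp) + o_\eps(1).$$
The $\De$-piece is controlled by Lemma~\ref{lem:8} and the hexagon piece by Lemma~\ref{lem:9}. For the lozenge piece, I would apply Fefferman's theorem (Theorem~\ref{thm:Feff}) with a parameter $\eps' \ll \eps$ to decompose $(\hexagon_v^\infty\setminus Q)^\circ$ into dyadic cubes on which $f^\sharp$ is $\eps'$-close in $L^\infty$ to its best affine approximation, except on a bad set of small total area. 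On a good cube of side $\ell_j$ positioned near $\lceil n\mathbf{x}_j\rceil$, the lozenge weight contribution from height functions $f_n$ near the local affine approximation $a_\infty^{(j)}$ concentrates by Proposition~\ref{prop:8.17} (applied at scale $m \asymp n\ell_j$) around $-\ell_j^2 n^2\, \sigma_{m,\edge}(\rho(\mathbf{x}_j), a_\infty^{(j)})$; by Proposition~\ref{prop:conv-final} this equals $-\ell_j^2 n^2\sigma_\edge(\rho(\mathbf{x}_j),a_\infty^{(j)}) + o(\ell_j^2 n^2)$, and Proposition~\ref{prop:sig-unif} absorbs the variation of $\rho$ within the cube. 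Summing over good cubes produces a Riemann sum converging to $n^2 S_{v,\edge}(f^\sharp)$, while bad cubes contribute $o_\eps(n^2)$ by the uniform bound on $\sigma_\edge$. The cube-wise maximizing tilings are then glued into a single admissible height function in $\HT_v^n(f^\sharp,\eps n)$ by repeated application of Proposition~\ref{prop:3}, with Proposition~\ref{prop:6.4} providing a global approximate-affine background on which the local adjustments are superimposed.

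For the upper bound, $\AHT_v^\infty$ is equicontinuous (gradients lie in the bounded set $K$) and uniformly bounded, hence compact in $L^\infty$ by Arzel\`a-Ascoli, so it admits a finite $\eps$-net $\{f^{(1)},\dots,f^{(N_\eps)}\}$. Every $f_n\in\HT_v^n$ rescales to a function within $L^\infty$-distance $\eps$ of some $f^{(i)}$, giving $h_n(v_n) = \max_i \max_{f_n\in \HT_v^n(f^{(i)},\eps n)} w_{\Xi(f_n)}$. Each inner maximum is a log-concave functional of the GUE data (by Theorem~\ref{theorem:prekopa}) whose fluctuations are $o(n^2)$ in view of Theorem~\ref{theorem:final3} combined with Lemma~\ref{lem:17}, so $\E \max_i X_i = \max_i \E X_i + o(n^2)$ for fixed $N_\eps$. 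The local lemma then yields $\limsup_n n^{-2}\E h_n(v_n) \le \sup_{f^\sharp} S_v(f^\sharp)$ after sending $n\to\infty$ and $\eps\to 0$, with the upper semicontinuity from Proposition~\ref{prop:upper-semi} ensuring that the finite-net values never exceed the supremum asymptotically. The matching lower bound is easier: pick $f^\sharp$ nearly attaining the supremum, realise it as a discrete height function via Proposition~\ref{prop:6.4}, and apply the local lemma.

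The main obstacle will be the lozenge piece of the local lemma: one must maintain uniform bead-model concentration across cubes of varying scales simultaneously, patch the cube-wise maximizers into a legitimate global tiling without losing more than $o(n^2)$ in total weight, and absorb the interface error between good and bad cubes. Fefferman's theorem, Proposition~\ref{prop:8.17}, and the patching Proposition~\ref{prop:3} are the natural tools, but keeping track of the $o_\eps(1)$ error terms as the parameters $\eps,\eps',m,n,\ell_j$ are sent to their limits in the correct order (in particular $n\ell_j\to\infty$ but $\ell_j\to 0$ after $n\to\infty$, while the Fefferman bad-cube measure $\eta$ is sent to zero only at the end) is the delicate point.
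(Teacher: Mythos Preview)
In the paper the Corollary is essentially a one-line consequence of two earlier results: Theorem~\ref{theorem:main} gives $\lim_{n\to\infty} n^{-2}\E_n h_n(v_n) = \sup_{f^\sharp} S_v(f^\sharp)$, and Theorem~\ref{theorem:final3} (the main result of \cite{NST}) gives $n^{-4}\var h_n(v_n)\to 0$; Chebyshev's inequality then yields convergence in probability. Your proposal does not take this route. Instead you attempt to reprove Theorem~\ref{theorem:main} from scratch, and you never actually carry out the final passage from convergence of $n^{-2}\E h_n(v_n)$ to convergence in probability of $n^{-2}h_n(v_n)$, which is what the Corollary asserts. That step needs the variance bound and should be stated.

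Regarding your sketch of the Main Theorem itself, the overall architecture (decompose into lozenge, triangle, and hexagon pieces; invoke Lemmas~\ref{lem:8} and~\ref{lem:9}; use Fefferman's decomposition together with Proposition~\ref{prop:8.17} and patching via Proposition~\ref{prop:3}) matches the paper's Lemmas~\ref{lem:12} and~\ref{lem:9.9}. However, there is a genuine gap in your upper bound. You apply Theorem~\ref{thm:Feff} to the \emph{fixed} limit $f^\sharp$, obtaining cubes on which $f^\sharp$ is nearly affine with tilt $a_\infty^{(j)}$. But the maximum you must control is over all $f_n\in\HT_v^n(f^\sharp,\eps n)$, and the constraint $\|n^{-1}f_n(n\cdot)-f^\sharp\|_\infty<\eps$ gives only \emph{global} $L^\infty$ control at scale $\eps n$; on a cube of sidelength $n\ell_j$ with $\ell_j\ll\eps$ this says nothing about the local tilt of $f_n$, so you cannot place $f_n|_{T'}$ in $B_m(a_\infty^{(j)},m\eps')$ and invoke Proposition~\ref{prop:8.17} with the tilt you claim. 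The paper avoids this by introducing two length scales: an outer $\ell_1$-mesh (at scale comparable to $\eps n$) on which $f_n$ is compared to $f^\sharp$, and an inner constant-scale $C_0$ mesh on which Theorem~\ref{thm:Feff} is applied to each \emph{individual} $f_n$ (not to $f^\sharp$), producing cubes $T'$ on which that particular $f_n$ is close to an affine $L_{T',f_n}$. The max over $f_n$ is then handled via Claim~\ref{cl:6}, which bounds the deviation of $\max_{\bar f_n}\langle\partial g_n,\partial\bar f_n\rangle$ from its expectation uniformly over all possible local tilts $L_{T'}$ at once, using log-concavity and the operator-norm bound from Proposition~\ref{prop:8.17}. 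Your $\eps$-net argument over $\AHT_v^\infty$ and appeal to Theorem~\ref{theorem:final3} at this stage does not substitute for this, since Theorem~\ref{theorem:final3} concerns $\var h_n(v_n)$ itself, not the variance of the restricted maxima $\max_{f_n\in\HT_v^n(f^{(i)},\eps n)} w_{\Xi(f_n)}$.
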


\subsection{Upper bound on $h(v)$}
{\it Definition~\ref{def:HT} restated:} Given $f \in \HT_v^\infty$ and positive $\eps$, let $\HT_v^n(f, \eps n)$ denote the set of all functions in $f_n \in \HT_v^n$, such that $| \frac{f_n}{n} (n x) - f(x)| < \eps|$ for all $x \in \hexagon^\infty_v$.

\begin{lemma}\lab{lem:12}
$$\limsup_{n \ra \infty}  \E_n  n^{-2} \max_{f_n \in \HT_v^n(f^\sharp, \eps n)}  \langle \partial g_n|_{\hexagon_v^n \cap (U_{n, \eps} \cup U'_{n, \eps})}, \partial f_n|_{\hexagon_v^n \cap (U_{n, \eps} \cup U'_{n, \eps})} \rangle  \leq  S_{v, \edge}(f^\sharp) + o_\eps(1).$$
\end{lemma}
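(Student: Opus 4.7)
The plan is to use Fefferman's quantitative differentiation (Theorem~\ref{thm:Feff}) to reduce the global max over height functions close to $f^\sharp$ to a Riemann sum over affine patches, and then evaluate each patch using the local bead-model characterization of $\sigma_\edge$.

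First I would fix auxiliary parameters $\eps^\flat, \eta^\flat \to 0$ and a mesoscopic scale $m = m(n) \to \infty$ with $m/n \to 0$, and apply Theorem~\ref{thm:Feff} to $f^\sharp$ restricted to $\hexagon_v^\infty \cap (U_\eps \cup U'_\eps)$ (Lipschitz since $\nabla f^\sharp \in K$), obtaining GOOD cubes $Q_j$ of side $s_j \ge m/n$ on which $|f^\sharp - L_{Q_j}| \le \eps^\flat s_j$, plus BAD cubes of total area $\le \eta^\flat$. Any $f_n \in \HT_v^n(f^\sharp,\eps n)$ then satisfies $|f_n - n L_{Q_j}| \le \eps n + \eps^\flat n s_j$ on the lattice patch $nQ_j$, placing $f_n|_{nQ_j}$ inside the ball $B_{m_j}(nL_{Q_j}, m_j \eps^*)$ of Proposition~\ref{prop:8.17}, where $m_j = n s_j$ and $\eps^* = O(\eps/s_j + \eps^\flat)$. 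The trivial bound $\max_{f_n}\sum_j \langle\cdot,\cdot\rangle_{nQ_j} \le \sum_j \max \langle\cdot,\cdot\rangle_{nQ_j}$, combined with Theorem~\ref{theorem:beadR} (local $W_1$-convergence of the rescaled GUE-minor patches to the marked bead model $\MM_{m_j,\x(\rho(\x_j))}$), Proposition~\ref{prop:8.17} (the variance of the local max is $O(m_j^4/\log^C m_j)$), and Definition~\ref{def:8.1} then yields
\[
n^{-2} \E_n \max_{f_n}\!\!\sum_{j \in \mathrm{GOOD}}\!\!\langle \partial g_n|_{nQ_j}, \partial f_n|_{nQ_j}\rangle \;\le\; -\!\!\sum_{j\in\mathrm{GOOD}}\!\! s_j^2\,\sigma_{m_j,\edge}(\rho(\x_j), \nabla L_{Q_j}, \eps^*) + o(1).
\]
The second (uniform lower-bound) clause of Proposition~\ref{prop:conv-final}, $\sigma_{m,\edge} \ge \sigma_\edge - \eps'$ on compact subsets of $\R_{>0}^2 \times K$ for large $m$, converts this into $-\sum_j s_j^2\, \sigma_\edge(\rho(\x_j), \nabla L_{Q_j}) + o(1)$. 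The Lipschitz bound on $\sigma_\edge$ in its second argument, together with $\|\nabla L_{Q_j} - \nabla f^\sharp\|_{L^1(Q_j)} = o(1)$ as $\eps^\flat \to 0$ (a standard property of best-affine $L^2$-approximants, also underlying Lemma~\ref{lem:6}), turns the Riemann sum into $S_{v,\edge}(f^\sharp) + O(\eps^\flat) + o(1)$. The BAD contribution is absorbed using Lemma~\ref{lem:TV}-style eigenvalue concentration: individual lozenge weights are $O(1)$ with overwhelming probability, so the BAD total is $O(\eta^\flat n^2)$, contributing $O(\eta^\flat) = o_\eps(1)$ after normalization.

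The main obstacle will be the simultaneous tuning of the four scales: Theorem~\ref{thm:Feff} requires $(\eps^\flat)^{\nnn+2}\eta^\flat k \ge C^\sharp$; one needs $m = n\, 2^{-k} \to \infty$ (so $k \lesssim \log_2 n$); and crucially, Proposition~\ref{prop:conv-final}'s uniform convergence is stated for the radius $\psi(m)$ whereas our effective radius is $\eps^*$, so one must close the gap via monotonicity of $\sigma_{m,\edge}$ in its third argument together with the uniform lower-bound clause of Proposition~\ref{prop:conv-final}. A compatible tuning such as $\eps^\flat \asymp \psi(m)$, $k \asymp (\log n)^{1/2}$, and $\eta^\flat$ a suitable power of $\psi(m)$ resolves the tension and delivers the stated $o_\eps(1)$ error in the limit $n \to \infty$. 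A secondary subtlety is that $\sigma_\edge$ is defined by a liminf on $K^o$ and extended by boundary continuity (Definition~\ref{def:8.7}); patches whose affine piece has slope near $\partial K$ are handled either by Proposition~\ref{prop:upper-semi} (upper semicontinuity of $S_{v,\edge}$) or by the directional-limit definition on $\partial K$.
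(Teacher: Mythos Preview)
There is a genuine gap in where you apply Fefferman's decomposition. You apply Theorem~\ref{thm:Feff} once to the fixed target $f^\sharp$, and then argue that any $f_n\in\HT_v^n(f^\sharp,\eps n)$ lands in $B_{m_j}(nL_{Q_j},m_j\eps^*)$ with $\eps^*=O(\eps/s_j+\eps^\flat)$. But the global closeness $|f_n-nf^\sharp|\le\eps n$ is only useful at scale $n$; on a GOOD cube of side $s_j$ it contributes $\eps/s_j$ to the relative radius, and Fefferman's constraint $(\eps^\flat)^{4}\eta^\flat k\ge C^\sharp$ forces $k$ large, so the smallest GOOD cubes have $s_j=2^{-k}$ and $\eps^*\ge\eps\,2^k$ is not small. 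Monotonicity of $\sigma_{m,\edge}(\rho,a,\cdot)$ in the radius (larger radius gives smaller $\sigma_{m,\edge}$, Lemma~\ref{lem:1}~C) then goes the \emph{wrong} way for the upper bound you need, namely $\sigma_{m_j,\edge}(\rho,a,\eps^*)\ge\sigma_\edge(\rho,a)-o_\eps(1)$; in fact for $\eps^*$ of order one the ball $B_{m_j}(a,m_j\eps^*)$ contains height functions of essentially all tilts and the left side can be close to $0$. Your proposed tuning ``$\eps^\flat\asymp\psi(m)$, $k\asymp(\log n)^{1/2}$'' does not resolve this, since $\eps\,2^k$ still diverges.

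The paper avoids this by applying Theorem~\ref{thm:Feff} to \emph{each $f_n$} (not to $f^\sharp$), at a bounded absolute scale $C_0$; this yields $|f_n-L_{T'}|\le\varepsilon\,\sidelength(T')$ directly on every GOOD $T'$ with $\varepsilon=\psi(2^k)$. Because the decomposition now depends on $f_n$, one cannot simply sum over the GOOD cells of $f_n$; instead the paper introduces the random variable $Y$ which sums over \emph{all} dyadic cells $T'$ with $\sidelength(T')\in[2^{-k}C_0,C_0]$ and takes the max over \emph{all} admissible tilts $L_{T'}$, bounding $\E_n Y$ by concentration (Proposition~\ref{prop:8.17}) and a union over the $O(\sidelength(T')^2)$ possible tilts (Claim~\ref{cl:6}). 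After subtracting $Y$, one is left with the deterministic quantity $\sum_{T'\text{ GOOD w.r.t. }f_n}(-1)|T'|\sigma_\edge(\rho,a_{T'})$; the crucial step you are missing is then \emph{convexity} of $\sigma_\edge(\rho,\cdot)$ (Proposition~\ref{prop:conv-final}), which collapses the many local tilts $a_{T'}$ on $T'\subseteq T$ into the single average tilt $a_T$ on an $\ell_1$-mesh triangle $T$, after which upper semicontinuity (Proposition~\ref{prop:upper-semi}) compares $f_{n,\ell_1}$ with $f^\sharp$. A smaller point: your BAD-cube bound ``lozenge weights are $O(1)$ with overwhelming probability'' is not correct uniformly (gaps near the spectral edge are large); the paper handles this via the log-concavity estimate Lemma~\ref{lem:reuse}.
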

\begin{proof}

As in Theorem~\ref{thm:Feff}, let $\varepsilon, \eta > 0$ and $k \geq 1$ where $\varepsilon^4 \eta k \geq C^\sharp$ (a large
enough absolute constant
). We will further assume that $\eta = \varepsilon$ and that both tend to $0$ as  $\eps \ra 0$ at some sufficiently slow rate to be determined later. We will consider two length scales: the first is $\ell_1 = n\eps_1$. For fixed $\eps$, $\eps_1$ is fixed. The fact that this mesh is not a lattice mesh will not affect us.  The second length scale given by an absolute constant $C_0$ (equal to a positive  integer  power of $2$) that is sufficiently large that  $2^k < C_0^{\frac{1}{2}}.$ 

Now  subdivide the upper and lower trapezoids of $\hexagon_v^n$ (separately) into dyadic triangles of an $\ell_1$-mesh. Further divide this $\ell_1$-mesh into a $C_0$-mesh. 
 In general, we will be considering the maximal subset of a triangle of the $\ell$-mesh which can be subdivided into dyadic $C_0$-triangles. The remaining portion of  will have a normalized area that is $O(n^{-1})$, which will be handled along with the BAD triangles in Theorem~\ref{thm:Feff}.
Given any dyadic triangle $T'$, let  $\sidelength(T')$ denote the sidelength of $T'$.
Let $\HT_{v, T'}^n(L, \varepsilon \sidelength(T'))$ denote the set of height functions defined on the dyadic triangle $T'$ that are within $\varepsilon \sidelength(T')$ (in $\sup$ norm) of an affine function whose linear component equals $L$.


In the inequality in Claim~\ref{cl:3} below, the summation over $T'$ is over all dyadic triangles that have sidelength in the interval $[2^{-k} C_0, C_0]$ and are proper subsets of some triangle of the $\ell_1$-mesh.  The maximum over $L_{T'}$ in the subscript is over the set of linear asymptotic height functions corresponding to possible linear interpolations of height functions over $T'$ that agree with some height function on the vertices. Further, $\varepsilon $ is chosen to be 
$\psi(2^k) = \frac{1}{k^{\frac{1}{10}}}$ by Definition~\ref{def:psi}.

Given a random variable $Z$, let $Z_+$ denote $\max(0, Z).$
Let $X$ denote the random variable
  $$\left(\max_{f_n \in \HT_v^n(f^\sharp, \eps n)}  \langle \partial g_n|_{\hexagon_v^n \cap (U_{n, \eps} \cup U'_{n, \eps})}, \partial f_n|_{\hexagon_v^n \cap (U_{n, \eps} \cup U'_{n, \eps})} \rangle -  n^2 S_{v, \edge}(f^\sharp)\right)_+.$$ Let $\NN(T')$  be the set of linear approximations of height functions on $T'$ that agree with some height function on the vertices.
  Let $$Y = \sum_{T'} \max_{\partial L_{T'} \in \NN(T')} \left| \max_{\bar{f}_n|_{T'} \in \HT_{v, T'}^n(L_{T'}, \varepsilon  \sidelength(T'))}   \langle \partial g_n|_{T'}, \partial \bar{f}_n|_{T'} \rangle  - \E_n  \max_{\bar{f}_n|_{T'} \in \HT_{v, T'}^n(L_{T'},  \varepsilon  \sidelength(T'))}  \langle \partial g_n|_{T'}, \partial \bar{f}_n|_{T'} \rangle\right|.$$

\begin{claim} \lab{cl:3}  \begin{equation}\lab{eq:**}  \E X \leq  \E Y + o_\eps(n^2). \end{equation}  \end{claim}
\begin{proof}

  Let $f_n$ denote an arbitrary function in $\HT_v^n$. 
  Set $F$ in Theorem~\ref{thm:Feff} (after the necessary rescaling of domain) to the piecewise linear extension of $f_n$  restricted to a fixed $C_0$-triangle $\tilde{T}$. By an application of Theorem~\ref{thm:Feff}, 
  every triangle $\tilde{T}$ of the $C_0$-mesh can be subdivided dyadically into GOOD and BAD triangles (denoted $T'$ and $T''$ respectively),  such that  
  \begin{enumerate}
  \item[(I)] The GOOD triangles have sidelength $\geq 2^{-k} C_0$. For each GOOD triangle $T'$, we have $|F - L_{T'}| \leq \varepsilon \de_{T'}$ on $T'$, where $L_{T'}$ is the linear function that agrees with  $F$ on the vertices of $T'$. 
  \item[(II)] The BAD triangles $T''$ have sidelength exactly $2^{-k}C_0$, and their total area is at most $\eta$.
  \end{enumerate}
 
  Let the magnitudes (\ie negatives) of the weights of the blue and green rhombi be denoted $x_1, \dots, x_m$. Their joint distribution is log-concave by Theorem~\ref{theorem:prekopa}. Let $k \leq C \eta n^2$ be the total number of blue and green rhombi that have a non-trivial intersection with the BAD triangles $T''$ of some height function $f_n$. Then, by Theorem~\ref{theorem:Tao1} and interlacing, $\E x_i \leq C\eps^{-\frac{1}{2}}.$ Therefore, by Lemma~\ref{lem:reuse},
\begin{equation}
 \E\max\limits_{|S| \leq k} \sum\limits_{i \in S} x_i  \leq  o_\eps(n^2).\lab{eq:7.5}
\end{equation}
  Let $L_{T', f_n}$ denote the linear approximant to  $f_n$ on $T'$ that agrees with $f_n$ on the vertices of $T'$. 
 
 Then by Theorem~\ref{thm:Feff} and (\ref{eq:7.5}), 
$$ \E_n\max_{f_n \in \HT_v^n(f^\sharp, \eps n)}  \langle \partial g_n|_{\hexagon_v^n \cap (U_{n, \eps} \cup U'_{n, \eps})}, \partial f_n|_{\hexagon_v^n \cap (U_{n, \eps} \cup U'_{n, \eps})} \rangle \leq $$ 
$$ \E_n\max_{f_n \in \HT_v^n(f^\sharp, \eps n)} \sum_{\substack{T' \text{\, GOOD}\\\text{w.r.t\,} f_n}}  \max_{\bar{f}_n \in \HT_{v, T'}^n(L_{T', f_n},  \varepsilon  \sidelength(T'))}  \langle \partial g_n|_{T'}, \partial \bar{f}_n|_{T'} \rangle  + o_\eps(n^2) \leq$$
$$  \E_nY + \max_{f_n \in \HT_v^n(f^\sharp, \eps n) }\sum_{\substack{T' \text{\, GOOD}\\\text{w.r.t\,} f_n}}  \E_n  \max_{\bar{f}_n \in \HT_{v, T'}^n(L_{T', f_n},  \varepsilon  \sidelength(T'))}  \langle \partial g_n|_{T'}, \partial \bar{f}_n|_{T'} \rangle + o_\eps(n^2). $$ 
Let $f_{n, \ell_1}$ denote the piecewise linear extension of $f_n$ restricted to the $\ell_1$-mesh. 

The first inequality below is  because $2^{-k} C_0 \ra \infty$ as $\eps \ra 0$.  For any height function $f_n \in \HT_v^n(f^\sharp, \eps n)$, $f_{n, \ell_1}$ is an asymptotic height function, in which the piecewise linear pieces have sidelength $\ell_1$. For a fixed triangle $T$ of the $\ell_1$-mesh, let $\rho(T)$ denote the value of $\rho$ at the center of $T$, and let $a_T$ denote the tilt of the linear function on $T$ whose values agree with that of $f_n$ when restricted to the endpoints of $T$. 
 For a fixed triangle $T$ of the $\ell_1$-mesh,
\begin{eqnarray}\sum_{\substack{T' \text{\, GOOD}\\\text{w.r.t\,} f_n\\ T' \subseteq T}}  \E_n  \max_{\bar{f}_n \in \HT_{v, T'}^n(L_{T', f_n},  \varepsilon  \sidelength(T'))}  \langle \partial g_n|_{T'}, \partial \bar{f}_n|_{T'} \rangle  & \leq & \sum_{\substack{T' \text{\, GOOD}\\\text{w.r.t\,} f_n\\ T' \subseteq T}}(-1) |T'| \sigma_{2^{-k}C_0, \edge}(\rho(T'), a_{T'}) + \eps_1^2 o_\eps(n^2).\nonumber \end{eqnarray}
Using the local invariance (within $T$) of $\sigma_{2^{-k}C_0, \edge}$ with respect to $\rho$ that we have from Theorem~\ref{theorem:Tao3},
$$\sum_{\substack{T' \text{\, GOOD}\\\text{w.r.t\,} f_n\\ T' \subseteq T}}(-1) |T'| \sigma_{2^{-k}C_0, \edge}(\rho(T'), a_{T'}) \leq 
\sum_{\substack{T' \text{\, GOOD}\\\text{w.r.t\,} f_n\\ T' \subseteq T}}(-1) |T'| \sigma_{2^{-k}C_0, \edge}(\rho(T), a_{T'}) + \eps_1^2 o_\eps(n^2)  $$
The RHS above is (by Proposition~\ref{prop:conv-final}), is less or equal to 
$$\sum_{\substack{T' \text{\, GOOD}\\\text{w.r.t\,} f_n\\ T' \subseteq T}}(-1) |T'| \sigma_\edge(\rho(T), a_{T'}) + \eps_1^2 o_\eps(n^2).$$
However,

$$\sum_{\substack{T' \text{\, GOOD}\\\text{w.r.t\,} f_n\\ T' \subseteq T}}(-1) |T'| \sigma_\edge(\rho(T), a_{T'}) + \eps_1^2 o_\eps(n^2)\leq (-1)|T|\sigma_\edge(\rho(T), a_{T}) + \eps_1^2 o_\eps(n^2),$$ where the last inequality uses the convexity of $\sigma_\edge(\rho, a_{T'})$ as a function of $a_{T'}$ from Proposition~\ref{prop:conv-final}.
Using the local invariance (within $T$) of $\sigma_\edge$ with respect to $\rho$, by Proposition~\ref{prop:sig-unif}, and controlling the total contribution of the bad triangles as was done in (\ref{eq:7.5}) we see that the RHS is less or equal to  $$(-1)\int_T \sigma_\edge(\rho(\x), a_{T})d\x + \eps_1^2 o_\eps(n^2).$$


Next, by the upper semicontinuity of $S_{v, \edge}$ from Proposition~\ref{prop:upper-semi},
  \begin{eqnarray} n^2 S_{v, \edge}(f_{n, \ell_1})+ o_\eps(n^2) \leq  n^2 S_{v,  \edge}(f^\sharp)+ o_\eps(n^2).\end{eqnarray}
The claim follows.
\end{proof}

Therefore, 
$$ \E_n\left[ \left(\max_{f_n \in \HT_v^n(f^\sharp, \eps n)}  \langle \partial g_n|_{\hexagon_v^n \cap (U_{n, \eps} \cup U'_{n, \eps})}, \partial f_n|_{\hexagon_v^n \cap (U_{n, \eps} \cup U'_{n, \eps})} \rangle -  n^2 S_{v, \edge}(f^\sharp)\right)_+\right] \leq 
 \E_n Y
+ o_\eps(n^2).$$


In order to prove $\E_n Y \leq o_\eps(n^2)$, it suffices to take $\NN = \NN(T')$ and prove the following.
\begin{claim}\lab{cl:6}  
Let   $$Y' = \sum_{T'} \max_{\partial L_{T'} \in \NN} \left| \max_{\bar{f}_n|_{T'} \in \HT_{v, T'}^n(L_{T'}, \varepsilon  \sidelength(T'))}   \langle \partial g_n|_{T'}, \partial \bar{f}_n|_{T'} \rangle  - \E_n  \max_{\bar{f}_n|_{T'} \in \HT_{v, T'}^n(L_{T'},  \varepsilon  \sidelength(T'))}  \langle \partial g_n|_{T'}, \partial \bar{f}_n|_{T'} \rangle\right|,$$
where the summation over $T'$ is over all dyadic triangles that have sidelength in the interval $[2^{-k} C_0, C_0]$ and are proper subsets of some triangle of the $\ell_1$-mesh.
Then, $$\E_n Y'\leq o_\eps(n^2). $$
\end{claim}
\begin{proof}
The variance of $ \langle \partial g_n|_{T'}, \partial \bar{f}_n|_{T'} \rangle$, a log-concave random variable,  is less than $\frac{C |T'|^2}{\log^A |T'|}$ for any fixed constant $A$ by Proposition~\ref{prop:8.17}. The number of distinct gradients that the linear approximants in $\NN$ can have is bounded above by $C \sidelength(T')^2$. Therefore, the expectation of $$\max_{\partial L_{T'} \in \NN} \left| \max_{\bar{f}_n|_{T'} \in \HT_{v, T'}^n(L_{T'}, \varepsilon  \sidelength(T'))}   \langle \partial g_n|_{T'}, \partial \bar{f}_n|_{T'} \rangle  - \E_n  \max_{\bar{f}_n|_{T'} \in \HT_{v, T'}^n(L_{T'},  \varepsilon  \sidelength(T'))}  \langle \partial g_n|_{T'}, \partial \bar{f}_n|_{T'} \rangle\right|,$$
is bounded above by $\frac{C |T'|}{\log^{A'} |T'|}$ for any fixed constant $A'$. The claim now follows from the linearity of expectation and the fact that $\sum_{T'} \frac{C |T'|}{\log^{A'} |T'|}$ is less than $\frac{C n^2}{\log(C_0/2^k)^A}$ which is in turn less than $o_\eps(n^2).$
\end{proof}

The lemma is proved.
\end{proof}

We have the following lemma.
\begin{lemma}\lab{lem:cl6}
$$ \sum_{\edge \not \subset U_{n,\eps} \cup U'_{n, \eps}} |\weight(\edge)| \ll \eps^{1/3} n^2$$
with overwhelming probability, where the sum is over all blue or green lozenges in $U$ or $U'$ that are not contained in $U_{n, \eps}$ or $U'_{n, \eps}$. 
\end{lemma}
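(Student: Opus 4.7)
The plan is to prove the stronger bound $\sum_\edge |\weight(\edge)| = O(\eps^{2/3} n^2)$ with overwhelming probability, which is $\ll \eps^{1/3} n^2$ for small $\eps$. By symmetry I handle only the upper trapezoid $U_n$. In the Gelfand--Tsetlin coordinates $(i,j)$ with $0 \leq i \leq j \leq n$ supplied by Proposition~\ref{gt-rem}(iv), the boundary strip $U_n \setminus U_{n,\eps}$ is covered (up to an $O(1)$-thickening, negligible for $\eps n \gg 1$) by three sub-strips of width $\eps n$: a top sub-strip $\{j \geq (1-\eps)n\}$, a left sub-strip $\{i \leq \eps n\}$ (near the top of the spectrum of each minor), and a diagonal sub-strip $\{j - i \leq \eps n\}$ (near the bottom). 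The structural fact I exploit is that $|\weight(\edge)|$ is a non-negative interlacing gap, so any partial sum is bounded by a corresponding full telescoping sum along a natural direction in the GT pattern.

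For the top sub-strip I will use the row telescopes $\sum_{i=1}^k (\la_{i,k+1} - \la_{i,k}) = a_{k+1,k+1} - \la_{k+1,k+1}$ and $\sum_{i=2}^k (\la_{i-1,k-1} - \la_{i,k}) = \la_{1,k} - a_{k,k}$, each of size $O(n)$ with overwhelming probability by Lemma~\ref{lem:TV} and the Gaussianity of $a_{k,k}$; summing over the $O(\eps n)$ levels in the top sub-strip gives $O(\eps n^2)$. For the left sub-strip, the column telescope $\sum_{j=i}^{n-1}(\la_{i,j+1} - \la_{i,j}) = \la_{i,n} - \la_{i,i} = O(n)$ bounds the sum of green weights, giving $O(\eps n^2)$ after summing over $i \in [1, \eps n]$. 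For the blue lozenges in the diagonal sub-strip, the along-diagonal telescope $\sum_i (\la_{i-1,i+\delta-1} - \la_{i,i+\delta}) = \la_{1,\delta+1} - \la_{n-\delta,n} = O(n)$ gives $O(\eps n^2)$ after summing over $\delta \in [0, \eps n]$.

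The two remaining cases---green lozenges in the diagonal sub-strip, and blue lozenges in the left sub-strip---require a finer argument, since the direct row telescope at fixed level only yields $O(n)$ per level and thus $O(n^2)$ overall. I will instead dominate each interlacing gap by an intra-level gap via the interlacing inequality $\la_{i,j+1} - \la_{i,j} \leq \la_{i,j+1} - \la_{i+1,j+1}$, and analogously for blue weights. Summing intra-level at fixed level $j$ over the top or bottom $\eps n$ eigenvalues telescopes to an edge-spread $\la_{1,j} - \la_{\lceil \eps n\rceil+1,j}$ or $\la_{j-\lceil \eps n\rceil,j} - \la_{j,j}$, which by Lemma~\ref{lem:TV} together with the $2/3$-power behavior of the semicircle quantile at its endpoints is $O(\eps^{2/3} \sqrt{nj})$ with overwhelming probability. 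Summing over $j$ from $1$ to $n$ contributes $O(\eps^{2/3} n^2)$ in total. A union bound over the $O(n)$ telescopes preserves overwhelming probability, and combining contributions from all three sub-strips (and symmetrically for $U'_n$) gives the claim.

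The main obstacle is obtaining the $\eps^{2/3}$ improvement in the two ``hard'' cases, since the naive row or column telescope loses by a factor of $\eps^{-2/3}$ there. The improvement uses the standard quantitative fact that the extreme $\eps n$ eigenvalues of a $j \times j$ GUE minor lie within a window of width $O(\eps^{2/3} \sqrt{nj})$ of the spectral edge, a consequence of the semicircle law made quantitative via Lemma~\ref{lem:TV}. Verifying this uniformly over $1 \leq j \leq n$---including treating the regime $j \leq \eps n$, where one uses the trivial range $O(\sqrt{nj})$ and checks that the summand $\sum_{j \leq \eps n} \sqrt{nj} = O(\eps^{3/2} n^2)$ is negligible---is the main technical step but is routine given the concentration tools already cited in this paper.
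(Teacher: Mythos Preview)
Your proof is correct and follows the paper's strategy: telescope the lozenge-weight sums to partial eigenvalue spreads at each level, then invoke rigidity (Lemma~\ref{lem:TV}) together with the $2/3$-power edge behaviour of the semicircle quantile. Your sub-strip and colour-by-colour case analysis is more explicit than the paper's terse summary (which simply records that the telescoping produces $O(\eps n)$ full spreads of size $O(n)$ and $O(n)$ partial edge spreads of size $O(\eps^{1/3} n)$), and in fact yields the sharper exponent $O(\eps^{2/3} n^2)$; the detour through the diagonal entries $a_{k,k}$ in the top sub-strip is valid but unnecessary, since the same interlacing domination you use in the two ``hard'' cases would bound each row sum by the full spread $\lambda_{1,k}-\lambda_{k,k}=O(n)$ directly.
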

\begin{proof}
The proof follows calculations on \cite[Page 1149]{NST}.
 The sum in the claim telescopes to be bounded by a sum of 
\begin{itemize}
\item[(i)] $O(\eps n)$ expressions of the form $\lambda_{k,1} - \lambda_{k,k}$ or $\mu_{k,1} - \mu_{k,k}$ for various $1 \leq k \leq n$;
\item[(ii)] $O(n)$ expressions of the form $\lambda_{k,1} - \lambda_{k,i}$ or $\mu_{k,1} - \mu_{k,i}$ for various $1 \leq i \leq k \leq n$ with $i = O(\eps n)$; and
\item[(iii)] $O(n)$ expressions of the form $\lambda_{k,i} - \lambda_{k,k}$ or $\mu_{k,i} - \mu_{k,k}$ for various $1 \leq i \leq k \leq n$ with $k-i = O(\eps n)$.
\end{itemize}
By eigenvalue rigidity (Lemma \ref{lem:TV}), all the expressions in (i) are of size $O(n)$ with overwhelming probability (\ie probability $1 - O(n^{-C})$ for any $C > 0$), while all the expressions in (ii), (iii) are of size $O( \eps^{1/3} n )$ with overwhelming probability.  The lemma follows.
\end{proof}

\begin{lemma}\lab{lem:9.8}
$$\limsup_{n \ra \infty}  \E_n  n^{-2} h_n(v_n)  \leq  \sup_{f^\sharp \in \AHT_v^\infty} S_v(f^\sharp).$$
\end{lemma}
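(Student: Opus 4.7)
The plan is to express $h_n(v_n)$ as a maximum over discrete height functions via the octahedron/tiling description and then reduce the supremum to a finite net in $\AHT_v^\infty$. Using the red-lozenge-avoiding form of the weight \eqref{alt-weight}, together with the bijection between lozenge tilings of $\hexagon_v^n$ and height function pairs in $\HT_v^n$, we may write
\begin{equation*}
h_n(v_n) \;=\; \max_{f_n \in \HT_v^n} W_n(f_n), \qquad W_n(f_n) \coloneqq \langle \partial g_n, \partial f_n\rangle + \langle g_n, \partial f_n\rangle_\Delta + \weight'(\hexagon_v^n),
\end{equation*}
where the first summand decomposes further into an ``interior'' bulk piece supported on $\hexagon_v^n \cap (U_{n,\eps}\cup U'_{n,\eps})$ and a ``boundary'' piece supported on its complement.

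Next, I would exploit compactness of $\AHT_v^\infty$ in $L^\infty$ (it is a family of uniformly Lipschitz pairs with fixed boundary data) to fix a finite $\eps$-net $\{f^\sharp_1,\dots,f^\sharp_N\}$ whose cardinality $N$ depends only on $\eps$. By Proposition~\ref{prop:6.4} every $f_n \in \HT_v^n$ lies in some $\HT_v^n(f^\sharp_i, \eps n)$, so
\begin{equation*}
h_n(v_n) \;\leq\; \max_{1 \leq i \leq N} X_i, \qquad X_i \coloneqq \max_{f_n \in \HT_v^n(f^\sharp_i,\,\eps n)} W_n(f_n).
\end{equation*}
For each fixed $i$, I would assemble the bound on $\E_n X_i$ by using Lemma~\ref{lem:12} for the interior lozenge contribution ($\leq n^2 S_{v,\edge}(f^\sharp_i) + o_\eps(n^2)$), Lemma~\ref{lem:cl6} together with the uniform Lipschitz bound on $\partial f_n$ for the boundary lozenge contribution ($\ll \eps^{1/3} n^2 = o_\eps(n^2)$ in expectation, with tail truncation handled by crude moment bounds on GUE eigenvalues), Lemma~\ref{lem:8} for the equator triangle contribution ($\leq n^2 S_{v,\Delta}(f^\sharp_i) + o_\eps(n^2)$), and Lemma~\ref{lem:9} for the hexagon term ($= n^2 S_{v,\hexagon} + o(n^2)$). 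Summing and comparing with Definition~\ref{def:Sv} yields $\E_n X_i \leq n^2 S_v(f^\sharp_i) + o_\eps(n^2)$.

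To pass from $\max_i \E_n X_i$ to $\E_n \max_i X_i$, I would observe that each $X_i$ is the supremum of a polynomial-in-$n$ collection of linear functionals of the underlying GUE minor eigenvalues, and hence a convex Lipschitz function of log-concave data; an application of Lemma~\ref{lem:17} (or equivalently a Poincar\'{e}--type inequality for log-concave measures, which is how it is used in \cite{NST}) together with Proposition~\ref{prop:8.17} for the variance of each maximum shows $\var_{\p_n} X_i = o_\eps(n^4)$. Since $N$ depends only on $\eps$, we get $\E_n \max_i X_i \leq \max_i \E_n X_i + o_\eps(n^2)$, whence
\begin{equation*}
n^{-2}\E_n h_n(v_n) \;\leq\; \max_{1 \leq i \leq N} S_v(f^\sharp_i) + o_\eps(1) \;\leq\; \sup_{f^\sharp \in \AHT_v^\infty} S_v(f^\sharp) + o_\eps(1).
\end{equation*}
Sending $\eps \to 0$ completes the argument.

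The main technical obstacle I expect is the fourth step: getting a genuinely effective concentration bound for each $X_i$ that is strong enough, after union-bounding over the net, to survive the extraction of the limsup. The boundary-edge estimate from Lemma~\ref{lem:cl6} is only \emph{with overwhelming probability}, so some care is needed in converting the pointwise-in-$f_n$ bound into an in-expectation bound uniform over the variational class $\HT_v^n(f^\sharp_i, \eps n)$; the log-concavity of the GUE minor process and the crude moment control available at the edge of the spectrum (Lemma~\ref{lem:TV}) are the right tools here, but one must check that the error terms remain $o_\eps(n^2)$ uniformly in $i$ as the net is refined.
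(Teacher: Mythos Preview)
Your approach is correct and mirrors the paper's: decompose $h_n(v_n)$ into bulk-lozenge, boundary-lozenge, triangle, and hexagon pieces, bound each via Lemmas~\ref{lem:12}, \ref{lem:cl6}, \ref{lem:8}, \ref{lem:9}, and then use compactness of $\AHT_v^\infty$ to pass to the supremum. The one substantive divergence is your ``fourth step'', the passage from $\max_i \E_n X_i$ to $\E_n \max_i X_i$.

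The paper does not invoke a separate concentration estimate $\var X_i = o(n^4)$ here. Instead, the proof of Lemma~\ref{lem:12} (Claims~\ref{cl:3} and~\ref{cl:6}) actually yields a \emph{pointwise} inequality
\[
\max_{f_n \in \HT_v^n(f^\sharp,\eps n)} \langle \partial g_n, \partial f_n\rangle \;\leq\; Y + Z + \bigl(\text{deterministic term} \leq n^2 S_{v,\edge}(f^\sharp) + o_\eps(n^2)\bigr),
\]
where $Y$ (the sum over \emph{all} dyadic triangles of the max-over-tilt deviation from the mean) and $Z$ (the bad-triangle contribution, controlled via Lemma~\ref{lem:reuse}) are random variables that do not depend on $f^\sharp$. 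Because the stochastic part is $f^\sharp$-independent, one may take the max over the finite net \emph{before} taking expectation at no cost; then $\E_n Y + \E_n Z \leq o_\eps(n^2)$ finishes the job. The analogous observation holds for the triangle and hexagon pieces: Lemmas~\ref{lem:8} and~\ref{lem:9} give fluctuations of order $n^{4/3+o(1)}$ and $n^{1+o(1)}$ uniformly in $f_n$. This is why the paper's proof simply cites Claim~\ref{cl:6} alongside compactness.

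Your route via $\var X_i = o(n^4)$ also works, but it effectively re-invokes the main theorem of~\cite{NST}. Note that Lemma~\ref{lem:17} together with Proposition~\ref{prop:8.17} alone is not quite sufficient, since Proposition~\ref{prop:8.17} only controls patches of size $m = o(n)$; one needs the full multi-scale argument of~\cite{NST} to obtain the global variance bound. The paper's route is more economical precisely because it extracts the uniformity in $f^\sharp$ directly from the structure of Claim~\ref{cl:3}, rather than appealing to concentration as a black box.
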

\begin{proof}
Note that by Lemma~\ref{lem:cl6},
$$|\limsup_{n \ra \infty}  \E_n  n^{-2} \max_{f_n \in \HT_v^n(f^\sharp, \eps n)}  \langle \partial g_n|_{\hexagon_v^n \setminus  (U_{n, \eps} \cup U'_{n, \eps})}, \partial f_n|_{\hexagon_v^n \setminus (U_{n, \eps} \cup U'_{n, \eps})} \rangle|  \leq  o_\eps(1).$$

Next, by Lemma~\ref{lem:12}, 
$$\limsup_{n \ra \infty}  \E_n  n^{-2} \max_{f_n \in \HT_v^n(f^\sharp, \eps n)}  \langle \partial g_n|_{\hexagon_v^n \cap (U_{n, \eps} \cup U'_{n, \eps})}, \partial f_n|_{\hexagon_v^n \cap (U_{n, \eps} \cup U'_{n, \eps})} \rangle  \leq  S_{v, \edge}(f^\sharp) + o_\eps(1).$$

Also, by Lemma~\ref{lem:8},
$$\limsup_{n \ra \infty}  \E_n  n^{-2} \max_{f_n \in \HT_v^n(f^\sharp, \eps n)}  \langle \partial g_n|_{Q_n}, \partial f_n|_{Q_n} \rangle_\De  \leq  S_{v, \De}(f^\sharp) + o_\eps(1).$$

Finally, by Lemma~\ref{lem:9}, $\E_n n^{-2} \weight'(\hexagon_v^n)$ converges as $n \ra \infty$.

Putting the preceding four facts together, we see that 
\beq \Lambda := \limsup_{n \ra \infty}  (\E_n  n^{-2} \max_{f_n \in \HT_v^n(f^\sharp, \eps n)}  \langle \partial g_n|_{\hexagon_v^n }, \partial f_n|_{\hexagon_v^n } \rangle & + & \nonumber \\
 \E_n  n^{-2} \max_{f_n \in \HT_v^n(f^\sharp, \eps n)}  \langle \partial g_n|_{Q_n}, \partial f_n|_{Q_n} \rangle_\De &+& \nonumber\\
\E_n n^{-2} \weight'(\hexagon_v^n)) & \leq & S_v(f^\sharp) + o_\eps(1).\lab{eq:3.2}\eeq

Recall the following.
\begin{theorem}[Arzelà–Ascoli]
    Let \( K \) be a compact metric space, and let \( \{ f_n \} \) be a sequence of real-valued functions on \( K \). If \( \{ f_n \} \) is uniformly bounded and equicontinuous, then there exists a subsequence \( \{ f_{n_k} \} \) that converges uniformly to a continuous function \( f: K \to \mathbb{R} \).
\end{theorem}

Together with Claim~\ref{cl:6}, (\ref{eq:3.2}) and the fact that $\AHT_v^\infty$ is compact in $L^\infty(\hexagon_v^\infty)$ (by the fact that $\AHT_v^\infty$ is closed and  the theorem of Arzelà–Ascoli, since these asymptotic height functions are Lipschitz and bounded) gives us, for any positive $\eps,$ $$\limsup_{n \ra \infty}  \E_n n^{-2} h_n(v_n) \leq   \sup_{f^\sharp \in \AHT_v^\infty} S_v(f^\sharp) + o_\eps(1).$$
This implies that 
$$\limsup_{n \ra \infty}  \E_n n^{-2} h_n(v_n) \leq  \sup_{f^\sharp \in \AHT_v^\infty} S_v(f^\sharp).$$

\end{proof}
\subsection{Lower bound on $h(v)$}

\begin{definition}\lab{def:dagger}
Let $f^\ddagger$ be the (unique) asymptotic height function on $\hexagon_v^\infty$ that is linear on every horizontal line segment contained in $\hexagon_v^\infty$. Likewise let $f^\ddagger_n$ be the asymptotic height function on $\hexagon_v^n$ that is linear on every horizontal line segment contained in $\hexagon_v^n$.
\end{definition}
Let $f^{\sharp, \de}$ denote the asymptotic height function on $\hexagon_v^\infty$, which equals $\de f^\ddagger + (1 - \de)f^\sharp$. 
Similarly, 
let $f_n^{\sharp, \de}$ denote the asymptotic height function on $\hexagon_v^n$, which equals $\de f_n^\ddagger + (1 - \de)f^\sharp_n$.
\begin{observation}\lab{obs:dagger}
Using the terminology of Definition~\ref{def:14} for the meaning of $f\vee g$, where $f$ and $g$ are height functions, we define $f \vee g$ to be the function that in the upper trapezoid takes the maximum of $f$ and $g$ and in the lower trapezoid takes the minimum of $f$ and $g$. Let the standard height function $f_{standard}$ be that corresponding to the tiling in Figure~\ref{fig:standard}. Restricted to the upper trapezoid, $f_{standard}$ is a pointwise lower bound, while restricted to the lower trapezoid, $f_{standard}$ is a pointwise upper bound on all possible height functions in $ \HT_v^n$.
We observe that for any positive $\eps$ and $\de$, for sufficiently large $n$, $\HT_v^n(f^{\sharp, \de}, \eps)$ is nonempty because
there exists a height function $f_n$ (which doesn't meet the boundary conditions) such that   $f_n - f_n^{\sharp, \de}$ on the upper trapezoid of $\hexagon_v^n$ belongs to  $[-3, -6]$  and on the lower trapezoid of $\hexagon_v^n$ belongs to $[3, 6]$ by Proposition~\ref{prop:6.4}. Then, $f_n' = f_{standard} \vee  f_n \in \HT_v^n(f^{\sharp, \de}, \eps)$ (and, in particular, does meet the boundary conditions).
\end{observation}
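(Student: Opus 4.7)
The plan is to exhibit a member of $\HT_v^n(f^{\sharp,\de},\eps)$ by producing a close lozenge-height approximant of $f^{\sharp,\de}$ via Proposition~\ref{prop:6.4} and then repairing its boundary values via the max/min combination of Definition~\ref{def:14} with the standard height function $f_{standard}$. First note that $f^{\sharp,\de} = \de f^\ddagger + (1-\de) f^\sharp \in \AHT_v^\infty$: the gradient constraint $\nabla f \in K$ is convex, and the boundary and equator linearity conditions from Definition~\ref{def:AHT} are affine, so they pass to convex combinations. Proposition~\ref{prop:6.4} then supplies a discrete local height function $f_n$ on the upper and lower trapezoids of $\hexagon_v^n$ with $\sup_v |f_n(v) - f_n^{\sharp,\de}(v)| < 3$. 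Since a height function on each (simply connected) trapezoid is determined only up to an additive integer, shift $f_n$ by integer constants $c_\up, c_\lo$ so that (a) $f_n$ lies strictly below $f_n^{\sharp,\de}$ by a bounded amount on the upper trapezoid and strictly above on the lower, and (b) the equator sum $f_{n,\up}+f_{n,\lo}$, which is a slope-$1$ linear function, coincides with $f_{standard,\up}+f_{standard,\lo}$; (b) pins down $c_\up+c_\lo$ uniquely and absorbs the $O(1)$ approximation discrepancy at the two equator endpoints, while (a) leaves a one-parameter family of valid choices for $c_\up - c_\lo$.

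Now define $f_n' := f_{standard} \vee f_n$. On the non-equator boundary of the upper trapezoid one has $f_n < f_{standard}$, since $f_{standard}$ and $f_n^{\sharp,\de}$ share their boundary values in $\HT_v^n$ and $f_n$ is strictly below $f_n^{\sharp,\de}$; hence $f_n' = f_{standard}$ there, and symmetrically on the lower trapezoid. On the equator, the identity $f_{standard,\up} + f_{standard,\lo} = f_{n,\up} + f_{n,\lo}$ forces $f_{standard,\up} \geq f_{n,\up}$ at a point if and only if $f_{standard,\lo} \leq f_{n,\lo}$ at the same point, so the $\vee$ picks either both $f_{standard}$-pieces or both $f_n$-pieces at each equator point, keeping $f_{n,\up}' + f_{n,\lo}'$ equal to the common slope-$1$ linear function. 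In the interior of each trapezoid, $f_{standard}$ differs from $f_n^{\sharp,\de}$ by an amount growing linearly in the distance to $\partial \hexagon_v^n$, so $f_n' = f_n$ in the interior; overall $\sup_v|f_n' - f_n^{\sharp,\de}| = O(1)$, yielding $\sup_{x \in \hexagon_v^\infty}|n^{-1} f_n'(nx) - f^{\sharp,\de}(x)| = O(1/n) < \eps$ for $n$ large.

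The main obstacle is verifying that $f_n'$ is indeed an element of $\HT_v^n$, i.e., arises from a legitimate lozenge tiling with the prescribed boundary data, rather than merely a Lipschitz lattice function. On each trapezoid this is the classical lattice structure on tiling height functions over a simply connected region (pointwise max or min of two tiling height functions is again a tiling height function, hence realized by a tiling). The subtler point is consistency across the equator, and this is engineered precisely by the equator-sum matching (b): it forces the two trapezoids to switch between the $f_{standard}$-regime and the $f_n$-regime simultaneously, preserving the slope-$1$ linearity of $f_{n,\up}' + f_{n,\lo}'$ demanded by the observation following the definition of height functions on $\hexagon_v^n$.
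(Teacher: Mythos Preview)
Your argument is correct and follows the paper's own (inline) justification: approximate $f^{\sharp,\de}$ via Proposition~\ref{prop:6.4}, shift so that the approximant lies strictly below/above $f_n^{\sharp,\de}$ on the upper/lower trapezoid, then take $f_{standard}\vee f_n$ to restore the boundary data. You supply considerably more detail than the paper, notably the equator-sum matching argument ensuring the $\vee$ operation is consistent across the equator; the only small refinement worth adding is that the shifts $c_\up,c_\lo$ should be taken as multiples of $3$ (e.g.\ $c_\up=-3$, $c_\lo=3$), so that the shifted $f_n$ and $f_{standard}$ agree $\bmod\,3$ and their pointwise max/min is again the height function of a genuine tiling.
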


 \begin{figure}
  \begin{center}
  \includegraphics[scale=0.60]{./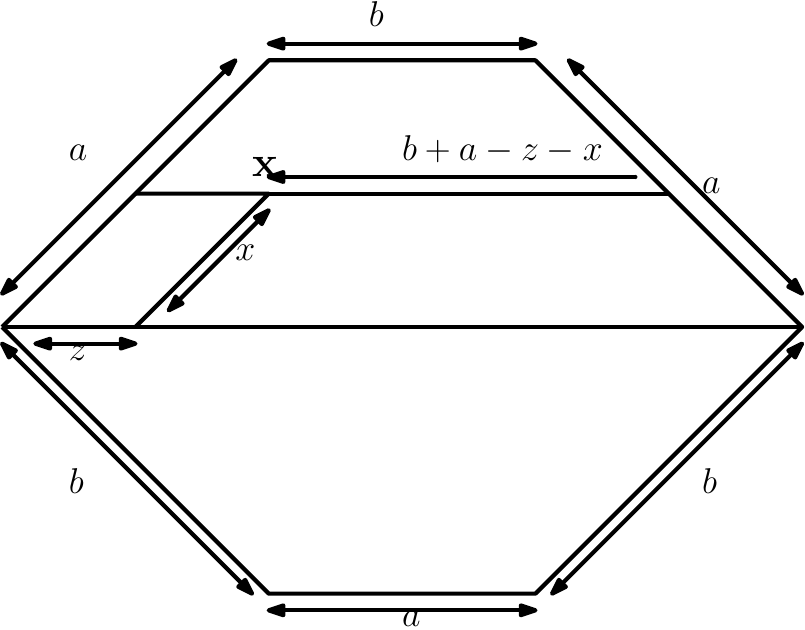}
  \caption{ Diagram accompanying the calculation of the gradient of $f^\ddagger$ at $\x$ in Lemma~\ref{lem:9.9}.} 
  \label{fig:dagger}
  \end{center}
  \end{figure}

\begin{lemma} \lab{lem:9.9} Assume that both the upper and lower trapezoids in $\hexagon_v^\infty$ contain Euclidean balls of positive radius. Let $\de = \frac{\eps}{10}$. 
Then the following inequality holds.
$$\liminf_{n \ra \infty}  \E_n n^{-2} \max_{f_n \in \HT_v^n(f^{\sharp, \de}, \eps)}  \langle \partial g_n|_{\hexagon_v^n \cap  (U_{n,\eps} \cup U'_{n,\eps})}, \partial f_n|_{\hexagon_v^n \cap (U_{n,\eps} \cup U'_{n,\eps})} \rangle  \geq  S_{v, \edge}(f^\sharp) - o_\eps(1).$$
\end{lemma}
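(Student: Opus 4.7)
The plan is to prove this lower bound by constructing a near-optimal global height function on a dyadic mesh and then gluing, dualizing the strategy of Lemma~\ref{lem:12}. The convex-combination perturbation $f^{\sharp,\de}=\de f^\ddagger+(1-\de)f^\sharp$ is the key enabling device: since $f^\ddagger$ has gradient in the interior $K^o$, the gradient of $f^{\sharp,\de}$ lies in $(1-c\de)K$ for an absolute $c>0$, providing the slack needed to apply Proposition~\ref{prop:3} and to guarantee that $\HT_v^n(f^{\sharp,\de},\eps)$ is non-empty (Observation~\ref{obs:dagger}). I would first set up the same two-scale decomposition used in Lemma~\ref{lem:12}: partition $\hexagon_v^n\cap(U_{n,\eps}\cup U'_{n,\eps})$ by an $\ell_1$-mesh with $\ell_1=n\eps_1$, then refine each $\ell_1$-triangle into dyadic triangles of sidelength in $[2^{-k}C_0,C_0]$, and apply Theorem~\ref{thm:Feff} to $f^{\sharp,\de}$ to produce GOOD triangles $T'$ on which $|f^{\sharp,\de}-L_{T'}|\le\varepsilon\,\sidelength(T')$ with $\partial L_{T'}\in(1-\de/2)K$, together with BAD triangles of total area $\le\eta$.

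Next, on each GOOD triangle $T'$ I would construct a near-optimal local height function. By Definition~\ref{def:8.1}, Definition~\ref{def:8.6}, and Proposition~\ref{prop:conv-final}, the expected maximum of $\langle\partial g_n|_{T'},\partial a_m\rangle$ over $a_m\in B_m(L_{T'},m\psi(m))$ (with $m=\sidelength(T')$) equals $-|T'|\,\sigma_\edge(\rho(T'),\partial L_{T'})+o(|T'|)$, using that $\sigma_{m,\edge}\to\sigma_\edge$ uniformly on compacts; by Proposition~\ref{prop:8.17}, this maximum concentrates around its mean with variance $O(|T'|^2/\log^A|T'|)$, so a Chebyshev plus union bound over the $O(n^2/|T'|^2)$ triangles gives simultaneous concentration with probability $1-o(1)$. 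For each GOOD $T'$, select a realized near-maximizer $f_n^{T'}$; its boundary values on $\partial T'$ lie within $m\psi(m)$ of $L_{T'}$.

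For the gluing, I would apply Proposition~\ref{prop:3} iteratively. Taking $\hat{\eps}=\de/2$, $\eps'=(\varepsilon+\psi(m))\,\sidelength(T')$, and a fixed moderate buffer width $\tilde\eps$, the hypothesis $2\eps'<\hat\eps\tilde\eps\,\sidelength(T')$ is met for all large $m$. The proposition then extends the collection $\{f_n^{T'}\}$, together with an arbitrary locally compatible choice on BAD triangles and on the peripheral region $\hexagon_v^n\setminus(U_{n,\eps}\cup U'_{n,\eps})$, to a global height function $f_n$ lying in $\HT_v^n(f^{\sharp,\de},\eps)$. Summing contributions, the GOOD triangles deliver $\sum_{T'}(-|T'|)\sigma_\edge(\rho(T'),\partial L_{T'})+o_\eps(n^2)$; by Proposition~\ref{prop:sig-unif} (continuity in $\rho$) and convexity of $\sigma_\edge(\rho,\cdot)$ from Proposition~\ref{prop:conv-final}, this is a Riemann sum converging to $(-1)\int\sigma_\edge(\rho(\x),\partial f^{\sharp,\de}(\x))\,d\x=S_{v,\edge}(f^{\sharp,\de})$ as $\eps_1,2^{-k}\to0$. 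The BAD triangles and buffers together contribute $o_\eps(n^2)$ via Lemma~\ref{lem:reuse} and Theorem~\ref{theorem:Tao1}, exactly as in Claim~\ref{cl:3}. Finally, since $\partial f^{\sharp,\de}$ stays in $(1-c\de)K$ and $\sigma_\edge(\rho,\cdot)$ is Lipschitz on such sets (Proposition~\ref{prop:conv-final}), $S_{v,\edge}(f^{\sharp,\de})=S_{v,\edge}(f^\sharp)+O(\de)=S_{v,\edge}(f^\sharp)-o_\eps(1)$ because $\de=\eps/10$.

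The main obstacle will be the gluing step. Three things must be reconciled simultaneously: (i) the chosen $f_n^{T'}$ must truly realize a near-maximum of its local lozenge weight, (ii) its boundary trace on $\partial T'$ must be uniformly close to a fixed affine function $L_{T'}$ with gradient in $(1-\de/2)K$ so Proposition~\ref{prop:3} applies, and (iii) the resulting global $f_n$ must stay within $\eps$ of $f^{\sharp,\de}$ so that it belongs to $\HT_v^n(f^{\sharp,\de},\eps)$. The $\chi\in\{-1,0,1\}$ shifts from successive applications of Proposition~\ref{prop:3} must not accumulate, and the buffer annuli (where the local function is forced to remain affine and therefore loses lozenge weight) must shrink fast enough that their total area is $o_\eps(1)$. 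Careful hierarchical ordering of the parameters $\eps\ll\de\ll\eps_1\ll2^{-k}C_0\ll C_0\ll1$, and the use of $\psi(m)\to0$ with $m\psi(m)\to\infty$ (Observation~\ref{obs:psi}), is exactly what makes this work.
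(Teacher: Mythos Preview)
Your overall strategy is sound, but it is more elaborate than necessary and the final step has a genuine gap.

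\textbf{Comparison with the paper.} The paper does \emph{not} use the two-scale Fefferman decomposition here. Because this is a lower bound, one is \emph{constructing} a near-optimal height function rather than controlling all of them, so quantitative differentiation is unneeded: the paper simply takes a single lengthscale $C_0$ (a large constant with $\eps C_0>\psi(C_0)$), forms the piecewise linear interpolant $\bar f_n$ of $f^{\sharp,\de}$ on the $C_0$-mesh, and patches in a near-optimal local height function on each $C_0$-triangle $T$ via Proposition~\ref{prop:3}. Your Fefferman route would work, but the GOOD/BAD dichotomy is redundant since every $C_0$-triangle already carries a well-defined tilt $L_T$ from $\bar f_n$. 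Also, your claim $\partial f^{\sharp,\de}\in(1-c\de)K$ is too optimistic: the explicit computation of $\partial f^\ddagger$ on $U_\eps\cup U'_\eps$ (the paper's Claim inside the proof) shows $\partial f^\ddagger$ is only at distance $\sim\eps$ from $\partial K$, so $\partial f^{\sharp,\de}$ is at distance $\sim\de\cdot\eps=c\eps^2$ from $\partial K$, i.e.\ $\partial f^{\sharp,\de}\in(1-c\eps^2)K$.

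\textbf{The gap.} Your last step, passing from $S_{v,\edge}(f^{\sharp,\de})$ to $S_{v,\edge}(f^\sharp)$ via the Lipschitz bound on $\sigma_\edge(\rho,\cdot)$, does not work. Proposition~\ref{prop:conv-final} gives Lipschitz constant $C_{|\rho|}\de'^{-1}$ on $(1-\de')K$; with $\de'=c\eps^2$ and displacement $|\partial f^{\sharp,\de}-\partial f^\sharp|=O(\eps)$, you get a change of order $\eps^{-1}$, not $O(\de)$. Worse, $\partial f^\sharp$ may lie on $\partial K$, where no Lipschitz bound is available at all. The correct argument uses the \emph{convexity} of $\sigma_\edge(\rho,\cdot)$ together with its boundedness on $K$ (both from Proposition~\ref{prop:conv-final}): pointwise,
\[
\sigma_\edge\bigl(\rho,\partial f^{\sharp,\de}\bigr)=\sigma_\edge\bigl(\rho,\de\,\partial f^\ddagger+(1-\de)\partial f^\sharp\bigr)\le \de\,\sigma_\edge(\rho,\partial f^\ddagger)+(1-\de)\,\sigma_\edge(\rho,\partial f^\sharp),
\]
so integrating (and flipping sign) yields $S_{v,\edge}(f^{\sharp,\de})\ge \de\,S_{v,\edge}(f^\ddagger)+(1-\de)\,S_{v,\edge}(f^\sharp)\ge S_{v,\edge}(f^\sharp)-C\de$, since $S_{v,\edge}(f^\ddagger)$ and $S_{v,\edge}(f^\sharp)$ are each bounded in absolute value by an integral of a uniformly bounded $\sigma_\edge$. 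This gives the required $o_\eps(1)$ error.
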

\begin{proof}
  We consider a lengthscale $C_0$ (that is a large absolute constant for fixed $\eps$ as $n \ra \infty$) that is a power of $2$ (for this proof, we assume that we have $\eps C_0  > \psi(C_0)$), and take a piecewise linear interpolation $\bar{f}_n$ using a triangulation at the lengthscale $C_0$ into dyadic triangles and the value of $f^{\sharp, \de}_n$  for the endpoints in the triangulation.   
When $n$ is large, $$ \HT_v^n(f^\sharp_n, n\eps) \supset \HT_v^n(\bar{f}_n, \eps C_0),$$ therefore 
$$ \E_n n^{-2} \max_{f_n \in \HT_v^n(f^\sharp_n,n \eps)}  \langle \partial g_n|_{\hexagon_v^n \cap  (U_{n,\eps} \cup U'_{n,\eps})}, \partial f_n|_{\hexagon_v^n \cap  (U_{n,\eps} \cup U'_{n,\eps})} \rangle  \geq$$
$$ \E_n n^{-2} \max_{f_n \in \HT_v^n(\bar{f}_n, \eps C_0)}  \langle \partial g_n|_{\hexagon_v^n \cap (U_{n,\eps} \cup U'_{n,\eps})}, \partial f_n|_{\hexagon_v^n \cap  (U_{n,\eps} \cup U'_{n,\eps})} \rangle. $$
 Given a dyadic triangle $\De$, denote by $|\De|$ the number of lattice points in  $\De.$ 
 
 \begin{claim} For two subsets $X$ and $Y$ of $\R^2$, let  $$\dist(X, Y):= \inf\limits_{(x, y)  \in X\times Y} \|x - y\|.$$
 For any dyadic $C_0$-triangle $T \subseteq \hexagon_v^\infty\cap (U_{n,\eps} \cup U'_{n,\eps})$, we have $\dist(L_{T, \bar{f}}, \partial K) > c\eps^2.$ 
 \end{claim}
 \begin{proof}
 This follows from our assumption that both the upper and lower trapezoids in $\hexagon_v^n$ contain Euclidean balls of radius $10 \eps$, together with a computation involving the diagram in Figure~\ref{fig:dagger}. An explicit calculation shows for $\x$ in the upper trapezoid (the situation when $\x$ is in the lower trapezoid is analogous) that $$ f^\ddagger(\x) = x + \frac{z(2a-b - 2x)}{a+b - x} = x + 2z - \frac{3bz}{a+b-x}.$$
 When $x < a - \eps$, the directional derivative $\partial_x f^\ddagger(\x) < 1 - \eps$, and the directional derivative $\partial_z f^\ddagger(\x) = 2 - \frac{3b}{a+b-x} > 2 - \frac{3b}{b+\eps} > -1 + \frac{\eps}{10}.$ Together with symmetry about the vertical axis, and the fact that 
 $f^{\sharp, \de}$ denotes the asymptotic height function, which equals $\de f^\ddagger + (1 - \de)f^\sharp$,
 these inequalities imply the claim.
 \end{proof}
 
 On all triangles $T$ on which the tilt $L_{T, \bar{f}}$ of $\bar{f}$ is at a distance $\bar{\eps} > \eps$ from the boundary of $K$,  we can patch in pieces of height functions corresponding the maximum weight matchings of that tilt using Proposition~\ref{prop:3} and an application of Proposition~\ref{prop:conv-final} to yeild a height function with $$\E_n \max_{\bar{f}_n \in \HT_{v, T}^n(L_{T, f_n}, \eps C_0)}  \langle \partial g_n|_{T}, \partial \bar{f}_n|_{T} \rangle \geq  \int_{\x \in T}(-1) \sigma_\edge(\rho(\x), L_T) d\x - |T|o_\eps(1).$$  Therefore, summing over all $T$ and bounding the contributions from the remaining rhombi by $o_\eps(n^2)$ as in the previous proof and using 
 Proposition~\ref{prop:conv-final},  we see that 
 \beqs \E_n n^{-2} \max_{f_n \in \HT_v^n(f^\sharp_n, \eps C_0)}  \langle \partial g_n|_{\hexagon_v^n \cap (U_{n,\eps} \cup U'_{n,\eps})}, \partial f_n|_{\hexagon_v^n \cap (U_{n,\eps} \cup U'_{n,\eps})} \rangle  & \geq & (n^{-2}\sum_T  \int_{\x \in T}(-1) \sigma_\edge(\rho(\x), L_T) d\x) - o_\eps(1).\eeqs

 By Lemma~\ref{lem:6}, 
 the RHS is greater or equal to $$(n^{-2}\int_{\x \in \hexagon_v^n\cap (U_{n,\eps} \cup U'_{n,\eps})}(-1) \sigma_\edge(\rho(\x), \partial f^\sharp(x)) d\x) - o_\eps(1),$$
 which in turn is greater or equal to $S_{v, \edge}(f^\sharp) - o_\eps(1).$
 
\end{proof}

\begin{lemma}\lab{lem:9.10}
$$\liminf_{n \ra \infty}  \E_n  n^{-2} h_n(v_n)  \geq  \sup_{f^\sharp \in \AHT_v^\infty}S_v(f^\sharp).$$
\end{lemma}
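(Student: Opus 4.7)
The plan is to mirror the upper bound proof in Lemma~\ref{lem:9.8}, but using the lower-bound halves of the relevant concentration estimates. Fix an arbitrary $f^\sharp \in \AHT_v^\infty$, and aim to show that $\liminf_{n\ra\infty}\E_n n^{-2}h_n(v_n) \geq S_v(f^\sharp) - o_\eps(1)$; sending $\eps\ra 0$ and then taking the supremum over $f^\sharp$ on the right-hand side immediately yields the claim. For definiteness assume that both trapezoids of $\hexagon_v^\infty$ contain Euclidean balls of positive radius so that Lemma~\ref{lem:9.9} applies (the degenerate cases are recovered by a limiting argument), set $\de = \eps/10$ as in that lemma, and observe that the feasible set $\HT_v^n(f^{\sharp,\de},\eps)$ is nonempty for large $n$ by Observation~\ref{obs:dagger}.

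The key idea is to exploit $h_n(v_n) = \max_\Xi w_\Xi$: any single feasible tiling $f_n$ already yields a pointwise lower bound, and the red-lozenge-avoiding form~\eqref{alt-weight} of $w_\Xi$ gives
$$
h_n(v_n)\;\geq\;\langle\partial g_n|_{\mathrm{int}},\partial f_n|_{\mathrm{int}}\rangle \;-\sum_{\edge\not\subset U_{n,\eps}\cup U'_{n,\eps}}|\weight(\edge)| \;+\;\langle g_n,\partial f_n\rangle_\De \;+\;\weight'(\hexagon_v^n),
$$
where $\mathrm{int} = \hexagon_v^n\cap(U_{n,\eps}\cup U'_{n,\eps})$. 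I would take $f_n^\ast$ to be the (random) maximizer of the interior edge term over the feasible set; then the first term equals the maximum and the third term is automatically at least the feasible-set minimum, since $f_n^\ast$ is itself feasible. The boundary-edge sum is controlled \emph{uniformly} over tilings by Lemma~\ref{lem:cl6}, and $\weight'(\hexagon_v^n)$ does not depend on $f_n^\ast$ at all. Passing to $\liminf\E_n n^{-2}(\cdot)$ term by term, Lemma~\ref{lem:9.9} bounds the interior edge maximum below by $S_{v,\edge}(f^\sharp)-o_\eps(1)$; the min-direction of Lemma~\ref{lem:8} applied to $f^{\sharp,\de}$ gives $S_{v,\De}(f^{\sharp,\de})-o_\eps(1)$, which equals $S_{v,\De}(f^\sharp)-o_\eps(1)$ since $\sigma_\De$ (Definition~\ref{defn:8.14}) is affine in its second argument and hence $S_{v,\De}$ is linear in $f$; Lemma~\ref{lem:9} contributes $S_{v,\hexagon}$; and Lemma~\ref{lem:cl6} contributes at most $O(\eps^{1/3})$. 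Summing delivers the required inequality $S_v(f^\sharp)-o_\eps(1)$.

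The main obstacle I expect is the coupling between the interior edge term and the border-triangle term: both are evaluated at the \emph{same} random choice $f_n^\ast$, which was only designed to maximize the first. A naive separate optimization is impossible since the tiling is shared. The resolution is precisely the two-sided concentration in Lemma~\ref{lem:8}: because \emph{every} feasible $f_n$ has delta contribution within $o_\eps(n^2)$ of $n^2 S_{v,\De}(f^\sharp)$, the value at $f_n^\ast$ is automatically acceptable, and the interior edge maximization decouples cleanly from the delta term. A secondary technicality is the use of $f^{\sharp,\de}$ in place of $f^\sharp$ (required to keep the tilts away from $\partial K$ in Lemma~\ref{lem:9.9}), but this is harmless by the continuity just described.
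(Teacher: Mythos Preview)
Your proposal is correct and follows essentially the same route as the paper: fix $f^\sharp$, use any feasible tiling as a pointwise lower bound for $h_n(v_n)$, then assemble Lemma~\ref{lem:9.9} (interior edge term), Lemma~\ref{lem:8} (border-triangle term), Lemma~\ref{lem:9} (hexagon weight), and Lemma~\ref{lem:cl6} (edges near $\partial U \cup \partial U'$). The paper's proof is terse to the point of leaving implicit exactly the two points you took care to spell out---namely the coupling between the interior edge maximization and the border-triangle term (resolved, as you note, by the two-sided concentration in Lemma~\ref{lem:8}), and the passage from $f^{\sharp,\de}$ back to $f^\sharp$ in the $\Delta$-contribution via the affine dependence of $\sigma_\Delta$ on its second argument---so in these respects your write-up is actually more complete than the original.
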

\begin{proof}

It suffices to prove that for any fixed $f^\sharp \in \AHT_v^\infty$, we have $ \liminf_{n \ra \infty}  \E_n  n^{-2} h_n(v_n)  \geq S_v(f^\sharp).$ We proceed to prove this.
By Lemma~\ref{lem:8},
$$\liminf_{n \ra \infty}    n^{-2} \min_{f_n \in \HT_v^n(f^\sharp, \eps n)}  \langle \E_n g_n, \partial f_n \rangle_\De  \geq  S_{v, \De}(f^\sharp) - o_\eps(1).$$
By Lemma~\ref{lem:9}, $n^{-2} \weight'(\hexagon_v^n)$ is a log-concave random variable that concentrates around its expectation under $\p_n$ as $n \ra \infty$ with a standard deviation of $O( n^{-1} \log^{O(1)} n) $, and in the limit as $n \ra \infty$, $$\E_n n^{-2} \weight'(\hexagon_v^n)$$ converges.
Lemma~\ref{lem:cl6} together with Lemma~\ref{lem:9.9} completes the proof.
\end{proof}

\begin{proof}[Proof of Theorem~\ref{theorem:main}] This is immediate from Lemma~\ref{lem:9.8} and Lemma~\ref{lem:9.10}. 
\end{proof}
    \section{Proofs involving the bead model}\lab{sec:Bead}
   We will need to work with a ``fixed index" version of a family of point processes on $\mathbb{R} \times \mathbb{Z}$, known as bead processes that were defined by Boutillier \cite{boutillier}. 
  The configurations that constitute the sample space are defined below.
   
  The configurations are countable subsets of $\R \times \Z$ (elements of which will be called beads) that satisfy the following two properties:
  \begin{enumerate}
  \item They are locally finite in that the number of beads in each finite interval of a thread is finite.
  \item Between two consecutive beads on a thread, there is exactly one bead on each neighboring thread. 
  \end{enumerate}
  
  

  It has been proved by Adler, Nordenstam and Van Moerbeke in \cite[Corollary 1.4]{ANV14} that the GUE minor process converges to a version of the bead model in any square of constant size around a fixed energy in the bulk.

\begin{lemma}[Covariance bounds for patches centered at a fixed index]\lab{lem:8.16}
Let $g_{m}$ denote an $2m+1\times 2m+1$ patch corresponding to the eigenvalues in a $2m+1\times 2m+1$ box of a GUE minor process around some fixed $(\lceil i_0 n \rceil, \lceil j_0 n\rceil)$, where $(i_0, j_0) \in (0, 1)^2$ and $  j_0 < i_0$, and $m = o(n).$ Thus, the square consists of all indices $(i, j)$ such that  $\max(|i- \lceil i_0 n\rceil|, |j-\lceil j_0 n\rceil|) \leq m.$ Geometrically, this square is similar to a green lozenge in one of the tilings, such as Figure~\ref{fig:typical}.
 Let $M$ be the $4m^2 \times 4m^2$ covariance matrix of $\partial g_m$. Then, for any $C > 0$,  $$\|M\|_{op} = O\left(\frac{m^2}{\log^C m}\right).$$ 
\end{lemma}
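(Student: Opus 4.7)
The plan is to reduce the operator-norm estimate to a row-sum estimate and then exploit correlation decay for interlacing gaps of the GUE minor process in the bulk. Since $M$ is symmetric, one has the standard bound
$$\|M\|_{op} \leq \max_{e} \sum_{e'} |M_{e,e'}|,$$
where $e, e'$ range over the $\sim 4m^2$ interlacing gaps in $\partial g_m$. It therefore suffices to show that for every gap $e$ in the patch,
$$\sum_{e'} |\mathrm{cov}(\partial g_m(e), \partial g_m(e'))| = O(m^2/\log^C m).$$

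To bound each covariance I would assemble two ingredients. First, a uniform $O(1)$ bound on the variance of an individual interlacing gap in the bulk, which follows from GUE bulk rigidity (Lemma~\ref{lem:TV}) together with the local convergence to the determinantal bead model of Adler--Nordenstam--Van Moerbeke already invoked in Theorem~\ref{theorem:beadR}. Second, the quantitative correlation decay provided by Theorem~\ref{theorem:Tao4}: for any $\varepsilon > 0$ there exists a threshold distance $D(\varepsilon)$, uniform in $n$ for $m = o(n)$ and in the bulk location $(i_0, j_0)$ on compacta, such that whenever the lattice distance $d(e,e') \geq D(\varepsilon)$ one has $|\mathrm{cov}(\partial g_m(e), \partial g_m(e'))| \leq \varepsilon$. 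The bead process is determinantal and its gap correlations decay at a polynomial rate, so we may take $D(\varepsilon) \ll \varepsilon^{-O(1)}$.

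The final step is to split the row sum into a near field $d(e,e') < D(\varepsilon)$, which contributes at most $C D(\varepsilon)^2$ since each term is $O(1)$ by Cauchy--Schwarz applied to the variance bound, and a far field $d(e,e') \geq D(\varepsilon)$, which contributes at most $C m^2 \varepsilon$. Choosing $\varepsilon := (\log m)^{-C-2}$ makes the far contribution $\ll m^2/\log^C m$, while $D(\varepsilon)$ is then at worst a fixed power of $\log m$, so the near contribution $D(\varepsilon)^2$ is negligible compared to $m^2/\log^C m$ for all sufficiently large $m$.

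The main obstacle is extracting the right quantitative form of the correlation decay from Theorem~\ref{theorem:Tao4}, uniformly in the bulk location and in $n$ with $m = o(n)$, and transferring bead-model decay to the finite-$n$ GUE minor process at the scale $m$. If Theorem~\ref{theorem:Tao4} only supplies a qualitative decay at the level of the bead-process limit, one route is a direct analysis of the determinantal kernel of the bead process (whose two-point gap covariance has explicit polynomial decay via sine-kernel asymptotics) combined with a quantitative rate of convergence of GUE minor patches to the bead model in the bulk from \cite{ANV14}; an alternative route is a soft interpolation argument that uses the $O(1)$ diagonal bound together with compactness of covariance matrices on patches of fixed size to promote qualitative decay into the quantitative form $|M_{e,e'}| \le \phi(d(e,e'))$ for a summable $\phi$, which is all we need.
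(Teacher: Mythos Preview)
Your approach takes an unnecessary detour, and in doing so introduces a genuine gap. The paper's proof is a one-liner: by the variational characterization of the operator norm of a covariance matrix,
\[
\|M\|_{op} \;=\; \sup_{\|a\|_2 = 1} a^{\!\top} M a \;=\; \sup_{\|a\|_2 = 1} \Var\!\left(\sum_{e} a_e\, (\partial g_m)_e\right),
\]
and Theorem~\ref{theorem:Tao4} bounds this last quantity directly by $O\!\bigl(\tfrac{\#\text{gaps}}{\log^A(\#\text{gaps})}\bigr)\|a\|_2^2$; since the number of interlacing gaps in the patch is $\sim 4m^2$, the bound $O(m^2/\log^C m)$ follows by taking $A$ large.

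Your route instead tries to pass through the row-sum bound $\|M\|_{op} \le \max_e \sum_{e'} |M_{e,e'}|$, which forces you to control \emph{individual} covariances $|\cov(\partial g_m(e),\partial g_m(e'))|$. But Theorem~\ref{theorem:Tao4} does not give pointwise covariance decay; it gives an aggregate variance bound on linear statistics. You cannot read off ``$|\cov| \le \varepsilon$ once $d(e,e') \ge D(\varepsilon)$'' from it, because plugging in $a$ supported on two indices at distance $d$ only yields an $O(d/\log^A d)$ bound, which grows with $d$. Your proposed workarounds (explicit bead-kernel asymptotics with quantitative convergence rates, or a compactness argument) would each require substantial additional input not available in the paper. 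The key observation you are missing is that the aggregate bound in Theorem~\ref{theorem:Tao4} \emph{is} the operator-norm bound; there is no need to disassemble $M$ into its entries and reassemble.
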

\begin{proof}
This follows from Theorem~\ref{theorem:Tao4}, taking the parameter $A$ in it to be sufficiently large.

\end{proof}

\noindent {\it Proposition~\ref{prop:8.17} restated:} Let $g_{m}$ denote an $(2m+1)\times (2m+1)$ patch corresponding to the eigenvalues in a $(2m+1)\times (2m+1)$ box of a GUE minor process around some fixed $(\lceil i_0 n \rceil, \lceil j_0 n\rceil)$, where $(i_0, j_0) \in (0, 1)^2$ and $  j_0 < i_0$, and $m = o(n).$ Thus, the square consists of all indices $(i, j)$ such that  $\max(|i- \lceil i_0 n\rceil|, |j-\lceil j_0 n\rceil|) \leq m.$
Let $B_m(a_\infty, m\eps)$ denote the set of all Lipschitz height functions on a $(2m+1)\times (2m+1)$ piece of the triangular lattice that are contained in an $L^\infty$ radius of $n\eps$ centered around $a_\infty$. Denote by $\partial g_m$, the array of interlacing gaps of the form $\la_{ij} - \la_{i-1\, j}$, where $\la_{ij}$ is in $g_{m}$. For any fixed $\eps > 0$ and fixed $C > 0$, for all affine functions $a_\infty \in K $, 
$$   \var\left( \max_{a_m \in B_m(a_\infty,  m\eps)}  \langle \partial g_{m}, \partial a_m\rangle\right) = O\left(\frac{m^4}{\log^C  m}\right).$$

\begin{proof}[Proof of Proposition~\ref{prop:8.17}]
Let $M_n$ denote the covariance matrix of the interlacing gaps of the form $\la_{ij} - \la_{i-1\, j}$ in $\partial g_{n}$. By Theorem~\ref{theorem:prekopa}, $\partial g_n$ is log-concave and so are all its marginals. By Lemma~\ref{lem:17}  
and Cauchy-Schwartz,  
$$ \var\left( \max_{a_m \in B_m(a_\infty,  m\eps)}  \langle \partial g_{m}, \partial a_m\rangle\right) < Cm^2 \|M_m\|_{op} \log (m + 2)  .$$
It thus suffices to prove that $\|M_m\|_{op} = O(\frac{m^{2} }{\log^C m}),$ for some sufficiently large $C$ which follows from Lemma~\ref{lem:8.16}.
\end{proof}


  

  


 \section{Proofs involving height functions}\lab{sec:Height}
  
  The following result is from Thurston  \cite{Thurston90} (see Theorem 1.4 of \cite{GorinBook}).
  
  \begin{proposition}[Thurston] \lab{prop:1}Let $\RR$ be a simply connected domain, with boundary $\partial \RR$ on the triangular lattice. Then $\RR$ is tileable if and only if both conditions hold:
  \ben \item  One can define $f$ on $\partial \RR$, so that $f(v) - f(u)=1$ when ever $ u \ra  v$ is an edge of $\partial \RR$, such that $v - u$  is a unit vector in one of the positive directions.
  \item  The above $f$ satisfies
  $\forall  u,v \in \partial \RR : f(v) - f(u) \leq d_\RR(u,v).$
  \een
  \end{proposition}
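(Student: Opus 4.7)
The plan is to prove both directions, with the real content in sufficiency. Throughout I use the mod-$3$ coordinate $\phi(w_1, w_2) := (w_1 - w_2) \bmod 3$; since $\hat i + (-\hat j) + (-\hat i + \hat j) = 0$ and each of these positive unit vectors increases $\phi$ by $1$ mod $3$, this is the natural parity for positively-oriented paths. For necessity ($\Rightarrow$), the local rule of Definition~\ref{def:Htfn} produces a height function $h \colon \RR \cap \L \to \Z$ from any tiling, unique up to an additive constant; set $f := h|_{\partial \RR}$. Boundary edges are edges of lozenges, so $h$ increases by $+1$ along a positive boundary edge, giving (1). For (2), choose any positively-oriented path inside $\RR$ of length $d_\RR(u, v)$; along each step $h$ changes by $+1$ or $-2$, so $h(v) - h(u) \le d_\RR(u, v)$.

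For sufficiency ($\Leftarrow$), given $f$ satisfying (1) and (2) I would define the extension
$$
\bar f(v) := \min_{u \in \partial \RR \cap \L}\bigl[f(u) + d_\RR(u, v)\bigr], \qquad v \in \RR \cap \L,
$$
and show it is a valid height function, from which the tiling can be read off (fuse adjacent unit triangles across each diagonal edge where $\bar f$ drops by $2$). Three things must be verified: (a) $\bar f|_{\partial \RR} = f$, (b) $\bar f$ is $d_\RR$-Lipschitz, and (c) the local rule $\bar f(v) - \bar f(u) \in \{+1, -2\}$ on every positive unit edge $u \to v$ of $\RR$. Property (a) follows by taking $u = v$ in the min (giving $\bar f(v) \le f(v)$) and using (2) (giving the reverse inequality). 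Property (b) is immediate from the triangle inequality for $d_\RR$. From (b), $\bar f(v) - \bar f(u) \le d_\RR(u, v) = 1$ and $\bar f(u) - \bar f(v) \le d_\RR(v, u) = 2$, so the difference a priori lies in $\{-2, -1, 0, 1\}$.

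The main obstacle is ruling out $-1$ and $0$ in (c), which I would handle via a mod-$3$ rigidity. Since $\RR$ is simply connected, $\partial \RR$ is a single loop; condition (1) propagates around it to give $f(u) \equiv \phi(u) + c \pmod 3$ on $\partial \RR$ for a single constant $c$. Any positively-oriented path from $u$ to $v$ has length $\equiv \phi(v) - \phi(u) \pmod 3$ (each step changes $\phi$ by $1$), so $d_\RR(u, v) \equiv \phi(v) - \phi(u) \pmod 3$. Hence every term $f(u) + d_\RR(u, v)$ in the defining min is $\equiv c + \phi(v) \pmod 3$, giving $\bar f(v) \equiv c + \phi(v) \pmod 3$ throughout $\RR$. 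For a positive edge $u \to v$ this yields $\bar f(v) - \bar f(u) \equiv 1 \pmod 3$, which combined with the range $\{-2, -1, 0, 1\}$ from (b) forces $\bar f(v) - \bar f(u) \in \{+1, -2\}$, completing (c). Simple connectedness is essential precisely for the existence of a single $c$; on a multiply-connected $\RR$ the mod-$3$ constants on different components of $\partial \RR$ could be inconsistent, and the construction would fail.
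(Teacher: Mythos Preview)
The paper does not supply a proof of this proposition: it is stated as a known result and attributed to Thurston \cite{Thurston90}, with a pointer to Theorem~1.4 of Gorin's book \cite{GorinBook}. Your argument is correct and is in fact the standard one found in those references: necessity via the height function of a tiling, and sufficiency via the maximal extension $\bar f(v)=\min_{u\in\partial\RR}\bigl[f(u)+d_\RR(u,v)\bigr]$ together with the mod-$3$ parity $\phi$ to force the local increments into $\{+1,-2\}$. The role of simple connectedness---a single mod-$3$ constant on $\partial\RR$---is exactly what the paper's next result, Proposition~\ref{prop:2}, isolates as condition (B) for the non-simply-connected case.
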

  
  We note the following result from (Corollary 1.5, \cite{GorinBook}).
  \begin{proposition}\lab{prop:2} Let $\RR$ be a possibly non-simply connected domain, with boundary $\partial \RR$ on the triangular lattice. As in the simply connected case, for two points $u, v$ in the same connected component of $\RR$, let the asymmetric distance function $d_{\RR}(u, v)$ be defined as  the minimal total length over all positively oriented paths from $u$ to $v$ staying within (or on the boundary) of $\RR$.
  Suppose  $f$ is defined along $\partial \RR$ satisfying the conditions 1. and 2. in Proposition~\ref{prop:1}; this is uniquely defined up to constants $c_1,c_2,\dots, c_\ell$ corresponding to constant shifts along the $\ell$ pieces of $\partial \RR$. Then $f$ (for some fixed values of the constants $c_i$) extends to a height function on $\RR$ if and only if for every $u$ and $v$ in $\partial \RR$:
  \ben \item[(A)] $d_\RR(u,v) - f(v)+f(u) \ge 0,$ \item[(B)] $d_\RR(u,v) - f(v)+f(u) \equiv 0 \,  (\mathrm{mod} \,3).$\een
  \end{proposition}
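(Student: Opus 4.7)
The plan is to prove the two implications separately. Necessity is essentially immediate from the definition of a height function. Sufficiency reduces to the simply connected case, Proposition~\ref{prop:1}, via a cutting argument.

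For necessity, suppose $f$ extends to a height function $h$ on $\RR$ for some choice of constants $c_i$. Condition (A) is immediate from Definition~\ref{def:Htfn}(1): by the Lipschitz requirement, $h(v) - h(u) \le d_\RR(u,v)$ for all lattice points, and in particular for $u, v \in \partial \RR$ where $h = f$. For condition (B), observe that along any positively oriented unit edge, the height changes by either $+1$ (along an edge of a lozenge) or $-2$ (crossing a lozenge diagonally), and both $\equiv 1 \pmod 3$. Hence along any positively oriented lattice path of length $L$ from $u$ to $v$, the total increment of $h$ is $\equiv L \pmod 3$. Applied to a shortest such path, this gives $f(v) - f(u) \equiv d_\RR(u,v) \pmod 3$, establishing (B).

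For sufficiency, assume (A) and (B) hold for some $c_i$. I would choose $\ell - 1$ disjoint positively oriented lattice paths $P_1, \dots, P_{\ell - 1}$ inside $\RR$, each connecting a distinct pair of boundary components, so that $\tilde \RR := \RR \setminus \bigcup_i P_i$ is simply connected. Along each $P_i$, extend $f$ to a function $\tilde f$ by the $+1$ rule, starting from the endpoint of $P_i$ that lies on the outer boundary (where $f$ is already fixed); since the two sides of each cut become distinct boundary segments of $\tilde\RR$, the definition of $\tilde f$ on them is forced to agree. One then checks that $\tilde f$ satisfies the hypotheses of Proposition~\ref{prop:1}: condition (1) holds by construction, and condition (2), $\tilde f(v) - \tilde f(u) \le d_{\tilde \RR}(u,v)$, is verified by case analysis. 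When both $u,v \in \partial\RR$, hypothesis (A) together with $d_{\tilde\RR} \ge d_\RR$ suffices. When one or both lie on a cut, one telescopes $\tilde f$ along the cut back to $\partial\RR$ using condition (1) and bounds the resulting expression using (A). Applying Proposition~\ref{prop:1} produces a height function $\tilde h$ on $\tilde\RR$, and the final step is to show that $\tilde h$ descends to a height function on $\RR$, which amounts to checking that $\tilde h$ agrees on corresponding points of the two sides of each cut. By construction the two sides carry identical boundary values, and the mod-$3$ condition (B) is precisely what allows a consistent choice of $c_i$ ensuring that going around each hole returns the same value mod $3$, so that the height is single-valued in $\RR$.

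The main obstacle I expect is verifying the distance condition $\tilde f(v) - \tilde f(u) \le d_{\tilde \RR}(u,v)$ when $u$ and $v$ lie on opposite sides of a cut, because $d_{\tilde \RR}(u,v)$ can strictly exceed $d_\RR(u,v)$ (paths that used to cross the cut in $\RR$ are no longer available). Overcoming this requires that the cut paths $P_i$ themselves be positively oriented, so traveling along a cut is an admissible move in $\tilde\RR$; combined with the telescoping of $\tilde f$ along $P_i$, this reduces the inequality to an instance of hypothesis (A) for a suitable pair of original boundary points. The mod-$3$ condition (B) then enters at the very end, where it ensures that the local consistency afforded by Proposition~\ref{prop:1} can be globalized across all $\ell - 1$ cuts simultaneously.
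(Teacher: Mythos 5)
The paper does not prove this proposition; it is quoted verbatim as Corollary 1.5 of Gorin's book \cite{GorinBook}, so there is no internal proof to compare against. I will therefore assess your argument on its own terms.

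Your necessity direction is correct: (A) is the Lipschitz inequality restricted to boundary lattice points, and (B) follows because the $+1$ and $-2$ increments are both $\equiv 1 \pmod 3$ while all positively oriented lattice paths between two fixed points have the same length modulo $3$.

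The sufficiency argument has a genuine gap at the step where you ``extend $f$ by the $+1$ rule'' along a positively oriented cut $P_i$ from $a \in \partial\RR_0$ to $b$ on another boundary component. After this extension one has $\tilde f(b) = f(a) + |P_i|$, and this must agree with the prescribed value $f(b)$ in order for $\tilde f$ to be a well-defined boundary function on $\partial\tilde\RR$ (the point $b$ lies both on the cut and on $\partial\RR$). But (A) gives only $f(b) - f(a) \le d_\RR(a,b) \le |P_i|$, so agreement forces $f(b) - f(a) = d_\RR(a,b)$ together with $P_i$ being a geodesic, and this equality is \emph{not} a consequence of (A) and (B): (A) supplies an inequality and (B) a congruence modulo $3$, nothing more. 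In general $d_\RR(a,b) - f(b) + f(a)$ can be a strictly positive multiple of $3$ for every pair $(a,b)$ on the two components, in which case no positively oriented cut reproduces $f(b)$ at its far endpoint. Relatedly, your description of where (B) and the constants $c_i$ enter (``the mod-$3$ condition allows a consistent choice of $c_i$ ensuring that going around each hole returns the same value'') does not reflect the mechanics: the $c_i$ are fixed data of the statement, not free to be chosen in the proof, and traversing a positively oriented cut down, around a hole, and back up trivially returns the starting value for any boundary data. The actual role of (B) is precisely to make the endpoint matching above hold modulo $3$, which is necessary but still not sufficient for your construction.

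A cleaner route, which avoids cutting entirely and is almost certainly the argument behind Gorin's Corollary 1.5, is the explicit envelope $h(w) := \min_{v \in \partial\RR}\bigl(f(v) + d_\RR(v,w)\bigr)$. Condition (A), rearranged as $f(v) + d_\RR(v,w) \ge f(w)$ for $v,w \in \partial\RR$, shows the competitor $v = w$ achieves the minimum, so $h|_{\partial\RR} = f$; the triangle inequality for $d_\RR$ gives $h(w') \le h(w) + d_\RR(w,w')$; and (B) ensures that $f(v) + d_\RR(v,w) \equiv \varphi(w) + c \pmod 3$ for a constant $c$ independent of which boundary component $v$ lies on, where $\varphi(i,j) = i-j$ is the lattice labeling for which every positive unit step adds $1$ modulo $3$. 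Combined with the Lipschitz bound this forces $h(u') - h(u) \in \{1,-2\}$ along interior positive edges, which is exactly the condition for $h$ to be the height function of a lozenge tiling. Your cutting approach can in principle be repaired by abandoning the requirement that the cuts be positively oriented and establishing a combinatorial lemma that produces a lattice cut from $a$ to $b$ whose net signed increment is exactly $f(b) - f(a)$, but that is substantially more work, and you would still need to verify condition (2) of Proposition~\ref{prop:1} for pairs on opposite sides of a non-monotone cut, which is more delicate than the positively-oriented case you sketched.
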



  \begin{figure}
      \begin{center}
      \includegraphics[scale=0.60]{./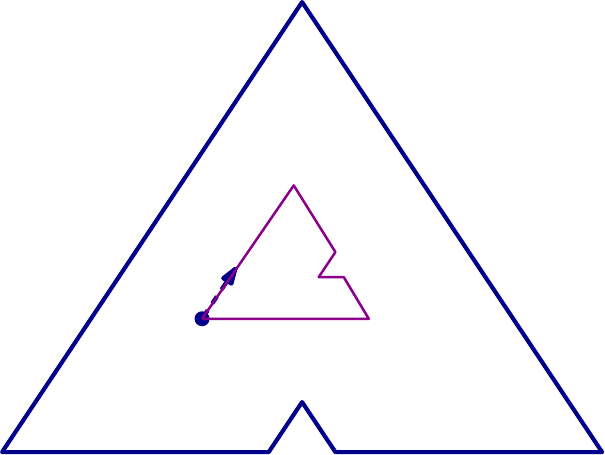}
      \caption{Values are specified on the boundary of an annular region.}\label{fig:geodesic1}
      \end{center}
  \end{figure}

{\it   Definition~\ref{def:14} restated:}  Given two asymptotic height functions $f$ and $g$ for $\hexagon_v^n$ we define $f \lor g$ to be the function that in the upper trapezoid takes the maximum of $f$ and $g$ and in the lower trapezoid takes the minimum of $f$ and $g$.
  Likewise, we define $f \land g$ to be the function that in the upper trapezoid takes the minimum of $f$ and $g$ and in the lower trapezoid takes the maximum of $f$ and $g$.
  For a collection $\FF$ of asymptotic height functions, we define $\bigvee_{f \in \FF} f$ to be the function that in the upper trapezoid takes the maximum of the functions in $\FF$ and in the lower trapezoid takes the minimum of the functions in $\FF$. Likewise, we define $\bigwedge_{f \in \FF} f$ to be the function that in the upper trapezoid takes the minimum of the functions in $\FF$ and in the lower trapezoid takes the maximum of the functions in $\FF$.
  We say $f \succeq g$ and $g \preceq f$ if $f \lor g = f$ and $f \land g = g$.
  
  Note that if $\FF$ is a set of asymptotic height functions for $\hexagon_v^n$, then $\bigvee_{f \in \FF} f$ and $\bigwedge_{f \in \FF} f$ are also asymptotic height functions. 
  
\noindent {\it Proposition~\ref{prop:6.4} restated:}  Let $f$ be an asymptotic height function for $\hexagon_v^n$. There exists a height function $f_n$ for $\hexagon_v^n$ such that $\max_v |f(v) - f_n(v)| < 3$.
  \begin{proof}[Proof of Proposition~\ref{prop:6.4}]
    This proof is modeled on that of \cite[Proposition 3.2]{CKP} by Cohn, Kenyon and Propp.
    We begin with the following claim.
    \begin{claim}
    Let $v$ be a lattice point in the upper trapezoid. Then there exists a height function $f_n$ such that $f_n \preceq f$  and $f(v) \geq f_n(v) > f(v) - 3$. Similarly, if $v$ is a lattice point in the lower trapezoid, then there exists a height function $f_n$ such that $f_n \preceq f$ and $f(v) \leq f_n(v) < f(v) + 3$.
    \end{claim}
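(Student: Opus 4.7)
I treat the upper trapezoid case; the lower case is symmetric under interchanging $\bigvee$ and $\bigwedge$. Let
\[
F := \{h \in \HT_v^n : h \preceq f\}.
\]
The set $F$ is nonempty by a standard rounding construction (cf.\ the existence argument in the proof of \cite[Prop.\ 3.2]{CKP}): round $f$ to integer values in the mod-$3$ coset dictated by Proposition~\ref{prop:2}, making a bounded number of local adjustments to enforce $\{1,-2\}$ edge differences. Set $f_n := \bigvee_{h \in F} h$ as in Definition~\ref{def:14}. Integer height functions on $\hexagon_v^n$ are closed under $\bigvee$ (taking pointwise max on $Z_{n,\up}$ and pointwise min on $Z_{n,\lo}$ preserves both integrality and the asymmetric Lipschitz constraint $h(w)-h(u)\leq d_\RR(u,w)$), and the defining inequalities for $F$ pass through $\bigvee$; hence $f_n \in F$ is the $\preceq$-maximum.

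Suppose, toward a contradiction, that $f_n(v) \leq f(v) - 3$. Call a lattice point $u \in Z_{n,\up}$ \emph{slack} if $f_n(u) \leq f(u) - 3$, let $A$ be the connected component of $v$ in the subgraph of the triangular lattice induced on slack vertices, and set
\[
g^\ast := f_n + 3 \cdot \mathbf{1}_A
\]
on $Z_{n,\up}$; at equator points of $A$ (if any), we simultaneously shift $f_n$ by $-3$ on the $Z_{n,\lo}$ side in order to preserve the equator sum condition, using the identity $f_\up - f_{n,\up} = f_{n,\lo} - f_\lo$ on $Q_n$ to conclude that the downward shift remains compatible with $f_{n,\lo} \geq f_\lo$. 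This gives $g^\ast \preceq f$. If $g^\ast$ is a valid height function, then $g^\ast \in F$ with $g^\ast(v) = f_n(v)+3$, contradicting maximality of $f_n$.

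The main obstacle is verifying that $g^\ast$ is an integer height function, i.e., $g^\ast(w)-g^\ast(u) \in \{1,-2\}$ on every positive-direction lattice edge $u \to w$. Edges interior to $A$ or to its complement are unchanged; the crux is $\partial A$. The key tool is a \emph{slack propagation lemma}: on a positive-direction edge $u \to w$, if $u$ is slack and $f_n(w)-f_n(u) = -2$ then $w$ is slack; and if $w$ is slack and $f_n(w)-f_n(u) = 1$ then $u$ is slack. Both follow from the one-step bound $-2 \leq f(w)-f(u) \leq 1$ on positive unit-direction steps (an elementary consequence of $\nabla f \in K$ after computing its extremal values against each positive direction) combined with $f_n(w)-f_n(u) \in \{1,-2\}$; for instance, in the first case $f(w)-f_n(w) \geq (f(u)-f_n(u)) + (-2) - (-2) \geq 3$. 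Consequently, on any positive-direction edge $u \to w$ with $u \in A$ and $w \notin A$ one must have $f_n(w)-f_n(u) = 1$ (otherwise $w$ would be slack and adjacent to $u \in A$ in the slack graph, forcing $w \in A$), so $g^\ast(w)-g^\ast(u) = 1-3 = -2$; the reverse boundary case is symmetric and yields $g^\ast(w)-g^\ast(u) = 1$. The outer boundary $\partial \hexagon_v^n$ is excluded from $A$ because $f=f_n$ there by the fixed boundary data, and a short Lipschitz collar keeps a neighborhood strictly out of slack. This yields the desired contradiction and completes the proof.
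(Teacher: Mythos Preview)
Your approach is genuinely different from the paper's. The paper constructs the desired height function explicitly: it fixes the value at $v$ to be the unique integer in $(f(v)-3,f(v)]$ congruent to $f_{\mathrm{standard}}(v)$ modulo $3$, then uses the McShane-type formula $g_\up(w)=\min_{u\in\{v\}\cup(\partial\RR\setminus Q_n)}g(u)+d_\RR(u,w)$ on the upper trapezoid and the maximal extension on the lower trapezoid. You instead take $f_n=\bigvee F$ and argue by contradiction via a local $+3$ raising on the connected slack component $A$. The slack-propagation lemma you isolate on $Z_{n,\up}$ is correct and clean, and the boundary-edge analysis on $\partial A$ inside $Z_{n,\up}$ is fine.

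There is, however, a real gap when $A$ reaches the equator $Q_n$. You shift $f_{n,\lo}$ by $-3$ \emph{only at the equator points of $A$}, and you verify that this keeps $g^*_\lo\ge f_\lo$ there via the sum identity. What you do not check is that $g^*_\lo$ is still a valid height function on $Z_{n,\lo}$. Take $u\in A\cap Q_n$ and an adjacent $w\in Z_{n,\lo}\setminus Q_n$ along a positive-direction edge $u\to w$ with $f_{n,\lo}(w)-f_{n,\lo}(u)=1$; after your modification $g^*_\lo(w)-g^*_\lo(u)=1+3=4\notin\{1,-2\}$. Your own propagation argument, transported to the lower trapezoid, shows that in this case $w$ is \emph{lower}-slack ($f_{n,\lo}(w)\ge f_\lo(w)+3$), which points to the fix: the $-3$ shift has to be propagated along the connected lower-slack set, not confined to $A\cap Q_n$; one must then also verify that the upper and lower slack components agree on $Q_n$. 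Alternatively, once $g^*_\up$ is fixed, discard your $g^*_\lo$ and instead take the maximal height-function extension on $Z_{n,\lo}$ matching the new equator data, then check it dominates $f_\lo$; that is essentially the paper's route. As written, the lower-trapezoid step is incomplete.
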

    \begin{proof}
    We prove the claim when $v$ is in the upper trapezoid, since the other case in analogous. Let $f_{standard}$ denote the height function corresponding to the standard tiling of $\hexagon_v^n$. 
    Define a function $g$ (which will later be shown to be a height function) as follows. Let the value of $g_\up$ at $v$ to be the unique value in $(f(v) - 3, f(v)]$ congruent modulo $3$ to $f_{standard}(v)$. Let $\RR$ be the upper trapezoid of $\hexagon_v^n$. We define $g_\up$ 
    on $\RR$ by $g_\up(w) = \min_{u \in \{v\} \cup (\partial \RR\setminus Q_n)} f(u) + d_\RR(u, w)$, $Q_n$ being the equator of $\hexagon_v^n$. On the lower trapezoid we define $g_\lo$ to be the unique maximal extension that extends the boundary values (including the equator, there the values are such that their sum with $g_\up$ is a linear function of slope $1$ taking integer values on lattice points). Such an extension always exists because of Proposition~\ref{prop:2}.
    
    We define $g$ to equal $g_\up$ on the upper trapezoid and $g_\lo$ on the lower trapezoid (while on the equator, it takes two not necessarily distinct values, one corresponding to its inclusion in the upper trapezoid and another corresponding to its inclusion in the lower trapezoid).
    \end{proof}
  Let $\FF$ be the set of all height functions $g_n$ on $\hexagon_v^n$ such that $g_n \preceq f$ and let $f_n = \bigvee_{g \in \FF} g$. Then $f_n$ is a height function for $\hexagon_v^n$ and by the above claim, $\max_v |f(v) - f_n(v)| < 3$.
  \end{proof}

  \noindent {\it Proposition~\ref{prop:3} restated:}  Let $m \in \N$ and 
  let $\RR_0$ be a simply connected tileable lattice domain with boundary $\partial \RR_0$. Let $ 0 < 2 \eps' < \hat{\eps} \tilde{\eps}.$ 
  Let $\RR_1 \subseteq \RR_0$, where $\RR_1$  is a simply connected tileable lattice domain, such that every lattice point $x \in \partial\RR_1$ satisfies $d_{\RR}(x, \partial \RR_0) > \tilde{\eps}m$ and every lattice point $y \in \partial \RR_0$ satisfies $d_{\RR}(x, \partial \RR_1) > \tilde{\eps}m$. 
  Let $\RR = \RR_0\setminus \RR_1.$ Let $g_i: \partial \RR_i \ra \Z$ for $i \in \{0, 1\}$ be such that 
   $g_i$ extends to a height function on $\RR_i$ for each $i \in \{0, 1\}$ and 
    there exists an affine asymptotic height function $a_\infty \in (1 - \hat{\eps})K$ such that $\sup\limits_{\partial \RR_i} |g_i - a_\infty| < \eps' m$ for $i \in \{0, 1\}$.
  Then, there is a constant $\chi \in \{-1, 0, 1\}$ such that $g: \partial \RR \ra \R$ defined by $$g|_{\partial \RR_0} = g_0$$ and $$g|_{\partial \RR_1} = g_1 + \chi,$$ extends to a height function on $\RR$.
  \begin{proof}[Proof of Proposition~\ref{prop:3}]
  By Proposition~\ref{prop:2}, $g$ extends to a height function $f$ on $\RR$ if and only if for every $u$ and $v$ in $\partial R$:
  \ben \item[(A)] $d_\RR(u,v) - g(v)+g(u) \ge 0,$ \item[(B)] $d_\RR(u,v) - g(v)+g(u) \equiv 0 \,  (\mathrm{mod} \,3).$\een
  
  Let us check the above two conditions for every $u, v \in \partial \RR.$ Firstly, suppose both $u$ and $v$ belong to $\partial \RR_0$. Then, because $g_0$ extends to a height function on $\RR_0$, conditions (A) and (B) are satisfied. Similarly, if both $u$ and $v$ belong to $\partial \RR_1$, then because $g_1$ extends to a height function on $\RR_1$, conditions (A) and (B) are satisfied.
  We will now prove conditions (A) and (B) when $u \in \partial \RR_1$ and $v \in \partial \RR_0$. 
  We proceed to check condition (A). 
  
  Observe that by condition 2. in the statement of Proposition~\ref{prop:3}, \beq a_\infty(v) - a_\infty(u) <  (1 - \hat{\eps})d_\RR(u, v).\eeq
  and  \beq g_0(v) - g_1(u) & = & (g_0(v) - a_\infty(v)) + (a_\infty(v) - a_\infty(u)) + (a_\infty(u) - g_1(u)).\eeq Therefore, 
  \beq g(v) - g(u) = g_0(v) - g_1(u) - \chi & < & (a_\infty(v) - a_\infty(u)) + 2 \eps' m - \chi\\
  & < & (1 - \hat{\eps})d_\RR(u, v) + 2 \eps' m - \chi\\
  & < & d_\RR(u, v) - (\hat{\eps}\tilde{\eps} - 2 \eps')m - \chi.\eeq
  By the condition  that $0 < 2 \eps' < \hat{\eps} \tilde{\eps}$, 
  $g(v) - g(u) < d_\RR(u, v) - \chi$ and since $g$ and $d_\RR$ are integer valued, condition (A) is satisfied.
  We now check condition (B).
  
  Let $f_0$ be a height function that is an extension of $g_0$ to $\RR_0$ and $f_1$ be a height function that is an extension of $g_1$ to $\RR_1$.
  \begin{claim}  $f_0 - f_1$ is a constant modulo $3$ on $\RR_1$.
  \end{claim}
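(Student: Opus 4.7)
The plan is to reduce the claim to a classical modular invariance of height functions on the triangular lattice, and then invoke connectedness of $\RR_1$. The key observation is that the local rule defining a height function from a lozenge tiling (as in Definition~\ref{def:Htfn}) forces $f(v) - f(u)$ to equal either $+1$ (when the edge $uv$ lies on the boundary of a lozenge) or $-2$ (when $uv$ is crossed diagonally by a lozenge), whenever $u,v \in \L$ are lattice-adjacent with $v - u$ a positive unit vector. Since $+1 \equiv -2 \equiv 1 \pmod 3$, we have $f(v) - f(u) \equiv 1 \pmod 3$ uniformly across all such pairs, regardless of which tiling $f$ comes from.

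Applying this observation to $f_0$ (restricted to $\RR_1$, which inherits the local rule from the underlying tiling of $\RR_0$) and to $f_1$ (on $\RR_1$, from its own tiling), I obtain for every pair $u, v \in \RR_1 \cap \L$ with $v - u$ a positive unit vector,
\[
(f_0 - f_1)(v) - (f_0 - f_1)(u) \equiv 1 - 1 \equiv 0 \pmod 3.
\]
Thus $f_0 - f_1$ modulo $3$ is invariant under every positive unit lattice step (and hence also under the reverse steps). Because $\RR_1$ is simply connected, its lattice points form a connected subgraph of the triangular lattice, so this local invariance propagates to give a single constant value of $f_0 - f_1 \pmod 3$ throughout $\RR_1 \cap \L$, which is exactly the claim.

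The main (very mild) subtlety is justifying the mod-$3$ local rule for $f_0$ restricted to $\RR_1$: $\RR_1$ need not be a union of lozenges from the tiling of $\RR_0$ (a lozenge may straddle $\partial \RR_1$). But the mod-$3$ local rule is a condition purely on the values $f_0(u), f_0(v)$ at adjacent lattice points, which are unchanged by restriction to $\RR_1$; no compatibility between the tiling of $\RR_0$ and the domain $\RR_1$ is required. Beyond that, the argument is essentially the standard fact that a height function is determined modulo $3$ by its value at any one reference lattice point, so no serious obstacle appears.
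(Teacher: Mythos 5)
Your proof is correct and takes essentially the same route as the paper: both exploit that the local height-increment rule gives differences in $\{1,-2\}$ along any positive unit step, hence $\equiv 1 \pmod 3$, so the increment of $f_0 - f_1$ vanishes mod $3$, and connectedness of $\RR_1$ propagates this to a global constant. Your remark that the mod-$3$ local rule is a pointwise condition on adjacent lattice values, and hence needs no compatibility between the tiling of $\RR_0$ and the subdomain $\RR_1$, is a harmless clarification that the paper leaves implicit.
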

  \begin{proof}
  Since $\RR_1$ is connected, there exists an undirected lattice path between any $u'$ and $v'$ that are contained in $\RR_1$. Therefore, it suffices to show that if $u'$ and $v'$ are two adjacent lattice points, then $f_0(u') - f_1(u') \equiv f_0(v') - f_1(v') \, (\mathrm{mod} \, 3).$ WLOG $u' \ra v'$ is a positive direction. By the definition of the height function corresponding to a tiling, both $f_0(u') - f_0(v')$ and the $f_1(u') - f_1(v')$ above  belong to the set $\{1, -2\}$, and hence are congruent modulo $3$. Therefore, the claim follows.
  \end{proof}
  Note that by the definition of a height function corresponding to a tiling, given any $w \in \partial \RR_1$, $d_\RR(v,w) - f_0(w) + f_0(v) \equiv 0 \,  (\mathrm{mod} \,3)$.  This, by the above claim, implies that 
  for all $w \in \partial \RR_1$, $d_\RR(v,w) - f_1(w) + f_0(v) \,  (\mathrm{mod} \,3)$ is independent of $v$ and $w$ as well.
  This proves that condition (B) is satisfied.
  The case of $u \in \partial \RR_0$ and $v \in \partial \RR_1$ is analogous.
  \end{proof}


  
\section{Proofs involving the surface tension function $\sigma_\edge(\rho, a_\infty)$}\lab{sec:Surface}

\begin{lemma}\lab{lem:1}
Suppose $\rho \in \R_{>0}^2$ and $a_\infty \in (1 - \de)K$. Suppose $m_1, m_2, m_3$ are positive integers larger than $C_\de$.
Then, 
\ben
\item[A.] $  |m_3^2 \sigma_{m_3, \edge}(\rho,  a_\infty, \eps)- m_2^2\sigma_{m_2, \edge}(\rho,  a_\infty, \eps)| \leq   C_{|\rho|}|m_3^2 - m_2^2|.$ 
\item[B.] Suppose $m_1 m_2 + C_\de m_1m_2\eps =  m_3$. Then, for any $\eps_3 \geq \frac{C'_\de +  m_1 \eps}{m_3}$, $$m_3^2 \sigma_{m_3, \edge}(\rho,  a_\infty, \eps_3) \leq  m_1^2 m_2^2\sigma_{m_1, \edge}(\rho,  a_\infty, \eps)  +  C_{|\rho|, \de}(m_1^2 m_2^2\eps).$$ 
\item[C.] If $\eps \leq \eps_0$, then $ \sigma_{m_1, \edge}(\rho,  a_\infty, \eps)  \geq  \sigma_{m_1, \edge}(\rho,  a_\infty, \eps_0).$ 
\item[D.] There  is a constant $C_{|\rho|} $  and an absolute constant $\tilde{C}$ such that for all  $m_1 \eps > \tilde{C}$, $\sigma_{m_1, \edge}(\rho,  a_\infty, \eps) < C_{|\rho|}.$
\item[E.] $\sigma_{m_1, \edge}(\rho,  a_\infty, \eps) \geq 0.$
\een
\end{lemma}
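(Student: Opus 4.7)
The overall plan is to treat parts (E), (C), (D), (A) by direct arguments and to devote the main effort to part (B), which is a submultiplicative scaling estimate in the spirit of the Cohn--Kenyon--Propp derivation of surface tension for random tilings.

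Part (E) is immediate from the remark made right after the definition of the total lozenge weight: each blue or green lozenge weight is almost surely nonpositive by the interlacing property of the GUE minor process, so $\langle \partial \MM_{m,\x(\rho)}, \partial a_m\rangle \leq 0$ surely for every $a_m$, and the normalization $-\frac{1}{m^2}\E\max$ is nonnegative. Part (C) follows from the set inclusion $B_m(a_\infty, m\eps)\subseteq B_m(a_\infty, m\eps_0)$ for $\eps\leq \eps_0$: the maximum over the larger set dominates, and negating reverses the inequality. For (D), when $m_1\eps$ exceeds an absolute constant, a construction analogous to Proposition~\ref{prop:6.4} produces a discrete height function $a^\star$ within $L^\infty$-distance $O(1)$ of $a_\infty$ on $\Box_{m_1}$, hence in $B_{m_1}(a_\infty, m_1\eps)$. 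Evaluating at $a^\star$ and using the rigidity of GUE eigenvalue gaps near $\x(\rho)$ (Lemma~\ref{lem:TV} and Theorem~\ref{theorem:Tao1}) bounds the expected magnitude of each of the $O(m_1^2)$ lozenge weights by $O(C_{|\rho|})$, giving $-\E\max \leq C_{|\rho|} m_1^2$. For (A), assume without loss of generality $m_3\geq m_2$ and extend a near-optimal $a_{m_2}$ to $a_{m_3}$ by filling $\Box_{m_3}\setminus\Box_{m_2}$ with a height function close to $a_\infty$ (via Propositions~\ref{prop:6.4} and~\ref{prop:3}); the two inner products differ only over lozenges in the annulus, whose expected total weight magnitude is $O(C_{|\rho|}(m_3^2-m_2^2))$. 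The converse direction follows by restriction.

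Part (B) is the main technical step. Partition $\Box_{m_3}$ into an $m_2\times m_2$ grid containing translated copies $\Box_{m_1}^{(k)}$ of $\Box_{m_1}$, separated by buffer strips of width approximately $C_\de m_1\eps$; the hypothesis $m_3=m_1m_2+C_\de m_1m_2\eps$ makes this geometric layout feasible. On each cell, take $a^{(k)}\in B_{m_1}(a_\infty, m_1\eps)$ to be essentially optimal for the local marked bead problem, and on each buffer strip place a height function tracking $a_\infty$ within $O(1)$ via Proposition~\ref{prop:6.4}. Proposition~\ref{prop:3}, applied with $\hat{\eps}=\de$ (ensured by $a_\infty\in(1-\de)K$) and $\tilde{\eps}$ proportional to the buffer width, furnishes the integer shifts $\chi_k\in\{-1,0,1\}$ that glue the pieces into a single valid height function $a_{m_3}$ on $\Box_{m_3}$. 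This $a_{m_3}$ lies in $B_{m_3}(a_\infty, m_3\eps_3)$ precisely when $\eps_3\geq (C'_\de+m_1\eps)/m_3$: the $m_1\eps$ term accommodates local cell fluctuations and $C'_\de$ absorbs the boundary-matching shifts.

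Finally, $\langle \partial \MM_{m_3,\x(\rho)}, \partial a_{m_3}\rangle$ decomposes as $\sum_k \langle \partial \MM_{m_1,\x(\rho)}^{(k)}, \partial a^{(k)}\rangle$ plus the buffer contribution. By the distributional locality of the marked bead limit (Theorem~\ref{theorem:beadR}), each cell's expected maximum over $a^{(k)}$ equals exactly $-m_1^2\sigma_{m_1,\edge}(\rho, a_\infty,\eps)$, giving a net $-m_1^2 m_2^2\sigma_{m_1,\edge}(\rho, a_\infty,\eps)$. The buffer strips have total area $O(C_\de m_1 m_2\eps\cdot m_3)=O(m_1^2 m_2^2\eps)$, so by the eigenvalue-gap bound from (D), their expected contribution has magnitude at most $C_{|\rho|,\de} m_1^2 m_2^2\eps$. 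Combining and negating yields the inequality asserted in (B). The main obstacle is the patching step: the need to produce a single valid height function in the prescribed $L^\infty$ ball forces a tension between wide buffers (required by the matching hypothesis $2\eps' < \hat{\eps}\tilde{\eps}$ of Proposition~\ref{prop:3}, hence the insistence on $a_\infty\in(1-\de)K$) and narrow buffers (required for the error term to remain $O(\eps)$); the specific scaling $C_\de m_1\eps$ of the buffer width resolves this trade-off and is what causes the constant $C_{|\rho|,\de}$ to blow up as $\de\downarrow 0$.
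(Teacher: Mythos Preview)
Your proposal is correct and follows essentially the same approach as the paper: parts (C), (E) are immediate from the definitions, (D) and (A) use the existence of a nearby height function via Proposition~\ref{prop:6.4} together with the $O(C_{|\rho|})$ expected magnitude of each lozenge weight, and (B) is the subdivision-and-patching argument via Proposition~\ref{prop:3} with buffer strips of width $C_\de m_1\eps$. The paper's own proof is considerably more terse but records exactly the same ideas; your discussion of the buffer-width trade-off and the role of $a_\infty\in(1-\de)K$ in satisfying the hypothesis of Proposition~\ref{prop:3} makes explicit what the paper leaves implicit.
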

\begin{proof} 
\ben 
  \item[A.]  follows from taking Lipschitz extensions and using Proposition~\ref{prop:3}.

\item[B.] follows from subdividing $[m_3] \times [m_3]$  into $m_2^2$ smaller squares of size $m_1 \times m_1$, with a border of thickness $C_\de \eps m_1$, and appealing to the fact that any set of height functions $a_{m_1}\in B_{m_1}(a_\infty, m_1\eps)$  corresponding to free boundary matchings of an $m_1\times m_1$ square  can be patched together into a  free boundary matching of a $[m_3] \times [m_3]$ square that corresponds to a height function $a_{m_3} \in B(a_\infty, m_3 \eps_3)$, using Lipschitz extension theory via  Proposition~\ref{prop:3}.


 \item[C.] follows from the fact that when the set of matchings is shrunk, the minimum weight can only increase, or stay the same. 
 
 \item[D.] follows from the fact that the interlacing gaps are in expectation $\rho_1$ and $\rho_2$ and hence  the negative of the mean of the sum of the weights of {\it all} the green lozenges and twice {\it all} the blue lozenges in an $m_1 \times m_1$ patch  is  bounded above  $ 2(\rho_1 + \rho_2)m_1^2 (1 + o_{m_1}(1))$, and that if $\tilde{C} > 3$ (the proof of this fact is entirely analogous to the proof of Proposition~\ref{prop:6.4}) there is at least one matching in the set the minimum is being taken over.
 
 \item[E.] follows from the nonpositivity of the (stochastic) edge weights $\wt(\edge)$.
 \een

\end{proof}

\begin{lemma}\lab{lem:6.3}
For any fixed $\rho \in \R_{>0}^2$ and $a_\infty \in K^o$, $\sigma_{m, \edge}(\rho,  a_\infty, \psi(m))$ is an approximately monotonically decreasing function of $m$ in the following sense. There exists positive $C_{|\rho|}$ and $M$ such that for all  $m_0 > M$ and $m_3 > m_0^2/\psi(m_0)$, $$\sigma_{m_3, \edge}(\rho,  a_\infty, \psi(m_3)) \leq \sigma_{m_0, \edge}(\rho,  a_\infty, \psi(m_0)) + C_{|\rho|}\psi(m_0).$$
\end{lemma}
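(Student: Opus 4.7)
The plan is to use the subadditivity inequality in Lemma~\ref{lem:1}(B) at the small scale $m_0$, tiling an auxiliary $m_3'\times m_3'$ square (with $m_3'$ slightly larger than the given $m_3$) by $m_2^2$ blocks of side $m_0$ separated by borders of width $C_\de m_0\psi(m_0)$, and then bridge from $m_3'$ back to $m_3$ via parts (A) and (C) of Lemma~\ref{lem:1}. Since $a_\infty\in K^o$, I fix once and for all a $\de>0$ with $a_\infty\in(1-\de)K$; all constants below may depend on $\de$ as well as $|\rho|$.

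\textbf{Choice of $m_3'$ and the side condition.} Given $m_3>m_0^2/\psi(m_0)$, I would take $m_2:=\lceil m_3/(m_0(1+C_\de\psi(m_0)))\rceil$ and set $m_3':=m_0 m_2(1+C_\de\psi(m_0))$, so that $m_3'\ge m_3$ and $m_3'-m_3=O(m_0)$. The key numerical point is that the hypothesis $m_3>m_0^2/\psi(m_0)$, combined with the monotonicity of $m\psi(m)$ from Observation~\ref{obs:psi}, forces $m_3'\psi(m_3')\gg m_0\psi(m_0)+C'_\de$ for $m_0>M$ (since $m_3'\psi(m_3')\ge (m_0^2/\psi(m_0))\cdot\psi(m_0^2/\psi(m_0))\gtrsim m_0^2$). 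This is exactly what allows the choice $\eps_3:=\psi(m_3')$ to satisfy the side condition $\eps_3\ge (C'_\de+m_0\psi(m_0))/m_3'$ required by Lemma~\ref{lem:1}(B).

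\textbf{Main estimate and bridging.} Applying Lemma~\ref{lem:1}(B) with $m_1=m_0$, $\eps=\psi(m_0)$, the above $m_2$, and $\eps_3=\psi(m_3')$ yields
$$(m_3')^2\,\sigma_{m_3',\edge}(\rho,a_\infty,\psi(m_3'))\;\le\;m_0^2 m_2^2\,\sigma_{m_0,\edge}(\rho,a_\infty,\psi(m_0))+C_{|\rho|,\de}\,m_0^2 m_2^2\,\psi(m_0).$$
Since $(m_3')^2=m_0^2 m_2^2(1+C_\de\psi(m_0))^2$, dividing and using $(1+C_\de\psi(m_0))^{-2}\le 1$ together with the uniform upper bound in Lemma~\ref{lem:1}(D) (valid once $m_0\psi(m_0)>\tilde{C}$) produces
$$\sigma_{m_3',\edge}(\rho,a_\infty,\psi(m_3'))\;\le\;\sigma_{m_0,\edge}(\rho,a_\infty,\psi(m_0))+C_{|\rho|,\de}\,\psi(m_0).$$
To descend from $m_3'$ to $m_3$: since $m_3\le m_3'$ gives $\psi(m_3)\ge\psi(m_3')$, Lemma~\ref{lem:1}(C) provides $\sigma_{m_3,\edge}(\rho,a_\infty,\psi(m_3))\le\sigma_{m_3,\edge}(\rho,a_\infty,\psi(m_3'))$, and then Lemma~\ref{lem:1}(A) at fixed $\eps=\psi(m_3')$, combined with $(m_3'/m_3)^2-1=O(\psi(m_0)/m_0)$ and part (D), yields $\sigma_{m_3,\edge}(\rho,a_\infty,\psi(m_3'))\le\sigma_{m_3',\edge}(\rho,a_\infty,\psi(m_3'))+O(\psi(m_0)/m_0)$. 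Chaining the three inequalities closes the argument.

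\textbf{Main obstacle.} The principal difficulty is bookkeeping rather than conceptual: Lemma~\ref{lem:1}(B) demands the exact relation $m_1 m_2+C_\de m_1 m_2\eps=m_3$, which cannot be arranged for a prescribed integer $m_3$. Introducing $m_3'\ge m_3$ and descending via (A) and (C) is the natural fix, but each step must be checked against the error budget $\psi(m_0)$: the slack $m_3'-m_3=O(m_0)$ contributes only $O(\psi(m_0)/m_0)$, the prefactor $(1+C_\de\psi(m_0))^{-2}$ erodes the dominant term by merely $1-O(\psi(m_0))$, and, most crucially, the scale separation $m_3>m_0^2/\psi(m_0)$ is exactly what lets $\eps_3=\psi(m_3')$ clear the threshold $(C'_\de+m_0\psi(m_0))/m_3'$ demanded by (B). Without this hypothesis, one cannot simultaneously set $\eps_3=\psi(m_3')$ and apply (B), and the argument would break at its very first step.
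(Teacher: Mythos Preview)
Your proposal is correct and follows essentially the same route as the paper: both apply the subadditivity inequality of Lemma~\ref{lem:1}(B) with block size $m_0$ and tolerance $\eps=\psi(m_0)$, and both use the hypothesis $m_3>m_0^2/\psi(m_0)$ precisely so that the side condition $\eps_3\ge (C'_\de+m_0\psi(m_0))/m_3$ is met with $\eps_3=\psi(m_3)$. The only difference is in how the integer constraint is handled: the paper takes $m_1=\lfloor m_3/m_0\rfloor$ and absorbs the discrepancy into a modified constant $C'$, whereas you round up to an auxiliary $m_3'\ge m_3$ satisfying the exact relation and then descend via parts (A) and (C); your bookkeeping is arguably the cleaner of the two.
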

\begin{proof}
By Lemma~\ref{lem:1} B., we see that the following holds. If $m_1 m_0(1 + C\psi(m_0)) = m_3$, then 
 $$ m_3^2 \sigma_{m_3, \edge}(\rho,  a_\infty, \psi(m_3)) \leq  m_1^2 m_0^2 \sigma_{m_0, \edge}(\rho,  a_\infty, \psi(m_0))  +  C_{|\rho|}(m_1^2m_0^2 \psi(m_0)).$$ 
 This implies that 
 \beq (1 + C \psi(m_0))^2 \sigma_{m_3, \edge}(\rho,  a_\infty, \psi(m_3)) \leq \sigma_{m_0, \edge}(\rho,  a_\infty, \psi(m_0))  +  C_{|\rho|}\psi(m_0).\lab{eq:conv}\eeq
 However, for every $m_3 > \frac{m_0^2(1+C\psi(m_0))}{\psi(m_0)}$, choosing $m_1 = \lfloor \frac{m_3}{m_0}\rfloor$, we see that
 $m_1 m_0(1 + C'\psi(m_0)) = m_3$ and therefore,
  \beq (1 + C' \psi(m_0))^2 \sigma_{m_3, \edge}(\rho,  a_\infty, \psi(m_0)) \leq \sigma_{m_0, \edge}(\rho,  a_\infty, \psi(m_0))  +  C_{|\rho|}\psi(m_0).\lab{eq:conv1}\eeq

\end{proof}

\begin{observation}\lab{obs:3}
If, in Lemma~\ref{lem:6.3}, in addition, we have that $\psi(m_3) \leq \frac{\psi(m_0)}{2},$ then we have
$$\sigma_{m_3, \edge}(\rho,  a_\infty, \psi(m_3))+ 2C_{|\rho|}\psi(m_3)\leq \sigma_{m_0, \edge}(\rho,  a_\infty, \psi(m_0)) + 2C_{|\rho|}\psi(m_0).$$
\end{observation}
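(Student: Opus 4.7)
The plan is to derive Observation~\ref{obs:3} as an essentially immediate bookkeeping consequence of Lemma~\ref{lem:6.3}. I would start from the conclusion of that lemma, namely
\[
\sigma_{m_3, \edge}(\rho, a_\infty, \psi(m_3)) \leq \sigma_{m_0, \edge}(\rho, a_\infty, \psi(m_0)) + C_{|\rho|}\psi(m_0),
\]
which holds under the hypothesis $m_3 > m_0^2/\psi(m_0)$. The strategy is then to add the quantity $2 C_{|\rho|}\psi(m_3)$ to both sides so the left-hand side matches the desired form.

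After this manipulation, the right-hand side becomes $\sigma_{m_0, \edge}(\rho, a_\infty, \psi(m_0)) + C_{|\rho|}\psi(m_0) + 2 C_{|\rho|}\psi(m_3)$, and the task reduces to absorbing the trailing $2 C_{|\rho|}\psi(m_3)$ term. Here the extra hypothesis $\psi(m_3) \leq \psi(m_0)/2$ is exactly what is needed: it implies $2 C_{|\rho|}\psi(m_3) \leq C_{|\rho|}\psi(m_0)$, and hence
\[
C_{|\rho|}\psi(m_0) + 2 C_{|\rho|}\psi(m_3) \leq 2 C_{|\rho|}\psi(m_0),
\]
yielding the claimed inequality
\[
\sigma_{m_3, \edge}(\rho, a_\infty, \psi(m_3)) + 2 C_{|\rho|}\psi(m_3) \leq \sigma_{m_0, \edge}(\rho, a_\infty, \psi(m_0)) + 2 C_{|\rho|}\psi(m_0).
\]

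There is no real obstacle here, since the observation is a bookkeeping corollary of Lemma~\ref{lem:6.3}. The only point worth checking carefully is that the constant $C_{|\rho|}$ in Observation~\ref{obs:3} is indeed the same constant coming out of Lemma~\ref{lem:6.3} (which itself is inherited via Lemma~\ref{lem:1}(B) from the Lipschitz-patching construction with $m_1 m_0(1+C\psi(m_0))=m_3$); with that identification the above computation is tight. I expect the observation is being stated precisely in this form so that later, the map $m \mapsto \sigma_{m,\edge}(\rho, a_\infty, \psi(m)) + 2 C_{|\rho|}\psi(m)$ is monotonically non-increasing along dyadic-like subsequences where $\psi$ halves, which will be used to upgrade the $\liminf$ in Definition~\ref{def:8.7} to a genuine limit and to establish uniform convergence in Proposition~\ref{prop:conv-final}.
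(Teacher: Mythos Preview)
Your argument is correct and is exactly the intended one: the paper states Observation~\ref{obs:3} without proof precisely because it follows from Lemma~\ref{lem:6.3} by adding $2C_{|\rho|}\psi(m_3)$ to both sides and using $2\psi(m_3)\le\psi(m_0)$ to absorb the extra term. There is nothing to add.
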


As  consequence of B. and C. in Lemma~\ref{lem:1}, we have the following.

\begin{corollary} \lab{cor:1} For  $\rho \in \R_{>0}^2$ and $a_\infty \in K^o$, and positive $\eps$,
  $$\sigma_\edge(\rho,  a_\infty, \eps) := \lim_{m \ra \infty}  \sigma_{m, \edge}(\rho,  a_\infty, \eps),$$ exists.
 \end{corollary}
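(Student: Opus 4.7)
The plan is to prove existence of $\lim_{m\to\infty}\sigma_{m,\edge}(\rho,a_\infty,\eps)$ for each fixed $\eps>0$ by establishing that $\liminf_m$ equals $\limsup_m$, using the subadditive-style scaling of Lemma~\ref{lem:1}~B together with the monotonicity in $\eps$ of Lemma~\ref{lem:1}~C (and using parts~D, E to get absolute bounds, and A to sweep intermediate scales).

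First I would record boundedness. By Lemma~\ref{lem:1}~E, $\sigma_{m,\edge}(\rho,a_\infty,\eps)\geq 0$ for all $m$, and by Lemma~\ref{lem:1}~D, once $m\eps>\tilde C$ one has $\sigma_{m,\edge}(\rho,a_\infty,\eps)\leq C_{|\rho|}$. Consequently $L^-:=\liminf_m \sigma_{m,\edge}(\rho,a_\infty,\eps)$ and $L^+:=\limsup_m \sigma_{m,\edge}(\rho,a_\infty,\eps)$ both lie in $[0,C_{|\rho|}]$, so the task reduces to showing $L^+\leq L^-$.

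Second, I would extract the key inequality from part~B by taking the outer parameter $\eps_3$ equal to the given $\eps$ and also the inner parameter equal to $\eps$. For any anchor scale $m_1$ and integer $m_2$ large enough so that $m_3:=m_1 m_2(1+C_\de\eps)$ satisfies the constraint $m_3\geq (C'_\de+m_1\eps)/\eps$ (always achievable by enlarging $m_2$), dividing through by $m_3^2=m_1^2 m_2^2 (1+C_\de\eps)^2$ yields
\[
\sigma_{m_3,\edge}(\rho,a_\infty,\eps)\ \leq\ (1+C_\de\eps)^{-2}\,\sigma_{m_1,\edge}(\rho,a_\infty,\eps)\ +\ (1+C_\de\eps)^{-2}\,C_{|\rho|,\de}\,\eps.
\]
Then I would pick a subsequence $m_1^{(k)}\to\infty$ along which $\sigma_{m_1^{(k)},\edge}(\rho,a_\infty,\eps)\to L^-$, and sweep $m_2$ so that the constructed $m_3$'s are cofinal; the $O(1/m)$-Lipschitz control of $m^2 \sigma_{m,\edge}(\rho,a_\infty,\eps)$ coming from Lemma~\ref{lem:1}~A bridges the gap to the integers skipped by the sparse sequence $\{m_3\}$. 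Passing to the $\limsup$ in $m_3$ and $k$ gives $L^+\leq (1+C_\de\eps)^{-2} L^- + (1+C_\de\eps)^{-2} C_{|\rho|,\de}\,\eps$.

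The main obstacle is converting this inequality $L^+\leq r L^- + s$, with contraction factor $r=(1+C_\de\eps)^{-2}<1$ and nonvanishing additive slack $s=r\,C_{|\rho|,\de}\eps>0$, into the equality $L^+=L^-$; on its own it only yields the a~priori bound $L^-\leq s/(1-r)$. I would close this gap by iterating the inequality of Step~2 and invoking part~C. Specifically, applying part~B with a variable inner parameter $\tilde\eps$ (still with outer parameter $\eps$) and choosing a sequence $\tilde\eps\downarrow 0$ along which $m_1\tilde\eps\to\infty$ (so part~D still controls $\sigma_{m_1,\edge}(\rho,a_\infty,\tilde\eps)$), the contraction factor $(1+C_\de\tilde\eps)^{-2}\to 1$ and the additive error $(1+C_\de\tilde\eps)^{-2} C_{|\rho|,\de}\tilde\eps\to 0$; using the monotonicity of part~C to compare $\sigma_{m_1,\edge}(\rho,a_\infty,\tilde\eps)$ with $\sigma_{m_1,\edge}(\rho,a_\infty,\eps)$ along the chosen scaling, the residual error is absorbed in the limit, forcing $L^+\leq L^-$ and establishing convergence of $\sigma_{m,\edge}(\rho,a_\infty,\eps)$ to $\sigma_\edge(\rho,a_\infty,\eps)$.
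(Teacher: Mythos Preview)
Your overall strategy matches the paper's one-line justification (``a consequence of B.\ and C.\ in Lemma~\ref{lem:1}''), and Steps~1--2 are fine: you correctly extract from part~B (with inner and outer parameter both equal to $\eps$, bridging via part~A) the inequality
\[
L^+ \;\leq\; (1+C_\de\eps)^{-2}\,L^- \;+\; (1+C_\de\eps)^{-2}\,C_{|\rho|,\de}\,\eps,
\]
and you correctly observe that this contraction-plus-fixed-slack relation alone does not force $L^+=L^-$.

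The gap is in your closing paragraph. Applying part~B with inner parameter $\tilde\eps$ and outer parameter $\eps$ gives
\[
L^+(\eps)\;\leq\;(1+C_\de\tilde\eps)^{-2}\bigl[\sigma_{m_1,\edge}(\rho,a_\infty,\tilde\eps)+C_{|\rho|,\de}\,\tilde\eps\bigr],
\]
and sending $\tilde\eps\downarrow 0$ (with $m_1\tilde\eps\to\infty$) does kill the prefactor and the additive error. But on the right-hand side you are left with $\sigma_{m_1,\edge}(\rho,a_\infty,\tilde\eps)$, and part~C says that for $\tilde\eps<\eps$ one has $\sigma_{m_1,\edge}(\rho,a_\infty,\tilde\eps)\geq\sigma_{m_1,\edge}(\rho,a_\infty,\eps)$, not $\leq$. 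So the monotonicity of~C points the wrong way here: it lets you conclude only
\[
L^+(\eps)\;\leq\;\liminf_{\substack{\tilde\eps\to 0\\ m_1\tilde\eps\to\infty}}\sigma_{m_1,\edge}(\rho,a_\infty,\tilde\eps),
\]
and this $\liminf$ is \emph{at least} $L^-(\eps)$, which is vacuous. Choosing $m_1$ along a subsequence realizing $L^-(\eps)$ does not help either, since that pins down $\sigma_{m_1,\edge}(\rho,a_\infty,\eps)$ but says nothing about $\sigma_{m_1,\edge}(\rho,a_\infty,\tilde\eps)$ for $\tilde\eps<\eps$. In short, the appeal to~C in your last step cannot convert the bound into $L^+\leq L^-$; the residual error is not ``absorbed'' because the comparison you need is in the opposite direction to what~C provides. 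The paper does not spell out how B and C alone close this gap, so your difficulty is a real one rather than a misreading, but as written the proposal does not establish convergence.
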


 \begin{lemma}\lab{lem:8.7}
 For any $\rho \in \R_{>0}^2$ and $a_\infty \in K^o$, as $m \ra \infty$, $ \sigma_{m, \edge}(\rho, a_\infty)$
 converges to  $\sigma_{\edge}(\rho, a_\infty)$.
 Further, for all sufficiently large $m_0$, and \ben \item for all $a_\infty \in K^o$, $$\sigma_{\edge}(\rho,  a_\infty ) 
 \leq \sigma_{m_0, \edge}(\rho,  a_\infty) + C_{|\rho|}\psi(m_0)
 ,$$
 \item for all $a_\infty \in \partial K$,
 $$\sigma_{\edge}(\rho,  a_\infty ) 
 \leq \sigma_{m_0, \edge}(\rho,  a_\infty) + o(1)$$ as $m_0 \ra \infty$.

 \een
 
Lastly, the value of $\sigma(\rho, a_\infty)$ does not depend on the specific function $\psi$ so long as it satisfies the conditions in Observation~\ref{obs:psi}.
 \end{lemma}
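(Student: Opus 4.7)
The plan is to derive the lemma from the approximate monotonicity provided by Observation~\ref{obs:3} (a consequence of Lemma~\ref{lem:6.3}), together with the elementary set-inclusion $B_m(a_\infty,r-\delta)\subseteq B_m(a'_\infty,r)$ which holds whenever the affine functions $a_\infty,a'_\infty$ differ by at most $\delta$ in the $L^\infty$ norm over the $(2m+1)^2$ patch. I will proceed in four steps: (i) convergence of $\sigma_{m,\edge}(\rho,a_\infty)$ for $a_\infty\in K^\circ$; (ii) the interior upper bound; (iii) independence of the particular choice of $\psi$; (iv) the boundary upper bound, which will rely on (iii).

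For (i), fix $a_\infty\in K^\circ$ and set $\Phi(m):=\sigma_{m,\edge}(\rho,a_\infty)+2C_{|\rho|}\psi(m)$. With $\psi(m)=(\log m)^{-1/10}$, both hypotheses of Observation~\ref{obs:3} (that $\psi(m_3)\le\psi(m_0)/2$ and $m_3>m_0^2/\psi(m_0)$) are ensured by, say, $m_3\ge m_0^{1024}$. Hence, for each sufficiently large $m_0$, $\Phi(m)\le\Phi(m_0)$ for all $m\ge m_0^{1024}$, so $\limsup_m\Phi\le\Phi(m_0)$ and consequently $\limsup_m\Phi\le\liminf_m\Phi$. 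Together with $\psi(m)\to 0$ this shows $\sigma_{m,\edge}(\rho,a_\infty)$ converges to its liminf, which is by definition $\sigma_\edge(\rho,a_\infty)$. Step (ii) then follows from sending $m_3\to\infty$ directly in Lemma~\ref{lem:6.3}.

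For (iii), let $\psi_1,\psi_2$ both satisfy Observation~\ref{obs:psi}, and apply Lemma~\ref{lem:1}B with perturbation parameter $\eps=\psi_2(m_0)$ on the right and $\eps_3=\psi_1(m_3)$ on the left. The hypothesis $\psi_1(m_3)\ge(C'_\delta+m_0\psi_2(m_0))/m_3$ holds eventually because $m_3\psi_1(m_3)\to\infty$. Sending $m_3\to\infty$ gives $\sigma^{(\psi_1)}_\edge(\rho,a_\infty)\le\sigma_{m_0,\edge}(\rho,a_\infty,\psi_2(m_0))+C_{|\rho|}\psi_2(m_0)$, and then sending $m_0\to\infty$ gives $\sigma^{(\psi_1)}_\edge\le\sigma^{(\psi_2)}_\edge$. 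Symmetry yields equality; in particular, replacing $\psi$ by $2\psi$ does not change $\sigma_\edge$.

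Part (iv) is the main obstacle. For $a_\infty\in\partial K$, the naive strategy of applying (ii) to $a'_\infty\to a_\infty$ and then comparing $\sigma_{m_0,\edge}(\rho,a'_\infty)$ to $\sigma_{m_0,\edge}(\rho,a_\infty)$ breaks down because $\sigma_{m_0,\edge}(\rho,a_\infty,\cdot)$ is only a non-increasing step function of its $\eps$-argument and need not be left-continuous at $\psi(m_0)$. I will circumvent this using the freedom from (iii): compute $\sigma_\edge(\rho,a'_\infty)$ with $2\psi$ in place of $\psi$, obtaining the interior bound $\sigma_\edge(\rho,a'_\infty)\le\sigma_{m_0,\edge}(\rho,a'_\infty,2\psi(m_0))+C_{|\rho|}\psi(m_0)$. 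For $a'_\infty$ on the directional segment of Definition~\ref{def:8.7} sufficiently close to $a_\infty$ that $\|a_\infty-a'_\infty\|_\infty\le m_0\psi(m_0)$ on the $(2m_0+1)^2$ patch, the inclusion $B_{m_0}(a_\infty,m_0\psi(m_0))\subseteq B_{m_0}(a'_\infty,2m_0\psi(m_0))$ yields $\sigma_{m_0,\edge}(\rho,a'_\infty,2\psi(m_0))\le\sigma_{m_0,\edge}(\rho,a_\infty)$. Taking liminf along $a'_\infty\to a_\infty$ (whose tail automatically satisfies the closeness condition for each fixed $m_0$) gives $\sigma_\edge(\rho,a_\infty)\le\sigma_{m_0,\edge}(\rho,a_\infty)+C_{|\rho|}\psi(m_0)$, and sending $m_0\to\infty$ produces the required $o(1)$ error.
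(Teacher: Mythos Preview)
Your arguments for parts (i), (ii), and (iii) are essentially the same as the paper's: convergence via the approximate monotonicity of Lemma~\ref{lem:6.3}/Observation~\ref{obs:3}, the interior bound by sending $m_3\to\infty$ in Lemma~\ref{lem:6.3}, and independence of $\psi$ via the patching in Lemma~\ref{lem:1}B (the paper phrases the last as a contradiction, you do it directly, but the content is the same).

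For part (iv), the boundary inequality, your route is genuinely different. The paper argues combinatorially: it views a tiling near $a_\infty\in\partial K$ as a family of non-intersecting bi-colored paths, deletes one in every $\delta^{-1}$ of them (replacing by lozenges of the rare color), and invokes Lemma~\ref{lem:reuse} to show that the total weight changes by $o_\delta(1)$; this gives $\sigma_{m,\edge}(\rho,a_\infty)\ge\sigma_{m,\edge}(\rho,a''_\infty)-o_\delta(1)$ for $a''_\infty$ slightly interior. You instead exploit the freedom established in (iii) to replace $\psi$ by $2\psi$, and then use the trivial set inclusion $B_{m_0}(a_\infty,m_0\psi(m_0))\subseteq B_{m_0}(a'_\infty,2m_0\psi(m_0))$ for $a'_\infty$ close enough to $a_\infty$; this immediately yields $\sigma_{m_0,\edge}(\rho,a'_\infty,2\psi(m_0))\le\sigma_{m_0,\edge}(\rho,a_\infty)$ with no combinatorics and no appeal to Lemma~\ref{lem:reuse}. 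Your approach is more elementary and in fact delivers the stronger conclusion $\sigma_\edge(\rho,a_\infty)\le\sigma_{m_0,\edge}(\rho,a_\infty)+C_{|\rho|}\psi(m_0)$ on $\partial K$, not just $o(1)$. The paper's path-removal argument, on the other hand, produces a finite-$m$ comparison $\sigma_{m,\edge}(\rho,a_\infty)\ge\sigma_{m,\edge}(\rho,a''_\infty)-o_\delta(1)$ that is reused verbatim later (Claim~\ref{lem:8.6-cl:3} in the proof of Lemma~\ref{lem:8.6}), so it is doing double duty there. Both routes ultimately lean on the uniformity over $K^\circ$ in part (ii) as stated.
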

\begin{proof} Fix $\de > 0$. Let $m_0$ be such that $\psi(m_0) < \de$ and that $\sigma_{m_0, \edge}(\rho, a_\infty)$ is within $\de$ of  $$s := \lim\inf_m \{ \sigma_{m, \edge}(\rho, a_\infty)\}.$$ Then, for any sufficiently large $m_3$,  by Lemma~\ref{lem:6.3}, $\sigma_{m_3, \edge}(\rho, a_\infty)$ is within $C_{|\rho|}\psi(m_0) + \de \leq  (C_{|\rho|} + 1)\de$ of $s$.  This proves the convergence to a limit.
We get the inequality 1.  by letting $m_3 \ra \infty$ in Lemma~\ref{lem:6.3}. 

We now prove inequality $2.$
If we consider any maximum weight lozenge tiling $\tiling(a_m)$ where $a_m \in B_{m}(a_\infty,  m\eps)$, we see (for example in a small piece of Figure~\ref{fig:lozenge_20}) that it can be viewed in three ways: as a set of non-intersecting blue-green paths, a set of non-intersecting red-blue paths or a set of non-intersecting green-red paths. Suppose $a_m$ belongs to the boundary of $\partial K$ then asymptotically, the fraction of lozenges of one or two of these colors vanishes. Suppose first that the fraction of exactly one color which we denote by $col_1$ vanishes (we denote the other two colors by $col_2$ and $col_3$). We view $a_m$ as a set of non-intersecting $col_2$-$col_3$ paths. We fix a small integer $\de^{-1}$ and given a tiling $a_m$ remove one in every $\de^{-1}$ $col_2$-$col_3$ paths successively (replacing it by $col_1$ lozenges). This process   replaces a height function corresponding to average tilt $a_\infty$ by one of tilt $a''_\infty = (1 - \de) a_\infty + \de a'_\infty + o(1)$ where $a'_\infty$ is the tilt corresponding to tilings with only $col_1$, and $m$ tends to infinity.
Since the fraction of lozenges replaced is only $O(\de)$, by Lemma~\ref{lem:reuse} this implies that $\sigma_{m, \edge}(\rho, a_\infty) \geq \sigma_{m, \edge}(\rho, a''_\infty) + o(1),$ as $\de \ra 0$ and for each $\de >0$, $m \ra \infty$.
Suppose next that both the fraction of $col_1$ and $col_2$ asymptotically vanish.  In this case, as well,
since the fraction of lozenges replaced is only $O(\de)$, by Lemma~\ref{lem:reuse}, this implies that $\sigma_{m, \edge}(\rho, a_\infty) \geq \sigma_{m, \edge}(\rho, a''_\infty) + o(1),$ as $\de \ra 0$ and for each $\de >0$, $m \ra \infty$.
To see the last part, suppose we have two candidates for $\psi$, namely $\psi_1$ and $\psi_2$, and for the sake of contradiction, $$\lim_{m\ra \infty} \sigma_{m, \edge}(\rho, a_\infty, \psi_1(m)) < \lim_{m\ra \infty} \sigma_{m, \edge}(\rho, a_\infty, \psi_2(m)).$$
There must be some positive $\de$ and some positive $m_0$ such that for all $m_1, m_2$ larger than  $ m_0$,  
\beq\lab{eq:de1}  \sigma_{m_1, \edge}(\rho, a_\infty, \psi_1(m_1)) + \de < \sigma_{m_2, \edge}(\rho, a_\infty, \psi_2(m_2)).\eeq


Choose $m_2$ such that $m_1 | m_2$ and $m_2 \psi_2(m_2) > m_1 \psi_1(m_1)$. We see by patching up, that 
$$ \sigma_{m_1, \edge}(\rho, a_\infty, \psi_1(m_1)) + C_{|\rho|} \psi_1(m_1) \geq \sigma_{m_2, \edge}(\rho, a_\infty, \psi_2(m_1)).$$ Letting $m_1 \ra \infty$ gives us a contradiction to (\ref{eq:de1}).
\end{proof}

 \begin{figure}
  \begin{center}
  \includegraphics[scale=0.60]{./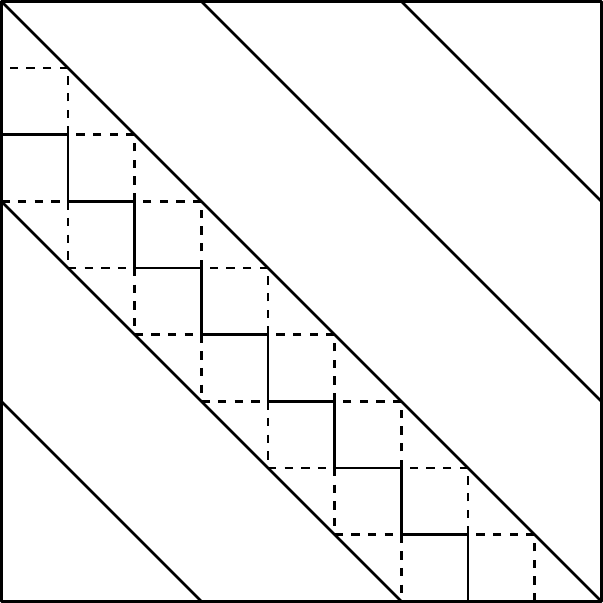}
  \caption{ Within each ``thin-strip" lattice domain bounded by transverse lines, there is a height function of close to constant slope. The (weighted) average of these slopes on the strips is the average slope on the triangle. 
  }\label{fig:washboard}
  \end{center}
  \end{figure} 
  \begin{figure}
  \begin{center}
  \includegraphics[scale=0.60]{./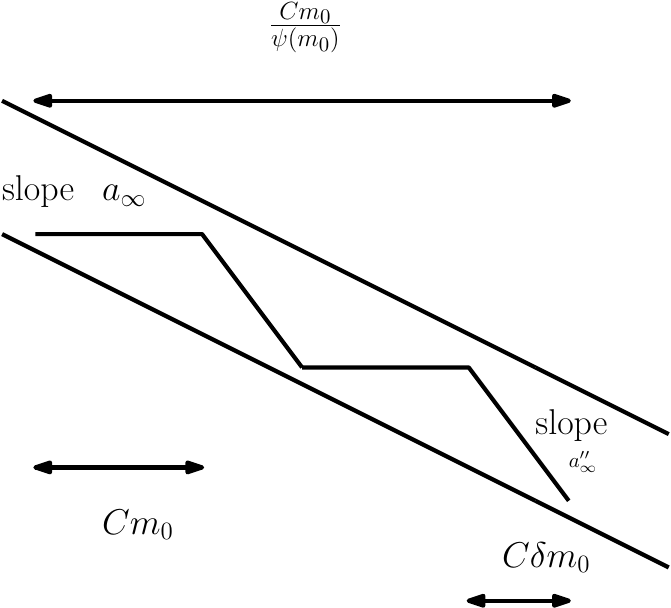}
  \caption{ Lengths of interleaving pieces in the washboard in the proof of Lemma~\ref{lem:7.13}.}
  \label{fig:m0psim0}
  \end{center}
  \end{figure} 

\begin{lemma}\lab{lem:1.5}
 For each fixed $\rho \in \R_{>0}^2$, $\sigma_\edge(\rho, a_\infty)$ is a nonnegative convex 
  function of $a_\infty$ on $K$ bounded above by $C_{|\rho|}$. Further, $\sigma_\edge(\rho, a_\infty)$ is a $C_{|\rho|}\de^{-1}$-Lipschitz function of $a_\infty$ on $(1 - \de)K$.   \end{lemma}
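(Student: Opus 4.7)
The plan is to establish the four assertions—nonnegativity, upper bound, convexity, and Lipschitz regularity on $(1-\de)K$—in that order, with convexity being the substantive step.

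Nonnegativity and the upper bound are immediate from Lemma~\ref{lem:1}. Part (E) yields $\sigma_{m,\edge}(\rho, a_\infty, \eps) \geq 0$ for every $m, \eps$, while part (D) yields $\sigma_{m,\edge}(\rho, a_\infty, \eps) < C_{|\rho|}$ as soon as $m\eps > \tilde C$. Observation~\ref{obs:psi} guarantees $m\psi(m) > \tilde C$ for every large $m$, so both bounds apply to $\sigma_{m,\edge}(\rho, a_\infty) = \sigma_{m,\edge}(\rho, a_\infty, \psi(m))$, and Lemma~\ref{lem:8.7} (combined with the directional liminf in Definition~\ref{def:8.7}) transfers them to $\sigma_\edge$ on all of $K$.

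The main step is convexity on $K^o$. Fix $a, b \in K^o$ and $\lambda \in (0,1)$, set $c := \lambda a + (1-\lambda) b$, and choose $\hat\eps > 0$ so that $a, b, c \in (1-\hat\eps) K$. I would use a washboard construction of the kind depicted in Figure~\ref{fig:washboard}: for an auxiliary scale $m_0$ and a much larger scale $m$, partition the $(2m{+}1)^2$ index patch carrying $\MM_{m, \x(\rho)}$ into parallel strips of two types, ``$A$'' of total area fraction $\lambda$ and ``$B$'' of total area fraction $1-\lambda$, separated by narrow transition corridors of width $\asymp m_0 \psi(m_0)$ and long-range period $\asymp m_0$. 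In each type-$A$ strip install a height function in $B_{m_0}(a, m_0 \psi(m_0))$ that near-achieves the maximum defining $\sigma_{m_0, \edge}(\rho, a)$, and likewise for $B$. Proposition~\ref{prop:3} applies in each corridor—its hypotheses are calibrated to exactly the $m_0 \psi(m_0)$ scale of the corridor, and $a, b \in (1-\hat\eps)K$ places the relevant affine profiles inside $(1-\hat\eps)K$—so after fixing appropriate shifts $\chi \in \{-1,0,1\}$ the pieces glue into a single height function $a_m^\sharp$ on the whole patch. The $L^\infty$ deviation of $a_m^\sharp$ from the linear profile $c$ is $O(m_0)$ by design, hence $\leq m\psi(m)$ once $m$ is large, so $a_m^\sharp \in B_m(c, m\psi(m))$.

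I would then sum the weights. By linearity of $\langle \partial \MM_{m,\x(\rho)}, \partial \cdot \rangle$, translation invariance of the bead-model limit (Theorem~\ref{theorem:beadR}) for the strip parts, and Lemma~\ref{lem:reuse} applied to the $O(\psi(m_0))$-fraction of corridor lozenges (whose weights are log-concave and of bounded expectation), one obtains
\[
-\tfrac{1}{m^2}\E \max_{a_m \in B_m(c, m\psi(m))} \langle \partial \MM_{m, \x(\rho)}, \partial a_m \rangle \;\leq\; \lambda \sigma_{m_0,\edge}(\rho, a) + (1-\lambda)\sigma_{m_0,\edge}(\rho, b) + o_{m_0}(1),
\]
as $m \to \infty$ with $m_0$ fixed. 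Taking $\liminf$ in $m$, then $m_0 \to \infty$ and invoking Lemma~\ref{lem:8.7}, yields $\sigma_\edge(\rho, c) \leq \lambda \sigma_\edge(\rho, a) + (1-\lambda)\sigma_\edge(\rho, b)$. Convexity on the closure $K$ follows by approximating $a, b$ along the inward segments built into Definition~\ref{def:8.7} and passing to the directional liminf. The Lipschitz estimate on $(1-\de)K$ is then a standard consequence of convexity together with $0 \leq \sigma_\edge \leq C_{|\rho|}$: for $x, y \in (1-\de)K$, extending the chord $xy$ beyond $y$ to a point $z \in \partial K$ with $|z-y| \geq c_K \de$ and applying the three-slopes inequality for convex functions gives
\[
\frac{\sigma_\edge(\rho, y) - \sigma_\edge(\rho, x)}{|y-x|} \;\leq\; \frac{\sigma_\edge(\rho, z) - \sigma_\edge(\rho, y)}{|z-y|} \;\leq\; \frac{C_{|\rho|}}{c_K \de},
\]
and the symmetric inequality (extending past $x$ instead) produces the claimed Lipschitz constant $C_{|\rho|}\de^{-1}$.

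The main obstacle is the washboard construction: verifying the hypotheses of Proposition~\ref{prop:3} uniformly in every corridor forces the construction to remain strictly inside $K$ (through $(1-\hat\eps)K$) and the corridor width to match the $m_0\psi(m_0)$ scale, while simultaneously the contribution of corridor and near-$\partial K$ lozenges must be controlled by Lemma~\ref{lem:reuse} well enough to yield a genuine $o_{m_0}(1)$ error surviving the double limit $m \to \infty$ followed by $m_0 \to \infty$.
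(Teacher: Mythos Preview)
Your proposal is correct and follows essentially the same route as the paper: nonnegativity from nonpositive lozenge weights, the upper bound from bounded expected total weight, convexity via the washboard construction (alternating strips of the two tilts glued using Proposition~\ref{prop:3}, then a double limit in the two scales), and the Lipschitz bound as a standard consequence of convexity together with $0\le\sigma_\edge\le C_{|\rho|}$. The paper's proof is terser---it does not spell out the three-slopes argument, cites the washboard figure directly, and absorbs the corridor error into an unspecified $o(1)$---but the ideas and their organization match yours.
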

\begin{proof}
The nonnegativity of $\sigma_\edge(\rho, a_\infty)$ follows from the nonpositivity of lozenge weights.
In what follows, we will prove the convexity of  $\sigma_{ \edge}(\rho, a_\infty)$ for a fixed $\rho\in \R_{>0}^2$ and $a_\infty \in K^o$ by considering the ``washboard" surface in Figure~\ref{fig:washboard} (see also \cite[Figure 6.2]{Sheffield} by Scott Sheffield where this technique was used earlier). 
Let $a_\infty, a'_\infty$ and $a''_\infty$ belong to $K^o$, and suppose for some $\la \in (0, 1)$, we have $a_\infty = \la a'_\infty + (1 - \la)a''_\infty$. 
We consider an asymptotic height function on a  square of sidelength $m_3$ (that will tend to infinity) obtained by pasting together alternating bands of tilt $a'_\infty$ and $a''_\infty$, the ratio of their respective thicknesses being $\frac{\la}{1 - \la}$ (see  Figure~\ref{fig:washboard}).
Within each band, we use Proposition~\ref{prop:3} to paste in height functions corresponding to  maximum weight matchings where the boundary is only specified to lie within the prescribed neighborhood of the relevant asymptotic height function  in squares of size $m_1$ (here $m_1$  tends to infinity, but at a rate less than  $\min(\la, 1 - \la) m_3$).
We then obtain the relation $$\sigma_{m_3, \edge}(\rho, a_\infty) \leq \la \sigma_{m_1, \edge}(\rho, a'_\infty) + (1 - \la) \sigma_{m_1, \edge}(\rho, a''_\infty) + o(1)$$ as $m_1$ and $m_3$ tend to infinity. Taking limits with the thicknesses of the bands tending to infinity slower than $m_3\psi(m_3)$, we obtain the result that $\sigma_\edge(\rho, a_\infty)$ is convex on $K^o$.
To prove that $\sigma_{m, \edge}(\rho, a_\infty)$  is bounded by $C_{|\rho|}$, it suffices to bound the total expected weight of all the lozenges, which is upper bounded by $C_{|\rho|} m^2$, since there exist  periodic tilings of any rational tilt in $K$, if the period is allowed to be sufficiently large.  Any $C_{|\rho|}$-bounded nonnegative convex function is $C_{|\rho|}\de^{-1}$-Lipschitz on $(1 - \de)K$.  
\end{proof}

 \begin{lemma}\lab{lem:7.13}
 Let $\de > 0$.
Given $a_\infty, a'_\infty  \in (1-\de)K$ such that $|a_\infty - a'_\infty| < \de^2$ and any sufficiently large $m_0$, we have the following inequality,
\beq \lab{eq:unif} \sigma_{m_0, \edge}(\rho, a_\infty , \psi(m_0) ) + C_{|\rho|}\de \geq \sigma_{C+\frac{Cm_0}{\psi(Cm_0)}, \edge}(\rho, a'_\infty,  \psi(C+\frac{Cm_0}{\psi(Cm_0)})).\eeq

\end{lemma}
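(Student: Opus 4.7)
The plan is to chain three facts already available in the paper: the Lipschitz continuity of the limiting surface tension (Lemma~\ref{lem:1.5}), and the two directions of convergence of $\sigma_{m,\edge}$ to $\sigma_\edge$ on the interior of $K$ (Lemma~\ref{lem:8.7}). Since $a_\infty, a'_\infty \in (1-\de)K \subset K^o$, these results all apply.

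First, by Lemma~\ref{lem:1.5}, $\sigma_\edge(\rho,\cdot)$ is $C_{|\rho|}\de^{-1}$-Lipschitz on $(1-\de)K$. Combined with the hypothesis $|a_\infty - a'_\infty| < \de^2$, this yields
\[
\sigma_\edge(\rho, a'_\infty) \le \sigma_\edge(\rho, a_\infty) + C_{|\rho|}\de.
\]

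Second, by part (1) of Lemma~\ref{lem:8.7}, for all sufficiently large $m_0$,
\[
\sigma_\edge(\rho, a_\infty) \le \sigma_{m_0,\edge}(\rho, a_\infty) + C_{|\rho|}\psi(m_0).
\]
Since $\psi(m_0)\to 0$, for $m_0$ large enough (depending on $\de$) the $C_{|\rho|}\psi(m_0)$ term is dominated by $C_{|\rho|}\de$.

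Third, by the convergence $\sigma_{m,\edge}(\rho,a'_\infty)\to\sigma_\edge(\rho,a'_\infty)$ in Lemma~\ref{lem:8.7}, one has for every sufficiently large $M$,
\[
\sigma_{M,\edge}(\rho, a'_\infty) \le \sigma_\edge(\rho, a'_\infty) + \de.
\]
The scale $M := C+\tfrac{Cm_0}{\psi(Cm_0)}$ tends to infinity as $m_0\to\infty$, so for $m_0$ sufficiently large this applies to our $M$. Chaining the three displays and absorbing constants into $C_{|\rho|}$ gives
\[
\sigma_{M,\edge}(\rho, a'_\infty,\psi(M)) \;\le\; \sigma_{m_0,\edge}(\rho, a_\infty,\psi(m_0)) + C_{|\rho|}\de,
\]
which is the desired inequality (recalling $\sigma_{m,\edge}(\rho,\cdot) = \sigma_{m,\edge}(\rho,\cdot,\psi(m))$ by Definition~\ref{def:8.6}).

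The only subtle point is that ``sufficiently large $m_0$'' must be chosen simultaneously for the three steps: large enough for Lemma~\ref{lem:8.7}(1) to apply, large enough so that $C_{|\rho|}\psi(m_0)\le C_{|\rho|}\de$, and large enough that $M= C+Cm_0/\psi(Cm_0)$ exceeds the threshold given by convergence of $\sigma_{M,\edge}(\rho,a'_\infty)$ to $\sigma_\edge(\rho,a'_\infty)$ within tolerance $\de$. All three are eventually-in-$m_0$ conditions (with the third following from $M\to\infty$ as $m_0\to\infty$), so they can be imposed jointly. An alternative, more constructive route would build a near-optimal tiling at scale $M$ with tilt $a'_\infty$ by patching sub-tilings of tilt $a_\infty$ (using Lemma~\ref{lem:1}(B)) together with a small fraction ($O(\de)$) of bands of a perturbed tilt $\tilde a \in (1-\de/2)K$ chosen so that the band-weighted average equals $a'_\infty$, glued by Proposition~\ref{prop:3}; the $\tilde a$-bands contribute at most $C_{|\rho|}\de$ to the normalized weight by Lemma~\ref{lem:1}(D). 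This parallels the washboard argument in Lemma~\ref{lem:1.5}, but routing through the limit is cleaner because the convexity/Lipschitz work has already been done there.
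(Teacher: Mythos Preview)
Your argument is correct for the statement as literally written, but it takes a genuinely different route from the paper, and the difference matters downstream.

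The paper proves Lemma~\ref{lem:7.13} by a direct washboard construction (exactly your ``alternative, more constructive route''): choose $a''_\infty$ so that $a'_\infty$ is a convex combination of $a_\infty$ and $a''_\infty$ with weight $\approx\de$ on $a''_\infty$; then in a square of side $M=C+\tfrac{Cm_0}{\psi(Cm_0)}$ lay alternating bands of thickness $Cm_0$ (tilt $a_\infty$) and $C\de m_0$ (tilt $a''_\infty$), embed near-optimal $m_0\times m_0$ tilings into the wide bands via Proposition~\ref{prop:3}, and bound the contribution of the thin bands by $C_{|\rho|}\de$. The key feature of this route is that the threshold ``sufficiently large $m_0$'' depends only on $\de$ (and $\rho$), \emph{not} on $a_\infty,a'_\infty$.

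Your main route instead chains Lemma~\ref{lem:1.5} and Lemma~\ref{lem:8.7}. There is no circularity (both are proved earlier without using Lemma~\ref{lem:7.13}), and the chain is valid. But your Step~3 invokes only the \emph{pointwise} convergence $\sigma_{M,\edge}(\rho,a'_\infty)\to\sigma_\edge(\rho,a'_\infty)$, so the threshold on $M$ (hence on $m_0$) depends on $a'_\infty$. This is precisely what the paper cannot afford: Lemma~\ref{lem:7.13} is invoked in Lemma~\ref{lem:uniform} to upgrade pointwise convergence on a finite $\de^2$-net to uniform convergence on $(1-\de)K$, and there $a'_\infty$ ranges over all of $(1-\de)K$. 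With an $a'_\infty$-dependent threshold that application breaks down (it would be circular to assume the uniform convergence you are trying to prove). So while your limit-based argument does establish the inequality for each fixed pair $(a_\infty,a'_\infty)$, the paper's washboard route is not merely an alternative aesthetic choice---it supplies the uniformity in the threshold that the next lemma needs.
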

 \begin{proof}
 Choose $a''_\infty$ such that $a'_\infty = \de a_\infty + (1 - \de) a''_\infty.$ We consider a washboard surface as in the proof of the preceding lemma with alternating strips of thickness $Cm_0$ and $C\de m_0$ (see Figure~\ref{fig:m0psim0}). and of a total thickness $Cm_0/\psi(m_0)$. 
 This allows us to embed (using Proposition~\ref{prop:3}) squares of sidelength $m_0$ and asymptotic slope $a_\infty$ into a larger square of sidelength $\frac{Cm_0}{\psi(m_0)}$, where the fraction of squares (of sidelength $\de m_0$) that have asymptotic slope $a''_\infty$ is asymptotically less or equal to $\de$. This implies the Lemma. 
 \end{proof}

\begin{lemma} $\sigma_{m, \edge}(\rho, a_\infty)$ converges uniformly to $\sigma_{ \edge}(\rho, a_\infty)$ on compact subsets of $K^o$ for fixed $\rho$.       \lab{lem:uniform}
\end{lemma}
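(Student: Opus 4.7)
The plan is to establish the uniform two-sided bound $|\sigma_{m,\edge}(\rho,a)-\sigma_\edge(\rho,a)|\le \epsilon$ for every $m$ sufficiently large and every $a$ in a fixed compact set $\mathcal L\subset K^o$. The lower bound $\sigma_{m,\edge}(\rho,a)\ge \sigma_\edge(\rho,a)-o(1)$ is essentially free: Lemma~\ref{lem:8.7} already asserts that for all sufficiently large $m$ and every $a_\infty\in K^o$ one has $\sigma_\edge(\rho,a_\infty)\le \sigma_{m,\edge}(\rho,a_\infty)+C_{|\rho|}\psi(m)$, and since $\psi(m)\to 0$ independently of $a_\infty$, this half of the estimate already holds uniformly on all of $K^o$, not just on compact subsets.

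For the matching upper bound, which is the main content, I would combine pointwise convergence on a finite net with the approximate-equicontinuity estimate of Lemma~\ref{lem:7.13} and the Lipschitz bound on the limit from Lemma~\ref{lem:1.5}. Fix $\epsilon>0$, and choose $\delta>0$ small enough that $\mathcal L\subset (1-2\delta)K$ and $C_{|\rho|}\delta<\epsilon/4$. Cover $\mathcal L$ by a finite $\delta^2$-net $\{a_i\}_{i=1}^N$. By pointwise convergence (Lemma~\ref{lem:8.7}), pick $m_0$ large enough that $\sigma_{m_0,\edge}(\rho,a_i)\le \sigma_\edge(\rho,a_i)+\epsilon/4$ simultaneously for all $i$ (a finite condition), and also large enough that the error term $C_{|\rho|}\psi(Cm_0)$ in the next step is $<\epsilon/4$. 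For any $a\in\mathcal L$, select a net point $a_i$ with $|a-a_i|<\delta^2$. Applying Lemma~\ref{lem:7.13} with $a_\infty=a_i$ and $a'_\infty=a$ yields
\[
\sigma_{M_0,\edge}(\rho,a)\le \sigma_{m_0,\edge}(\rho,a_i)+C_{|\rho|}\delta
\]
at the specific scale $M_0=C+Cm_0/\psi(Cm_0)$. Combining with the Lipschitz bound $|\sigma_\edge(\rho,a)-\sigma_\edge(\rho,a_i)|\le C_{|\rho|}\delta^{-1}|a-a_i|\le C_{|\rho|}\delta$ from Lemma~\ref{lem:1.5} (applicable since both points lie in $(1-\delta)K$), this gives $\sigma_{M_0,\edge}(\rho,a)\le \sigma_\edge(\rho,a)+3\epsilon/4$ uniformly in $a\in\mathcal L$.

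To upgrade this bound from the single scale $M_0$ to every $m$ past some threshold, I invoke the approximate monotonicity of Lemma~\ref{lem:6.3}: for all $m>M_0^2/\psi(M_0)$, $\sigma_{m,\edge}(\rho,a)\le \sigma_{M_0,\edge}(\rho,a)+C_{|\rho|}\psi(M_0)$. After absorbing this last $<\epsilon/4$ error, we obtain the uniform upper bound $\sigma_{m,\edge}(\rho,a)\le \sigma_\edge(\rho,a)+\epsilon$ on $\mathcal L$ for every $m$ beyond a threshold depending only on $\epsilon$, $\rho$, and $\mathcal L$.

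The only thing to watch carefully is that every constant appearing in this chain depends solely on $|\rho|$ and on the compact set $\mathcal L$ (through the parameter $\delta$), rather than on the specific point $a\in\mathcal L$ or on $m$. This is visible from the statements of Lemmas~\ref{lem:1.5}, \ref{lem:6.3}, \ref{lem:7.13}, and \ref{lem:8.7}, since all their constants are of the form $C_{|\rho|}$ or $C_{|\rho|,\delta}$, and the net cardinality $N$ depends only on $\delta$ and $\mathcal L$. The main bookkeeping burden is simply the parameter cascade $\epsilon\to\delta\to m_0\to M_0\to$ (threshold for $m$), and no new estimate is needed beyond what the excerpt has already established.
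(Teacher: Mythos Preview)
Your proposal is correct and follows essentially the same approach as the paper: a finite $\delta^2$-net on $(1-\delta)K$, pointwise convergence on the net, and Lemma~\ref{lem:7.13} to pass from net points to arbitrary points. The paper's proof is much terser (two sentences), leaving implicit the ingredients you spell out explicitly---the uniform lower bound from Lemma~\ref{lem:8.7}, the Lipschitz control of $\sigma_\edge$ from Lemma~\ref{lem:1.5}, and the approximate-monotonicity upgrade via Lemma~\ref{lem:6.3} from the single scale $M_0$ produced by Lemma~\ref{lem:7.13} to all large $m$.
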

\begin{proof}
Any compact subset of $K^o$ is contained inside $(1 - \de)K$ for some positive $\de$.
Let $\L$ be the lattice spanned by $\hat{i}$ and $\hat{j}$.
Take a $\de^2$-net $\NN = (1 - \de)K \cap (\de/2)\L$ of $(1 - \de)K$. On $\NN$, we have uniform convergence, so there exists $m(\de)$ such that  for all $m > m(\de)$,
$\sigma_{m, \edge}(\rho, a_\infty)$ is within $\de^2$ of $\sigma_{ \edge}(\rho, a_\infty)$, $\forall a_\infty \in \NN$ for fixed $\rho$. The lemma now follows from an application of Lemma~\ref{lem:7.13}.

\end{proof}

\begin{lemma}\lab{lem:8.6}
Given  a compact subset $\C \subseteq \R_{>0}^2$, and $\eps' > 0$,  there exists an $m_0$ such that for all $m > m_0$ and all $(\rho, a_\infty) \in \C \times K,$ $$ \sigma_{m, \edge}(\rho, a_\infty)  \geq \sigma_\edge(\rho, a_\infty) - \eps'.$$
 \end{lemma}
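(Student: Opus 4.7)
The plan is to read Lemma~\ref{lem:8.6} as essentially a direct corollary of the one-sided $m$-monotonicity estimate Lemma~\ref{lem:6.3}, after observing that every implicit constant appearing in Lemma~\ref{lem:6.3} and in Lemma~\ref{lem:8.7} depends on $\rho$ only through $|\rho|$ and is therefore bounded uniformly on the compact set $\C$. The core work in building the limit $\sigma_\edge(\rho,a_\infty)$ on $K^o$ and extending it to $\partial K$ has already been done in Lemma~\ref{lem:8.7}; what remains is uniformity in $(\rho,a_\infty)\in\C\times K$.

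For $(\rho,a_\infty)\in\C\times K^o$, Lemma~\ref{lem:6.3} gives, for all sufficiently large $m_0$ and every $m_3>m_0^2/\psi(m_0)$,
\[
\sigma_{m_3,\edge}(\rho,a_\infty)\le\sigma_{m_0,\edge}(\rho,a_\infty)+C_{|\rho|}\psi(m_0),
\]
and the liminf in $m_3$ of the left-hand side equals $\sigma_\edge(\rho,a_\infty)$ by Definition~\ref{def:8.7}. Setting $C_\C:=\sup_{\rho\in\C}C_{|\rho|}<\infty$ and rearranging yields
\[
\sigma_{m_0,\edge}(\rho,a_\infty)\ge\sigma_\edge(\rho,a_\infty)-C_\C\,\psi(m_0),
\]
so any $m_0$ large enough that $C_\C\,\psi(m_0)<\eps'$ delivers the claim uniformly on $\C\times K^o$.

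For $a_\infty\in\partial K$, I would invoke Lemma~\ref{lem:8.7}(2), whose proof perturbs a near-boundary tilt to an interior tilt $a''_\infty=(1-\de)a_\infty+\de a'_\infty$ by swapping an $O(\de)$-fraction of lozenges and estimates the resulting weight loss via Lemma~\ref{lem:reuse}. Because the interlacing-gap expectations entering Lemma~\ref{lem:reuse} are controlled by $|\rho|$, the $o(1)$ rate is uniform both over $\C$ and over the three face-directions of $\partial K$, and combining with the interior case produces a single $m_0=m_0(\eps',\C)$ valid on all of $\C\times K$. The principal obstacle is the behavior at the three excluded vertices $\{2(\hat i-\hat j),-2\hat i,-2\hat j\}$ of $\partial K$, where Definition~\ref{def:8.7} prescribes an unrestricted rather than directional liminf: to handle these one must verify that the washboard color-removal argument can be run uniformly over all approach directions, a check that reduces to uniformity of Lemma~\ref{lem:reuse} in the gap-expectations and is the only place a careful direction-by-direction argument is genuinely needed.
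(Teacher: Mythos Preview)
Your plan reads the lemma off from the per-point bounds of Lemma~\ref{lem:8.7} once $C_{|\rho|}$ is bounded on $\C$. This is fine on any fixed compact subset of $K^o$, but the whole content of Lemma~\ref{lem:8.6} is uniformity of the threshold $m_0$ as $a_\infty$ approaches $\partial K$, and here your argument has a gap. The phrase ``for all sufficiently large $m_0$'' in Lemma~\ref{lem:6.3} (and hence in Lemma~\ref{lem:8.7}(1)) traces back to Lemma~\ref{lem:1}B, whose constants $C_\de$ and $C_{|\rho|,\de}$ depend on the depth $\de$ with $a_\infty\in(1-\de)K$; as $a_\infty\to\partial K$ that threshold blows up, so one cannot simply take a single $m_0$ good for all of $K^o$. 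For $\partial K$ you then invoke Lemma~\ref{lem:8.7}(2), but that $o(1)$ is proved pointwise in $a_\infty$, and you assert rather than verify that the color-removal rate is uniform along the entire boundary (not just at the three vertices you flag).

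The paper proceeds differently and never needs a $\de$-uniform version of Lemma~\ref{lem:6.3}. It fixes $\de>0$, takes a \emph{finite} net $\NN\subset(1-\de)K$, and applies Lemma~\ref{lem:8.7}(1) only on $\C\times\NN$, where uniformity in $a_\infty$ is automatic. For an arbitrary $a_\infty\in K$ (boundary included) it does not appeal to Lemma~\ref{lem:8.7}(2) at all; instead it picks $a'_\infty\in\NN$ on the segment from $a_\infty$ toward a far side of $K$ and proves directly, via the color-removal argument and Lemma~\ref{lem:reuse}, that $\sigma_{m,\edge}(\rho,a_\infty)\ge\sigma_{m,\edge}(\rho,a'_\infty)-o_\de(1)$ \emph{uniformly} over $\C\times K$. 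It then closes the loop using convexity and boundedness of $\sigma_\edge(\rho,\cdot)$ (Lemma~\ref{lem:1.5}) to get $\sigma_\edge(\rho,a_\infty)-\sigma_\edge(\rho,a'_\infty)\ge -C_{|\rho|}\de$, and finally sends $\de\to 0$. In short: the paper trades a would-be uniform monotonicity lemma for one uniform color-removal step at scale $m$ plus convexity of the limit.
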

 \begin{proof} Fix $\de > 0$.
 Let $\L$ be the lattice spanned by $\hat{i}$ and $\hat{j}$.
Take a $c\de$-net $\NN = (1 - \de)K \cap (\de/2)\L$ of $(1 - \de)K$. On $\C\times \NN$, we have uniform convergence of $\sigma_{m, \edge}(\rho, a_\infty)$ to $\sigma_\edge(\rho, a_\infty)$ by $1.$ of Lemma~\ref{lem:8.7}, so there exists $m_{\de, \C}$ such that  for all $m > m_{\de, \C}$,
$\sigma_{m, \edge}(\rho, a_\infty)$ is within $\de$ of $\sigma_{ \edge}(\rho, a_\infty)$, $\forall (\rho, a_\infty) \in \C\times \NN$.

Now consider an arbitrary $a_\infty \in K$.   Let   $a'_\infty \in K$  be such that the line segment joining $a_\infty$ to $a'_\infty$ extended, intersects $\partial K$ in the line segment joining $col_1$ and $col_2$ (two vertices of $K$) at a tilt $a''_\infty$, and $|a_\infty - a'_\infty| < C\de$.  
\begin{claim}\lab{lem:8.6-cl:3}
There exists $M$ such that for all $m > M$, uniformly over $(\rho, a_\infty) \in \C \times K$, $ \sigma_{m, \edge}(\rho, a_\infty)  \geq \sigma_{m, \edge}(\rho, a'_\infty) - o_\de(1)$.
\end{claim}
\begin{proof}
Let $a''_\infty = \de'' col_1 + (1 - \de'') col_2$ and  $a'_\infty = \de'a''_\infty + (1 - \de')a_\infty$ for some $\de'', \de' \in [0, 1]$.
Thus $$a'_\infty = a_\infty + \de'\de''(col_1 - a_\infty) + \de'(1 - \de'')(col_2 - a_\infty).$$
We first view $a_m$ as a set of non-intersecting $col_2$-$col_3$ paths. Remove one in every $(\de'\de'')^{-1}$ adjacent $col_2$-$col_3$ paths (replacing them by $col_1$ lozenges). This process   replaces a height function corresponding to average tilt $a_\infty$ by one of tilt $a_\infty = \de'\de''(col_1 - a_\infty)  a_\infty + o(1)$ as  $m$ tends to infinity. Let this height function be called $\tilde{a}_m$.
We then view $\tilde{a}_m$ as a set of non-intersecting $col_3$-$col_1$ paths.
Remove one in every $(\de'(1 -\de')')^{-1}$ adjacent $col_3$-$col_1$ paths (replacing them by $col_2$ lozenges). This process   replaces a height function corresponding to average tilt $\tilde{a}_\infty + o(1)$ by one of tilt $a'_\infty +o(1)$ as  $m$ tends to infinity. 
Since the fraction of lozenges replaced is only $O(\de)$, by Lemma~\ref{lem:reuse}, we see that their total weight in expectation does not exceed $C_{|\rho|}m^2 \de^{\frac{1}{4}}.$

This implies that $$\inf_{\rho\in \C} \sigma_{m, \edge}(\rho, a_\infty) -  \sigma_{m, \edge}(\rho, a'_\infty) \geq - o_\de(1),$$ as $m \ra \infty$.

\end{proof}
There exists at least two corners of $K$ which we arbitrarily fix and denote by  $col_1$ and $col_2$ (which will depend on $a_\infty$)  such that $|a_\infty - col_1| > c$ and $|a_\infty - col_2| > c$.
Note that there exists an element of $a'_\infty \in \NN$ (a function of $a_\infty$) such that the line segment joining $a_\infty$ to $a'_\infty$ extended, intersects $\partial K$ in the line segment joining $col_1$ and $col_2$ at a tilt $a''_\infty$, and $|a_\infty - a'_\infty| < C\de$.  
Next, let $m$ in the above claim be larger than $\max(M, m_{\de, \C})$. If we then choose $\de$ small enough that $C\de + C\de\log \frac{C}{\de} < \eps'/10$ and $$\inf_{\rho\in \C}\inf_{a_\infty \in K} \sigma_{m, \edge}(\rho, a_\infty) - \sigma_{m, \edge}(\rho, a'_\infty) > -  \frac{\eps'}{10}.$$ 
Using 1. in Lemma~\ref{lem:8.7}, we see that this gives us 
$$\inf_{\rho\in \C}\inf_{a_\infty \in K} \sigma_{m, \edge}(\rho, a_\infty) - \sigma_\edge(\rho, a'_\infty) >  - \frac{\eps'}{10} - C_{|\rho|} \psi(m).$$ 

But for each fixed $\rho$, $\sigma_\edge(\rho, a'_\infty)$ is convex and bounded above by $C_{|\rho|}$ by Lemma~\ref{lem:1.5}. It follows that 
$$\sigma_\edge(\rho, a_\infty) - \sigma_\edge(\rho, a'_\infty) \geq C\de( \sigma_\edge(\rho, a_\infty)  - \sigma_\edge(\rho, a''_\infty)) \geq - C\de C_{|\rho|}.$$
Choosing $C\de C_{|\rho|} < \frac{\eps'}{10},$ and $C_{|\rho|} \psi(m) < \frac{\eps'}{10}$,
we thus obtain 
$$\inf_{\rho\in \C}\inf_{a_\infty \in K} \sigma_{m, \edge}(\rho, a_\infty) - \sigma_\edge(\rho, a_\infty) >  - \eps'.$$ 
 \end{proof}
 \begin{lemma}\lab{lem:8.12}
 For fixed $\rho \in \R_{> 0}^2$, $\sigma_\edge(\rho, a_\infty)$  is a continuous function of $a_\infty$ on $K.$ Further, the value at the extreme vertices of $K$ equals 
 $2\rho_1$, $\rho_2$ or $0$, depending on the color of the most populous lozenges corresponding to that vertex being blue, green or red respectively.
 \end{lemma}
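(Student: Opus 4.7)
The plan is to prove two things: continuity of $\sigma_\edge(\rho, \cdot)$ on all of $K$, and the explicit values at the three vertices. For continuity on the open part $K^o$, Lemma~\ref{lem:1.5} already supplies it: the function is convex on $K^o$, bounded above by $C_{|\rho|}$, and hence $C_{|\rho|}\de^{-1}$-Lipschitz on $(1-\de)K$ for every $\de > 0$. To extend continuity up to $\partial K$, I will combine Lemma~\ref{lem:8.6} (which gives $\sigma_{m,\edge}(\rho, a_\infty) \geq \sigma_\edge(\rho, a_\infty) - \eps'$ uniformly on $K$ for all sufficiently large $m$) with both parts of Lemma~\ref{lem:8.7} (which yield the matching upper bound $\sigma_\edge(\rho, a_\infty) \leq \sigma_{m_0, \edge}(\rho, a_\infty) + o(1)$ on $K^o$ and on $\partial K$). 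Together these produce uniform convergence $\sigma_{m, \edge}(\rho, \cdot) \to \sigma_\edge(\rho, \cdot)$ on all of $K$. For each fixed $m$ the function $\sigma_{m, \edge}(\rho, \cdot)$ is continuous on $K$, since the admissible set $B_m(a_\infty, m\psi(m))$ varies continuously with $a_\infty$ (in the Hausdorff sense on a bounded finite-dimensional set) and the weight functional in Definition~\ref{def:8.1} is a bounded integrable function of $a_\infty$; a uniform limit of continuous functions is continuous, delivering continuity of $\sigma_\edge(\rho, \cdot)$ on all of $K$.

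For the values at the three vertices, I exploit extremality: for a vertex $a_\infty^* \in \{2(\hat{i}-\hat{j}), 2\hat{j}, -2\hat{i}\}$, any $a_m \in B_m(a_\infty^*, m\psi(m))$ has average gradient within $O(\psi(m))$ of $a_\infty^*$; since its pointwise gradient lies in $K$ and $a_\infty^*$ is extreme, the corresponding lozenge tiling $\Xi(a_m)$ consists, up to a $o(1)$ fraction, entirely of lozenges of the single color associated to $a_\infty^*$. If $a_\infty^*$ is the \emph{red} vertex, the weight functional in \eqref{alt-weight} receives contributions only from the $o(m^2)$ blue and green lozenges; by Lemma~\ref{lem:reuse} applied to their log-concave marginals (whose expected weights are bounded in terms of $\rho$), the total such contribution is $o(m^2)$, yielding $\sigma_\edge(\rho, a_\infty^*) = 0$. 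If $a_\infty^*$ is the \emph{blue} vertex, essentially all lozenges (after area normalization) are blue with expected $\wt(\edge) = -\rho_1/3$ each; the factor of $2$ in \eqref{alt-weight} together with the $1/m^2$ normalization of Definition~\ref{def:8.1} produces the value $2\rho_1$. If $a_\infty^*$ is the \emph{green} vertex, an analogous calculation (no factor of $2$) produces $\rho_2$.

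The main obstacle is the uniform extension of continuity from $K^o$ to all of $K$, since the admissible set $B_m(a_\infty, m\psi(m))$ degenerates as $a_\infty$ approaches $\partial K$ and standard interior-based convexity arguments do not apply. This is precisely the role of the two-sided bounds in Lemmas~\ref{lem:8.6} and~\ref{lem:8.7}: they promote pointwise convergence of $\sigma_{m,\edge}$ to uniform convergence on $K$, after which continuity of the limit is automatic. The concentration step at the vertices is a routine appeal to Lemma~\ref{lem:reuse} and to the extremality of the three corners of $K$; the only non-automatic piece of the vertex computation is matching the area factor between the lattice domain and the number of lozenges of each color, so that the constants $2$, $1$, $0$ in the statement come out correctly.
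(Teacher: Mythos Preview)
Your approach to continuity on $\partial K$ via uniform convergence of $\sigma_{m,\edge}$ has two genuine gaps.

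First, the inequalities you cite from Lemmas~\ref{lem:8.6} and~\ref{lem:8.7} point in the \emph{same} direction: both assert $\sigma_{m,\edge}(\rho,a_\infty) \geq \sigma_\edge(\rho,a_\infty) - o(1)$ (equivalently, $\sigma_\edge \leq \sigma_{m,\edge} + o(1)$). Neither supplies the reverse bound $\sigma_{m,\edge}(\rho,a_\infty) \leq \sigma_\edge(\rho,a_\infty) + o(1)$ uniformly on $K$, so uniform convergence on all of $K$ does not follow from what you have written. Moreover, for $a_\infty \in \partial K$ the value $\sigma_\edge(\rho,a_\infty)$ is \emph{defined} (Definition~\ref{def:8.7}) as a directional $\liminf$ of interior values, not as $\lim_m \sigma_{m,\edge}(\rho,a_\infty)$; there is no a~priori reason these agree, and that is part of what must be proved.

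Second, even granting uniform convergence, the functions $\sigma_{m,\edge}(\rho,\cdot)$ are not continuous on $K$. The admissible set $B_m(a_\infty, m\psi(m))$ consists of finitely many \emph{integer-valued} height functions on a finite lattice; as $a_\infty$ varies, height functions enter or leave this $L^\infty$ ball discretely, so the maximum over the set (and hence its expectation) is piecewise constant with jumps. The set does not vary continuously in the Hausdorff sense, contrary to your assertion.

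The paper's argument is different: for a non-vertex boundary point $a$ it works directly from Definition~\ref{def:8.7}, considering the one-parameter family $f_\eps(a') = \sigma_\edge(\rho, a' - \eps u)$ along the inward normal $u$. Near $a$ (away from vertices) the points $a' - \eps u$ remain inside $(1-\de)K$ for $\de$ bounded away from zero, so each $f_\eps$ is convex and $C_{|\rho|}\de^{-1}$-Lipschitz by Lemma~\ref{lem:1.5}; the pointwise limit of such a uniformly Lipschitz family is Lipschitz, and the convergence is locally uniform, which yields continuity at $a$. Your treatment of the three extreme vertices via extremality of $K$ and Lemma~\ref{lem:reuse} is essentially the same as the paper's.
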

 \begin{proof} 
 Suppose $a \in (1 - \de)K$ for some positive $\de$. By Proposition~\ref{prop:conv-final},  for fixed  $\rho \in \R_{> 0}^2$, $\sigma_\edge(\rho, a)$ is a $C_{|\rho|}\de^{-1}$-Lipschitz function  on $(1 - \de)K$ and is therefore continuous at $a$. 
 Suppose next that $a$ is on $\partial K \setminus\{2(\hat{i} - \hat{j}), -2\hat{i}, -2 \hat{j}\}$.
  Given a side of $K$ perpendicular to the outer normal $u$, and sufficiently small $\eps$ (belonging to a neighborhood $V$ depending on $a$),
 we define the function $f_\eps(a):= \sigma_{\edge}(\rho, a - \eps u)$ for $a$ that is on $\partial K \setminus\{2(\hat{i} - \hat{j}), -2\hat{i}, -2 \hat{j}\}$. The pointwise limit as $\eps$ tends to $0$ exists because $\sup_{\eps \in V} f_\eps(a)$ is bounded and $f_\eps(a)$ is a  convex function of $a$ for fixed $\eps$ and is pointwise convergent as $\eps \ra 0$. 
 The limit of bounded locally Lipschitz functions that converge pointwise is  locally Lipschitz, and the convergence is locally uniform, therefore $\lim_{\eps \ra 0} f_\eps(a)$ is continuous at $a$.
 
 Finally, suppose  $a$ is in $\{2(\hat{i} - \hat{j}), -2\hat{i}, -2 \hat{j}\}$. In an $\eps$-neighborhood of  $a$,
 the corresponding lozenge tilings have a $O(\eps)$ fraction of lozenges of two colors and a $1 - O(\eps)$ fraction of the third color.  Note that the negatives of the weights are nonnegative. 
 We wish to prove that the total weight of the lozenges of these two minority colors tends to $0$ in expectation as $\eps \ra 0$.
 Since red lozenges have $0$ weight,  without loss of generality, we assume blue and green lozenges are the minority.
 
  Let the magnitudes (\ie negatives) of the weights of the blue and green lozenges be denoted $x_1, \dots, x_t$. Let $k \leq C \eps m^2$ be the total number of blue and green rhombi . 
 Also, the density of the GUE minor process is log-concave (see \cite{NST}) and so the pushforward under any linear map is also a log-concave density by \cite[Theorem 6]{prekopa}. Consequently, each $x_i$ is a nonnegative random variable, that has a density that is log-concave (being a limit of log-concave random variables).
 Therefore, by Lemma~\ref{lem:reuse},
\begin{eqnarray} \E\max\limits_{|S| \leq k} \sum\limits_{i \in S} x_i  \leq  o_\eps(m^2).\lab{eq:7.5o}
\end{eqnarray}
 

 Therefore, the  minimum weight matchings with these proportions converge to $2\rho_1$, $\rho_2$ or $0$, depending on the color of the most populous lozenges being blue, green or red respectively, and so for all sufficiently large $m$, $\liminf_{a' \ra a} \sigma_{m, \edge}(\rho, a)$ within an error of $o_\eps(1)$, equals the appropriate one among  $2\rho_1$, $\rho_2$ and $0$.
 \end{proof}

Let $\HT^m(a_\infty, m \psi(m))$ denote the set of height functions in an $m \times m$ square similar in orientation to a green lozenge (linearly transformed to be a square), that are within $m \psi(m)$ of the affine asymptotic height function $a_\infty$ in the sup norm. Suppose $m_1 | m_2$ and that $m_2$ is an integer power of $2$.
Let $ f \in \HT^{m_2}(a_\infty, \psi(m_2)).$ Let $\CZ(f, m_1)$ denote the decomposition of $\Box_{m_2}$ obtained by the following procedure.
Subdivide the domain of $f$ \ie $\Box_{m_2}$ into congruent squares of side length $m_1$.
Set $\varepsilon =\eta = \psi(m_1)$, and $k = C^\sharp \log^{\frac{1}{2}}m_1.$ These values satisfy the condition
$\varepsilon^4 \eta k \leq C^\sharp$ imposed by Theorem~\ref{thm:Feff}.
On each square of side length $m_1$, we apply the CZ decomposition
from Theorem~\ref{thm:Feff} (after rescaling it to a unit square, and rescaling the restriction of $f$ to the square by the same factor).
Now define \beq \phi_{m_2, m_1}(\rho, f) := \sum_{\Box'} (\frac{m'^2}{m_2^2}) \sigma_{m', \edge}(\rho, L_{\Box'}, 2 \psi(m'))\lab{eq:m2}\eeq where the sum is over dyadic squares in the CZ decomposition and $L_{\Box'}$ is the average slope of  $f$ on $\Box'$, \ie $$\frac{\int_{\Box'} \partial f(\x) d\x}{m'^2}, $$   and  $m'$ is the side length of $\Box'$. 
\begin{definition}
Let $\AHT^{m}(a_\infty, m \eps)$ denote the asymptotic height functions on a $[m] \times [m]$ square that lie within an $L^\infty$ ball of radius $m\eps$ around a linear asymptotic height function $a_\infty$.
\end{definition}

Let \beq \gamma_{m_1}(\rho, a_\infty) := \liminf\limits_{\substack{m_2 \ra \infty\\m_1|m_2}} \inf\limits_{f \in \AHT^{m_2}(a_\infty, m_2 \psi(m_2))} \phi_{m_2, m_1}(\rho, f).\lab{eq:g1}\eeq

\begin{lemma}\lab{lem:sigma-cont2} There exists a function $\bar{\psi}: \N \ra [0, 1]$ such that $\lim_{m \ra \infty} \bar{\psi}(m) = 0$, such that 
for all $\rho \in \R_{>0}^2$ and $a_\infty \in K^o$, for all $m_1$ that are integer powers of $2$,
$$\sigma_\edge(\rho, a_\infty) \geq\gamma_{m_1}(\rho, a_\infty)  - C_{|\rho|} \bar{\psi}(m_1).$$
\end{lemma}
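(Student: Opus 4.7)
The plan is to relate the large-scale surface tension $\sigma_{m_2, \edge}$ to the averaged medium-scale tensions in $\phi_{m_2, m_1}$ via Fefferman's CZ decomposition (Theorem~\ref{thm:Feff}) applied to a maximizing tiling at scale $m_2$, and then take $m_2 \to \infty$ to recover $\sigma_\edge$ on the left and $\gamma_{m_1}$ on the right.

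Fix $\rho \in \R_{>0}^2$, $a_\infty \in K^o$, and a power of two $m_1$. For each multiple $m_2$ of $m_1$, let $a_{m_2}^\star$ be a measurable maximizer of $\langle \partial \MM_{m_2, \x(\rho)}, \partial a_{m_2}\rangle$ over $a_{m_2} \in B_{m_2}(a_\infty, m_2 \psi(m_2))$, so that $-\E\langle \partial \MM_{m_2, \x(\rho)}, \partial a_{m_2}^\star\rangle = m_2^2\, \sigma_{m_2, \edge}(\rho, a_\infty, \psi(m_2))$. Partition $\Box_{m_2}$ into $(m_2/m_1)^2$ mega-squares of sidelength $m_1$ and, within each, apply Theorem~\ref{thm:Feff} to the appropriately rescaled piecewise linear extension of $a_{m_2}^\star$ with parameters $\eps = \eta = \psi(m_1)$ and $k = C^\sharp \log^{1/2} m_1$ (which satisfies $\eps^{\nnn+2}\eta k \geq C^\sharp$). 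This produces GOOD dyadic subsquares $\Box'$ of sidelength $m' \in [2^{-k} m_1, m_1]$ with best $L^2$ linear approximation $L_{\Box'}$ satisfying $|a_{m_2}^\star - L_{\Box'}| \leq \psi(m_1)\, m' \leq 2\psi(m')\, m'$ (using monotonicity of $\psi$), together with BAD subsquares of total area at most $\psi(m_1)\, m_2^2$.

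Because lozenge weights are almost surely nonpositive, each term in the decomposition
$$m_2^2\, \sigma_{m_2, \edge}(\rho, a_\infty, \psi(m_2)) \;=\; \sum_{\Box'} \left(-\E\langle \partial \MM|_{\Box'},\, \partial a_{m_2}^\star|_{\Box'}\rangle\right) \;+\; \left(-\E\, W_{\mathrm{bdy}}\right)$$
is nonnegative, where $W_{\mathrm{bdy}}$ collects rhombi straddling subsquare boundaries. Dropping the BAD and boundary terms yields a valid lower bound by the GOOD-$\Box'$ sum. On each GOOD $\Box'$, the restriction $a_{m_2}^\star|_{\Box'}$ is a height function lying in $B_{m'}(L_{\Box'}, 2\psi(m')\, m')$, and by the translation invariance of the bulk bead limit (Theorem~\ref{theorem:beadR}), the marginal of $\partial \MM_{m_2, \x(\rho)}$ on $\Box'$ is distributionally identical to $\partial \MM_{m', \x(\rho)}$. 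Consequently, by the defining supremum of Definition~\ref{def:8.1},
$$-\E\langle \partial \MM|_{\Box'},\, \partial a_{m_2}^\star|_{\Box'}\rangle \;\geq\; m'^2\, \sigma_{m', \edge}(\rho, L_{\Box'}, 2\psi(m')).$$
Summing over GOOD subsquares and bounding the BAD contributions to $\phi_{m_2, m_1}$ by $C_{|\rho|} \psi(m_1) m_2^2$ (using Lemma~\ref{lem:1}D), we conclude
$$m_2^2\, \sigma_{m_2, \edge}(\rho, a_\infty, \psi(m_2)) \;\geq\; m_2^2\, \phi_{m_2, m_1}(\rho, a_{m_2}^\star) - C_{|\rho|}\, \psi(m_1)\, m_2^2.$$

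The piecewise linear extension of $a_{m_2}^\star$ lies in $\AHT^{m_2}(a_\infty, m_2 \psi(m_2))$ (absorbing an $O(1)$ discrepancy into the radius, negligible for large $m_2$), so $\phi_{m_2, m_1}(\rho, a_{m_2}^\star) \geq \inf_{f} \phi_{m_2, m_1}(\rho, f)$. Taking $\liminf_{m_2 \to \infty}$ along multiples of $m_1$ and invoking $\sigma_\edge(\rho, a_\infty) = \lim_{m_2 \to \infty} \sigma_{m_2, \edge}(\rho, a_\infty, \psi(m_2))$ from Lemma~\ref{lem:8.7} yields the claimed inequality with $\bar\psi(m_1) := \psi(m_1)$. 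The main obstacle is the clean distributional identification between $\partial \MM_{m_2, \x(\rho)}|_{\Box'}$ and $\partial \MM_{m', \x(\rho)}$ via translation invariance of the bulk bead limit (so that Definition~\ref{def:8.1}'s supremum applies verbatim on each GOOD $\Box'$), together with selecting $a_{m_2}^\star$ as a measurable maximizer so that the $\liminf$ can be taken; the quantitative content of Fefferman's theorem is then just a clean geometric input.
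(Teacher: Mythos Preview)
There is a genuine gap. Your maximizer $a_{m_2}^\star$ is a \emph{random} height function (it depends on the realization of $\MM_{m_2,\x(\rho)}$), and therefore the CZ decomposition of $a_{m_2}^\star$, the set of GOOD subsquares $\Box'$, and the linear approximants $L_{\Box'}$ are all random and \emph{correlated with} $\partial\MM|_{\Box'}$. Your displayed decomposition $m_2^2\sigma_{m_2,\edge}=\sum_{\Box'}(-\E\langle\cdot,\cdot\rangle)+\dots$ already fails to make sense as written, since the index set of the sum is itself random; and even if one reads it with the expectation outside, the key step
\[
-\E\langle \partial \MM|_{\Box'},\, \partial a_{m_2}^\star|_{\Box'}\rangle \;\geq\; m'^2\, \sigma_{m', \edge}(\rho, L_{\Box'}, 2\psi(m'))
\]
is unjustified. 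Definition~\ref{def:8.1} gives $m'^2\sigma_{m',\edge}(\rho,L,2\psi(m'))=-\E\max_{a\in B_{m'}(L,2m'\psi(m'))}\langle\partial\MM_{m'},\partial a\rangle$ only for a \emph{deterministic} (or at least $\MM$-independent) tilt $L$. Here $L_{\Box'}$ is a measurable function of the very environment you are averaging over, and the maximizer $a_{m_2}^\star$ is designed to exploit favorable fluctuations of $\MM$, so there is no reason the conditional local maxima should be bounded by the unconditional $\sigma_{m',\edge}$. Translation invariance of the bead limit does not help: it identifies the \emph{unconditional} law of $\partial\MM|_{\Box'}$ with $\partial\MM_{m',\x(\rho)}$, but says nothing once you condition on the maximizer's CZ profile.

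This is precisely the obstacle that the paper's proof addresses. Instead of applying Fefferman's decomposition to the random maximizer, the paper stratifies the \emph{deterministic} family $\FF=\HT^{m_2}(a_\infty,m_2\psi(m_2))$ according to a finite CZ ``profile'' invariant $\chi_{m_1}(f)$ (recording, for each possible sidelength and rounded tilt, how many CZ subsquares of that type occur). For each fixed profile $\chi$, the class $\FF_\chi$ is deterministic and $\phi_{m_2,m_1}(\rho,\cdot)$ is constant on it, so one legitimately compares $\E\max_{a\in\FF_\chi}\langle\partial\MM,\partial a\rangle$ to the corresponding sum of $\sigma_{m',\edge}$'s. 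The passage from $\E\max_{a\in\FF}$ to $\max_\chi\E\max_{a\in\FF_\chi}$ is then controlled by the concentration estimate of Proposition~\ref{prop:8.17} (subexponential tails via log-concavity) together with the cardinality bound $(m_2^2)^{|\NN_K|\,C^\sharp/(\varepsilon^4\eta)}$ on the number of profiles. This ``$\E\max\to\max\E$'' step is the missing ingredient in your argument; Fefferman's theorem alone is not enough.
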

\begin{proof}
Let $m_2$ be a power of $2$ that is an integer multiple of $m_1$.
We will cover $\HT^{m_2}(a_\infty, m_2 \psi(m_2))$ using a small number of sets defined below.
Consider $f \in \HT^{m_2}(a_\infty, m_2 \psi(m_2))$. We associate an invariant $\chi_{m_1}(f)$ to it as follows. Look at all the squares in $\CZ(f, m_1)$. To each such square $\Box'$, associate a slope $L_{\Box'}$ by the above procedure. Now for all squares of side length $\sidelength(\Box')$, and (rounded) slope $L_{\Box'}$, maintain the count $\#( L_{\Box'}, \log(m_1/ \sidelength(\Box')))$, indexed by the approximate slope in which is an element of a set $\NN_K$ (having at most $Cm_1^2$ elements, because $m_1$ is an integer power of $2$) and the depth of $\Box'$ compared to the $m_1 \times m_1$ square. This invariant of $f$, which we denote by $\chi_{m_1}(f)$ is a map from $ \NN_K \times \N$ to $\N$.  Observe that $\phi_{m_2, m_1}(\rho, f) = \phi_{m_2, m_1}(\rho, f')$ if $\chi_{m_1}(f) = \chi_{m_1}(f')$. Next observe that by Theorem~\ref{thm:Feff}, the smallest possible square that can appear in $\CZ(f, m_1)$ has side length greater or equal to $2^{-k}m_1$, where $k\varepsilon^4 \eta \leq C^\sharp$.
Therefore, the number of possible distinct invariants  $\chi$  (for fixed $m_1, m_2$) is bounded above by 
\beq \lab{eq:card}  \left(m_2^2\right)^{|\NN_K| \frac{C^\sharp}{\varepsilon^4 \eta}}.\eeq
Given $\chi = \chi_{m_1}(f)$, we define $\FF_\chi$ to be the set of all height functions $f' \in  \HT^{m_2}(a_\infty, m_2 \psi(m_2))$ such that $\chi_{m_1}(f') = \chi$. In the rest of this proof, we abbreviate $\HT^{m_2}(a_\infty, m_2 \psi(m_2))$ to $\FF$.

Recall from Definition~\ref{def:8.1} that \begin{eqnarray*} \sigma_{m_2, \edge}(\rho, a_\infty) &=&  -\left(\frac{1}{m_2^2}\right) \E \max_{a \in \FF}  \langle \partial \MM_{m_2, \x(\rho)}, \partial a\rangle.\end{eqnarray*}

By Proposition~\ref{prop:8.17},  the variance of $\langle \partial \MM_{m_2, \x(\rho)}, \partial a\rangle$ is bounded above by $O(\frac{m_2^4}{(\log m_2)^A})$  for every absolute constant $A$. Further, 
$ \MM_{m_2, \x(\rho)}$ is log-concave, so by  \cite[Theorem 6]{prekopa},  so is 
$ \partial \MM_{m_2, \x(\rho)}$, and for since $a \in \FF$ is $m_2^2$-Lipschitz, 
for each $\chi$, $ \max_{a \in \FF_\chi}  \langle \partial \MM_{m_2, \x(\rho)}, \partial a\rangle$ is a subexponential random variable
whose $\psi_1$-norm (see Definition~\ref{def:psi1}) is bounded above by $\frac{m_2^2}{(\log m_2)^\frac{A}{2}}$ by \cite[Theorem 1.5]{Emanuel}.
Note that 
$$ -\left(\frac{1}{m_2^2}\right) \max_\chi \max_{a \in \FF_\chi}  \langle \partial \MM_{m_2, \x(\rho)}, \partial a\rangle = 
 -\left(\frac{1}{m_2^2}\right) \max_{a \in \FF}  \langle \partial \MM_{m_2, \x(\rho)}, \partial a\rangle.$$
The preceding discussion implies that 
\beqs m_2^{-2} \left( \E \max_{a \in \FF}  \langle \partial \MM_{m_2, \x(\rho)}, \partial a\rangle - \max_\chi (\E \max_{a \in \FF_\chi}  \langle \partial \MM_{m_2, \x(\rho)}, \partial a\rangle) \right) \eeqs is bounded above by the contributions $\mathrm{contrib}(\eta)$ of the BAD squares in Theorem~\ref{thm:Feff} added to a fluctuation term, namely \beq  \mathrm{contrib}(\eta) + O\left(
(\log m_2)^{-\frac{A}{2}}  \log   \left(\left(m_2^2\right)^{|\NN_K| \frac{C^\sharp}{\varepsilon^4 \eta}}\right)\right),\eeq
where the first term is bounded above by $C_{|\rho|} \bar{\psi}(m_1)$ by Lemma~\ref{lem:reuse}, and the second is due to subexponential tail bounds and (\ref{eq:card}). 

As a consequence, we see that $$ \sigma_{m_2, \edge}(\rho, a_\infty) \geq  \inf_{f \in \FF} \chi_{m_1}(f) - C_{|\rho|} \bar{\psi}(m_1).$$ Taking $m_2 \ra \infty$, we see that 
 $$ \sigma_{ \edge}(\rho, a_\infty) \geq  \gamma_{m_1}(\rho, a_\infty) - C_{|\rho|} \bar{\psi}(m_1),$$ which proves the lemma.
\end{proof}
\begin{lemma}\lab{lem:8.14}
For any fixed $a_\infty \in K$, $\sigma_{\edge}(\rho, a_\infty)$ is a continuous function of 
$\rho$ for $\rho \in \R_{>0}^2$.
\end{lemma}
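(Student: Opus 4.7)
The plan is to deduce continuity of $\rho \mapsto \sigma_\edge(\rho, a_\infty)$ on $\R_{>0}^2$ by realizing it as a uniform limit on compact sets of the continuous functions $\rho \mapsto \sigma_{m, \edge}(\rho, a_\infty)$. Concretely, I would first fix $a_\infty \in K^o$ and a compact set $\C \subset \R_{>0}^2$, and combine the two ingredients already proved above: Lemma~\ref{lem:8.6} gives the one-sided bound $\sigma_{m, \edge}(\rho, a_\infty) \geq \sigma_\edge(\rho, a_\infty) - \eps'$ uniformly for $(\rho, a_\infty) \in \C \times K$ whenever $m > m_0(\eps', \C)$, while part (1) of Lemma~\ref{lem:8.7} supplies the reverse inequality $\sigma_\edge(\rho, a_\infty) \leq \sigma_{m_0, \edge}(\rho, a_\infty) + C_{|\rho|}\psi(m_0)$. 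Since $|\rho|$ is bounded on $\C$, the constant $C_{|\rho|}$ is bounded on $\C$, so together these give the desired uniform convergence $\sigma_{m, \edge}(\rho, a_\infty) \to \sigma_\edge(\rho, a_\infty)$ on $\C$ for each fixed $a_\infty \in K^o$.

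The remaining step is to verify that, for each fixed $m$ and each fixed $a_\infty \in K$, the function $\rho \mapsto \sigma_{m, \edge}(\rho, a_\infty)$ is itself continuous on $\R_{>0}^2$. By Definition~\ref{def:8.1} and Definition~\ref{def:8.6}, this is
\[
\sigma_{m, \edge}(\rho, a_\infty) \;=\; -\frac{1}{m^2}\, \E \max_{a_m \in B_m(a_\infty, m\psi(m))} \langle \partial \MM_{m, \x(\rho)}, \partial a_m \rangle,
\]
where $B_m(a_\infty, m\psi(m))$ is a finite set of height functions independent of $\rho$, and the inner functional $\max_{a_m} \langle \partial (\cdot), \partial a_m\rangle$ is a fixed $O(m^2)$-Lipschitz function of the bead configuration. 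Hence it suffices to know that the law of $\MM_{m, \x(\rho)}$ depends continuously on $\rho$ in $W_1$. This I would extract from Theorem~\ref{theorem:beadR}: couple $\MM_{m, \x(\rho)}$ and $\MM_{m, \x(\rho')}$ for nearby $\rho, \rho'$ by realizing them as the $\ell \to \infty$ limits of the rescaled windows $\ell^{1/2}(\MM|_{\lceil \ell \x(\rho)\rceil + \Box_m} - \MM(\lceil \ell\x(\rho)\rceil)\one_{\Box_m})$ on the \emph{same} underlying GUE minor process $\MM$, and use that $\x(\rho)$ is a continuous function of $\rho$ together with the smooth dependence of bulk GUE minor statistics on the macroscopic coordinate to pass to the $W_1$-limit.

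Having established continuity on $\R_{>0}^2$ for fixed $a_\infty \in K^o$, I would extend to $a_\infty \in \partial K$ using the definition of $\sigma_\edge(\rho, a_\infty)$ at boundary points as a directional liminf from $K^o$. Continuity in $\rho$ for interior $a_\infty$ is preserved under taking a liminf along a fixed directional family $a'_\infty \to a_\infty$, provided one has the uniform boundedness $\sigma_\edge(\rho, \cdot) \leq C_{|\rho|}$ from Lemma~\ref{lem:1.5}. This yields the continuity claim at boundary tilts as well.

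The main obstacle is the continuous-in-$\rho$ dependence of $\MM_{m, \x(\rho)}$ in Step one. The issue is technical rather than conceptual: one needs a quantitative coupling (in $W_1$, say) between the windows around $\lceil \ell \x(\rho)\rceil$ and $\lceil \ell \x(\rho')\rceil$ in a single realization of the GUE minor process, with error vanishing as $|\rho - \rho'| \to 0$ uniformly in $\ell$. I would handle this by shifting one window onto the other and applying the rigidity/local statistics input (in the same spirit as the quoted Theorem~\ref{theorem:Tao4}), showing that the centered rescaled windows agree up to an $o(1)$ $W_1$-error as $|\rho - \rho'| \to 0$; the continuity of $\sigma_{m, \edge}(\cdot, a_\infty)$ then follows since $\max_{a_m}\langle \partial (\cdot), \partial a_m\rangle$ is Lipschitz in the bead configuration.
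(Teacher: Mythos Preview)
Your approach for the interior case $a_\infty \in K^o$ is correct and in fact cleaner than the paper's. You sandwich $\sigma_\edge$ between $\sigma_{m,\edge}$ using Lemma~\ref{lem:8.6} (from below) and Lemma~\ref{lem:8.7}(1) (from above), both of which are uniform on compact $\C\subset\R_{>0}^2$ once one notes that the implicit thresholds and the constant $C_{|\rho|}$ are controlled on $\C$. This gives uniform convergence in $\rho$, and continuity of $\sigma_\edge$ then follows from continuity of $\sigma_{m,\edge}$. The paper instead routes through the auxiliary $\gamma_{m_1}$ of \eqref{eq:g1} and Lemma~\ref{lem:sigma-cont2}, together with a supporting-hyperplane argument exploiting the convexity of $\sigma_\edge(\rho,\cdot)$; this is more involved but reaches the same destination. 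For the continuity of $\sigma_{m,\edge}$ in $\rho$, the paper simply cites Theorem~\ref{theorem:Tao3}; your proposed coupling via Theorem~\ref{theorem:beadR} is morally the same ingredient, though you correctly flag that making the $W_1$-continuity of $\rho\mapsto\MM_{m,\x(\rho)}$ fully rigorous needs some care.

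There is, however, a genuine gap in your treatment of boundary tilts $a_\infty\in\partial K$. The assertion that ``continuity in $\rho$ is preserved under taking a liminf along a fixed directional family $a'_\infty\to a_\infty$, provided one has uniform boundedness'' is false: the liminf of a uniformly bounded family of continuous functions is in general only upper semicontinuous. Boundedness alone does not restore lower semicontinuity. The paper handles the non-vertex boundary case by contradiction: assuming continuity in $\rho$ fails at $(\rho,a_\infty)$, it invokes Lemma~\ref{lem:8.12} (continuity of $\sigma_\edge(\rho,\cdot)$ on all of $K$) to replace $a_\infty$ by a nearby interior tilt $a'_\infty$ with $|\sigma_\edge(\rho,a'_\infty)-\sigma_\edge(\rho,a_\infty)|<\eps/4$, then applies the already-established interior case at $a'_\infty$ together with Lemma~\ref{lem:8.7} to reach a contradiction. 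At the three vertices of $K$, Lemma~\ref{lem:8.12} gives the explicit values $2\rho_1$, $\rho_2$, $0$, which are manifestly continuous in $\rho$. To repair your argument you need precisely this extra input from Lemma~\ref{lem:8.12}: the directional liminf defining $\sigma_\edge(\rho,a_\infty)$ on $\partial K$ is in fact a limit, and that convergence (controlled via convexity and the explicit boundary behavior) is what lets you transfer continuity in $\rho$ from $K^o$ to $\partial K$.
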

\begin{proof}
\begin{claim}
When $a_\infty \in K^o$, the lemma is true.
\end{claim}
\begin{proof} 
Suppose $a_\infty \in (1-\de)K$. 
Because of Theorem~\ref{theorem:Tao3} and Proposition~\ref{prop:conv-final}  and Lemma~\ref{lem:sigma-cont2}, 
it suffices to show that given any $\eps'>0$, for all $m$ a large enough power of $2$, $\gamma_m(\rho, a_\infty) \geq \sigma_{m, \edge}(\rho, a_\infty) - \eps'$.
We already know by Proposition~\ref{prop:conv-final} that the pointwise limit of $\sigma_{m, \edge}(\rho, a'_\infty)$ as $m \ra \infty$ is bounded below by $\sigma_{\edge}(\rho, a'_\infty)$ (for all $a'_\infty \in K$) which is a convex function. It follows by considering a supporting hyperplane (which by convexity of $\sigma_\edge(\rho, a'_\infty)$ as a function of $a'_\infty$ and boundedness has a bounded slope) to the epigraph of $\sigma_\edge(\rho, a'_\infty)$ and applying Proposition~\ref{prop:conv-final} that given any $\eps'>0$, for all $m$ large enough, $\gamma_m(\rho, a_\infty) \geq \sigma_{m, \edge}(\rho, a_\infty) - \eps'$. The claim is proved.
\end{proof}
Suppose now that $a_\infty \in \partial K\setminus \{-2\hat{i}, 2\hat{j}, 2(\hat{i} - \hat{j})\}$.
We proceed by contradiction. Suppose there exists some $\eps > 0$ such that every neighborhood of $\rho$
contains some $\rho'$ such that $|\sigma_\edge(\rho', a_\infty) - \sigma_\edge(\rho, a_\infty)| > \eps.$ Let $a'_\infty \ra a_\infty$ along a straight line $L(a_\infty) $ normal to $\partial K$. Suppose $\rho'' \ra \rho$ is such that \beq\lab{eq:sigmaeps} |\liminf_{\rho'' \ra \rho} \sigma_\edge(\rho'', a_\infty) - \sigma_\edge(\rho, a_\infty)| > \eps.\eeq 

By Lemma~\ref{lem:8.12},  we can fix a sufficiently small $\de$ such that for $a'_\infty \in L(a_\infty)$ and $|a'_\infty - a_\infty| = \de$,   \beq \lab{eq:sigmaeps1} |\sigma_\edge(\rho, a'_\infty) - \sigma_{ \edge}(\rho, a_\infty)| < \frac{\eps}{4}.\eeq                                                                 Then, by (\ref{eq:sigmaeps}) and (\ref{eq:sigmaeps1}), for all sufficiently large $m_1$ (depending on $\de$), \beq \lab{eq:8.9} |\liminf_{\rho'' \ra \rho} \sigma_{m_1, \edge}(\rho'', a'_\infty) - \sigma_{ \edge}(\rho, a_\infty)| > \eps/2.\eeq

Now, $$|\liminf_{\rho'' \ra \rho} \sigma_{m_1, \edge}(\rho'', a'_\infty) - \sigma_{\edge}(\rho, a_\infty)| < $$ $$|\liminf_{\rho'' \ra \rho} \sigma_{m_1, \edge}(\rho'', a'_\infty) - \sigma_{m_1, \edge}(\rho, a'_\infty)| +| \sigma_{m_1, \edge}(\rho, a'_\infty) - \sigma_{ \edge}(\rho, a_\infty)|=$$
$$| \sigma_{m_1, \edge}(\rho, a'_\infty) - \sigma_{ \edge}(\rho, a_\infty)|<$$
$$|\sigma_{m_1, \edge}(\rho, a'_\infty) - \sigma_\edge(\rho, a'_\infty)| + 
|\sigma_\edge(\rho, a'_\infty) - \sigma_{ \edge}(\rho, a_\infty)|.$$
However, by the first statement of Lemma~\ref{lem:8.7}, 
this is bounded above by
$$o(1) + \frac{\eps}{4}$$ as $m_1 \ra \infty$.  But this contradicts (\ref{eq:8.9}).

Finally, suppose $a_\infty \in \{-2\hat{i}, 2\hat{j}, 2(\hat{i} - \hat{j})\}.$ Then, the result follows by Lemma~\ref{lem:8.12}.
\end{proof}

\noindent {\it Proposition~\ref{prop:conv-final} restated:}
 $\sigma_{m, \edge}(\rho, a_\infty)$ converges uniformly to $\sigma_{ \edge}(\rho, a_\infty)$ on compact subsets of $K^o$ for fixed $\rho$. Additionally,
 given  a compact subset $\C \subseteq \R_{>0}^2$, and $\eps' > 0$,  there exists an $m_0$ such that for all $m > m_0$ and all $(\rho, a_\infty) \in \C \times K,$ $$ \sigma_{m, \edge}(\rho, a_\infty)  \geq \sigma_\edge(\rho, a_\infty) - \eps'.$$
Further,  for each fixed $\rho \in \R_{>0}^2$, $\sigma_\edge(\rho, a_\infty)$ is a continuous nonnegative convex 
  function of $a_\infty$ on $K$ bounded above by $C_{|\rho|}$ and consequently,  for any $\de > 0$,  $\sigma_\edge(\rho, a_\infty)$ is a $C_{|\rho|}\de^{-1}$-Lipschitz function of $a_\infty$ on $(1 - \de)K$. 

Proposition~\ref{prop:conv-final}  is the consolidation of  Lemma~\ref{lem:uniform},  Lemma~\ref{lem:8.6}, Lemma~\ref{lem:1.5} and Lemma~\ref{lem:8.12} into a single proposition.

\noindent {\it Proposition~\ref{prop:sig-unif} restated: }
Let $\rho \in \R_{> 0}^2$. Let $\bar{B}(\rho, \de)$ be the closed Euclidean ball of radius $\de$ around $\rho$. Then, the function $|\sigma_\edge(\rho', a'_\infty) - \sigma_\edge(\rho, a'_\infty)|$ defined for  $(\rho', a'_\infty) \in \bar{B}(\rho, \de) \times K$ is uniformly upper bounded by $o(1)$ as $\de \ra 0$.
\begin{proof}[Proof of Proposition~\ref{prop:sig-unif}]

Suppose to the contrary. Then, there exists a positive $\eps$ such that for every $\de > 0$, there is $(\rho_\de, a'_{\infty, \de})$ such that $|\rho_\de - \rho| < \de$ and \beq\lab{eq:last1} |\sigma_\edge(\rho_\de, a'_{\infty, \de}) - \sigma_\edge(\rho, a'_{\infty, \de})| \geq \eps.\eeq
Since $K$ is compact, there is a subsequential limit $(\rho, a'_{\infty, 0})$ of these $(\rho_\de, a'_{\infty, \de})$ as $\de \ra 0$.  As $\de \ra 0$, $|\sigma_\edge(\rho_\de, a'_{\infty, \de}) - \sigma_\edge(\rho_\de, a'_{\infty, 0})| \ra 0$ by Lemma~\ref{lem:8.12}. However, 

$$ |\sigma_\edge(\rho_\de, a'_{\infty, \de}) - \sigma_\edge(\rho, a'_{\infty, \de})| \leq  |\sigma_\edge(\rho_\de, a'_{\infty, \de}) - \sigma_\edge(\rho_\de, a'_{\infty, 0})| +  |\sigma_\edge(\rho_\de, a'_{\infty, 0}) - \sigma_\edge(\rho, a'_{\infty, 0})|$$
The last two statements imply that $\limsup_{\de \ra 0}  |\sigma_\edge(\rho_\de, a'_{\infty, 0}) - \sigma_\edge(\rho, a'_{\infty, 0})| > 0$. This  contradicts  Lemma~\ref{lem:8.14} (which implies that  $\sigma_\edge(\rho', a'_{\infty, 0})$ is continuous in $\rho'$ when it is in a $\de$-neighborhood of $\rho$ and the second argument is fixed).
\end{proof}

\section{Acknowledgements}
We wish to express our gratitude to Charles Fefferman, Scott Sheffield and Terence Tao.
Charles Fefferman supplied his notes on quantitative differentiation \cite{CF}, that stated and proved Theorem~\ref{thm:Feff}. 
Scott Sheffield  suggested the use of the surface tension $\sigma_\edge$ and discussed many points with us. 
Terence Tao answered our  many queries on the GUE minor process at fixed index  in \cite{TaoPreprint} and supplied the proof of Theorem~\ref{theorem:beadR}.

 Finally, we thank Aalok Gangopadhyay for permitting us to use his beautiful simulations of random tilings and hives from \cite{Gang}.

 We were supported by a Swarna Jayanti fellowship and a grant associated with the Infosys-Chandrasekharan virtual center for Random Geometry. We also acknowledge support from the Department
of Atomic Energy, Government of India, under project no. RTI4001.
Part of this research was performed while the author was visiting the Institute for Pure and Applied Mathematics (IPAM), which is supported by the National Science Foundation (Grant No. DMS-1925919).
\bibliographystyle{alphaabbrv}

\appendix

\section{Eigenvalues of GUE}
\subsection{Concentration of eigenvalues}\lab{ssec:rep}
By Corollary 15 of Tao and Vu, \cite{TaoVu-conc},  we know the following (which in fact holds for a larger class of Wigner matrices):
\begin{lemma}\lab{lem:TV}
  Let $\tilde{W}_n$ be a matrix  drawn from the GUE, normalized so that $\E \sum_i \la_{i}^2 = n^3$.  
Then for
  any $1 \leq i \leq n$ we have
$$\p(n^{-1/3} \min(i, n - i + 1)^{1/3}|\la_i - \E\la_i| \geq T )\ll  n^{O(1)} exp(-cT^c)$$
for any $T > 0. $
\end{lemma}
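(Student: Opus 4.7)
The plan is to exploit the determinantal structure of GUE eigenvalues and reduce concentration of $\la_i$ to concentration of a counting function that, in law, is a sum of independent Bernoullis. Concretely, GUE eigenvalues form a determinantal point process with the Hermite kernel $K_n$, and a classical fact says that for any Borel $A\subseteq\R$ the counting function $N_A:=\#\{j:\la_j\in A\}$ has the same distribution as $\sum_k \xi_k$, where the $\xi_k$ are independent Bernoullis whose parameters are the eigenvalues of $K_n$ restricted to $A$. Bernstein's inequality applied to this Bernoulli representation then gives
\begin{equation*}
  \p\bigl(|N_A - \E N_A| \geq s\bigr) \;\le\; 2\exp\bigl(-c\min(s^2/V_A,\,s)\bigr),\qquad V_A := \Var(N_A).
\end{equation*}

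First I would record the variance estimate $V_{[t,\infty)} = O(\log n)$ for $t$ in the bulk, improved to $V_{[t,\infty)} = O(\min(i,n-i+1)^{2/3}\log n)$ when $t$ is near the classical location $\gamma_i$ of the $i$-th eigenvalue; these bounds follow from explicit computations with the Hermite kernel, equivalent to the Costin--Lebowitz/Soshnikov CLT for smooth linear spectral statistics. Then I would convert counting-function concentration into eigenvalue-location concentration via the duality $\{\la_i \leq t\} = \{N_{[t,\infty)} \leq n-i\}$. Setting $t = \gamma_i \pm s_i(T)$ with $s_i(T) := n^{1/3}\min(i,n-i+1)^{-1/3} T$ and using the semicircle-density estimate $\rhosc(\gamma_i) \asymp n^{-2/3}\min(i,n-i+1)^{1/3}$, a spatial window of width $s_i(T)$ around $\gamma_i$ contains on average $\Theta(T)$ eigenvalues both in the bulk and near the edge; hence the deviation of $N_{[t,\infty)}$ from its mean required to force $|\la_i - \gamma_i| \geq s_i(T)$ is of order $T$. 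Bernstein then yields the tail bound $\exp(-c\min(T^2/V,\,T)) \leq \exp(-cT^c)$ for some $c\in(0,1]$, after absorbing $V = \log^{O(1)} n$ into the $n^{O(1)}$ prefactor. Integrating this rigidity estimate around $\gamma_i$ gives $|\E\la_i - \gamma_i| = O(n^{1/3}\min(i,n-i+1)^{-1/3}\log^{O(1)} n)$, which then lets us replace $\gamma_i$ by $\E\la_i$ at the cost of enlarging the $n^{O(1)}$ factor.

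The main obstacle is the uniform treatment of the edge. At the hard edge (when $i = O(1)$ or $n-i = O(1)$) the Bernoulli variance $V_A$ degenerates and the Bernstein bound crosses from the sub-gaussian regime $\exp(-cs^2/V)$ into the sub-exponential regime $\exp(-cs)$; this crossover is precisely why the stated tail has the form $\exp(-cT^c)$ rather than a pure gaussian tail, and forces $c\leq 1$ uniformly in $i$. The other delicate point is verifying the variance and density estimates uniformly down to the edge; this is standard but nontrivial and is typically carried out via Plancherel--Rotach asymptotics for Hermite polynomials, or via the Airy-kernel approximation and its integrated analogue.
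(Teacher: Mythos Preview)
The paper does not prove this lemma at all: it is stated in the appendix as a direct quotation of Corollary~15 of Tao--Vu \cite{TaoVu-conc}, with no argument supplied. Your proposal, by contrast, sketches a self-contained proof tailored to GUE via the determinantal structure: the Hough--Krishnapur--Peres--Vir\'ag representation of counting statistics as sums of independent Bernoullis, Bernstein's inequality, and the duality between eigenvalue locations and counting functions. This is a legitimate and well-trodden route to GUE rigidity (it underlies, for example, Gustavsson's CLT and many subsequent rigidity arguments), and the crossover from sub-gaussian to sub-exponential tails that you identify is exactly the reason for the $\exp(-cT^c)$ form.

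The trade-off is scope versus directness. The Tao--Vu result cited by the paper holds for a broad class of Wigner matrices and is proved by combining GUE input with a Lindeberg-type swapping argument; your approach is more elementary and self-contained but is specific to the determinantal (GUE) case, which is all the present paper needs. A minor caution: your stated variance bound $V_{[t,\infty)}=O(\min(i,n-i+1)^{2/3}\log n)$ near the edge is not the sharp one (the counting variance is in fact $O(\log n)$ uniformly in the bulk and bounded near the edge), but as you observe any polynomial bound is harmless here since it is absorbed into the $n^{O(1)}$ prefactor once $T$ exceeds a polylogarithmic threshold.
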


\subsection{Covariance of eigengaps in GUE}

We will denote small universal constants by $c, c'$ and large ones by $C, C'$ etc,  and their precise value may depend upon the occurrence. 
When $X < CY$, we will also at times write $ X = O(Y)$ and $X \ll Y.$ 

Let $n \geq 1$ be an integer (which we view as a parameter going off to infinity). An $n \times n$ GUE matrix $M_n$ is defined to be a random Hermitian $n \times n$ matrix $M_n = (\xi_{ij} )_{1\leq i,j \leq n},$ in which the $\xi_{ij}$ for $1 \leq i \leq j \leq n$ are jointly independent with $\xi_{ij} = \xi_{ji}$ (in particular, the $\xi_{ii}$ are real-valued), and each $\xi_{ij}$  has mean zero and variance one. 
The bulk distribution of the eigenvalues $\la_1(M_n), \dots , \la_n(M_n)$ of a GUE matrix is governed by the Wigner semicircle law. Indeed, if we let $N_I(M_n)$ denote the number of eigenvalues of $M_n$ in an interval $I$,  then with probability $1 - o(1),$ we have the asymptotic
$$N_{\sqrt{n}I}(M_n) = n \int_I \rho_{sc}(u) du + o(n)$$ 
 uniformly in $I,$ where $\rho_{sc}$  is the Wigner semi-circular distribution
 $$\rho_{sc}(u) := \frac{1}{2\pi} (4 - u^2)^{\frac{1}{2}}_+.$$
The Wigner semi-circle law predicts that the location of an individual eigenvalue $\la_i(M_n)$ of a GUE matrix $M_n$ for $i$ in the bulk region $\eps n \leq i \leq (1- \eps)n,$ should be approximately $\sqrt{n}u$
 where $u = u_{i/n}$ is the classical location of the eigenvalue, given by the formula
\beq \int_{-\infty}^u \rho_{sc}(y) dy = \frac{i}{n}.\lab{eq:5}\eeq

The following result is from \cite{Tao-gap}.
\begin{theorem} \lab{theorem:Tao1}

Let $M_n$ be drawn from GUE, and let $\eps n \leq i \leq (1 - \eps)n$ for some fixed $\eps > 0.$ Then one has the asymptotic 
$$\p\left(\frac{\la_{i+1}(M_n) - \la_i(M_n)}{1/(\sqrt{n}\rho_{sc}(u))} \leq s\right) = \int_0^s p(y)dy + o(1)$$ where $p$ is the Gaudin distribution
 for any fixed $s>0,$ where $u=u_{i/n}$ is given by (\ref{eq:5}).
\end{theorem}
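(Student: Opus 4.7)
The plan is to combine two ingredients: (i) the classical Gaudin--Mehta sine-kernel limit, which gives the gap law at a fixed bulk \emph{energy}, and (ii) the GUE eigenvalue rigidity (Lemma~\ref{lem:TV}), which pins the $i$-th eigenvalue so tightly around its classical location that the fixed-energy and fixed-index formulations agree in the limit. The Gaudin density $p$ is defined as the gap density of the Dyson sine process, so the identification of limits is forced once we have convergence of local eigenvalue statistics.

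First I would establish the sine-kernel limit at a fixed bulk energy $E=\sqrt{n}u$. The eigenvalues of $M_n$ form a determinantal point process with the Hermite--Christoffel--Darboux kernel $K_n$. Plancherel--Rotach asymptotics (or equivalently Riemann--Hilbert/steepest descent) give, locally uniformly in $(x,y)$,
\[
\frac{1}{\sqrt{n}\rhosc(u)} K_n\!\left(E+\tfrac{x}{\sqrt{n}\rhosc(u)},\,E+\tfrac{y}{\sqrt{n}\rhosc(u)}\right) \longrightarrow \frac{\sin\pi(x-y)}{\pi(x-y)}.
\]
All finite correlation functions therefore converge to those of the sine process, and standard inclusion--exclusion (or Fredholm determinant) arguments upgrade this to convergence of the probability that a prescribed interval around $E$ contains no eigenvalue. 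From here the distribution of the gap of the local point process straddling $E$ converges to $\int_0^s p(y)\,dy$.

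Second, I would transfer from a fixed bulk energy to a fixed bulk index. By Lemma~\ref{lem:TV}, for $\eps n \le i \le (1-\eps)n$ we have $\la_i(M_n) = \sqrt{n}u_{i/n} + O(n^{-1/3}\log^{O(1)}n)$ with overwhelming probability, so $\la_i$ is concentrated in a window of length $o(1/\sqrt{n})$ around the classical location $\sqrt{n}u$. Let $F_{i,n}(t) := \p(\la_{i+1}-\la_i \le t)$ be the fixed-index gap distribution and let $G_{E,n}(t)$ be the probability that the eigenvalue gap straddling $E$ has length at most $t$. Choose a mesoscopic window $I$ of length $|I|\ll 1/\sqrt{n}$ centered at $\sqrt{n}u$ but much larger than the rigidity scale; with overwhelming probability the eigenvalues of $M_n$ in $I$ carry consecutive indices in a tiny range around $i$. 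Averaging over $E\in I$ and using translation invariance of the sine process (so that the limit $\lim_n G_{E,n}(s/(\sqrt{n}\rhosc(u))) = \int_0^s p(y)\,dy$ is the same for every $E\in I$), a counting argument based on the Kac--Rice type identity $\int_I \mathbf{1}_{\la_j \in I}\,\mathrm{d}E = |I \cap \{\text{eigenvalues}\}|$ yields
\[
F_{i,n}(t) = \frac{1}{|I|\cdot\sqrt{n}\rhosc(u)}\,\E\!\sum_{j : \la_j \in I}\mathbf{1}_{\la_{j+1}-\la_j \le t} + o(1),
\]
which in turn equals $\frac{1}{|I|}\int_I G_{E,n}(t)\,dE + o(1)$. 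Letting $n\to\infty$ and then $|I|\to 0$ slowly gives the claimed Gaudin limit.

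The main obstacle will be making the fixed-energy to fixed-index transfer fully rigorous, i.e. justifying that exactly the eigenvalues of index $\simeq i$ (and not some nearby index) populate the window $I$ with the right multiplicity. Controlling this requires rigidity at a scale finer than the mean eigenvalue spacing $1/(\sqrt{n}\rhosc(u))$, which Lemma~\ref{lem:TV} just barely supplies in the bulk; away from the bulk the $\min(i,n-i+1)^{1/3}$ factor would degrade, which is exactly why the statement is restricted to $\eps n \le i \le (1-\eps)n$. A secondary subtlety is handling the boundary event that $\la_i$ or $\la_{i+1}$ lies outside $I$: one uses the overwhelming-probability estimate from Lemma~\ref{lem:TV} together with a uniform tail bound on the sine-process gap density to show this contributes $o(1)$.
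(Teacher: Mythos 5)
The paper does not prove Theorem~\ref{theorem:Tao1}; it is quoted verbatim from \cite{Tao-gap}, so there is no in-paper argument to compare against, and the review applies to your proposal on its own terms. The proposal contains a genuine quantitative error that invalidates the fixed-energy-to-fixed-index transfer. You claim rigidity concentrates $\la_i$ in a window of length $o(1/\sqrt{n})$, i.e., within $o(1)$ mean gaps of the classical location $\sqrt{n}u$. The exponent is already off: in the normalization of the theorem (unit-variance entries, eigenvalues of size $\sqrt{n}$, mean gap $\sim n^{-1/2}$), Lemma~\ref{lem:TV} gives bulk fluctuations $|\la_i - \E\la_i| = O(n^{-1/2}\log^{O(1)} n)$, not $O(n^{-1/3}\log^{O(1)} n)$. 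But more seriously, even the correct bound is a factor $\log^{O(1)}n$ larger than the mean gap, so rigidity locates $\la_i$ only to within $\log^{O(1)}n$ gaps of its classical location, never to within $o(1)$ gaps. Thus the two requirements ``$|I|\ll 1/\sqrt{n}$'' and ``$|I|$ much larger than the rigidity scale'' are mutually exclusive, and the eigenvalues inside any admissible window carry a range of indices of length at least $\log^{O(1)}n$, not ``a tiny range around $i$.''

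Consequently the Kac--Rice counting identity you write down does not deliver $\p(\la_{i+1}-\la_i\le t)$; it delivers the index-averaged gap distribution over the $\gg 1$ gaps in the window. Passing from the averaged statistic to the single-index one requires the additional and genuinely nontrivial input that $F_{i',n}(t)=F_{i,n}(t)+o(1)$ uniformly for $|i'-i|$ up to the rigidity scale. This index stability of the gap law does not follow from the sine-kernel limit and rigidity alone; establishing precisely such stability is the central content of \cite{Tao-gap}, and for interlacing gaps the analogous statements are Theorems~\ref{interlacing-univ} and~\ref{theorem:Tao3} quoted in the present paper. Without supplying that step your argument does not close.
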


Note that $p$ is log-concave by Theorem~\ref{theorem:prekopa}.

   
    \subsection{Distribution of eigenvalues of GUE and its minors at ﬁxed
  index}
  The next three results are from \cite{TaoPreprint} and are due to Terence Tao.
  
  Suppose that $\lambda_1 < \dots < \lambda_N$ are the eigenvalues of an $N \times N$ GUE matrix $H$.  If we let $\lambda'_1 < \dots < \lambda'_{N-1}$ be the eigenvalues of the top left $N-1 \times N-1$ minor of this matrix (which is again a GUE matrix, but with $N$ replaced by $N-1$), then we almost surely have the Cauchy interlacing law
  $$ \lambda_i < \lambda'_i < \lambda_{i+1}$$
  for $1 \leq i < N$.  The combined process $\tilde \Sigma \subset \R \times \{N-1,N\}$ defined by
  \begin{equation}\label{td-def}
    \tilde \Sigma \coloneqq \{ (\lambda_i, N): 1 \leq i \leq N\} \cup \{ (\lambda'_i, N-1): 1 \leq i \leq N-1\}
  \end{equation}
  forms two rows of the \emph{GUE minor process}.  Like the GUE eigenvalue process, this process is known to be determinantal \cite{ANV14}, \cite{JN06}.  Among other things, it was shown in \cite{ANV14} that under suitable rescaling, this kernel in the bulk converges to the kernel of a Boutillier bead process \cite{boutillier} (with a drift parameter determined by the location in the bulk).  
  
  As an application of the above estimates, one may now also obtain a universal law in the \emph{fixed index} sense.  Define the \emph{interlacing gaps}
  $$ \tilde g_i \coloneqq \sqrt{N/2} \rhosc(\gamma_{i/N}) (\lambda'_i-\lambda_i)$$
  then we have $0 < \tilde g_i < g_i$.  Informally, the quantities $g_i,\dots,g_{i+m}, \tilde g_i, \dots, \tilde g_{i+m}$ then describe the behavior of the GUE minor process near the index $i$.
  
  The main result for two rows from \cite[Theorem 1.2]{TaoPreprint} is that
  
  \begin{theorem}[Interlacing universality]\label{interlacing-univ} Let $i, j$ lie in the bulk region \eqref{bulk-def} for some fixed $\delta > 0$ with $i = j + o(N)$, let $m$ be fixed, and let $F: \R^{2m} \to \R$ be a fixed smooth, compactly supported function.  Then
    $$ \E F(g_i,\dots,g_{i+m}, \tilde g_i,\dots,\tilde g_{i+m}) = \E F(g_j,\dots,g_{j+m}, \tilde g_j,\dots,\tilde g_{j+m}) + o(1)$$
  as $N \to \infty$.
  \end{theorem}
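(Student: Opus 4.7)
The plan is to combine the explicit determinantal structure of the two-row GUE minor process with the eigenvalue rigidity bounds of Lemma~\ref{lem:TV} to reduce the fixed-index statement to a fixed-energy statement, and then invoke the known bulk scaling limit of the minor kernel from \cite{JN06,ANV14}.

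First I would apply Lemma~\ref{lem:TV} both to the $N \times N$ matrix $H$ and to its top-left $(N-1) \times (N-1)$ minor (which is itself a GUE matrix of size $N-1$). A union bound over the $2(m+1)$ indices involved shows that with probability $1 - O(N^{-c})$ all of $\lambda_i, \lambda_{i+1}, \ldots, \lambda_{i+m}$ and $\lambda'_i, \ldots, \lambda'_{i+m}$ lie in a mesoscopic window of width $N^{-1/2+o(1)}$ around the classical location $E_i := \sqrt{N/2}\,\gamma_{i/N}$. On this event, the random vector $(g_i, \ldots, g_{i+m}, \tilde g_i, \ldots, \tilde g_{i+m})$ can be recovered, with essentially no loss, from the restriction of the two-row process $\tilde\Sigma$ in \eqref{td-def} to this window, rescaled by the local density factor $\sqrt{N/2}\,\rhosc(\gamma_{i/N})$.

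Next I would use the fact, established in \cite{JN06,ANV14}, that $\tilde\Sigma$ is determinantal with an explicit kernel $K_N$ built from Hermite polynomials, and that under the bulk rescaling above $K_N$ converges locally uniformly to a universal limit kernel $K_\infty$ (the Boutillier bead kernel, with a drift parameter that is a continuous function of $\gamma_{i/N}$). Since $F$ is smooth and compactly supported, $F(g_i, \ldots, \tilde g_{i+m})$ is a bounded continuous functional depending only on finitely many points of the rescaled process in a bounded window. A standard expansion of such observables in terms of correlation functions of the kernel then promotes the local uniform convergence $K_N \to K_\infty$ (together with routine Hermite-kernel tail bounds) into
$$\E F(g_i, \ldots, g_{i+m}, \tilde g_i, \ldots, \tilde g_{i+m}) \;=\; \Phi(\gamma_{i/N}) + o(1)$$
for an explicit continuous functional $\Phi$ of the bulk coordinate. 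Under the hypothesis $i = j + o(N)$ one has $\gamma_{i/N} - \gamma_{j/N} = o(1)$, hence $\Phi(\gamma_{i/N}) = \Phi(\gamma_{j/N}) + o(1)$ by continuity, which yields the claimed asymptotic equality.

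The hardest part will be the fixed-index-to-fixed-energy reduction: promoting the single-index rigidity of Lemma~\ref{lem:TV} into a joint statement that simultaneously controls $\lambda_i, \ldots, \lambda_{i+m}$ and their interlacing counterparts $\lambda'_i, \ldots, \lambda'_{i+m}$, and verifying that the indices picked out by the fixed-energy window around $E_i$ are exactly the desired ones. Since $m$ is fixed, a union bound in principle suffices, but one must ensure that the small-probability rigidity-failure event does not blow up when weighted by $\|F\|_\infty$; this is handled using the polynomial-in-$N$ decay of the failure probability in Lemma~\ref{lem:TV}, together with the fact that compact support of $F$ bounds $\|F\|_\infty$ by an absolute constant. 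A secondary subtlety is that the scaling factor $\rhosc(\gamma_{i/N})$ must be applied \emph{consistently} to both the eigenvalue gaps and the interlacing gaps; this is where the condition $i = j + o(N)$ (rather than an unrestricted bulk comparison) enters, since it ensures that the two rescalings used on the left and right sides of the theorem differ by only $o(1)$.
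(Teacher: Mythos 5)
The paper does not prove Theorem~\ref{interlacing-univ}: it is quoted verbatim as \cite[Theorem 1.2]{TaoPreprint} and used as a black box (for instance in the appendix proof of Theorem~\ref{theorem:beadR}, where the \emph{first} step is to invoke index universality in order to pass from a fixed-index statistic to an averaged-index one), so there is no in-paper argument to track against.

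Your proposal, however, has a genuine gap at precisely the step you flag as hardest. You reduce the theorem to the stronger assertion
$$ \E F(g_i,\dots,g_{i+m},\tilde g_i,\dots,\tilde g_{i+m}) = \Phi(\gamma_{i/N}) + o(1), $$
a full fixed-index scaling limit, and claim this follows from locally uniform convergence of the two-row minor kernel to the bead kernel plus Lemma~\ref{lem:TV}, on the grounds that $F$ is ``a bounded continuous functional depending only on finitely many points of the rescaled process in a bounded window.'' That last claim is false: identifying \emph{which} point of the process in a microscopic window is $\lambda_i$ requires the counting statistic $\#\{k:\lambda_k<\lambda_i\}$, which is global, not a local functional of the configuration inside the window. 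Rigidity does not repair this: in the natural microscopic normalization (bulk gaps of order $1$) Lemma~\ref{lem:TV} localizes $\lambda_i$ only to an $O(\log^{O(1)}N)$-neighborhood of its classical location, which is \emph{larger} than the gap scale by a polylogarithmic factor, so $\lambda_i$ can land on any of roughly $\log^{O(1)}N$ consecutive bead positions. The discrepancy between fixed-index and fixed-energy statistics is exactly the size-biasing issue visible in the appendix proof of Theorem~\ref{theorem:beadR}, where the fixed-index statistic matches a \emph{reweighted} fixed-energy statistic $\E\bigl[F(\Lambda^E_N)/(\lambda_{0,E}^{(N)} - \lambda_{1,E}^{(N)})\bigr]$ rather than $\E F(\Lambda^E_N)$ itself; your argument produces no such weight. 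If your route through kernel convergence plus rigidity worked, Theorems~\ref{interlacing-univ}--\ref{theorem:Tao4} would be routine corollaries of \cite{ANV14} and \cite{JN06} and would not require a separate preprint. A correct proof of the comparison statement must compare indices $i$ and $j$ directly (for example by the resolvent-swapping machinery of \cite{Tao-gap} adapted to the joint minor process), rather than detouring through a fixed-index scaling limit that is itself at least as hard as the theorem being proved.
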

  
  
  The above result was for two rows of the minor process, but an adaptation of the argument also applies to any fixed number of rows of the minor process as follows.  For any $1 \leq M \leq N$, and let $\lambda^{(M)}_1 < \dots < \lambda^{(M)}_{M}$ be the eigenvalues of the top left $N-k \times N-k$ minors of $H$, thus in the previous notation we have $\lambda^{(N)}_i = \lambda_i$ and $\lambda^{(N-1)}_i = \lambda'_i$.  We then have the normalized eigenvalue gaps
  $$ g^{(M)}_i \coloneqq \sqrt{M/2} \rhosc(\gamma_{i/M}) (\lambda^{(M)}_{i+1}-\lambda^{(M)}_i)$$
  as well as the interlacing gaps
  $$ \tilde g^{(M)}_i \coloneqq \sqrt{M/2} \rhosc(\gamma_{i/M}) (\lambda^{(M-1)}_{i}-\lambda^{(M)}_i)$$
  for any $1 \leq i < M \leq N$.
  Thus, for instance, $g^{(N)}_i = g_i$ and $\tilde g^{(N)}_i = g'_i$.  Also, for any fixed $m$ and any $1 \leq i \leq i+m+1 \leq N$, $g_i,\dots,g_{i+m},\tilde g_i,\dots,\tilde g_m$ can be expressed as a linear combination (with bounded coefficients) of the $g^{(M)}_j$ for $M=N-1,N$ and $i \leq j \leq i+m$, together with a single interlacing gap $\tilde g^{(N)}_i$. The following generalization of  Theorem \ref{interlacing-univ} appears in \cite[Theorem 1.3]{TaoPreprint}.
  
  \begin{theorem}[Interlacing universality, II]\label{theorem:Tao3} Let $i, j$ lie in the bulk region \begin{equation}\label{bulk-def}
    \delta n \leq i \leq (1-\delta)n
  \end{equation} for some fixed $\delta > 0$ with $i = j + o(N)$, let $k, m \geq 1$ be fixed, and let $F: \R^{(m+1)k + k-1} \to \R$ be a fixed smooth, compactly supported function.  Then
    $$ \E F( (g^{(M)}_{i'})_{\substack{N-k < M \leq N;\\ i \leq i' \leq i+m}}, (\tilde g^{(M)}_i)_{N-k+1 < M \leq N} ) = \E F( (g^{(M)}_{j'})_{\substack{N-k < M \leq N;\\ j \leq j' \leq j+m}}, (\tilde g^{(M)}_j)_{N-k+1 < M \leq N} ) + o(1)$$
  as $N \to \infty$.
  \end{theorem}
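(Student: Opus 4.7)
\medskip
\noindent\textbf{Proof proposal for Theorem~\ref{theorem:Tao3}.}

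The plan is to reduce the fixed--index statement to the well-known universality of the GUE minor process at a fixed energy (in the bulk), using rigidity to convert between index and spectral location. The core input is that the GUE minor process is determinantal, with an extended kernel $K_N(x,M;y,M')$ built from Hermite functions and explicit shift operators between adjacent levels. After rescaling by $\sqrt{M/2}\,\rhosc(\gamma_{i/M})$ around the classical location $\sqrt{2M}\,\gamma_{i/M}$, this kernel has a known scaling limit in the bulk (the two-level limit is computed in \cite{ANV14}, and the $k$-level analogue is obtained by the same stationary-phase / saddle-point asymptotics applied to a fixed $k$-dimensional block). This limiting kernel depends continuously on the local density $\rhosc(\gamma_{i/N})$, and hence only on $i/N$ through a continuous function.

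First I would fix $\delta>0$ and $k,m$, and let $F$ be a fixed smooth compactly supported test function on $\R^{(m+1)k+k-1}$. By Lemma~\ref{lem:TV} (applied to each minor, which is itself GUE), eigenvalue rigidity gives
\[
\lambda^{(M)}_{i'} \;=\; \sqrt{2M}\,\gamma_{i'/M}\;+\;O\!\bigl(M^{-1/3}\log^{O(1)}M\bigr)
\]
with overwhelming probability, uniformly for $N-k<M\le N$ and $i'$ in a window of bounded size around $i$. Using rigidity, I can truncate to the event where every eigenvalue of every relevant minor lies within $M^{-1/3+o(1)}$ of its classical location; on the complement, boundedness of $F$ makes the contribution $o(1)$.

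Next I would write the joint density of the relevant block $(\lambda^{(M)}_{i'})_{N-k<M\le N,\;i\le i'\le i+m}$ (together with the one extra interlacing coordinate $\lambda^{(M-1)}_i$ per level) as a Pfaffian/determinantal expression in terms of the extended kernel, and perform the affine change of variables
\[
y^{(M)}_{i'} \;=\; \sqrt{M/2}\,\rhosc(\gamma_{i/M})\bigl(\lambda^{(M)}_{i'}-\sqrt{2M}\,\gamma_{i/M}\bigr).
\]
Under this change of variables, the quantities $g^{(M)}_{i'}$ and $\tilde g^{(M)}_i$ become fixed linear functionals of the $y^{(M)}_{i'}$. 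The standard bulk asymptotics for Hermite functions then imply that the rescaled kernel converges, uniformly on compact sets in the $y$-variables, to a universal $k$-level bead-process kernel $\K_{\ast}(\gamma_{i/N})$ that depends on $i/N$ only through the classical location $\gamma_{i/N}$. This is essentially the $k$-row generalization of the two-row convergence already used to prove Theorem~\ref{interlacing-univ}; the key point is that the saddle-point analysis localizes everything in a window of size $O(1)$ around $\sqrt{2N}\,\gamma_{i/N}$, so only the first $k$ levels matter and the argument is uniform in $k$ bounded.

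Finally, since $i=j+o(N)$ and $\gamma_{\cdot}$ is smooth on $(\delta,1-\delta)$, we have $\gamma_{i/N}-\gamma_{j/N}=o(1)$, and the limiting kernel $\K_{\ast}(\gamma_{i/N})$ differs from $\K_{\ast}(\gamma_{j/N})$ only by $o(1)$ (continuity in the classical location). Since $F$ is bounded and uniformly continuous, integration of $F$ against the two rescaled densities gives the same limit up to $o(1)$, completing the proof.

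\medskip
\noindent\textbf{Main obstacle.} The single real difficulty is upgrading the \emph{two-row} bead-limit of \cite{ANV14} to a joint limit over $k$ consecutive rows while simultaneously sliding the index from $i$ to $j$. The two-row case already uses determinantal structure combined with Hermite asymptotics in the bulk; for $k$ rows one must control the shift/difference operators between adjacent levels uniformly, and verify that the error terms in the Plancherel--Rotach expansion for the Hermite wave functions remain $o(1)$ uniformly over the $O(1)$ window of indices around $i$ and simultaneously over the $o(N)$ drift from $i$ to $j$. Rigidity and the smoothness of $\rhosc$ in the bulk are what make this uniformity possible; once uniformity is in hand, the rest of the argument is formal.
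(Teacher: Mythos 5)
The paper itself contains no proof of Theorem~\ref{theorem:Tao3}: it is quoted verbatim as \cite[Theorem 1.3]{TaoPreprint}, an external preprint, and the surrounding text simply records the statement. So there is no in-paper argument to compare against; what follows is a critique of your proposal on its own terms.

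The kernel-convergence part of your sketch (the $k$-row extended determinantal kernel of the GUE minor process, rescaled around the classical location, converging in the bulk to a universal bead-type kernel depending continuously on the local semicircle density) is a reasonable and essentially standard piece of machinery, and it is the correct analogue of what \cite{ANV14} does for two rows. The problem is the step that passes from this fixed-\emph{energy} information to the fixed-\emph{index} statement, and rigidity alone does not close that gap. In the gap-normalized scale where the mean bulk spacing is $O(1)$, the fluctuation of $\lambda^{(N)}_i$ about its classical location is not $O(1)$; rigidity controls it by $\log^{O(1)}N$, and Gustavsson's CLT shows the fluctuation is genuinely of order $\sqrt{\log N}$. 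So conditioning on the index $i$ is a smearing over an \emph{unbounded} window of energies, and one cannot simply evaluate the limiting kernel at the single classical location $\gamma_{i/N}$ and be done. Relatedly, your claim that the joint law of the fixed-index block $(\lambda^{(M)}_{i'})_{M,i'}$ is ``a Pfaffian/determinantal expression in terms of the extended kernel'' elides the real difficulty: the correlation kernel gives $k$-point correlation functions, not the density of the $i$-th ordered eigenvalue. To get at the latter one needs hole probabilities / Fredholm determinants (conditioning on exactly $i-1$ eigenvalues lying to one side), which is a different and harder object.

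The missing ingredient is the deconditioning/reweighting argument that converts a fixed-index expectation into an averaged fixed-energy expectation. The paper's own proof of Theorem~\ref{theorem:beadR} in Appendix~A.4 (due to Tao) exhibits the shape of this argument: one first averages the index statistic over a window $i \leq i' \leq i+L$ with $L$ large enough to cover the rigidity fluctuations, uses the exact identity
$$\sum_{i\le i'\le i+L}\E F(\Lambda_{i',N}) = \E\int_{\lambda^{(N)}_{i+L}}^{\lambda^{(N)}_{i-1}} \frac{F(\Lambda^{E'}_N)}{\lambda^{(N)}_{0,E'}-\lambda^{(N)}_{1,E'}}\,dE'$$
to pass to a fixed-energy reweighted statistic, controls the reweighting factor with a Wegner/level-repulsion estimate such as \eqref{la-2}, and only then invokes fixed-energy kernel universality. (Of course that appendix proof uses Theorem~\ref{theorem:Tao3} itself, so one cannot lift it wholesale; but it shows what is missing from yours.) Without some version of this averaging-plus-reweighting step, ``kernel converges $+$ $\gamma_{i/N}$ smooth $\Rightarrow$ fixed-index universality'' is a non sequitur, and I would mark this as a genuine gap rather than a detail to be filled in.
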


  As an application of this universality, some non-trivial reduction in variance of linear statistics of interlacing gaps is shown to occur in \cite[Theorem 1.4]{TaoPreprint}:

  \begin{theorem}[Interlacing statistics variance]\label{theorem:Tao4} Let $i$ lie in the bulk region \eqref{bulk-def} for some fixed $\delta > 0$, let $m \geq 1$ be fixed, and let $a_1,\dots,a_m$ be complex numbers.  Then
  $$ \Var \sum_{l=1}^m a_l \tilde g_{i+l} \ll_{\delta,A} (\frac{m}{\log^A(2+m)}+o(1)) \sum_{l=1}^m |a_l|^2$$
  for any $A>0$.
  \end{theorem}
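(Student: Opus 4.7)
The plan is to combine the determinantal structure of the GUE minor process with the eigenvalue rigidity of Lemma~\ref{lem:TV} and the interlacing universality of Theorem~\ref{theorem:Tao3}, in order to improve on the trivial bound $\Var \sum_l a_l \tilde g_{i+l} \leq m \sum_l |a_l|^2$ by an arbitrary power of $\log(2+m)$.

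After normalizing so that $\sum_l |a_l|^2 = 1$ and expanding $\Var \sum_l a_l \tilde g_{i+l} = \sum_{k,l} a_k \bar a_l \cov(\tilde g_{i+k}, \tilde g_{i+l})$, it suffices to control the operator norm of the covariance matrix $C_{kl}$ on the window $1 \leq k, l \leq m$. By Theorem~\ref{theorem:Tao3}, one may reduce at the cost of a $o(1)$ error to a fixed bulk index such as $i = \lfloor N/2 \rfloor$, so the remaining task is universal.

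Next, I would exploit that the two-row minor process $\tilde \Sigma$ from \eqref{td-def} is determinantal on $\R \times \{N-1, N\}$ with an explicit extended kernel (see \cite{JN06, ANV14}) that, after rescaling on scale $N^{-1/2}$ around the classical location, converges to the Boutillier bead kernel \cite{boutillier}. Using the determinantal formula, one can write each covariance $\cov(\tilde g_{i+k}, \tilde g_{i+l})$ as a double integral of $|K_N|^2$ against indicator-like functions localized near the classical locations of eigenvalues $i+k$ and $i+l$. A summation-by-parts rewrite
$$\sum_l a_l \tilde g_{i+l} = \int \phi_a(x) \, d(N(x) - N'(x)) + O(\text{rigidity error}),$$
where $\phi_a$ is built from partial sums of the $a_l$'s and $N, N'$ are the counting functions of the two minors, recasts the problem as bounding the variance of a linear statistic of the signed counting process $N - N'$. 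Using the decay $|K_\infty((x, M), (y, M'))| \lesssim 1/(1 + |x - y|)$ of the bead kernel, combined with a dyadic decomposition of $\phi_a$ so that on each scale one may extract cancellation against the oscillatory part of the kernel via contour shifts or stationary phase, one would obtain the asserted $m / \log^A(2+m)$ bound.

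The main obstacle is extracting the arbitrary $\log^A$ gain: a naive two-point kernel decay of $(1+|x-y|)^{-1}$ would yield only a single logarithmic improvement, whereas the theorem asks for improvement to every power. This requires pulling in the oscillatory/analytic structure of the bead kernel at large distances and carrying out a stationary phase analysis in which the phase gradient separates the classical locations of different gaps; alternatively, one could bootstrap via iterated application of Theorem~\ref{theorem:Tao3} on progressively smaller sub-windows inside the window of size $m$. The $o(1)$ error in the statement provides the necessary flexibility to absorb constants depending on $A$.
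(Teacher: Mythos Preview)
The paper does \emph{not} contain a proof of Theorem~\ref{theorem:Tao4}: it is quoted from the external preprint \cite{TaoPreprint} (see the sentence ``The next three results are from \cite{TaoPreprint} and are due to Terence Tao'' in Appendix~A.3). So there is nothing in the present paper against which to compare your argument.

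Assessing your proposal on its own terms: what you have written is a plausible outline but not a proof, and you have correctly put your finger on the gap. The reduction via Theorem~\ref{theorem:Tao3} to a fixed bulk index is fine, and passing to the determinantal kernel and its bead-process limit is a natural move. However, the bead kernel decay $|K_\infty| \lesssim (1+|x-y|)^{-1}$ gives at best a single logarithm in the operator-norm bound, as you note. Your two proposed fixes are both too vague to be convincing. The ``stationary phase on the oscillatory part of the bead kernel'' idea would require you to identify an explicit oscillatory factor in the Boutillier kernel whose phase genuinely separates distinct gap locations, and then to check that the amplitude behaves well enough to iterate the integration by parts $A$ times; none of this is carried out or even made precise. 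The alternative ``bootstrap via iterated application of Theorem~\ref{theorem:Tao3} on sub-windows'' is not a mechanism at all: Theorem~\ref{theorem:Tao3} is a universality statement (it says distributions at different bulk indices agree asymptotically), not a decorrelation statement, so iterating it does not produce any variance reduction.

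In short, the proposal identifies the right objects and the right difficulty, but the step that would turn a single-log gain into an arbitrary-power-of-log gain is missing, and neither of the suggested routes is fleshed out enough to be called a proof.
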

  \subsection{Tao's proof of Theorem~\ref{theorem:beadR}}\lab{ssec:Tao4.4}
  
  \begin{proof}[Proof of Theorem~\ref{theorem:beadR}]
  We will use the convergence of the GUE minor process to the bead model with parameters $\rho(\x)$ (which can be explicitly computed from $\x$ using  \cite[Theorem 1.4] {Sosoe}) in any square of constant size around a fixed energy in the bulk \cite{ANV14}, together with Theorem~\ref{theorem:Tao3}.
  
  At a fixed energy $E$ and level $N$ let $M$ vary in $[N - m, N+m]$, and let $\la_{-m, E}^{(M)} > \dots > \la_{0, E}^{(M)}\geq E >  \lambda_{1,E}^{(M)} > \dots > \la_{m, E}^{(M)}$ be the eigenvalues at level $M$ around $E$. We will normalize the scaling so that the mean gap size is $1$. Let $F$ be some smooth compactly supported multivariate function of the variables $\la_{k, E}^{(M)} - \la_{0, E}^{(N)}$, which we organize as a vector and denote by $\Lambda^E_N$. 
  At a fixed index $i$ and level $N$ let $M$ vary in $[N - m, N+m]$, and let $\la_{-m+ i}^{(M)} - \la_{ i}^{(N)} > \dots > 0 >  \lambda_{1+i}^{(M)} - \la_{ i}^{(N)} > \dots > \la_{m+ i}^{(M)}- \la_{ i}^{(N)}$ be the shifted eigenvalues around index $i$ and level $N$, which we organize as a vector and denote by $\Lambda_{i, N}$.
  We claim that the fixed index gap statistic \(\E F(\Lambda_{i, N})\) for $i$ in the bulk is asymptotically equal to the fixed energy reweighted gap statistic
  \[
  {\E} \frac{F(\Lambda^E_N)}{\lambda_{0,E}^{(N)} - \lambda_{1,E}^{(N)}}.
  \]
  
  Because of the log-concavity of $\ell^\frac{1}{2}\left(\MM|_{\lceil \ell \x\rceil + \Box_m} - \MM(\lceil\ell\x\rceil)  \one_{\Box_m}\right)$, for any fixed $\ell$,  the distribution of any Lipschitz function is subexponential (see \cite[Theorem 1.2]{Emanuel}), and therefore the aforementioned asymptotic equality for smooth compactly supported functions $F$ implies convergence in the Wasserstein metric.
  
  Here, $E$ is the classical location of $\la_i$.
  
  Firstly, because of index universality (Theorem~\ref{theorem:Tao3}), one can approximate the fixed index statistic
  \[
  {\E} F(\Lambda_{i, N})
  \]
  by the averaged index statistic
  \[
  \frac{1}{\log^{10} N} \sum_{i \leq i' \leq i+\log^{10} N} \E F(\Lambda_{i', N}))
  \]
  up to small error. We choose $\log^{10} N$ here because this is larger than the oscillations given by eigenvalue rigidity. Now observe that $F(\Lambda^{E'}_N)$ is equal to $F(\Lambda_{i', N})$ whenever $E'$ lies in $(\lambda_{i'}^{(N)}, \lambda_{i'+1}^{(N)})$, in which case $\Lambda^{E'}_N$ is equal to $\Lambda_{i', N}$. As a consequence, we see that
  \[
  \sum_{i \leq i' \leq i+\log^{10} N} {\E} F(\Lambda_{i, N}) = \int^{\lambda_{i-1}^{(N)}}_{\lambda_{i+\log^{10} N}^{(N)}} \frac{F(\Lambda^{E'}_N)}{\lambda_{0,E'}^{(N)} - \lambda_{1,E'}^{(N)}} \, dE'.
  \]
  
  By eigenvalue rigidity, this averaged index expression is very close (within $o(\log^{10} N)$) of the averaged energy expression
  \[
  \int_{E-\log^{10} N}^{E} \frac{F(\Lambda^{E'}_N)}{\lambda_{0,E'}^{(N)} - \lambda_{1,E'}^{(N)}} \, dE'.
  \]
  From the Wegner estimate (see \cite[Theorem 3.5]{ESY}),  denoting by $\mathcal{N}_I$ the number of eigenvalues of a GUE (rescaled so that the mean gap at $E$ is $1$), in the interval $I$, we have that 
  \begin{equation}\label{la-2}
  \p[\mathcal{N}_{(-\eps + E', E' + \eps)} \geq 2] \leq C \eps^4.
  \end{equation}
  By fixed energy universality (\cite[Corollary 1.4]{ANV14}) and the above Wegner eigenvalue repulsion estimate,
  the expression
  \[
  {\E} \frac{F(\Lambda^{E'}_N)}{\lambda_{0,E'}^{(N)} - \lambda_{1,E'}^{(N)}}
  \]
  is basically 
   independent of $E'$. So we can pass from the averaged energy statistic to the fixed energy statistic
  \[
  {\E} \frac{F(\Lambda^E_N)}{\lambda_{0,E}^{(N)} - \lambda_{1,E}^{(N)}}.
  \]
  This proves the theorem.
  \end{proof}

\section{Fefferman's proof of Theorem~\ref{thm:Feff}}\lab{sec:Feff}
\subsection*{Basic Identity}

If \( u \) is Lipschitz on \( \mathbb{R}^n \) with compact support, then

\[
\int_{\mathbb{R}^n} |\nabla u(x)|^2 \, dx = c_n \int_{\substack{\xi \in \mathbb{R}^n\\R>0}} \left( \frac{1}{R^{n+2}} \int_{x \in B(\xi, R)} |u(x) - L_{\xi,R}(x)|^2 \, dx \right) \frac{d\xi \, dR}{R}
\]

where \( B(\xi, R) \) denotes the ball about \( \xi \) of radius \( R \), and \( L_{\xi,R} \) is the linear (including a constant term) function that best approximates \( u \) in \( L^2(B(\xi, R)) \).

See Subsection~\ref{ssec:CF-appendix} for a sketch of the proof.

Let \( Q^\circ \) denote the unit cube in \( \mathbb{R}^n \),  
and let \( Q \) denote a dyadic subcube of \( Q^\circ \). We write \( \de_Q \) for the sidelength of \( Q \)  
and \( |Q| \) for \( \de_Q^n \), the volume of \( Q \).

We write \( E(Q) \subset \mathbb{R}^n \times (0, \infty) \) to denote the set of all \((\xi, R)\) such that  
\( \xi \in Q \) and \( Q \subset B(\xi, R) \), \( 2R < (1.01) \cdot \text{diameter}(Q) \).

For \( u \in L^2_{\text{loc}}(\mathbb{R}^n) \), we write \( L_Q \) to denote the linear function that  
best approximates \( u \) in \( L^2(Q) \).

Note that \( E(Q) \) and \( E(Q') \) are disjoint if \( Q, Q' \) are distinct dyadic cubes.

Note also that  
\[
c |Q| < \int_{E(Q)} \frac{d\xi \, dR}{R^{n+1}} < C|Q|
\]
for any dyadic cube \( Q \).

Moreover, if $(\xi, R) \in E(Q)$ then  
$$c \de_Q < R < C\de_Q$$ and 
\[
c|Q| < R^n < C|Q|.
\]

\[
\|u - L_Q\|_{L^2(Q)} \leq \|u - L_{\xi, R}\|_{L^2(Q)} \leq \|u - L_{\xi, R}\|_{L^2(B(\xi, R))}.
\]
Therefore, if $u$ is Lipschitz and of compact support, then 
\[
(*) \,\,\,\,\,\,\,\,\, \sum_Q \frac{1}{\de_Q^{2}} \int_Q |u(x) - L_Q(x)|^2 dx \leq \sum_Q \int_{(\xi, R) \in E(Q)} \left( \frac{1}{R^{n+2}} \int_{B(\xi, R)} |u(x) - L_{\xi, R}(x)|^2 dx \right) \frac{d\xi \, dR}{R}
\]

$$ \leq \int_{\substack{\xi \in \R^n\\R>0}} (\frac{1}{R^{n+2}} \int_{B(\xi, R)} |u(x) - L_{\xi, R}(x)|^2 dx) \frac{d\xi\, dR}{R} = C \|\nabla u\|^2.$$

Now let \( F \) be a function on the unit cube \( Q^\circ \), taking the value 0  
at the center of \( Q^\circ \), and having Lipschitz constant \(\leq 1\).

By McShane's extension theorem, there exists a function \( u : \mathbb{R}^n \to \mathbb{R} \),  
such that \( u = F \) on \( Q^\circ \), \( u = 0 \) outside the double of \( Q^\circ \), and  
\( u \) has Lipschitz constant \(\leq C\).

Applying to \( u \) the chain of inequalities \((*)\), we obtain  
the following result:

\[
\sum_{Q} \frac{|Q|}{\de_Q^{n+2}} \int_Q |F(x) - L_Q(x)|^2 dx \leq C,
\]

where \( L_Q \) denotes the linear function that best approximates \( F \)  
in \( L^2(Q) \). Here, the sum is taken over all dyadic subcubes \( Q \subset Q^\circ \).

\bigskip

\noindent\textbf{Equivalently,} \\

\noindent\textbf{Main Estimate:}

\[
\int_{z \in Q^\circ} \left( \sum_{Q \ni z} \frac{1}{\de_Q^{n+2}} \int_Q |F(x) - L_Q(x)|^2 dx \right) dz \leq C.
\]

To prove this estimate, we assumed that $F=0$ at the center of $Q^\circ$. We may drop this assumption by replacing $F$ by $F - F(\text{center\,\,} \text{of\,\,} Q^\circ)$.
\subsection*{CZ Decomposition}

We pick parameters \( \varepsilon \), \( \eta > 0 \), and \( k \geq 1 \),  such that

\begin{equation}
(\smile )\,\,\,\,\,\,\,\,\,\, \varepsilon^{n+2} \eta k \geq C^\# 
\end{equation}

where \( C^\# \) is a large enough constant determined by the dimension \( n \).

We then make a CZ decomposition of \( Q^\circ \) by successively bisecting and stopping according to the following rules:

We stop cutting \( Q \) if either

\begin{equation} \tag{A}
\frac{1}{|Q|^{1 + \frac{2}{n}}} \int_Q |F(x) - L_Q(x)|^2 dx \leq \hat{c} \varepsilon^{n+2}
\end{equation}

\[
\text{\small \(\hat{c}\) is a constant to be picked later. It will depend only on \(n\).}
\]
or
\begin{equation} \tag{B}
\delta_Q \leq 2^{-k}
\end{equation}

If (B) holds, we call \( Q \) \textbf{BAD}; otherwise, \( Q \) is \textbf{GOOD}.

Thus, \( Q^\circ \) is partitioned into finitely many CZ cubes,  each having sidelength \( \geq 2^{-k} \).

\subsection*{The Good Cubes}

Let \( Q \) be a \textbf{Good CZ cube}. Then

\begin{equation} \tag{A}
\frac{1}{\delta_Q^{n+2}} \int_Q |F(x) - L_Q|^2 dx \leq \hat{c} \varepsilon^{n+2}
\end{equation}

Because \( F \) has Lipschitz constant \(\leq 1\) and \( L_Q \) is the best linear approximation to \( F \) on \( L^2(Q) \), we can find that \( F - L_Q \) has Lipschitz constant \(\leq C\).

Suppose \( |F(y) - L_Q(y)| \geq \varepsilon \delta_Q \) for some \( y \in Q \).

Then, because \( F - L_Q \) has Lipschitz constant \(\leq C\), we have

\[
|F - L_Q| \geq \frac{\varepsilon}{2} \delta_Q \quad \text{on} \quad Q \cap B(y, c \varepsilon \delta_Q)
\]

a set whose volume is \(\geq c (\varepsilon \delta_Q)^n\).

Therefore,

\[
\int_Q |F - L_Q|^2 dx > \left( \frac{\varepsilon}{2} \delta_Q \right)^2 \cdot c (\varepsilon \delta_Q)^n
\]

i.e.,

\begin{equation} \tag{!}
\frac{1}{\delta_Q^{n+2}} \int_Q |F - L_Q|^2 dx > c' \varepsilon^{n+2}.
\end{equation}

If we pick \(\hat{c}\) in (A) to be less than our present \( c' \), then (!) contradicts (A). We pick such a \(\hat{c}\) and conclude that

\[
|F - L_Q| \leq \varepsilon \delta_Q \quad \text{on } Q \quad \text{(for any Good CZ cube)}.
\]

\subsection*{The Bad Cubes}

Let \( \hat{Q} \) be a bad CZ cube. Then (A) didn't hold for any dyadic cube \( Q \) strictly containing \( \hat{Q} \),  
else we would have stopped cutting before reaching \( \hat{Q} \).

Therefore,  
\begin{equation}
\frac{1}{\delta_Q^{n+2}} \int_Q |F - L_Q|^2 dx > \hat{c} \varepsilon^{n+2}
\end{equation}
for all $Q \supseteq \hat{Q}$, $Q \neq \hat{Q}$.  
There are \( k \) such dyadic cubes \( Q \) for fixed \( \hat{Q} \).  
Consequently,  
\begin{equation}
(!!)\,\,\,\,\,\,\, \int_{z \in \hat{Q}} \left( \sum_{Q \ni z} \frac{1}{\delta_Q^{n+2}} \int_Q |F - L_Q|^2 dx \right) dz \geq k |\hat{Q}| \cdot \hat{c} \varepsilon^{n+2}
\end{equation}

Summing over all the bad CZ cubes, we find that  
\begin{equation}
\int_{z \in Q^\circ} \left( \sum_{Q \ni z} \frac{1}{\delta_Q^{n+2}} \int_Q |F - L_Q|^2 dx \right) dz \geq \hat{c} \varepsilon^{n+2} k \sum_{\hat{Q} \, \text{bad}} |\hat{Q}|
\end{equation}

Hence, by our \textbf{Main Estimate},  
\begin{equation}
\varepsilon^{n+2} k \sum_{\hat{Q} \, \text{bad}} |\hat{Q}| \leq C.
\end{equation}

If we pick \( C^\# \) in \((\smile)\) to be bigger than our present \( C \),  
then we conclude from \((\smile)\) and \((!!)\) that the total volume  
of all the bad CZ cubes is at most \(\eta\).

\subsection*{Recap}

Suppose \( \varepsilon, \eta > 0 \) and \( k \geq 1 \).

Assume that  
\[
\varepsilon^{n+2} \eta k > C^\# 
\]  
(a large enough constant depending only on the dimension \( n \)).

Let \( F : Q^\circ \to \mathbb{R} \), where \( Q^\circ \) is the unit cube in \( \mathbb{R}^n \).  
Suppose \( F \) has Lipschitz constant \(\leq 1\).

Then we may decompose \( Q^\circ \) into finitely many \textbf{Good} cubes, and finitely many \textbf{Bad} cubes, with the following properties:

\begin{enumerate}
    \item[(I)] The \textbf{Good} cubes have sidelengths \( \geq 2^{-k} \).

    For each good cube \( Q \), we have  
    \[
    |F - L_Q| \leq \varepsilon \delta_Q \quad \text{on} \quad Q,
    \]  
    where \( L_Q \) is the linear function that best approximates \( F \) in \( L^2(Q) \).

    \item[(II)] The \textbf{Bad} cubes have sidelength exactly \( 2^{-k} \), and their total volume is at most \( \eta \).
\end{enumerate}

\subsection{Sketch of the proof of the basic identity}\lab{ssec:CF-appendix}
Let 
\[
\varphi_0^R(x) = c_0\frac{\mathbf{1}_{B(0, R)}(x)}{R^{n/2}} \quad \text{and} \quad \varphi_j^R(x) = c_1 \, x_j \, \frac{\mathbf{1}_{B(0, R)}}{R^{1 + \frac{n}{2}}} \quad \text{for } j = 1, \dots, n,
\]
where \(\mathbf{1}\) denotes the indicator function and the constants \( c_0, c_1\) are picked so that \(\varphi_0^R, \varphi_1^R, \dots, \varphi_n^R\) have \( L^2 \)-norm 1 on \( \mathbb{R}^n \).

Then
\[
L_{\xi, R}(z) = \sum_{j=0}^n \int u(x) \, \varphi_j^R(\xi - x) \, dx \cdot \varphi_j^R(\xi - z),
\]
hence
\[
\int_{x \in B(\xi, R)} |u(x) - L_{\xi, R}(x)|^2 dx = \int_{x \in B(\xi, R)} |u(x)|^2 dx -  \int_{x \in B(\xi, R)} |L_{\xi, R}(x)|^2dx
\]

(Because \( L_{\xi, R} - u \perp u\) on \( B(g, R) \))

\[
= \int_{x \in B(g, R)} |u(x)|^2 dx - \sum_{j=0}^n \left( \varphi_j^R * u (\xi - x)dx \right)^2
\]

(Because \(\varphi_0^R, \dots, \varphi_n^R\) are orthonormal)
\[
= \int_{x \in B(\xi, R)} |u(x)|^2dx - \sum_{j=0}^n|\varphi_j^R \ast u(\xi)|^2. 
\]

Integrating over \( \xi \in \mathbb{R}^n \) for fixed \( R \), we find that

\[
\int_{\xi \in \mathbb{R}^n} \int_{x \in B(\xi, R)} |u(x) - L_{\xi, R}(x)|^2 dx \, d\xi = \tilde{c} \, R^n \left( \int_{\mathbb{R}^n} |u(x)|^2 dx - \sum_{j=0}^n \|\varphi_j \ast u\|_{L^2(\mathbb{R}^n)}^2 \right)
\]

where \( \tilde{c} \) is the volume of the unit ball in \( \mathbb{R}^n \).

By the Plancherel theorem, we have

\[
\int_{\xi \in \mathbb{R}^n} \int_{x \in B(\xi,  R)} |u(x) - L_{\xi, R}(x)|^2 dx \, d\xi = \tilde{c} R^n \int_{\mathbb{R}^n} |\hat{u}(\xi)|^2 d\xi - \sum_{j=0}^n \int_{\mathbb{R}^n} |\widehat{\varphi_j^R}(\xi) \hat{u}(\xi)|^2 d\xi
\]

\[
= \int_{\mathbb{R}^n} \left\{ \tilde{c} R^n - \sum_{j=0}^n |\widehat{\varphi_j^R}(\xi)|^2 \right\} |\hat{u}(\xi)|^2 d\xi
\]

Integrating against \( \frac{dR}{R^{n+3}} \), we find that

\[
\int_{\substack{\xi \in \mathbb{R}^n\\R > 0}} \int_{R > 0} \frac{1}{R^{n+2}} \int_{x \in B(\xi, R)} |u(x) - L_{\xi, R}(x)|^2 dx \, \frac{d\xi \, dR}{R}
\]

\[
= \int_{\mathbb{R}^n} \left[ \int_0^\infty \left\{ \tilde{c} R^n - \sum_{j=0}^n |\widehat{\varphi_j^R}(\xi)|^2 \right\} \frac{dR}{R^{n+3}} \right] |\hat{u}(\xi)|^2 d\xi \tag{\#}
\]

The quantity in square brackets may be computed; it equals

\[
c |\xi|^2 \quad \text{for a constant \( c \) determined by the dimension \( n \)}.
\]

(More on this later.)

Therefore,  
\[
\int_{\substack{\xi\in \mathbb{R}^n\\R > 0}}  \left\{ \frac{1}{R^{n+2}} \int_{x \in B(\xi, R)} |u(x) - L_{\xi, R}(x)|^2 dx \right\} \frac{d\xi \, dR}{R}
= c \int_{\mathbb{R}^n} |\xi|^2 |\hat{u}(\xi)|^2 d\xi = c \int_{\mathbb{R}^n} |\nabla u(x)|^2 dx,
\]

completing the proof of the \textbf{Basic Identity}.

\bigskip

\noindent\textbf{A Note on the Quantity in Square Bracket in (\#)}


We first note that
\[
\int_{\xi \in \mathbb{R}^n} \int_{R > 0} \left\{ \frac{1}{R^{n+2}} \int_{x \in B(\xi, R)} |u(x) - L_{\xi, R}(x)|^2 dx \right\} \frac{d\xi \, dR}{R} < \infty \tag{\#\#}
\]
when \( u \in C_0^\infty(\mathbb{R}^n) \).

If the quantity in square brackets were infinite for a set of \(\xi\) of nonzero measure, then (\#\#) would be impossible. Therefore, (\#) leads to a formula of the form:

\[
\int_{\xi \in \mathbb{R}^n} \int_{R > 0} \frac{1}{R^{n+2}} \int_{x \in B(\xi, R)} |u(x) - L_{\xi, R}(x)|^2 dx \, \frac{d\xi \, dR}{R} = \int_{\mathbb{R}^n} m(\xi) |\hat{u}(\xi)|^2 d\xi \tag{!!!}
\]

for some function \( m(\xi) \). Here, the left-hand side is invariant under rotation and has a homogeneity with respect to dilations.

It follows that \( m(\xi) \) is rotation-invariant and homogeneous of degree 2 under dilations.  
Therefore,  
\[
m(\xi) = c |\xi|^2
\]  
for some constant \( c \).

One has to compute in order to determine the constant \( c \), but here we use only that \( c \) is determined by the dimension \( n \), which is obvious.

Plugging our formula for \( m(\xi) \) into \((!!!)\), we obtain the \textbf{Basic Identity} without doing the computation of the quantity in square brackets in (\#).

\end{document}